\numberwithin{equation}{section}
\numberwithin{table}{section}
\numberwithin{equation}{section}
\newtheorem{theorem}{Theorem}[section]
\newtheorem{lemma}[theorem]{Lemma}
\newtheorem{proposition}[theorem]{Proposition}
\newtheorem{corollary}[theorem]{Corollary}
\theoremstyle{definition}
\newtheorem{example}[theorem]{Example}
\newtheorem{remark}[theorem]{Remark}
\theoremstyle{definition}
\newtheorem{definition}[theorem]{Definition}
\def \wv{\wp\varsigma}
\def\la{\lambda}
\def \ot{\otimes}
\def \As{\mathcal{A}ss}
\def \Com{\mathcal{C}om}
\def \Lie{\mathcal{L}ie}
\def \Pois{\mathcal{P}ois}
\def \O{\mathbb{O}}
\def\vs{\varsigma}
 \def\e{\epsilon}
\def \idf{\operatorname{idf}}
\def \fdf{\operatorname{fdf}}
\def \ui{\ucr{i}}
\def \ip{{\mathcal{P}}}
\def \iu{\!{\it{\Upsilon}}}
\def \ias{\!\mathcal{A}ss}
\def \dim{\operatorname{dim}}
\def\GKdim{\operatorname{GKdim}}
\def \Hom{\operatorname{Hom}}
\def \Ext{\operatorname{Ext}}
\def \der{\operatorname{der}}
\def \sf{\operatorname{sf}}
\def \ssf{\operatorname{Sf}}
\def \ad{\operatorname{ad}}
\def \aad{\operatorname{Ad}}
\def \ider{\operatorname{ider}}
\def \End{\operatorname{End}}
\def \HH{\operatorname{HH}}
\def \Aut{\operatorname{Aut}}
\def \IAut{\operatorname{IAut}}
\def \OAut{\operatorname{OAut}}
\def \id{\operatorname{id}}
\def \ker{\operatorname{Ker}}
\def \Vect{\operatorname{Vect}}
\def \ep{\epsilon}
\def \1{\mathbbm 1}
\def \k{\Bbbk}
\def \S{\mathbb{S}}
\def \deg{\operatorname{deg}}
\def \n{[n]}
\newcommand{\ucr}[1]{\underset{#1}{\circ}}
\def \fb{\mathbbm{b}}
\begin{document}
\title{Cohomological invariants of algebraic operads, I}

\subjclass[2010]{}

\keywords{}

\author{Y.-H. Bao}
\address{(Bao) School of Mathematical Sciences, Anhui University,
Hefei 230601, China}
\email{baoyh@ahu.edu.cn, yhbao@ustc.edu.cn}

\author{Y.-H. Wang}
\address{(Wang) School of Mathematics, Shanghai Key Laboratory
of Financial Information Technology, Shanghai University of
Finance and Economics, Shanghai 200433, China}
\email{yhw@mail.shufe.edu.cn}

\author{X.-W. Xu}
\address{(Xu) College of Mathematics, Jilin University,
Changchun 130012, China}
\email{xuxw@jlu.edu.cn}

\author{Y. Ye}
\address{(Ye) School of Mathematical Sciences,
University of Sciences and Technology of China, Hefei 230026, China}
\email{yeyu@ustc.edu.cn}

\author{J.J. Zhang}
\address{(Zhang) Department of Mathematics, Box 354350,
University of Washington, Seattle, Washington 98195, USA}
\email{zhang@math.washington.edu}

\author{Z.-B. Zhao}
\address{(Zhao) School of Mathematical Sciences, Anhui University,
Hefei 230601, China}
\email{zbzhao@ahu.edu.cn}

\begin{abstract}
We study various invariants, such as cohomology groups, derivations,
automorphisms and infinitesimal deformations, of algebraic operads
and show that $\As$, $\Com$, $\Lie$ and $\Pois$ are rigid or semirigid.
\end{abstract}

\subjclass[2000]{Primary 18D50, 55P48, 16E40, 53D55}



\keywords{Operad, cohomology group, rigidity, automorphism,
endomorphism, derivation, infinitesimal deformation}

\maketitle


\setcounter{section}{-1}

\section{Introduction}
\label{yysec0}

Several cohomology theories of operads have been introduced since 
1990s. The first was in Rezk's 1996 Ph.D. Dissertation \cite{Re}. 
Other theories via deformation complexes were introduced by 
Ne{\v c}as-Niessner in his Master Thesis \cite{NN} in 2010 and 
by Paljug in his Ph.D. Dissertation \cite{Pa} in 2015. Similar 
ideas appeared in many interesting papers including 
\cite{DK, FP, FW, KS, Ma1, Ma2}. Like the Hochschild 
cohomology of associative algebras, cohomologies of operads are 
extremely important to understand structures of algebraic operads. 
On the other hand, these cohomology theories have not been 
calculated for any explicit algebraic operads (as far as we 
know).

This paper concerns cohomology groups $H^i(\ip)$, for $i=0, 1,2$,
and related invariants such as derivations, automorphisms, and
infinitesimal deformations of an algebraic operad $\ip$. These 
basic invariants are already very interesting and reflect 
different aspects of algebraic operads. Algebraic operads encoding 
unital associative algebras (denoted by $\As$), unital commutative 
algebras (denoted by $\Com$), Lie algebras (denoted by $\Lie$), and 
unital Poisson algebras (denoted by $\Pois$) have been studied 
extensively by many authors for many years \cite{LV}. One of 
our main goals is to systematically calculate above mentioned 
invariants for $\As$, $\Com$, $\Lie$, $\Pois$ and other 
related operads.

Let $\Bbbk$ be a base field throughout the introduction and we 
consider symmetric operads over $\Bbbk$ though some ideas apply to 
plain (or non-symmetric) or set operads.

Below we recall the partial definition of an operad since we will 
use it as our basic definition. For the classical definition of a 
(non-symmetric or symmetric) operad, we refer to \cite[Chapter 5]{LV}.
To simplify notation we work with operads over $\Bbbk$-vector spaces.

\begin{definition}\cite[Section 5.3.4]{LV}
\label{yydef0.1}
An \emph{operad} consists of the following data:
\begin{enumerate}
\item[(i)]
a sequence $(\ip(n))_{n\geq 0}$ of right $\Bbbk \S_n$-modules,
where the right $\S_n$ actions are denoted by $\ast$,
\item[(ii)]
an element $\1\in \ip(1)$ called the \emph{identity},
\item[(iii)]
for all integers $m\ge 1$, $n \ge0$, and $1\le i\le m$,
a \emph{partial composition map}
\[-\ucr{i}-\colon \ip(m) \otimes \ip(n) \to \ip(m+n-1),\]
\end{enumerate}
satisfying the following axioms:
\begin{enumerate}
\item[(OP1$'$)] (Identity)
for $\theta\in \ip(n)$ and $1\leq i\leq n$,
\[\theta\ucr{i} \1 = \theta =\1\underset{1}{\circ} \theta;\]

\item[(OP2$'$)] (Associativity)
for $\lambda \in \ip(l)$, $\mu\in \ip(m)$ and $\nu\in \ip(n)$,
\[
\begin{cases}
(\la  \ucr{i} \mu) \underset{i-1+j}{\circ} \nu
=\la \ucr{i} (\mu \underset{j}{\circ} \nu),
& 1\le i\le l, 1\le j\le m,\\
(\la  \ucr{i} \mu) \underset{k-1+m}{\circ} \nu
=(\la \underset{k}{\circ}\nu) \ucr{i} \mu,
& 1\le i<k\le l;
\end{cases}
\]

\item[(OP3$'$)] (Equivariance)
for $\mu\in \ip(m)$, $\phi\in \S_m$, $\nu\in \ip(n)$ and
$\sigma\in \S_{n}$,
\[
\begin{cases}
\mu  \ucr{i} (\nu \ast \sigma)=
(\mu  \ucr{i} \nu)\ast \sigma',\\
(\mu\ast \phi)  \ucr{i} \nu=
(\mu  \underset{\phi(i)}{\circ} \nu)\ast \phi'',
\end{cases}
\]
where
\begin{align}
\label{E0.1.1}\tag{E0.1.1}
\sigma'= 1_m\ucr{i}\sigma, \ \ \textrm{and}\ \
\phi''= \phi\ucr{i} 1_n
\end{align}
are given by the partial composition in the associative algebra 
operad $\As$. Here $1_m$ denotes the identity permutation in 
$\S_m$ for all $m\geq 0$.
\end{enumerate}
\end{definition}


The approach of this paper is to consider an operad as a 
version of a noncommutative algebra and to use elementary 
invariants to define the first few cohomology groups. Following 
the classical definition of the Hochschild cohomology of 
associative algebras (respectively, Chevalley-Eilenberg 
cohomology of Lie algebras and Harrison cohomology of 
commutative algebras), the first cohomology group of an 
operad is defined in terms of derivations and the second 
cohomology group of an operad is defined in terms of 
infinitesimal deformations. The concepts of derivations 
and infinitesimal deformations are known for operads,
see for example \cite{Re, NN, Pa}, but it is a good idea 
to give a brief review here. In the introduction, we only 
recall some relevant definitions for the first cohomology 
group $H^1(\ip)$.

\begin{definition}
\label{yydef0.2}
Let $\ip$ be an operad.
\begin{enumerate}
\item[(1)] \cite[Section 6.1]{DR} \cite{Ja, Sc}
A {\it derivation} of $\ip$ is a $\Bbbk$-linear map $\partial$ of $\ip$
preserving the $\S_n$-module structure of $\ip$ such that
\[\partial (\mu \ucr{i} \nu)=
\partial (\mu) \ucr{i} \nu+
\mu \ucr{i} \partial (\nu)\]
for all $\mu \in \ip(m)$, $\nu\in \ip(n)$ and $1\leq i\leq m$.
The set of derivations of $\ip$ is denoted by $\der(\ip)$.

\item[(2)]
A derivation $\partial$ of $\ip$ is called {\it inner}
if there is an element $\la\in \ip(1)$ such that
\[\partial(\theta)=\la \underset{1}{\circ} \theta
 -\sum_{i=1}^{n} \theta\ucr{i} \la\]
for all $\theta\in \ip(n)$.
Such a derivation is denoted by $\ad_{\la}$. The set of 
inner derivations of $\ip$ is denoted by $\ider(\ip)$.

\item[(3)]
The \emph{first cohomology group} of $\ip$ is defined to 
be the $\Bbbk$-linear space
\[H^1(\ip):=\der(\ip)/\ider(\ip).\]

\item[(4)]
We say $\ip$ is {\it $\der$-rigid} (\emph{resp}., 
{\it $\der$-semirigid}) if $\dim_{\Bbbk} \der(\ip)\leq 1$ 
(\emph{resp}., $\dim_{\Bbbk} \der(\ip)\leq 2$).
\end{enumerate}
\end{definition}

The definitions of $H^0(\ip)$ and $H^2(\ip)$ can be found 
in Section \ref{yysec2}. One reason that we refer to 
Section \ref{yysec2} for the definition of $H^2(\ip)$
(and $H^2_{\ast}(\ip)$) is that it is quite involved.
Note that $H^2_{\ast}(\ip)$ corresponds to infinitesimal 
deformations that do not necessarily preserve the $\S$-action.
Therefore our $H^2_{\ast}(\ip)$ does not agree with most of 
the cohomology theories developed for operads. In general, 
$H^2_{\ast}(\ip)$ can not be computed by the deformation 
complex introduced in \cite{NN}. When $i\geq 3$, the 
definition of $H^i(\ip)$ is much more complicated, see \cite{BQZ}.
One advantage of our approach is that the first few 
cohomology groups are computable in some special cases 
due to the explicit formulation in the definition.

Going back to the derivations, it is easy to see that 
$\der(\ip)$ is a Lie algebra [Lemma \ref{yylem1.4}(3)].
If $\ip$ is locally finite and finitely generated,
then $\der(\ip)$ is finite dimensional [Lemma \ref{yylem3.1}(2)].
Recall that $\As$ (\emph{resp}., $\Com$, $\Lie$, $\Pois$)
denotes the operad encoding unital associative algebras
(\emph{resp}., unital commutative algebras, Lie algebras, 
unital Poisson algebras). These four are 
famous and commonly-used algebraic operads. The calculation 
of the derivations for these operads is quite easy.

\begin{theorem}
\label{yythm0.3}
\begin{enumerate}
\item[(1)]
$\der(\As)\cong \Bbbk$ and $H^1(\As)=0$.
\item[(2)]
$\der(\Com)\cong \Bbbk$ and $H^1(\Com)=0$.
\item[(3)]
$\der(\Lie)\cong \Bbbk$ and $H^1(\Lie)=0$.
\item[(4)]
$\der(\Pois)\cong \Bbbk\oplus \Bbbk$, the 2-dimensional abelian
Lie algebra, and $H^1(\Pois)\cong \Bbbk$.
\end{enumerate}
\end{theorem}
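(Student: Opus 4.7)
The plan is to exploit finite generation: each of $\As$, $\Com$, $\Lie$ is generated as an operad by a single arity-$2$ element together with $\1$ (and the arity-$0$ unit $e$ in the unital cases), while $\Pois$ is generated by the commutative product $\mu$ and the Lie bracket $\{-,-\}$. A derivation $\partial$ is therefore determined by its values on these generators. Applying the Leibniz rule to $\1=\1\ucr{1}\1$ forces $\partial(\1)=0$ in every operad; in the unital cases, the relation $\mu\ucr{1} e=\1$ similarly pins down $\partial(e)$ from $\partial(\mu)$.

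For $\As$, one writes the general value $\partial(\mu)=a\mu+b(\mu\ast(12))$ in $\As(2)=\k\S_2$ and applies $\partial$ to the associativity relation $\mu\ucr{1}\mu=\mu\ucr{2}\mu$. The terms involving $a$ cancel by symmetry of the Leibniz rule in the two copies of $\mu$, while the terms involving $b$ yield a linear combination of four distinct permutation basis elements (corresponding to $x_3x_1x_2$, $x_2x_1x_3$, $x_2x_3x_1$, $x_1x_3x_2$) in $\As(3)=\k\S_3$; linear independence forces $b=0$. The resulting scaling $\partial(\theta)=a(n-1)\theta$ on $\ip(n)$ is easily checked to define a derivation, giving $\der(\As)\cong\k$. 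For $\Com$ and $\Lie$, the space $\ip(2)$ is one-dimensional (trivial for $\Com$, sign for $\Lie$), so $\S_2$-equivariance already forces $\partial$ to be a scalar on the generator, and the remaining relations impose no further constraint, yielding $\der\cong\k$.

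For $\Pois$, the isotypic decomposition $\Pois(2)=\k\mu\oplus\k\{-,-\}$ (trivial plus sign) gives $\partial(\mu)=\alpha\mu$ and $\partial(\{-,-\})=\beta\{-,-\}$ with independent $\alpha,\beta\in\k$. Associativity of $\mu$ and Jacobi for $\{-,-\}$ are automatic for scaling derivations, and the Leibniz compatibility, which in operadic form reads
\[
\{-,-\}\ucr{2}\mu \;=\; \mu\ucr{1}\{-,-\} \;+\; (\mu\ucr{2}\{-,-\})\ast(12),
\]
transforms under $\partial$ into an overall factor $(\alpha+\beta)$ on each side, so holds for every choice of $(\alpha,\beta)$. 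This yields $\der(\Pois)\cong\k\oplus\k$; the bracket of any two such scaling derivations vanishes on the generators $\mu$ and $\{-,-\}$, hence on all of $\Pois$, so the Lie algebra is abelian.

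Finally, in all four operads $\ip(1)=\k\1$, so $\ider(\ip)=\k\cdot\ad_{\1}$, and a direct calculation from Definition \ref{yydef0.2}(2) gives $\ad_{\1}(\theta)=(1-n)\theta$ for $\theta\in\ip(n)$, which is nonzero. For $\As,\Com,\Lie$ this exhausts $\der(\ip)$, giving $H^1=0$; for $\Pois$ it corresponds to the line $\alpha=\beta=-1$ inside the two-dimensional $\der(\Pois)$, so $H^1(\Pois)\cong\k$. The main delicate step is the $\As$ computation, which requires carefully expanding $(\mu\ast(12))\ucr{i}\mu$ and $\mu\ucr{i}(\mu\ast(12))$ for $i=1,2$ via (OP3$'$) and identifying the resulting four permutations as distinct elements of $\S_3$; the other cases reduce to representation-theoretic bookkeeping and the observation that scalar derivations automatically preserve the defining relations.
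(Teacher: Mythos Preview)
Your proposal is correct and follows essentially the same strategy as the paper: reduce to the action on a finite generating set, use $\S$-equivariance to cut down $\partial$ on arity~$2$, and then test against the defining relations. The paper packages the $\As$ step slightly differently: it first normalizes by subtracting $\sf_c$ so that $\partial(\1_0)=0$, uses the unit relation $\1_2\ucr{2}\1_0=\1$ to force $b=-a$, and only then applies associativity to obtain $a\,\xi_2=0$ (with $\xi_2=(23)-(12)-(123)+(132)$); you instead hit associativity first, which kills $b$ directly via the same nonzero element $\xi_2$, and recover $\partial(\1_0)$ afterwards from the unit relation. Both routes land on exactly the same four-permutation obstruction in $\Bbbk\S_3$, and the $\Com$, $\Lie$, $\Pois$ cases and the $H^1$ computation via $\ider(\ip)=\Bbbk\cdot\ad_{\1}=\sf(\ip)$ match the paper's Propositions~3.3--3.6 verbatim.
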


Theorem \ref{yythm0.3} indicates the (semi-)rigidity of algebraic operads
$\As$, $\Com$, $\Lie$ and $\Pois$ with respect to derivations.
There are similar statements concerning the automorphisms
and endomorphisms of these operads, see Section \ref{yysec3}.

For a general operad $\ip$, it is difficult to describe all 
derivations of $\ip$, however, we are able to work them out 
for operads introduced in \cite[Example 2.3]{BYZ}. Let $A$ 
be an associative algebra (unital or non-unital). The set 
of derivations of $A$ is denoted by $\der(A)$ and the first 
Hochschild cohomology of $A$ is denoted by $\HH^1(A)$
(see the beginning of Section \ref{yysec4}).
The Gelfand-Kirillov dimension of an operad is defined in 
\cite[(E0.0.3)]{BYZ}. The definition of 2-unitary operads
will be reviewed in Definition \ref{yydef1.1}(3).

\begin{theorem}
\label{yythm0.4}
Let $\ip$ be a 2-unitary operad of GKdimension two. Let
$\overline{\ip(1)}$ be the augmentation idea of the associative
algebra $\ip(1)$. Then there is a short exact sequence of Lie
algebras
$$0\to \Bbbk\to \der(\ip)\to \der(\overline{\ip(1)})\to 0.$$
As a consequence, $\der(\ip)\cong \Bbbk \rtimes \der(\overline{\ip(1)})$
and $H^1(\ip)\cong \HH^1(\overline{\ip(1)})$.
\end{theorem}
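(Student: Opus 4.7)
The plan is to construct the restriction map $\rho\colon\der(\ip)\to\der(\overline{\ip(1)})$ explicitly, then use the $2$-unitary/GKdim two hypotheses to identify its kernel and prove its surjectivity. The splitting and the cohomology statement will then follow from a short diagram chase.

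By axioms (OP1$'$) and (OP2$'$), $(\ip(1),\ucr{1})$ is an associative algebra with unit $\unit$, so $\ip(1)=\Bbbk\unit\oplus\overline{\ip(1)}$. A derivation $\partial\in\der(\ip)$ preserves arity, hence $\partial(\ip(1))\subseteq\ip(1)$, and the Leibniz rule in Definition~\ref{yydef0.2}(1) specializes to the ordinary Leibniz rule for the product on $\ip(1)$; since $\unit\ucr{1}\unit=\unit$ we get $\partial(\unit)=0$, so $\partial|_{\ip(1)}$ descends to an element of $\der(\overline{\ip(1)})$. The assignment $\rho\colon\partial\mapsto\partial|_{\overline{\ip(1)}}$ is clearly a Lie algebra homomorphism.

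The inner derivation $\ad_{\unit}$ acts on $\ip(n)$ by the scalar $1-n$, hence vanishes on $\ip(1)$ and is nonzero because $\GKdim\ip=2$ forces $\ip(n)\ne 0$ for some $n\ne 1$; thus $\Bbbk\cdot\ad_{\unit}\subseteq\ker\rho$. Conversely, I expect the $2$-unitary assumption together with GKdim two to force the structure of $\ip$ to be recovered from $\ip(1)$ and a distinguished generator $\wv\in\ip(0)$ by partial compositions, so that each $\ip(n)$ with $n\ne 1$ is controlled by $\ip(1)$ via identities of the form $\mu\ucr{i}\wv\in\ip(n-1)$. A derivation killed by $\rho$ is then determined by the single scalar through which it acts on $\wv$, giving $\dim\ker\rho\le 1$. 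For surjectivity, given $\partial_0\in\der(\overline{\ip(1)})$ one defines a lift by setting it to be zero on $\ip(0)$, to equal $\partial_0\oplus 0$ on $\ip(1)=\overline{\ip(1)}\oplus\Bbbk\unit$, and to be determined on $\ip(n)$ for $n\ge 2$ by those structural identities together with the Leibniz rule. The main obstacle I anticipate is verifying that this lift is well-defined (independent of how one writes an element as a partial composition), $\S_n$-equivariant, and compatible with every partial composition; this amounts to a careful check against the relations built into the $2$-unitary presentation. Once surjectivity is in hand, the explicit lift produces a section of $\rho$, so the sequence splits and $\der(\ip)\cong\Bbbk\rtimes\der(\overline{\ip(1)})$.

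For the cohomology statement, observe that for any $\la\in\ip(1)$ the restriction of $\ad_{\la}$ to $\ip(1)$ is the commutator $[\la,-]$, and decomposing $\la=c\,\unit+\overline{\la}$ with $\overline{\la}\in\overline{\ip(1)}$ shows that $\rho(\ider(\ip))=\ider(\overline{\ip(1)})$. Combined with $\ker\rho=\Bbbk\cdot\ad_{\unit}\subseteq\ider(\ip)$ and the surjectivity of $\rho$, a standard diagram chase shows that the induced map $\der(\ip)/\ider(\ip)\to\der(\overline{\ip(1)})/\ider(\overline{\ip(1)})$ is an isomorphism, which is exactly $H^1(\ip)\cong\HH^1(\overline{\ip(1)})$.
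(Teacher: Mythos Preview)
Your outline matches the paper's strategy closely---define the restriction $\rho\colon\der(\ip)\to\der(\overline{\ip(1)})$, identify $\ker\rho$ with $\Bbbk\cdot\ad_{\unit}$, produce a section, and pass to $H^1$---but the two places where you write ``I expect'' and ``the main obstacle I anticipate'' are exactly where the substance lies, and your sketch does not resolve them.

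The paper does not argue abstractly from the hypotheses ``$2$-unitary of GKdimension two''. Instead it invokes the classification theorem \cite[Theorem~0.6]{BYZ}: every such operad is isomorphic to an explicit operad $\mathcal{D}_A$ built from an augmented algebra $A=\Bbbk\unit\oplus\bar A$, with a completely explicit $\Bbbk$-basis $\{\1_n\}\cup\{\delta^n_{(i)j}\}$ and partial composition rules (Example~\ref{yyex4.1}). With that description in hand, both of your gaps disappear: for the kernel, one shows (Lemma~\ref{yylem4.2}(1)) that ${}^2\iu(2)=0$ in $\mathcal{D}_A$, so if $\partial(\1_0)=0$ and $\partial|_{\ip(1)}=0$ then $\partial(\1_2)\in{}^2\iu(2)=0$ and hence $\partial=0$ since $\{\1_0,\1_2\}\cup\ip(1)$ generates. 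For surjectivity, the lift $\partial_\phi$ of $\phi\in\der(\bar A)$ is written down in closed form on the basis, $\partial_\phi(\1_m)=0$ and $\partial_\phi(\delta^n_{(i)j})=(\phi(\delta_j))^n_{(i)}$, and one checks the Leibniz rule against the four explicit composition formulas \eqref{E4.3.1}--\eqref{E4.3.4}; there is no well-definedness issue because one is working on a basis, not on a presentation.

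Without the classification, your approach faces a concrete obstruction: to get $\ker\rho\subseteq\Bbbk\cdot\ad_{\unit}$ you need ${}^2\iu(2)=0$ (cf.\ Lemma~\ref{yylem3.1}(3)), and this is not obvious from the abstract hypotheses---it is a structural fact about $\mathcal{D}_A$. Likewise, your proposed lift ``determined on $\ip(n)$ by the Leibniz rule'' presumes a normal form for elements of $\ip(n)$, which again comes from the explicit model. So your plan is correct, but the missing ingredient is precisely the reduction to $\mathcal{D}_A$; once you cite that, the checks you flag as obstacles become straightforward.
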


The above theorem fails for operads of GKdimension larger than
$2$ [Example \ref{yyex4.4}]. Similar to Theorem \ref{yythm0.4},
we have

\begin{theorem}
\label{yythm0.5}
Let $\ip$ be a 2-unitary operad that is left and right 
artinian and semiprime. Let $\overline{\ip(1)}$ be the 
augmentation ideal of the associative algebra $\ip(1)$.
Then there is a short exact sequence of Lie algebras
\[0\to \Bbbk\to \der(\ip)\to \der(\overline{\ip(1)})\to 0.\]
As a consequence, $\der(\ip)\cong \Bbbk \rtimes \der(\overline{\ip(1)})$
and $H^1(\ip)\cong \HH^1(\overline{\ip(1)})$.
\end{theorem}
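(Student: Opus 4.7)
The plan is to follow the strategy of Theorem~\ref{yythm0.4}, with the semiprime artinian hypothesis on $\ip(1)$ playing the role that GKdimension two played there. Concretely, I will build a Lie algebra restriction homomorphism $\rho:\der(\ip)\to\der(\overline{\ip(1)})$, identify $\ker\rho$ with a one-dimensional space spanned by a canonical grading derivation, and prove $\rho$ is surjective; the short exact sequence and its splitting will then follow.

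First, for any $\partial\in\der(\ip)$ the identity $\partial(\1\ucr{1}\1)=2\partial(\1)$ forces $\partial(\1)=0$. Because $\ip(1)$ is an associative algebra under $\ucr{1}$ and the 2-unitary hypothesis provides an algebra splitting $\ip(1)=\Bbbk\1\oplus\overline{\ip(1)}$, the restriction $\partial|_{\ip(1)}$ preserves $\overline{\ip(1)}$ and defines $\rho(\partial)\in\der(\overline{\ip(1)})$; this map is a Lie algebra morphism by Lemma~\ref{yylem1.4}(3). For the kernel, the grading map $\partial_g\colon \theta\mapsto (n-1)\theta$ for $\theta\in\ip(n)$ is a derivation, since $(m+n-2)=(m-1)+(n-1)$, and it vanishes on $\ip(1)$, so $\Bbbk\partial_g\subseteq\ker\rho$. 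Conversely, if $\partial|_{\ip(1)}=0$, then the identities $\partial(\la\ucr{1}\theta)=\la\ucr{1}\partial(\theta)$ and $\partial(\theta\ucr{i}\la)=\partial(\theta)\ucr{i}\la$ for all $\la\in\ip(1)$ show that $\partial$ acts as a morphism of $(\ip(1),\ip(1)^{\otimes n})$-bimodules on each $\ip(n)$. Feeding $\wv\in\ip(0)$ into these identities and running an induction on $n$---using that the Wedderburn-decomposed bimodule actions are rich enough to separate elements---one obtains $\partial=c\partial_g$ for a unique $c\in\Bbbk$.

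The main obstacle is surjectivity. Given $d\in\der(\overline{\ip(1)})$, extend it to an algebra derivation $\tilde d$ of $\ip(1)$ by $\tilde d(\1)=0$, and construct $\partial\in\der(\ip)$ extending $\tilde d$ arity by arity. Using the Wedderburn idempotents, each $\ip(n)$ decomposes as a finite direct sum of Peirce components under the left/right $\ip(1)$-actions; the artinian hypothesis bounds each component and the semiprime hypothesis rules out nilpotent obstructions, so that the only consistent extension is the one dictated by the derivation identities together with $\partial(\wv)=0$. The principal labor is verifying compatibility with arbitrary partial compositions (axiom OP2$'$) and with the $\S_n$-equivariance (axiom OP3$'$); this is a routine but lengthy bookkeeping argument mirroring the corresponding step in Theorem~\ref{yythm0.4}.

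Assembling these pieces yields the short exact sequence $0\to\Bbbk\to\der(\ip)\to\der(\overline{\ip(1)})\to 0$. A direct computation shows $\partial_g$ is central in $\der(\ip)$ (since it acts by a scalar on each arity, it commutes with every derivation), so the extension is central, and the canonical lift from surjectivity provides a Lie algebra splitting $\der(\ip)\cong\Bbbk\rtimes\der(\overline{\ip(1)})$. Finally, any $\la\in\ip(1)$ gives $\ad_\la\in\ider(\ip)$ restricting on $\overline{\ip(1)}$ to $[\la,\cdot]$, whence $\rho(\ider(\ip))=\ider(\overline{\ip(1)})$; since $\partial_g$ is not inner (no single $\la$ can reproduce the action by $(n-1)$ on every arity simultaneously), quotienting the exact sequence by inner derivations yields $H^1(\ip)\cong\HH^1(\overline{\ip(1)})$.
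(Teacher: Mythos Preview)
Your approach diverges from the paper's, which is a two-line reduction: by \cite[Theorem~0.4(3)]{BYZ}, a 2-unitary operad that is left and right artinian and semiprime is isomorphic to an operad $\mathcal{D}_A$ of the form in Example~\ref{yyex4.1} for a semisimple artinian augmented algebra $A$, and then Theorem~\ref{yythm4.3} gives the whole statement from the explicit basis and composition rules of $\mathcal{D}_A$. In particular, the key facts that $\ip$ is generated by $\{\1_0\}\cup\ip(1)\cup\{\1_2\}$ and that ${^2\iu}(2)=0$ (Lemma~\ref{yylem4.2}) are used to pin down both the kernel and the image of the restriction map. Your argument tries to bypass this classification and work directly from the Wedderburn structure of $\ip(1)$, but the artinian/semiprime hypothesis on $\ip$ controls the \emph{operad}, not merely $\ip(1)$; without the structure theorem you have no description of $\ip(n)$ for $n\ge 2$, and your claims that the bimodule actions are ``rich enough to separate elements'' and that the extension in the surjectivity step is ``routine bookkeeping'' are not substantiated.

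There is also a concrete error in your $H^1$ computation: the grading derivation $\partial_g=\sf_1$ \emph{is} inner, namely $\partial_g=\ad_{-\1}$ (see the remark after Lemma~\ref{yylem1.2}). Thus your sentence ``since $\partial_g$ is not inner\dots'' is false. The correct argument, as in Theorem~\ref{yythm4.3}(2), is that $\ider(\ip)\cong\Bbbk\oplus\ider(\overline{\ip(1)})$ under the same decomposition, so the $\Bbbk$-summands cancel upon passing to the quotient and $H^1(\ip)\cong\HH^1(\overline{\ip(1)})$.
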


Deformation theory of algebraic structures is controlled by their
cohomology theory, see \cite{NN, Pa, Re}. The second cohomology
$H^2_{\ast}(\ip)$ of an operad $\ip$ is closely related to infinitesimal
deformations of $\ip$. The definition of $H^0(\ip)$ and
$H^1(\ip)$ is definite, while there are two slightly different
diversions when we consider the second cohomology group.
One is $H^2_{\ast}(\ip)$ that measures all infinitesimal
deformations of $\ip$, and the other is $H^2(\ip)$
that measures those infinitesimal deformations of $\ip$
preserving the $\S$-action. In general, $H^2_{\ast}(\ip)
\not\cong H^2(\ip)$ [Example \ref{yyex2.8}]. Fortunately,
when ${\rm{char}}\; \Bbbk=0$, these two are the same
[Theorem \ref{yythm2.10}(3)]. One of the main results concerning
the second cohomology group is the following.

\begin{theorem}
\label{yythm0.6}
Suppose ${\rm{char}}\; \Bbbk=0$ in part {\rm{(3)}}.
\begin{enumerate}
\item[(1)]
$H^2_{\ast}(\As)=H^2(\As)=0$.
\item[(2)]
$H^2_{\ast}(\Com)=H^2(\Com)=0$.
\item[(3)]
$H^2_{\ast}(\Lie)=H^2(\Lie)=0$.
\item[(4)]
$H^2_{\ast}(\Pois)=H^2(\Pois)\cong \Bbbk$.
\end{enumerate}
\end{theorem}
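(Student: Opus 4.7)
The plan is to compute $H^2(\ip)$ directly for each of $\As$, $\Com$, $\Lie$, $\Pois$, and then invoke Theorem \ref{yythm2.10}(3) to conclude $H^2_{\ast}(\ip) = H^2(\ip)$ in characteristic zero, which handles part (3) and the characteristic-zero portion of (4). For parts (1), (2) and the positive-characteristic content of (4), I would establish the identification $H^2_{\ast} = H^2$ separately: for $\Com$ this is automatic, since every $\Com(n)$ is a one-dimensional trivial $\S_n$-module and equivariance is vacuous; for $\As$ and $\Pois$, I would use that $\As(n) \cong \Bbbk \S_n$ as right $\S_n$-modules to split each non-equivariant cocycle into an equivariant part and an explicit coboundary piece.

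Next, I would set up the cocycle description from Section \ref{yysec2}: a cocycle is a family of $\S_m \times \S_n$-equivariant maps $c_i : \ip(m) \otimes \ip(n) \to \ip(m+n-1)$ satisfying the linearization of (OP1$'$), (OP2$'$), (OP3$'$), with coboundaries arising from an $\S$-equivariant endomorphism $\partial$ of $\ip$ via a commutator-with-composition formula. Since each of $\As$, $\Com$, $\Lie$ is generated by a single binary element modulo a quadratic relation in arity $3$, every cocycle is determined by its value on the generator and is constrained by the arity-$3$ relation, giving a finite-dimensional linear system. In each of these three cases I expect every solution to be exhibited as the coboundary of a rescaling-type endomorphism, mirroring the $H^1$-rigidity already obtained in Theorem \ref{yythm0.3}. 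The case of $\Pois$ proceeds analogously but with two generators (the commutative product $m$ and the Lie bracket $\{-,-\}$) modulo Jacobi and Leibniz, and the analogous linear system in $\Pois(3)$ now admits an additional direction of solutions.

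The main obstacle is part (4): identifying the non-trivial class in $H^2(\Pois)$ and proving the quotient is exactly one-dimensional. Conceptually the extra freedom should come from a deformation mixing the product and the bracket, for example modifying the Leibniz relation or deforming the arity-$3$ interaction of $\{-,-\}$ with $m$, in a manner that cannot be absorbed by any equivariant derivation of $\Pois$ (whose two-dimensional space is fully used in Theorem \ref{yythm0.3}(4)). Verifying non-triviality of one specific cocycle, and triviality of everything else modulo it, will require a careful case analysis in $\Pois(3)$. A secondary difficulty is keeping the arguments for (1), (2), (4) characteristic-free, which precludes the averaging argument normally used to promote non-equivariant cocycles to equivariant ones.
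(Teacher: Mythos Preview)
Your overall strategy---reduce to $\S$-2-cocycles and then analyze cocycles on generators against the quadratic relations---matches the paper's. The reduction you want is Theorem~\ref{yythm2.12}(3), not Theorem~\ref{yythm2.10}(3); and your freeness observation for $\As$ and $\Pois$ is exactly Lemma~\ref{yylem6.5}. For $\As$ the actual computation is longer than you suggest (see Lemmas~\ref{yylem6.6} and~\ref{yylem6.7}: one first kills $\wp_1(1_m,1_n)$ for all $m,n$ by a functional-equation argument, then handles the remainder), but your outline is correct.

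Your claim that $H^2_* = H^2$ is ``automatic'' for $\Com$ is not right. Triviality of the $\S$-action implies that every 2-\emph{coboundary} has $\varsigma = 0$ (Lemma~\ref{yylem6.3}(1)), but not that every 2-\emph{cocycle} does; indeed $\Ext^1_{\Bbbk\S_n}(\Bbbk,\Bbbk)\neq 0$ when $\operatorname{char}\Bbbk = 2$ (Lemma~\ref{yylem6.2}(2)), so Theorem~\ref{yythm2.12} does not apply there. The paper covers characteristic $2$ by a different route, realizing $\Com$ as $\As/{^1 \iu}$ and invoking Theorem~\ref{yythm6.8}.

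The substantive gap is in part~(4). Your plan is to exhibit an explicit nontrivial cocycle and certify nontriviality by a finite analysis in $\Pois(3)$. The paper does \emph{not} do this, and it is not clear your plan succeeds as stated: the arity-$3$ analysis in Section~\ref{yysec7} establishes only the upper bound $\dim H^2_*(\Pois)\leq 1$ (Theorem~\ref{yythm7.6}), by showing that after normalization the only free parameter is the scalar $a_4$ of Lemma~\ref{yylem7.2}(4); it does not verify that $a_4\neq 0$ is actually realized by a global cocycle, since the cocycle conditions involve all arities. For the lower bound the paper argues indirectly: the Livernet--Loday operad (Example~\ref{yyex8.4}) is a nontrivial \emph{formal} deformation of $\Pois$, and Theorem~\ref{yythm8.2} shows that $H^2_*(\Pois) = 0$ would force every formal deformation to be trivial. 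This contrapositive, via a known global deformation, is the missing ingredient in your proposal. Note also that this argument requires $\operatorname{char}\Bbbk = 0$; the positive-characteristic case of~(4) that you allude to is in fact left open in the paper (Remark~\ref{yyrem7.7}(3)).
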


See Theorem \ref{yythm6.8} and Remark \ref{yyrem7.7}
for other results about the second cohomology groups.
The proof of Theorem \ref{yythm0.6} is very much involved
and dependent on careful analysis of $\Pois$.

It is well-known from the work of Livernet-Loday \cite{LL} and
Markl-Remm \cite{MaR} that $\As$ is a deformation of $\Pois$.
(See \cite{Br} for deformations of a few other operads.)
Theorem \ref{yythm0.6}(4) suggests that a similar statement 
holds at the infinitesimal level. Theorem \ref{yythm0.6} also 
indicates that these four commonly-used operads are rigid or 
semirigid with respect to deformations, which proves a claim 
made by Kontsevich-Soibelman in \cite{KS}.

\begin{remark}
\label{yyrem0.7}
In positive characteristic, the second cohomology $H^2_{\ast}(\ip)$ 
of an operad $\ip$ is slightly different from other cohomology 
theories of operads in Rezk's Thesis \cite{Re}, or 
Ne{\v c}as-Niessner's Thesis \cite{NN}, or Paljug's Thesis \cite{Pa} 
or \cite{BQZ}. But there are some strong connections between our 
approach and others.
\end{remark}

In addition to the above results for specific operads, we have
a general result concerning $H^i(\ip)$ for $i\leq 2$.

\begin{theorem}
\label{yythm0.8}
Let $\ip$ be a locally finite operad.
Suppose that $\ip$ is generated by finitely many elements and
subject to a finite set of relations. Then the following hold.
\begin{enumerate}
\item[(1)]
The automorphism group $\Aut(\ip)$ is an algebraic group.
\item[(2)]
$H^i(\ip)$ is finite dimensional over $\Bbbk$ for $i=0,1,2$.
\end{enumerate}
\end{theorem}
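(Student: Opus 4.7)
The plan is to reduce both assertions to finite-dimensional linear algebra by exploiting the finite presentation. Fix a finite set of generators $\{x_1,\ldots,x_s\}$ with $x_i\in\ip(n_i)$, let $V=\bigoplus_i V(n_i)\subset\ip$ be the $\S$-submodule they span (finite-dimensional by local finiteness), and write $\ip\cong F(V)/(R)$, where $F(V)$ is the free operad on $V$ and $R\subset F(V)$ is a finite-dimensional $\S$-submodule whose operadic ideal equals the kernel of the surjection $F(V)\twoheadrightarrow\ip$.

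For (1), every operad endomorphism $\phi$ is determined by its restriction $\phi|_V$, which lies in the finite-dimensional affine space $E:=\bigoplus_i\Hom_{\S_{n_i}}(V(n_i),\ip(n_i))$. Conversely, an $\S$-equivariant map $f\colon V\to\ip$ extends to an endomorphism if and only if its free-operad extension $\widetilde f\colon F(V)\to\ip$ annihilates $R$; since $R$ is finite-dimensional and $\widetilde f(r)$ depends polynomially on $f$ for each $r\in R$, this condition cuts out $\End(\ip)$ as a closed affine subvariety of $E$, and composition of endomorphisms is polynomial, making $\End(\ip)$ an affine algebraic monoid. The next step is to show that $\phi\in\End(\ip)$ is an automorphism iff each restriction $\phi|_{\ip(n_i)}\in\End_\Bbbk(\ip(n_i))$ is invertible: necessity follows from the finite-dimensionality of $\ip(n_i)$; for sufficiency, surjectivity on each $\ip(n_i)$ forces $\Im(\phi)$ to be a sub-operad containing every $x_i$, hence equal to $\ip$, and then surjectivity on each finite-dimensional $\ip(n)$ upgrades to bijectivity. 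Hence $\Aut(\ip)$ is the principal open subset of $\End(\ip)$ defined by the nonvanishing of the finitely many polynomials $\det(\phi|_{\ip(n_i)})$. Inversion is regular since $\phi^{-1}|_{\ip(n_i)}=(\phi|_{\ip(n_i)})^{-1}$ is polynomial in $GL(\ip(n_i))$. Thus $\Aut(\ip)$ is an affine algebraic group.

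For (2), the finite-dimensionality of $H^1(\ip)$ is Lemma \ref{yylem3.1}(2): a derivation $\partial$ is determined by $\partial|_V\in E$, subject to finitely many linear Leibniz-type relations coming from $R$, so $\der(\ip)$ and its quotient $H^1(\ip)=\der(\ip)/\ider(\ip)$ are finite-dimensional. For $H^0(\ip)$, the definition in Section \ref{yysec2} realizes $H^0(\ip)$ as a subspace of the ``arity-one center'' sitting inside $\ip(1)$, which is finite-dimensional by local finiteness. For $H^2(\ip)$ (and $H^2_\ast(\ip)$), the strategy is to view an infinitesimal deformation as a flat $\Bbbk[\epsilon]/(\epsilon^2)$-operad $\widetilde\ip$ lifting $\ip$, to normalize it to a lift $F(V)/(R+\epsilon R')$ of the presentation (so that deformations are parametrized, modulo trivial ones, by an $\S$-equivariant map $R\to\ip$, a finite-dimensional space), and to identify the trivial deformations with the finite-dimensional image of first-order changes of lifts of $V$.

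The main obstacle is this last step for $H^2$: one must rigorously justify that every infinitesimal deformation admits such a presentation-lift normalization and that the resulting parametrization surjects onto $H^2$; for $H^2$ as opposed to $H^2_\ast$ one must also verify that the normalization can be carried out $\S$-equivariantly. This is an operadic analog of the Hochschild-cohomology control $\HH^2(A)\subseteq\Hom(R,A)/\der(V,A)$ for a finitely presented algebra $A=T(V)/(R)$; the adaptation has to handle partial compositions and $\S_n$-equivariance, and ultimately uses that $R$ is finitely generated as an $\S$-ideal in $F(V)$.
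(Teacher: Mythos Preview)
Your arguments for part (1) and for $H^0$, $H^1$ in part (2) are correct and match the paper's: the paper likewise embeds $\Aut(\ip)$ into a finite-dimensional general linear group via restriction to a generating set and cuts it out by polynomial equations coming from the relations (Theorem~\ref{yythm1.7}), and it handles $H^0\subseteq\ip(1)$ and $H^1$ via Lemma~\ref{yylem3.1}(2) exactly as you do.

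The genuine gap is $H^2$: you only sketch the presentation-lifting strategy and explicitly flag the justification that every infinitesimal deformation admits a lift of the form $F(V)/(R+\epsilon R')$ as ``the main obstacle'', without resolving it. The paper sidesteps this obstacle with a cleaner argument (Theorem~\ref{yythm5.3}). Instead of lifting the presentation, it defines a \emph{restriction map}
\[
Res\colon Z^2_\ast(\ip)\longrightarrow V:=\bigoplus_{m,n\le N}\Hom_\Bbbk(\ip(m)\otimes\ip(n),\ip(m+n-1))\oplus\bigoplus_{m\le N}\Hom_\Bbbk(\ip(m)\otimes\Bbbk\S_m,\ip(m))
\]
that records only those components of a 2-cocycle $(\wp_i,\varsigma)$ in arities $\le N$, where $N$ bounds the arities occurring in the finite relation set $R$. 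The target $V$ is finite-dimensional by local finiteness. The key step is that $\ker(Res)\subseteq B^2_\ast(\ip)$: if $\wv$ restricts to zero, then the deformed partial compositions and $\S$-actions coincide with the original ones on all inputs of arity $\le N$, so the generators $X\subset\ip\subseteq\ip[\epsilon]$ still satisfy every relation in $R$ with respect to $(\ucr{i}^\epsilon,\ast^\epsilon)$. Lemma~\ref{yylem5.1} then applies directly: the universal property of $\ip=\mathcal{F}(X)/\langle R\rangle$ produces an operad map $\ip\to\ip[\epsilon]$ fixing $X$, which one checks is an equivalence of deformations. Hence $\dim H^2_\ast(\ip)\le\dim Z^2_\ast(\ip)/\ker(Res)\le\dim V<\infty$. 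This is simpler than your route because it never needs to analyze the kernel of $F(V)[\epsilon]\twoheadrightarrow\widetilde\ip$ or prove it is generated by lifts of $R$; Lemma~\ref{yylem5.1} absorbs that work into a single application of the universal property of the quotient operad.
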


\begin{remark}
\label{yyrem0.9}
In the sequel \cite{BWX}, we will continue to study other 
interesting cohomological invariants of operads related to 
universal deformation formulas and universal cohomology 
classes of $\ip$-algebras. 
\end{remark}

The paper is organized as follows. In Section \ref{yysec1} 
we review some basic definitions. The first couple of 
cohomology groups $H^i(\ip)$ are defined in Section 
\ref{yysec2}. Section \ref{yysec3} is devoted to computation 
of derivations and automorphisms of operads and Theorem
\ref{yythm0.3} is proved there. Theorems \ref{yythm0.4} and
\ref{yythm0.5} are proved in Section \ref{yysec4}. Several 
useful lemmas for $H^2_{\ast}(\ip)$ computation, as well as 
a part of proof of Theorem \ref{yythm0.8}, are given in 
Section \ref{yysec5}. In Sections \ref{yysec6} and 
\ref{yysec7}, we calculate $H^2_{\ast}(\ip)$ for several 
operads and prove Theorem \ref{yythm0.6}. In Section 
\ref{yysec8} we briefly recall the notion of a formal 
deformation of an operad.

We denote by $1_n$  the identity element in $\S_n$, and 
sometimes write a general permutation $\sigma\in \S_n$ as 
a product of disjoint cycles, namely, 
$\sigma=(i_1i_2\cdots i_{r})\cdots(j_1j_2\cdots j_s)_n$,
or $\sigma=(i_1i_2\cdots i_{r})\cdots(j_1j_2\cdots j_s)$ 
when no confusion arise. Note that the convention is 
different from \cite[Appendix]{BYZ}.

\section{Preliminaries}
\label{yysec1}

The operad theory is originated from the work of Boardman-Vogt \cite{BV}
and May \cite{May} in 1970s in homotopy theory. Operadic structures have 
been used in category theory, combinatorics, homological algebra, 
mathematical physics and topology. Many developments have been recorded 
in recent books \cite{Fr, LV, MSS}. We refer to \cite{LV} for basics of 
algebraic operads. In \cite{BYZ}, several new families of operads were 
constructed; in particular, 2-unitary operads of Gelfand-Kirillov dimension 
at most two are classified in \cite[Theorem 0.6]{BYZ}. The new examples 
given in \cite{BYZ} can be used to test various theories and questions
as we will do in this paper.

Throughout the rest of the paper we fix a base commutative ring $\Bbbk$.
In most of examples in this paper, we assume that $\Bbbk$ is a field.
All unadorned $\ot$ is $\ot_\k$.
The base category we
use is $\Vect$, the category of $\Bbbk$-modules (or the category
of vector spaces over $\Bbbk$ when $\Bbbk$ is a field). In this
section, we review definitions and basic facts about some special 
operads from \cite[Section 1]{BYZ}.


\begin{definition}
\label{yydef1.1}
\begin{enumerate}
\item[(1)] \cite[Section 2.2]{Fr}
An operad $\ip$ over $\Bbbk$ is called {\it unitary}
if $\ip(0)=\Bbbk \1_0\cong \Bbbk$.
Here $\1_0$ is a basis for $\ip(0)$ and is called a \emph{$0$-unit}
of $\ip$.
\item[(2)] An operad is called \emph{connected} if $\ip(1)=\Bbbk \1$.
\item[(3)] \cite[Definition 1.1(5)]{BYZ}
Let $\ip$ be a unitary operad with a fixed $0$-unit $\1_0
\in \ip(0)$. A unitary operad $\ip$ is called
{\it 2-unitary} if there is an element $\1_2\in \ip(2)$  (called a \emph{2-unit})
such that
\[\1_2 \underset{1}{\circ} \1_0 =\1=\1_2 \underset{2}{\circ} \1_0.\]
\end{enumerate}
\end{definition}

We remark that our notion of a 2-unitary operad is closely 
related to but different from the one of an operad with 
multiplication as introduced by Menich in \cite{Me}. Recall 
that an {\it operad with multiplication} is an operad 
equipped with an element $\mu\in \ip(2)$ (called the 
multiplication) and an element $e\in \ip(0)$ such that 
$$\mu\ucr{1}\mu=\mu\ucr{2}\mu, \quad {\text{and}} \quad
\mu\ucr{1}e=\1=\mu\ucr{2}e,$$ 
which is also called a \emph{strict} unital comp algebra 
in \cite{GS}. In fact, for a 2-unitary operad, we assume 
that $\ip(0)$ has $\Bbbk$-dimension 1, while  for an 
operad with multiplication, one needs the associativity 
of $\mu$.

We continue to give definitions, remarks
and comments, and to prove basic results, concerning derivations,
endomorphisms and automorphisms of operads. Recall that the
definition of a derivation is given in Definition \ref{yydef0.2}.
The following lemma is known and its proof is omitted.

\begin{lemma}
\label{yylem1.2}
Let $\ip$ be an operad.
\begin{enumerate}
\item[(1)] 
Let $c\in \Bbbk$. Then $\sf_c: \ip\to \ip$ determined by
\[\sf_c(\theta)=(n-1)c \theta,
\quad {\text{for all $\theta\in \ip(n)$}}\]
is a derivation.
\item[(2)] \cite[Proposition 6.1]{DR}
Let $\lambda\in \ip(1)$. Then
$\ad_\lambda: \ip\to \ip$ determined by
\[\ad_\lambda (\theta)= \lambda \underset{1}{\circ} \theta
-\sum_{i=1}^n \theta
\ucr{i} \lambda,
\quad {\text{for all $\theta\in \ip(n)$}}\]
is a derivation.
\end{enumerate}
\end{lemma}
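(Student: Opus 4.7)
The plan is to verify, for each of the two candidate maps, the two conditions required of a derivation in Definition \ref{yydef0.2}(1): compatibility with the right $\S_n$-action on each arity, and the Leibniz identity with respect to every partial composition $\ucr{i}$.

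Part (1) is essentially arithmetic. On each $\ip(n)$, the map $\sf_c$ is scalar multiplication by $(n-1)c$, which automatically commutes with the $\S_n$-action. For the Leibniz rule, if $\mu\in\ip(m)$ and $\nu\in\ip(n)$, then $\mu\ucr{i}\nu\in\ip(m+n-1)$, and the arithmetic identity $(m+n-1)-1=(m-1)+(n-1)$ together with the bilinearity of $\ucr{i}$ gives $\sf_c(\mu\ucr{i}\nu)=\sf_c(\mu)\ucr{i}\nu+\mu\ucr{i}\sf_c(\nu)$.

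For part (2), $\S_n$-equivariance is a direct application of (OP3$'$). Since $\lambda\in\ip(1)$, the two auxiliary permutations in \eqref{E0.1.1} simplify: $1_1\ucr{1}\sigma=\sigma$ and $\sigma\ucr{i}1_1=\sigma$ for every $\sigma\in\S_n$. Hence $\lambda\ucr{1}(\theta\ast\sigma)=(\lambda\ucr{1}\theta)\ast\sigma$ and $(\theta\ast\sigma)\ucr{i}\lambda=(\theta\ucr{\sigma(i)}\lambda)\ast\sigma$, and reindexing the outer sum by $j=\sigma(i)$ (legitimate because $\sigma$ is a bijection) yields $\ad_\lambda(\theta\ast\sigma)=\ad_\lambda(\theta)\ast\sigma$.

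The main step is the Leibniz rule for $\ad_\lambda$. The approach is to expand $\sum_{k=1}^{m+n-1}(\mu\ucr{i}\nu)\ucr{k}\lambda$ by three applications of (OP2$'$), according to where $k$ falls relative to the block of positions coming from $\nu$:
\[
(\mu\ucr{i}\nu)\ucr{k}\lambda=\begin{cases}
(\mu\ucr{k}\lambda)\ucr{i}\nu, & 1\le k\le i-1,\\
\mu\ucr{i}(\nu\ucr{k-i+1}\lambda), & i\le k\le i+n-1,\\
(\mu\ucr{k-n+1}\lambda)\ucr{i}\nu, & i+n\le k\le m+n-1.
\end{cases}
\]
The first and third ranges come from the second branch of (OP2$'$) (in the first range one applies the axiom with the roles of the smaller and larger indices swapped), and the middle range comes from the first branch. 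Reindexing so that the outer $j$-sums run over $\{1,\dots,m\}\setminus\{i\}$ and $\{1,\dots,n\}$, one is left with a single unmatched term $(\mu\ucr{i}\lambda)\ucr{i}\nu$, which by the first branch of (OP2$'$) with $j=1$ equals $\mu\ucr{i}(\lambda\ucr{1}\nu)$; this is exactly the $+\lambda\ucr{1}\nu$ contribution from $\mu\ucr{i}\ad_\lambda(\nu)$, so it cancels. Combining these identifications, the expanded right-hand side matches $\ad_\lambda(\mu)\ucr{i}\nu+\mu\ucr{i}\ad_\lambda(\nu)$. The main obstacle is purely bookkeeping: keeping straight the three index shifts and verifying that the boundary term at $k=i$ is absorbed correctly via the identification $(\mu\ucr{i}\lambda)\ucr{i}\nu=\mu\ucr{i}(\lambda\ucr{1}\nu)$.
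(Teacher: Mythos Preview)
Your proof is correct. The paper itself omits the proof entirely, stating that the lemma is known and citing \cite[Proposition 6.1]{DR} for part (2); your direct verification from the axioms (OP2$'$) and (OP3$'$) fills in exactly the details one would expect, including the crucial identification $(\mu\ucr{i}\lambda)\ucr{i}\nu=\mu\ucr{i}(\lambda\ucr{1}\nu)$ that makes the bookkeeping close.
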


It is easy to see that $\sf_{-c}=\ad_{c \1}$.

\begin{remark}
\label{yyrem1.3}
Let $\ip$ be an operad.
\begin{enumerate}
\item[(1)]
Let $f:\ip\to\ip$ be a $\Bbbk$-linear map of
$\ip$ (always preserving the degree).
Then $f$ is said to be {\it superfluous} if
there are a sequence of scalars $\{a_i\}_{i\geq 0}$ in
$\Bbbk$ such that
$$f(\mu)=a_m \mu$$
for all $\mu\in \ip(m)$.
\item[(2)]
The derivation $\sf_{c}$ given in Lemma
\ref{yylem1.2}(1) is superfluous.  The set of {\it superfluous
derivations} of the form $\sf_{c}$ is denoted by $\sf(\ip)$.

We mention that for the operads we are interested in this paper, 
such as $\ias, \Com, \Lie, \Pois$, a superfluous derivation is 
always of the form $\sf_c$. In fact, the statement is true if 
for all $m\ge 1, n\ge 0$, there exist $\mu\in \ip(m)$, 
$\nu\in \ip(n)$ and $1\le i\le m$, such that $\mu\ucr{i} \nu
\neq 0$. Let $\partial$ be a superfluous derivation of $\ip$ 
associated to $\{a_i\}_{i\ge 0}$. Then by our assumption, we 
have $a_{m+n-1}=a_m+a_n$ for all $m\ge 1$ and $m\ge 0$. 
Clearly, $a_1=0$ and $a_2=-a_0=c$. Taking $m=2$, we have 
$a_{n+1}=a_n+c$. It follows that $a_n=(n-1)c$ for all $n\ge 0$.
\item[(3)]
An inner derivation is also called {\it intrinsic} in
\cite[Section 6.1]{DR}.
\end{enumerate}
\end{remark}

\begin{lemma}
\label{yylem1.4}
Let $\ip$ be an operad.
\begin{enumerate}
\item[(1)]
$\der(\ip)$ is a Lie algebra over $\Bbbk$.
\item[(2)] \cite[Exercise 6.2]{DR}
If $\mu,\nu\in \ip(1)$, then
$[\ad_{\mu},\ad_{\nu}]=\ad_{[\mu,\nu]}$.
\item[(3)]
If $\partial\in \der(\ip)$ and $\ad_{\la}\in \ider(\ip)$
with $\la\in \ip(1)$ {\rm{[Definition \ref{yydef0.2}(2)]}},
then $[\partial, \ad_{\la}]=\ad_{\partial(\la)}$.
As a consequence, $\ider(\ip)$ is a Lie ideal of $\der(\ip)$
and $\der(\ip)/\ider(\ip)$ is a Lie algebra.
\item[(4)]
Every superfluous derivation of the form $\sf_{c}$ is in the
center of $\der(\ip)$.
\end{enumerate}
\end{lemma}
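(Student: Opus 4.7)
The overall plan is to reduce each assertion to the Leibniz rule of Definition~\ref{yydef0.2}(1) together with the partial-composition axioms (OP1$'$)--(OP3$'$); no deeper machinery is needed. For part~(1), $\der(\ip)$ is visibly a $\Bbbk$-submodule of the endomorphisms of $\ip$ that preserve each $\S_n$-action and the arity grading, so the only point is that the commutator $[\partial_1,\partial_2]$ of two derivations is again a derivation. Applying the Leibniz rule twice to $\partial_1\partial_2(\mu\ucr{i}\nu)$ and to $\partial_2\partial_1(\mu\ucr{i}\nu)$, the four ``mixed'' second-order terms of the shapes $\partial_1(\mu)\ucr{i}\partial_2(\nu)$ and $\partial_2(\mu)\ucr{i}\partial_1(\nu)$ cancel pairwise, leaving the Leibniz rule for $[\partial_1,\partial_2]$; the Jacobi identity is then automatic from associativity of composition of linear maps.

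For part~(2), I would evaluate $[\ad_\mu,\ad_\nu](\theta)$ on an arbitrary $\theta\in\ip(n)$. Expanding by the definition of $\ad_{(-)}$, the result is an alternating sum of terms of shapes $\mu\ucr{1}(\nu\ucr{1}\theta)$, $\mu\ucr{1}(\theta\ucr{i}\nu)$, $(\nu\ucr{1}\theta)\ucr{j}\mu$, $(\theta\ucr{i}\nu)\ucr{j}\mu$, and their $\mu\leftrightarrow\nu$ swaps. Repeated use of the two cases of~(OP2$'$) (with $\mu,\nu\in\ip(1)$, many indices collapse) cancels every term in which both $\mu$ and $\nu$ are inserted at distinct positions inside $\theta$, and what remains is $(\mu\ucr{1}\nu-\nu\ucr{1}\mu)\ucr{1}\theta-\sum_{i=1}^{n}\theta\ucr{i}(\mu\ucr{1}\nu-\nu\ucr{1}\mu)$, which is $\ad_{[\mu,\nu]}(\theta)$ by definition. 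The identity $[\partial,\ad_\la]=\ad_{\partial(\la)}$ in part~(3) proceeds in the same spirit: apply $\partial$ to $\ad_\la(\theta)$ via the Leibniz rule, apply $\ad_\la$ to $\partial(\theta)$ by its definition, and subtract; the pieces containing $\la\ucr{1}\partial(\theta)$ and $\partial(\theta)\ucr{i}\la$ cancel, and the remaining $\partial(\la)$-terms assemble into $\ad_{\partial(\la)}(\theta)$. The consequence of~(3) is then formal: $[\partial,\ad_\la]\in\ider(\ip)$ exhibits $\ider(\ip)$ as a Lie ideal of $\der(\ip)$, so the quotient $\der(\ip)/\ider(\ip)$ inherits a Lie algebra structure from part~(1).

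Part~(4) is essentially free: a derivation $\partial$ preserves the arity grading, so on $\ip(n)$ both $\sf_c\partial$ and $\partial\sf_c$ act as $\partial$ followed by multiplication by the scalar $(n-1)c$, hence $[\sf_c,\partial]=0$. The only step I expect to require real care is the index bookkeeping in part~(2), where both cases of~(OP2$'$) must be applied several times and the summation variables $i,j$ realigned so that the correct cancellations are visible; parts~(1), (3), and~(4) are essentially formal once that computation is carried out.
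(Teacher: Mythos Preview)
Your argument is correct and for parts~(1)--(3) matches the paper's approach exactly (the paper omits the computation for~(2), citing it as an exercise, but your sketch is the intended one). The only genuine difference is in part~(4): you argue directly that $\sf_c$ commutes with any $\partial$ because both are arity-preserving and $\sf_c$ acts by the scalar $(n-1)c$ on $\ip(n)$, whereas the paper instead observes that $\sf_c=\ad_{-c\1}$ and then applies part~(3) to get $[\partial,\sf_c]=\ad_{\partial(-c\1)}=\ad_0=0$. Your route is more self-contained and does not rely on~(3); the paper's route is a one-line reduction that exploits the already-proved ideal property. Both are equally valid and neither requires more than the definitions.
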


\begin{proof}
(1) Let $\partial, \partial'\in \der(\ip)$. Define
\[[\partial, \partial']:=\partial \circ \partial'-
\partial'\circ \partial.\]
We only need to prove $[\partial, \partial']\in \der(\ip)$. 
For $\mu \in \ip(m)$, $\nu\in \ip(n)$ and
$1\leq i \leq m$, we have
\begin{align*}
 [\partial, \partial'](\mu \ucr{i} \nu)
&=\partial \circ \partial' (\mu \ucr{i} \nu)
-\partial' \circ \partial (\mu \ucr{i} \nu)\\
&=\partial [\partial'(\mu) \ucr{i} \nu+\mu\ucr{i} \partial'(\nu)]
-\partial' [\partial(\mu) \ucr{i} \nu+\mu\ucr{i} \partial(\nu)]\\
&=\partial(\partial'(\mu)) \ucr{i} \nu+\partial'(\mu)\ucr{i} \partial(\nu)
+[\partial(\mu) \ucr{i} \partial'(\nu)+\mu\ucr{i} \partial(\partial'(\nu))]\\
&\quad
-[\partial'(\partial(\mu)) \ucr{i} \nu
+\partial(\mu)\ucr{i} \partial'(\nu)]
-[\partial'(\mu)\ucr{i} \partial(\nu)+\mu\ucr{i} \partial'(\partial(\nu))]\\
&=[\partial, \partial'](\mu) \ucr{i} \nu+\mu \ucr{i} [\partial, \partial'](\nu).
\end{align*}
Therefore $[\partial, \partial']$ is a derivation.

(3) For every $\theta\in \ip(n)$, we have
$$\begin{aligned}
\quad [\partial, \ad_{\la}](\theta)
&=\partial(\ad_{\la} (\theta))-\ad_{\la} (\partial(\theta))\\
&=\partial (\la\ucr{1} \theta -\sum_{i=1}^n \theta \ucr{i}
\la)-\la\ucr{1} \partial(\theta)+\sum_{i=1}^n
\partial(\theta) \underset{i}{\circ} \la\\
&=\partial(\la)\ucr{1} \theta+\la\ucr{1} \partial(\theta)
-\sum_{i=1}^n \partial(\theta) \ucr{i}
\la-\sum_{i=1}^n \theta \ucr{i}\partial(\la)
-\la\ucr{1} \partial(\theta)+\sum_{i=1}^n
\partial(\theta) \ucr{i} \la\\
&=\partial(\la)\ucr{1} \theta-\sum_{i=1}^n \theta \ucr{i}\partial(\la)
\\
&=\ad_{\partial(\la)}(\theta).
\end{aligned}
$$
The assertion follows.

(4) By part (3),
\[[\partial, \sf_{c}]=[\partial, \ad_{-c\1}]=
\ad_{\partial(-c\1)}=\ad_{0}=0.\]
The assertion follows.
\end{proof}

If $A$ is an associative algebra, then the set of
invertible elements in $A$ is denoted by $A^{\times}$.

\begin{definition}
\label{yydef1.5}
Let $\ip$ be an operad and $\Phi\in \Hom(\ip, \ip)$
be a $\Bbbk$-linear map.
\begin{enumerate}
\item[(1)]
We say $\Phi$ is an {\it endomorphism}
of $\ip$ if
\begin{enumerate}
\item[(1a)]
$\Phi(\1)=\1$,
\item[(1b)]
$\Phi(\mu \ast \sigma)=\Phi(\mu)\ast \sigma$ for all
$\mu\in \ip(m)$ and $\sigma \in \S_m$,
\item[(1c)]
$\Phi(\mu \ucr{i} \nu)=
\Phi(\mu)\ucr{i} \Phi(\nu)$
for all $\mu\in \ip(m)$, $\nu\in \ip(n)$ and $1\leq i\leq m$.
\end{enumerate}
The set of endomorphisms of $\ip$ is denoted by
$\End(\ip)$.
\item[(2)]
We say $\Phi$ is an {\it automorphism}
of $\ip$ if it is an endomorphism and is
invertible.
All automorphisms of $\ip$ form a group, denoted by
$\Aut(\ip)$.
\item[(3)]
Suppose $\Aut(\ip)$ is an algebraic group as in
Theorem \ref{yythm1.7} below. We say $\ip$ is
{\it $\Aut$-rigid}
(respectively, {\it $\Aut$-semirigid}) if
$\dim \Aut(\ip)\leq 1$ (respectively,
$\dim \Aut(\ip)\leq 2$).
\item[(4)]
If, there is a scalar $c\in \Bbbk^\times$, $\Phi$ satisfies
that
$$\Phi(\theta)= c^{n-1} \theta$$
for all $\theta\in \ip(n)$, then $\Phi$ is denoted by $\ssf_{c}$.
It is clear that $\ssf_{c}\in \Aut(\ip)$. The set of superfluous
automorphisms of the form $\ssf_{c}$ is denoted by $\ssf(\ip)$,
which is a subgroup of $\Aut(\ip)$.
\item[(5)]
An automorphism $\Phi$ is called {\it inner} if, there is an
invertible element $\la\in \ip(1)$ such that
$$\Phi(\theta)= (\la^{-1} \circ \theta) \circ (\la, \la,\cdots, \la)$$
for all $\theta\in \ip(n)$. In this case, $\Phi$ is denoted by
$\aad_{\la}$.
All inner automorphisms form a normal subgroup of $\Aut(\ip)$,
denoted by $\IAut(\ip)$.
The {\it outer automorphism} group of $\ip$ is defined to be
$$\OAut(\ip)=\Aut(\ip)/\IAut(\ip).$$
\end{enumerate}
\end{definition}

One can check the following lemma.

\begin{lemma}
\label{yylem1.6}
Let $\ip$ be an operad.
\begin{enumerate}
\item[(1)]
Let $G$ be a subgroup of $\Aut(\ip)$. Then
the fixed subspace
$$\ip^G:=\{x\in \ip\mid g(x)=x, \; \forall \;
g\in G\}$$
is a suboperad of $\ip$. It is called the {\it
fixed suboperad} under the $G$-action.
\item[(2)]
Let $L$ be a Lie subalgebra (or Lie subring) of $\der(\ip)$.
Then the fixed subspace
$$\ip^L:=\{x\in \ip\mid \partial(x)=0, \; \forall \;
\partial\in L\}$$
is a suboperad of $\ip$. It is called the {\it
fixed suboperad} under the $L$-action.
\end{enumerate}
\end{lemma}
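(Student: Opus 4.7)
The strategy is to verify directly that both $\ip^G$ and $\ip^L$ satisfy the three closure conditions required of a suboperad: they contain the identity $\1$, they are closed under the partial composition $\ucr{i}$, and they are preserved by the symmetric group actions $\ast$.

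For part (1), I would argue as follows. Each $g\in G$ is an automorphism, so by Definition \ref{yydef1.5}(1a), $g(\1)=\1$, hence $\1\in \ip^G$. Given $\mu\in \ip^G(m)$ and $\nu\in \ip^G(n)$, conditions (1b)--(1c) of Definition \ref{yydef1.5} give
\[
g(\mu\ucr{i}\nu)=g(\mu)\ucr{i}g(\nu)=\mu\ucr{i}\nu,
\qquad
g(\mu\ast\sigma)=g(\mu)\ast\sigma=\mu\ast\sigma,
\]
for every $g\in G$, $\sigma\in\S_m$ and $1\le i\le m$, so both $\mu\ucr{i}\nu$ and $\mu\ast\sigma$ lie in $\ip^G$. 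Hence $\ip^G$ is a suboperad.

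For part (2), the main subtlety is to check that $\1\in \ip^L$, since a priori $\der(\ip)$ was only defined via the Leibniz rule on $\ucr{i}$. The identity axiom (OP1$'$) gives $\1\ucr{1}\1=\1$, so for any $\partial\in L$,
\[
\partial(\1)=\partial(\1\ucr{1}\1)=\partial(\1)\ucr{1}\1+\1\ucr{1}\partial(\1)=2\partial(\1),
\]
where I used $\theta\ucr{1}\1=\theta=\1\ucr{1}\theta$ for $\theta=\partial(\1)\in \ip(1)$. Therefore $\partial(\1)=0$, so $\1\in\ip^L$. For $\mu\in \ip^L(m)$, $\nu\in \ip^L(n)$ and $\partial\in L$, the Leibniz rule from Definition \ref{yydef0.2}(1) gives
\[
\partial(\mu\ucr{i}\nu)=\partial(\mu)\ucr{i}\nu+\mu\ucr{i}\partial(\nu)=0,
\]
so $\mu\ucr{i}\nu\in\ip^L$. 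Equivariance under the $\S_m$-action follows from the fact that, by Definition \ref{yydef0.2}(1), a derivation preserves the $\S_m$-module structure, hence commutes with $\ast\sigma$, giving $\partial(\mu\ast\sigma)=\partial(\mu)\ast\sigma=0$, so $\mu\ast\sigma\in\ip^L$.

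The only point requiring any thought is the argument $\partial(\1)=0$ above, which also explains why one does not need to add this as an axiom for derivations; everything else is immediate from the definitions of automorphism and derivation. Since these verifications are straightforward and proceed arity by arity, I expect no real obstacle, which is presumably why the authors invite the reader to check the lemma.
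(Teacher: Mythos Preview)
Your proof is correct and is exactly the routine verification the paper has in mind; the authors omit the proof entirely with the remark ``One can check the following lemma,'' and your direct check of the three closure conditions (identity, partial composition, $\S$-action) is precisely what is intended. Your observation that $\partial(\1)=0$ follows from the Leibniz rule applied to $\1\ucr{1}\1=\1$ is a nice point to make explicit.
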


To conclude this section we show that under some reasonable
hypotheses, $\Aut(\ip)$ is an algebraic group. Let $\O$ be the
category of operads. Let $\O_u$ (respectively, $\O_{2u}$) be the
category of unitary (respectively, 2-unitary) operads. For
finitely generated locally finite operads, we have the following
theorem. Starting with a set of generators $X$, an expression in $X$ is
a result of finite $\S$-actions and partial compositions in terms
of $X$. This expression is similar to an element in the free operad
generated by $X$. We use $e(X)$ to denote an expression in $X$.

\begin{theorem}
\label{yythm1.7}
Suppose $\Bbbk$ is a field. Let $\ip$ be a finitely generated and
locally finite operad. The following hold.
\begin{enumerate}
\item[(1)]
$\Aut_{\O}(\ip)$ is an algebraic group.
\item[(2)]
If $\ip$ is a unitary operad, then $\Aut_{\O_u}(\ip)$ is an
algebraic group and
$$\Aut_{\O_u}(\ip) \subseteq \Aut_{\O}(\ip).$$
\item[(3)]
If $\ip$ is a 2-unitary operad, then $\Aut_{\O_{2u}}(\ip)$ is
an algebraic group and
$$\Aut_{\O_{2u}}(\ip)\subseteq \Aut_{\O_{u}}(\ip) \subseteq
\Aut_{\O}(\ip).$$
\end{enumerate}
\end{theorem}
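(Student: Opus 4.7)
The plan is to realize $\Aut_\O(\ip)$ as the open invertible locus inside an affine algebraic monoid $\End(\ip)$, exploiting that $\ip$ is finitely generated by a set $X=\{x_1,\dots,x_r\}$ with $x_j\in\ip(n_j)$, and that each $\ip(n_j)$ is finite dimensional by local finiteness. Since any endomorphism $\Phi$ is determined by the tuple $(\Phi(x_1),\dots,\Phi(x_r))$ --- every element of $\ip$ is a $\Bbbk$-linear combination of expressions $e(X)$ on which $\Phi$ acts by commuting with partial compositions and $\S_n$-actions --- there is a natural injection
\[\End(\ip)\hookrightarrow V:=\prod_{j=1}^{r}\ip(n_j),\]
into a finite-dimensional affine space over $\Bbbk$.

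First I would show that the image of $\End(\ip)$ in $V$ is Zariski closed. Fix homogeneous bases of $\ip(n)$ for every $n$. The axioms $\Phi(\1)=\1$, $\S$-equivariance, and compatibility with every partial composition translate into polynomial equations in the coordinates of $V$: each linear relation holding among expressions in $X$ in $\ip$ must be preserved by $\Phi$, which after substitution yields a polynomial identity in the coordinates of $V$. By noetherianity of polynomial rings over $\Bbbk$, the resulting (a priori infinite) ideal is finitely generated, so $\End(\ip)$ is a closed affine subvariety of $V$. Composition of endomorphisms, read through their restriction to $X$, is polynomial in these coordinates, giving $\End(\ip)$ the structure of an affine algebraic monoid.

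Next I would identify $\Aut(\ip)$ as a principal open set in $\End(\ip)$. Pick $M\geq\max_j n_j$, so that $X\subset \ip_{\leq M}:=\bigoplus_{n=0}^{M}\ip(n)$; then the finite-dimensional subspace $\ip_{\leq M}$ also generates $\ip$ as an operad. The key claim is that $\Phi\in\End(\ip)$ is an automorphism if and only if the linear endomorphism $\Phi|_{\ip_{\leq M}}$ is invertible. One direction is obvious; conversely, if $\Phi|_{\ip_{\leq M}}$ is surjective then the suboperad $\Phi(\ip)$ contains $\ip_{\leq M}$ and therefore equals $\ip$, so $\Phi$ is surjective on each $\ip(n)$, which together with finite dimensionality forces bijectivity. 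Hence $\Aut(\ip)$ is the non-vanishing locus of $\det(\Phi|_{\ip_{\leq M}})$, an open subvariety. Inversion is a morphism via Cramer's rule, because each $\Phi^{-1}(x_j)$ is the unique preimage of $x_j$ under $\Phi|_{\ip_{\leq M}}$, hence rational in its matrix entries. This proves (1). For (2) and (3), the further requirement $\Phi(\1_0)=\1_0$ (and, for 2-unitary operads, $\Phi(\1_2)=\1_2$) is a linear equation on $V$, cutting out closed subgroups $\Aut_{\O_{2u}}(\ip)\subseteq\Aut_{\O_u}(\ip)\subseteq\Aut_\O(\ip)$.

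The hardest part will be the closedness step: verifying that the potentially infinite constraints imposed by the operad axioms on a tuple in $V$ really form a Zariski closed subset. Since $\ip$ is not a priori finitely presented, one must argue arity by arity using local finiteness, noting that only finitely many coordinates of $V$ enter the verification at each arity, and then invoke noetherianity to trim the system to finitely many defining equations.
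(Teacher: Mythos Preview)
Your proposal is correct and close in spirit to the paper's argument, but the packaging differs in one useful way. The paper chooses $X$ to be a full $\Bbbk$-basis of $\ip_{\leq M}$ (not just a generating set) and embeds $\Aut_\O(\ip)$ directly into $GL(\Bbbk X)$; it then checks that preserving the operad relations \eqref{E1.7.2}--\eqref{E1.7.6} imposes polynomial conditions on the matrix $(m_{ij})$, so $\Aut_\O(\ip)$ sits as a Zariski-closed subgroup of $GL(\Bbbk X)$ and is therefore automatically an algebraic group. The key fact that a matrix in $GL(\Bbbk X)$ satisfying the endomorphism conditions actually gives an \emph{automorphism} of $\ip$ is exactly your observation that invertibility on $\ip_{\leq M}$ forces surjectivity (hence bijectivity by local finiteness) on every $\ip(n)$; the paper uses this implicitly by working inside $GL$ rather than inside all matrices.

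Your route via $\End(\ip)$ as an affine algebraic monoid is a genuine variant: you embed into the smaller affine space $V=\prod_j\ip(n_j)$ indexed only by generators, cut out $\End(\ip)$ as a closed subvariety, and then recover $\Aut(\ip)$ as the principal open set $D(\det(\Phi|_{\ip_{\leq M}}))$, checking inversion by Cramer's rule. This costs you the extra verification that composition and inversion are morphisms, which the paper gets for free from the ambient $GL$; on the other hand you obtain the algebraic-monoid structure on $\End(\ip)$ as a byproduct, which the paper does not discuss. The closedness step is handled the same way in both arguments: an arbitrary (infinite) family of polynomial conditions still cuts out a closed set, and noetherianity is only needed if one insists on finitely many defining equations. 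Parts (2) and (3) are identical in both proofs.
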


\begin{proof} (1) Let $\{x_{\alpha}\}_{\alpha\in I}$ be a $\Bbbk$-linear
basis of $\ip$ where $I$ is an index set that starts with $1,2,3,\cdots$.
Since $\ip$ is finitely generated and locally finite, there are two
positive integers $M$ and $N$ such that $X:=\{x_{\alpha}\}_{\alpha=1}^N$
(as a subset of $\{x_{\alpha}\}_{\alpha\in I}$) is a $\Bbbk$-linear
basis of $\bigoplus_{i\leq M} \ip(i)$ that generates $\ip$.  Since $X$
generates $\ip$, for each $\alpha>N$ (always in $I$), $x_{\alpha}$ can
be written as a fixed expression $e_{\alpha}(X)$ that involves
$\S_n$-actions and partial compositions and elements in $X$ and write
$$e_{\alpha}(X)=e_{\alpha}(x_1,\cdots, x_N).$$
If $\alpha\leq N$, we just set $e_{\alpha}(X)=x_{\alpha}$.

Let $\phi$ be an element in $\Aut_{\O}(\ip)$. Then, for each $\alpha
\leq N$, $\phi(x_{\alpha})=\sum_{j\leq N} m_{\alpha j} x_{j}$. Hence
we can define naturally a map $\phi\to \left(m_{ij}\right)_{N\times N}$
that is a group monomorphism from $\Aut_{\O}(\ip)$ to the general
linear group $GL(\Bbbk X)$. It remains to show that $\Aut_{\O}(\ip)$
is a closed variety of $GL(\Bbbk X)$. We will do next is to translate
the conditions of $\phi$ being an operad automorphism into a set of
polynomial identities on the matrix $\left(m_{ij}\right)_{N\times N}$
with fixed coefficients in $\Bbbk$.

Now write, for all $\alpha, \beta$ in $I$ and $\sigma$ in $\S$,
\begin{equation}
\notag
x_{\alpha}\ast \sigma =\sum_{\gamma} c_{\alpha, \sigma}^{\gamma}
x_{\gamma}
\end{equation}
and
\begin{equation}
\label{E1.7.1}\tag{E1.7.1}
x_{\alpha}\ucr{i} x_{\beta}=\sum_{\gamma}
c_{\alpha,i,\beta}^{\gamma}x_{\gamma}
\end{equation}
be the $\S_n$-action and partial composition rules with a set of
fixed scalars $c_{\alpha, \sigma}^{\gamma}$ and
$c_{\alpha,i,\beta}^{\gamma}$. And for each expression $e_{\alpha}(X)$
defined in the first paragraph and a collection $\{x_{i_1},\cdots,x_{i_N}\}$
with repetition where $X_{i_s}\in X$, we fix a linear combination
\begin{equation}
\notag
e_{\alpha}(x_{i_1},\cdots,x_{i_N})=
\begin{cases}
0, & \deg x_{i_s}\neq\deg x_s, \; {\text{for some $s$}},\\
\sum_{\gamma} c_{\alpha}^{\gamma}((i_s)) x_{\gamma},
& \deg x_{i_s}=\deg x_s, \; {\text{for all $s$}},
\end{cases}
\end{equation}
where $\{c_{\alpha}^{\gamma}((i_s))\}$ is a fixed set of scalars only
dependent on $\alpha$, $\gamma$ and $\{i_s\}_{s=1}^N$.

Applying $\phi$ to $e_{\alpha}(X)$ we have
$$\begin{aligned}
\phi(e_{\alpha}(X))&=e_{\alpha}(\phi(X))=e_{\alpha}(\sum_{i_1} m_{1 i_1}x_{i_1},\cdots,
\sum_{i_N} m_{N i_N}x_{i_N})\\
&=\sum c_{1}(m_{ij})_{\alpha, (i_s)} e_{\alpha}(x_{i_1},\cdots, x_{i_N})\\
&=\sum c(m_{ij})_{\alpha}^{\gamma} x_{\gamma}
\end{aligned}
$$
where $c_{1}(m_{ij})_{\alpha, (i_s)}$ and $c(m_{ij})_{\alpha}^{\gamma}$ are
polynomials in $\{m_{ij} \mid 1\leq i,j\leq N\}$ over $\Bbbk$ for
each $\alpha$ and $\gamma$. Note that the coefficients of
$c_{1}(m_{ij})_{\alpha, (i_s)}$ and $c(m_{ij})_{\alpha}^{\gamma}$ are
independent of the matrix $(m_{ij})_{N\times N}$. From the partial definition
[Definition \ref{yydef0.1}], we also have, for all $\alpha, \alpha',
\alpha''$ in $I$,
\begin{align}
\label{E1.7.2}\tag{E1.7.2}
x_{\alpha}\ucr{i} \1 &= x_{\alpha} =\1\underset{1}{\circ} x_{\alpha},\\
\label{E1.7.3}\tag{E1.7.3}
(x_{\alpha}  \ucr{i} x_{\alpha'}) \underset{i-1+j}{\circ} x_{\alpha''}
& =x_{\alpha} \ucr{i} (x_{\alpha'} \underset{j}{\circ} x_{\alpha''}),
 1\le i\le l, 1\le j\le m,\\
\label{E1.7.4}\tag{E1.7.4}
(x_{\alpha}  \ucr{i} x_{\alpha'}) \underset{k-1+m}{\circ} x_{\alpha''}
& =(x_{\alpha} \underset{k}{\circ}x_{\alpha''}) \ucr{i} x_{\alpha'},
 1\le i<k\le l,\\
\label{E1.7.5}\tag{E1.7.5}
x_{\alpha}  \ucr{i} (x_{\alpha'} \ast \sigma) &=
(x_{\alpha}  \ucr{i} x_{\alpha'})\ast \sigma',\\
\label{E1.7.6}\tag{E1.7.6}
(x_{\alpha}\ast \sigma)  \ucr{i} x_{\alpha'} & =
(x_{\alpha}  \underset{\sigma(i)}{\circ} x_{\alpha'})\ast \sigma'',
\end{align}
where
$\sigma'$ and $\sigma''$ are given in \eqref{E0.1.1}.

Note that $\phi: \Bbbk X\to \Bbbk X$ induces an endomorphism of $\ip$ precisely
when $\phi$ preserves the relations \eqref{E1.7.2} -- \eqref{E1.7.6}. These
give rise to a set of equations on the matrix $(m_{ij})_{N\times N}$. Just to
give one example, applying $\phi$ to \eqref{E1.7.3} and using the fact that
$$\phi(x_{\alpha})=
\phi(e_{\alpha}(X))=e_{\alpha}(\phi(X))=\sum c(m_{ij})_{\alpha}^{\gamma} x_{\gamma},$$
we have an equation
$$\begin{aligned}
\quad [(\sum c(m_{ij})_{\alpha}^{\gamma} x_{\gamma})
\ucr{i} &(\sum c(m_{ij})_{\alpha'}^{\gamma} x_{\gamma})]
\underset{i-1+j}{\circ} (\sum c(m_{ij})_{\alpha''}^{\gamma} x_{\gamma})\\
&=(\sum c(m_{ij})_{\alpha}^{\gamma} x_{\gamma})\ucr{i}
[(\sum c(m_{ij})_{\alpha'}^{\gamma} x_{\gamma})
\underset{j}{\circ} (\sum c(m_{ij})_{\alpha''}^{\gamma} x_{\gamma})].
\end{aligned}
$$
By using \eqref{E1.7.1} we obtain  a set of polynomial equations in $(m_{ij})_{N\times N}$.
The same argument applies to the other relations in
\eqref{E1.7.2} -- \eqref{E1.7.6}. Therefore $\Aut_{\O}(\ip)$
is a closed variety of $GL(\Bbbk X)$. The assertion follows.

(2) Since $\Aut_{\O_u}(\ip)=\{f\in \Aut_{\O}(\ip)\mid f(\1_0)=\1_0\}$,
$\Aut_{\O_u}(\ip)$ is a closed subgroup of $\Aut_{\O}(\ip)$. The assertion
follows.

(3) Since $\Aut_{\O_{2u}}(\ip)=\{f\in \Aut_{\O}(\ip)\mid f(\1_0)=\1_0, f(\1_2)=\1_2\}$,
$\Aut_{\O_{2u}}(\ip)$ is a closed subgroup of $\Aut_{\O}(\ip)$. The assertion
follows.
\end{proof}

\section{Definition of the first three cohomologies}
\label{yysec2}

In this section we define the first few cohomology
groups of an operad. Recall that, for $\lambda\in
\ip(1)$, $\ad_{\lambda}$ is defined to be the
derivation given in Lemma \ref{yylem1.2}(2).

\begin{definition}
\label{yydef2.1}
Let $\ip$ be an operad.
The \emph{0th cohomology group} of $\ip$ is defined to be
$$H^0(\ip):=\{\lambda\in \ip(1)\mid \ad_{\lambda}=0\}.$$
\end{definition}

It is easy to see that $H^0(\ip)$ is a $\Bbbk$-submodule
of the center of the associative algebra $\ip(1)$. This
implies that $H^0(\ip)$ has a natural (and trivial)
abelian Lie algebra structure. Calculation of $H^0(\ip)$
is relatively easy when $\ip$ is explicitly given, as the next
lemma indicates.

\begin{lemma}
\label{yylem2.2}
Let P be a connected operad over a field $\Bbbk$. 
Then $H^0(\ip)=0$ provided that one of the following holds: 
\begin{enumerate}
\item[(1)] 
$\operatorname{char}\Bbbk =0$ and 
$\ip(n)\neq 0$ for some $n\neq 1$; 
\item[(2)]
$\operatorname{char}\Bbbk =p>0$ and 
$\ip(n)\neq 0$ for some $n\not\equiv 1\ (\!\!\!\mod p)$.
\end{enumerate}
\end{lemma}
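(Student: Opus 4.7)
The plan is to exploit the fact that connectedness pins down $\ip(1)$ to a one-parameter family, reducing the vanishing of $H^0$ to a single scalar equation in terms of the arities at which $\ip$ is nonzero. Since $\ip(1) = \Bbbk \1$, every $\lambda \in \ip(1)$ has the form $\lambda = c\1$ for a unique $c \in \Bbbk$, and it suffices to show that $\ad_{c\1} = 0$ forces $c = 0$ under either hypothesis.

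First I would compute $\ad_{c\1}$ explicitly. For any $\theta \in \ip(n)$, the identity axiom (OP1$'$) gives $\1 \ucr{1} \theta = \theta$ and $\theta \ucr{i} \1 = \theta$ for $1 \le i \le n$, so
\[
\ad_{c\1}(\theta) = c\1 \ucr{1} \theta - \sum_{i=1}^n \theta \ucr{i} c\1 = c\theta - nc\theta = c(1-n)\theta.
\]
Hence $\ad_{c\1} = 0$ on $\ip(n)$ is equivalent to $c(1-n) = 0$ in $\Bbbk$ whenever $\ip(n) \ne 0$.

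Next I would conclude case by case. In case (1), if $\operatorname{char}\Bbbk = 0$ and there exists $n \ne 1$ with $\ip(n) \ne 0$, then $1 - n \ne 0$ in $\Bbbk$, so $c = 0$; thus $H^0(\ip) = 0$. In case (2), if $\operatorname{char}\Bbbk = p > 0$ and there exists $n$ with $n \not\equiv 1 \pmod p$ and $\ip(n) \ne 0$, then again $1 - n$ is a unit in $\Bbbk$ and we conclude $c = 0$. In both situations $H^0(\ip) = 0$.

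There is essentially no obstacle here; the only subtle point is making sure to keep track of characteristic when cancelling the factor $1 - n$, which is exactly why the statement is phrased as two separate cases. The computation uses nothing beyond the identity axiom and the connectedness hypothesis, so no extra machinery is required.
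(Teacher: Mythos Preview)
Your proof is correct and is essentially the intended argument: the paper omits the proof but records elsewhere (Lemma~\ref{yylem1.2}(1) and the line $\sf_{-c}=\ad_{c\1}$ following it) exactly the computation $\ad_{c\1}(\theta)=(1-n)c\,\theta$ that you carry out. Nothing further is needed.
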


The proof is easy and is omitted. The above lemma applies
to $\As$, $\Com$, $\Lie$, $\Pois$ and many others.

\begin{definition}[Definition \ref{yydef0.2}(3)]
\label{yydef2.3}
Let $\ip$ be an operad.
The \emph{1st cohomology group} of $\ip$ is defined to be
$$H^1(\ip):=\der(\ip)/\ider(\ip)$$
the quotient $\Bbbk$-module of derivations modulo that of the
inner derivations.
\end{definition}

By Lemma \ref{yylem1.4}(3), $H^1(\ip)$ is a Lie algebra.
We will work out many examples of $H^1(\ip)$ in Sections \ref{yysec3}
and \ref{yysec4}. The 2nd cohomology is quite complicated and we need
several steps.

\begin{definition}
\label{yydef2.4}
Let $\ip$ be an operad.
\begin{enumerate}
\item[(1)]
A {\it $\wv$-collection} of $\ip$ means
a collection of $\Bbbk$-linear maps, for all $m\geq 1,n\geq 0$,
$$\{ \wp_i:=\wp_{m,n,i}: \ip(m)\otimes \ip(n)\to \ip(m+n-1), \;
\forall\;  1\leq i\leq m\}$$
together with a collection of $\Bbbk$-linear maps, for all $m\geq 0$,
$$\{\varsigma:=\varsigma_{m}: \ip(m)\otimes \Bbbk \S_m\to
\ip(m)\}.$$
The $\Bbbk$-module of all $\wv$-collections is denoted by $\wv(\ip)$.
\item[(2)]
A $\wv$-collection of $\ip$
is called a {\it 2-cocycle} if the
following hold.
\begin{enumerate}
\item[(2a)]
for $\lambda \in \ip(l)$, $\mu\in \ip(m)$ and $\nu\in \ip(n)$,
\begin{equation}
\label{E2.4.1}\tag{E2.4.1}
\wp_{i-1+j}(\la  \ucr{i} \mu, \nu)
+\wp_{i}(\la,\mu) \underset{i-1+j}{\circ} \nu
=\wp_i(\la, \mu \underset{j}{\circ} \nu)
+ \la \ucr{i} \wp_{j}(\mu, \nu),
\end{equation}
if $1\le i\le l, 1\le j\le m$, and
\begin{equation}
\label{E2.4.2}\tag{E2.4.2}
\wp_{k-1+m}(\la  \ucr{i} \mu,\nu)
+ \wp_{i}(\la, \mu) \underset{k-1+m}{\circ} \nu
=\wp_{i}(\la \underset{k}{\circ}\nu, \mu)
+\wp_{k}(\la,\nu) \ucr{i} \mu,
\end{equation}
if $1\le i<k\le l$.
\item[(2b)]
for $\mu\in \ip(m)$, $\phi\in \S_m$, $\nu\in \ip(n)$ and $\sigma\in \S_{n}$,
\begin{equation}
\label{E2.4.3}\tag{E2.4.3}
\wp_i(\mu, \nu \ast \sigma)+\mu\ui \varsigma(\nu,\sigma)=
\wp_i(\mu, \nu)\ast \sigma'+\varsigma(\mu\ui\nu, \sigma')
\end{equation}
\begin{equation}
\label{E2.4.4}\tag{E2.4.4}
\wp_i(\mu\ast \phi, \nu)+\varsigma(\mu, \phi)\ui \nu=
\wp_{\phi(i)}(\mu, \nu)\ast \phi''+\varsigma(\mu\underset{\phi(i)}{\circ}
\nu, \phi''),
\end{equation}
where $\sigma'=1_m\ucr{i} \sigma$ and $\phi=\phi\ucr{i} 1_n$
are given by the partial composition maps in $\As$.

\item[(2c)]
for $\mu\in \ip(m)$ and $\sigma, \tau \in \S_m$,
\begin{equation}
\label{E2.4.5}\tag{E2.4.5}
 \vs(\mu, \sigma\tau)=\vs(\mu\ast \sigma, \tau)+\vs(\mu, \sigma)\ast \tau.
\end{equation}
\end{enumerate}
We continue to use $\wp\varsigma$ to denote a 2-cocycle.
The set of 2-cocycles of $\ip$, which is a $\Bbbk$-module, is
denoted by $Z^2_{\ast}(\ip)$.
\item[(3)]
A $\wv$-collection (or a 2-cocycle) $\wp\varsigma\in \wv(\ip)$ is called
{\it superfluous} if
\begin{enumerate}
\item[(3a)]
there are scalars $a_{m,n,i}$
such that
\[\wp_{i}(\mu, \nu)=a_{m,n,i} \; \mu\ui\nu\]
for all $\mu\in \ip(m)$, $\nu\in \ip(n)$; and
\item[(3b)]
there are scalars $b_m$ such that
\[\varsigma(\mu, \sigma)=b_m \; \mu\ast \sigma\]
for all $\mu\in \ip(m)$ and $\sigma\in \S_m$.
\end{enumerate}
\item[(4)]
A 2-cocycle $\wv$ is called an {\it $\S$-2-cocycle} if $\varsigma=0$.
The set of $\S$-2-cocycles of $\ip$, which is a $\Bbbk$-module, is
denoted by $Z^2(\ip)$.
\item[(5)]
A 2-cocycle (or $\S$-2-cocycle) $\wv$ is called \emph{normalized}, 
if for all $\mu\in \ip(m)$ and $1\leq i\leq m$,
\[\wp_i(\mu,\1)=0=\wp_1(\1,\mu).\]
\end{enumerate}
\end{definition}

From \eqref{E2.4.5}, it is easily seen that $\varsigma(\mu, 1_m)=0$ 
for any $\mu\in \ip(m)$. Consequently, $b_m=0$ in {\rm (3b)} for 
all $m$ for a superfluous 2-cocycle $\wv$.

\begin{definition}
\label{yydef2.5}
Let $\wv$ and $\widetilde{\wv}$ be two 2-cocycles of $\ip$.
\begin{enumerate}
\item[(1)]
We say $\wv$ is a {\it 2-coboundary} if there
is a $\Bbbk$-linear map $\partial: \ip\to \ip$ such that,
for all $1\leq i\leq m$, $\mu\in \ip(m)$, $\nu\in \ip(n)$
and $\sigma\in \S_m$,
\begin{enumerate}
\item[(1a)]
\begin{equation}
\label{E2.5.1}\tag{E2.5.1}
\wp_i(\mu,\nu)=\partial(\mu\ui \nu)
-[\partial(\mu)\ui \nu+\mu \ui \partial(\nu)],
\end{equation}
\item[(1b)]
\begin{equation}
\label{E2.5.2}\tag{E2.5.2}
\varsigma(\mu, \sigma)=
\partial(\mu\ast \sigma)-\partial(\mu)\ast \sigma.
\end{equation}
\end{enumerate}
In this case we write $\wv=(\wp_i, \vs)$ as 
$\wv(\partial)=(\wp(\partial)_i, \vs(\partial))$.
The set of 2-coboundaries of $\ip$, which is a $\Bbbk$-module, is
denoted by $B^2_{\ast}(\ip)$.
\item[(2)]
A 2-coboundary $\wv$ is called an \emph{$\S$-2-coboundary} if $\varsigma=0$.
The set of $\S$-2-coboundaries of $\ip$, which is a $\Bbbk$-module, is
denoted by $B^2(\ip)$.
\item[(3)]
We say $\wv$ and $\widetilde{\wv}$ are {\it equivalent} if
$\wv-\widetilde{\wv}$ is a $2$-coboundary.
\end{enumerate}
\end{definition}

One can check that every 2-coboundary is a 2-cocycle (also
see Theorem \ref{yythm2.10}(2)). Hence $B^2_{\ast}(\ip)
\subseteq Z^2_{\ast}(\ip)$ and $B^2(\ip)
\subseteq Z^2(\ip)$.

\begin{lemma}
\label{yylem2.6}
Let $\ip$ be an operad. Then any 2-cocycle $($or $\S$-2-cocycle$)$
$\wv$ is equivalent to a normalized one.
\end{lemma}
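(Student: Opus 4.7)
The plan is to realize the normalization by a single coboundary whose effect is controlled by one element of $\ip(1)$. Specifically, given a 2-cocycle $\wv=(\wp_i,\varsigma)$, I will set $\omega:=\wp_1(\1,\1)\in\ip(1)$, define $\partial:\ip\to\ip$ by $\partial(\theta):=-\omega\ucr{1}\theta$ for every $\theta\in\ip(n)$, and show that $\widetilde{\wv}:=\wv-\wv(\partial)$ has vanishing unit values and (automatically) zero $\varsigma$-correction, so that both the 2-cocycle and $\S$-2-cocycle cases are handled by the same construction.

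First I would show that the unit values of $\wp$ are already dictated by the single element $\omega$. Specializing \eqref{E2.4.1} to $\lambda=\1$ (so $l=i=1$) and applying OP1$'$ collapses the $\wp_j(\mu,\nu)$ terms on both sides, leaving
\[\wp_1(\1,\mu)\ucr{j}\nu=\wp_1(\1,\mu\ucr{j}\nu);\]
taking $\mu=\1$ and $j=1$ yields $\wp_1(\1,\nu)=\omega\ucr{1}\nu$ for every $\nu\in\ip(n)$. Symmetrically, specializing \eqref{E2.4.1} to $\nu=\1$ (so $n=1$ and $j=1$) and using OP1$'$ gives $\wp_{i-1+j}(\lambda\ucr{i}\mu,\1)=\lambda\ucr{i}\wp_j(\mu,\1)$, and then setting $\mu=\1$ and $j=1$ extracts $\wp_i(\lambda,\1)=\lambda\ucr{i}\omega$ for all $\lambda\in\ip(l)$ and $1\le i\le l$. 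Thus both boundary patterns are encoded in $\omega$.

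With this in hand, the verification is immediate. Using $\partial(\1)=-\omega\ucr{1}\1=-\omega$ and OP1$'$, \eqref{E2.5.1} yields $\wp(\partial)_i(\mu,\1)=-\mu\ucr{i}\partial(\1)=\mu\ucr{i}\omega=\wp_i(\mu,\1)$ and likewise $\wp(\partial)_1(\1,\mu)=\omega\ucr{1}\mu=\wp_1(\1,\mu)$, so $\widetilde{\wv}$ is normalized. For the equivariance axiom, OP3$'$ with $\omega\in\ip(1)$ gives $\sigma'=1_1\ucr{1}\sigma=\sigma$, hence $\partial(\mu\ast\sigma)=-(\omega\ucr{1}\mu)\ast\sigma=\partial(\mu)\ast\sigma$, and \eqref{E2.5.2} then forces $\varsigma(\partial)=0$; in particular, if $\wv$ is an $\S$-2-cocycle then so is $\widetilde{\wv}$. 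The only genuinely non-formal step is the first one: spotting that the cocycle axioms force the boundary terms of $\wp$ to be $\lambda\ucr{i}\omega$ and $\omega\ucr{1}\nu$ for a single element $\omega$, after which the candidate $\partial=-\omega\ucr{1}(-)$ is essentially forced and every remaining check is a one-line application of OP1$'$ or OP3$'$.
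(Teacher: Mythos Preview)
Your proof is correct and follows essentially the same approach as the paper: both arguments specialize \eqref{E2.4.1} to extract the identities $\wp_i(\lambda,\1)=\lambda\ucr{i}\omega$ and $\wp_1(\1,\nu)=\omega\ucr{1}\nu$ for $\omega=\wp_1(\1,\1)$, and then subtract a coboundary built from $\omega$. The only difference is cosmetic: the paper takes $\partial$ supported on the single basis vector $\1$ (setting $\partial(\1)=-\omega$ and $\partial=0$ elsewhere), whereas you take the globally defined $\partial(\theta)=-\omega\ucr{1}\theta$. Your choice is basis-free and makes the $\S$-equivariance $\varsigma(\partial)=0$ an immediate consequence of OP3$'$, while the paper's choice requires a (trivial) case check on the degree; either way the resulting normalized cocycle is the same.
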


\begin{proof}
Suppose that $\{\1\}\cup \{ x_{\alpha}\}_{\alpha\in I}$
is a $\Bbbk$-linear basis of $\ip$. Let 
$\partial\colon \ip\to \ip$ be a $\Bbbk$-linear map defined as
$\partial(\1)=-\wp_1(\1, \1)$ and $\partial(x_{\alpha})=0$ otherwise.
It is easy to see that $\partial(\mu\ast \sigma)=\partial(\mu)
\ast \sigma$ for all $\mu\in \ip(m)$ and $\sigma\in \S_m$.
Taking $\mu=\1=\nu$ in \eqref{E2.4.1}, we get
\[\wp_i(\la, \1)=\la\ucr{i}\wp_1(\1, \1),\]
and then
\[\wp_i(\la, \1)=\partial(\la\ucr{i}\1)-
(\partial(\la)\ucr{i}\1+\la\ucr{i}\partial(\1))\]
for all $\la\in \ip(l), 1\le i\le l$.
Taking $\la=\1=\mu$ in \eqref{E2.4.1}, we get
\[\wp_1(\1, \nu)=\wp_1(\1, \1)\ucr{1} \nu,\]
and then
\[\wp_1(\1, \nu)=\partial(\1\ucr{1}\nu)-
(\partial(\1)\ucr{1}\nu+\1\ucr{1}\partial(\nu))\]
for all $\nu\in \ip(n)$.
Set
\begin{align*}
\widetilde{\wp}_i(\mu, \nu)=&\wp_i(\mu, \nu)-
[\partial(\mu\ucr{i}\nu)-\partial(\mu)\ucr{i}\nu-\mu\ucr{i}\partial(\nu)]\\
\widetilde{\varsigma}(\mu, \sigma)
=& \varsigma(\mu, \sigma)-[\partial(\mu\ast\sigma)-\partial(\mu)\ast \sigma]
\end{align*}
for all $\mu\in \ip(m), \nu\in \ip(n)$ and $1\le i\le m$. Then 
$\widetilde{\wv}$ is equivalent to $\wv$ and satisfies
\[\widetilde{\wp}_i(\mu, \1)=0=\widetilde{\wp}_1(\1, \mu)\]
for all $\mu\in \ip(m)$ and $1\le i\le m$.
\end{proof}

\begin{definition}
\label{yydef2.7}
Let $\ip$ be an operad.
\begin{enumerate}
\item[(1)]
The \emph{2nd cohomology group} of $\ip$ is defined to be the $\Bbbk$-module
\[H^2_{\ast}(\ip):=Z^2_{\ast}(\ip)/B^2_{\ast}(\ip).\]
\item[(2)]
The \emph{2nd $\S$-cohomology group} of $\ip$ is defined to be the $\Bbbk$-module
\[H^2(\ip):=Z^2(\ip)/B^2(\ip).\]
\end{enumerate}
\end{definition}

In fact, in all computation, we can use normalized 
2-cocycles and normalized 2-coboundaries to
compute the 2nd cohomology group by Lemma \ref{yylem2.6}.

There is a natural $\Bbbk$-linear injective morphism from
$H^2(\ip)\to H^2_{\ast}(\ip)$. But, in general, $H^2(\ip)
\not\cong H^2_{\ast}(\ip)$, see the next example.

\begin{example}
\label{yyex2.8}
Suppose ${\rm{char}}\; \Bbbk=2$. Let $\ip$ be the
operad $\Bbbk \1\oplus \Bbbk\1_2$ where $\1$ is the
identity with composition law given by
\[\1_2 \underset{1}{\circ} \1_2=\1_2 \underset{2}{\circ}\1_2=0.\]
The $\S_2$-action on $\Bbbk \1_2$ is trivial.
It is easy to check that $\ip$ is an operad.

Now we define a 2-cocycle of $\ip$ by
\begin{align*}
\wp_i&=0 \qquad {\text{ for all $i$,}}\\
\varsigma_{m}&=0\qquad {\text{ for all $m\neq 2$,}}
\end{align*}
and
\[\varsigma_{2}(\1_2, \1_2)=0, \quad \varsigma_{2}(\1_2, (12))=\1_2.\]
By using the fact ${\rm{char}}\; \Bbbk=2$, it is straightforward
to show that the above defines a 2-cocycle of $\ip$, denoted
by $\wv$. By Definition \ref{yydef2.5}(1) and the fact
that $\S_2$-action on $\ip(2)$ is trivial, $\varsigma=0$
for every 2-coboundary. Therefore $\wv$ is not a
2-coboundary. In fact, with a few more lines of details, this shows that
\[H^2_{\ast}(\ip)=\Bbbk.\]
By Definition \ref{yydef2.4}(4), $\varsigma=0$ for every
$\S$-2-cocycle. Using Definition \ref{yydef2.4}(2a), one
sees that every $\S$-2-cocycle of $\ip$ is zero. This implies
that
\[H^2(\ip)=0.\]
Therefore $H^2(\ip)\not\cong H^2_{\ast}(\ip)$.

In view of Theorem \ref{yythm2.10} below, not every infinitesimal
deformation preserves $\S$-module structure.
\end{example}

Following ideas of Gerstenhaber \cite{Ge1, Ge2, Ge3}, the 2nd
cohomology group of $\ip$ should control infinitesimal
deformations of $\ip$. Let $\Bbbk$ be a base commutative ring
and let $\Bbbk[\epsilon]$ be the ring $\Bbbk[t]/(t^2)$.  Let
$\ip[\epsilon]$ denote $\ip\otimes \Bbbk[\epsilon]$, namely, for
every $n\geq 0$,
\[\ip[\epsilon](n):=\ip(n)\otimes \Bbbk[\epsilon].\]

In this paper we only consider infinitesimal deformations that
preserve the identity $\1\in \ip$, see Remark \ref{yyrem2.11}.

\begin{definition}
\label{yydef2.9}
Let $\ip$ be an operad over $\Bbbk$.
\begin{enumerate}
\item[(1)]
An {\it infinitesimal deformation} of $\ip$ is a $\Bbbk[\epsilon]$-linear
operadic structure on the $\Bbbk[\epsilon]$-module $\ip[\epsilon]$ such
that its partial composition, denoted by $-\ucr{i}^\ep -$,
and its $\S$-action, denoted by $\ast^\epsilon$, satisfy the following
conditions:
\begin{enumerate}
\item[(1a)]
for all $1\leq i\leq m$,
\[-\ucr{i}^\ep -:
\ip[\epsilon](m) \otimes_{\Bbbk[\epsilon]}\ip[\epsilon](n)
\to \ip[\epsilon](m+n-1)\]
\begin{equation}
\label{E2.9.1}\tag{E2.9.1}
\mu \ucr{i}^\ep \nu=
\mu \ucr{i} \nu +\wp_i(\mu,\nu) t.
\end{equation}
where $\wp_i:\ip(m) \otimes \ip(n) \to \ip(m+n-1)$
is $\Bbbk$-linear for all $m, n$ and $1\le i\le m$.

\item[(1b)]
for each $m\geq 0$,
\[\ast^\epsilon: \ip[\epsilon](m) \otimes_{\Bbbk[\epsilon]} 
\Bbbk[\epsilon] \S_m \to \ip[\epsilon](m)\]
\begin{equation}
\label{E2.9.2}\tag{E2.9.2}
\mu \ast^\epsilon \sigma =\mu \ast \sigma +\varsigma(\mu, \sigma) t.
\end{equation}
where $\varsigma: \ip(m)\otimes \Bbbk \S_m \to \ip(m)$
is $\Bbbk$-linear  for all $m\ge 0$.
\end{enumerate}
Note that every infinitesimal deformation produces a $\wv$-collection
by \eqref{E2.9.1} and \eqref{E2.9.2}.
\item[(2)]
Two infinitesimal deformations
$(\ip[\epsilon], \ucr{i}^{\epsilon}, \ast^{\epsilon})$ and 
$(\ip[\epsilon], \underset{i}{\widetilde{\circ}}^{\epsilon},
\widetilde{\ast}^{\epsilon})$ are called {\it equivalent} if 
there is an automorphism
$\Phi$ of $\Bbbk[\epsilon]$-modules $\ip[\epsilon]$ such that
\begin{enumerate}
\item[(2a)]
$\Phi$ is an isomorphism of operads from
$(\ip[\epsilon], \ucr{i}^{\epsilon}, \ast^{\epsilon})$ to 
$(\ip[\epsilon], \underset{i}{\widetilde{\circ}}^{\epsilon},
\widetilde{\ast}^{\epsilon})$, and
\item[(2b)]
there is a $\Bbbk$-linear map $\partial:\ip\to\ip$ such that
$\Phi(\mu)=\mu+\partial(\mu) t$ for all $\mu\in \ip(m)$.
\end{enumerate}
\item[(3)]
An infinitesimal deformation is called {\it trivial} if it is equivalent to
the one defined by zero $\wv$-collection.
\item[(4)]
The set of infinitesimal deformations of $\ip$ modulo the equivalent relation
defined in part (2) is denoted by $\idf (\ip)$.
\end{enumerate}
\end{definition}

\begin{theorem}
\label{yythm2.10}
The following hold.
\begin{enumerate}
\item[(1)]
There is a one-to-one correspondence between $Z^2_{\ast}(\ip)$ and
the set of infinitesimal deformations of $\ip$ via
\eqref{E2.9.1} and \eqref{E2.9.2}.
\item[(2)]
Under the correspondence in part {\rm{(1)}}, a 2-cocycle is a
2-coboundary if and only if the corresponding infinitesimal deformation
is trivial.
\item[(3)]
There is a natural isomorphism between $\idf (\ip)$ and
$H^2_{\ast}(\ip)$.
\end{enumerate}
\end{theorem}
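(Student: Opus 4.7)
The plan is to prove all three parts by translating, term by term, the axioms of an operad (respectively, of an operad isomorphism) on $\ip[\ep]$ into the linear-in-$t$ identities defining 2-cocycles (respectively, 2-coboundaries), using that $t^2=0$ in $\Bbbk[\ep]$.

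For part (1), given a $\wv$-collection $(\wp_i,\vs)$, extend $\ucr{i}^\ep$ and $\ast^\ep$ to $\ip[\ep]$ by $\Bbbk[\ep]$-linearity from (E2.9.1) and (E2.9.2). I would verify that these define an operad structure on $\ip[\ep]$ if and only if $(\wp_i,\vs)\in Z^2_{\ast}(\ip)$. Concretely, substitute $\ucr{i}^\ep$ and $\ast^\ep$ into each axiom (OP1$'$), (OP2$'$), (OP3$'$) of Definition \ref{yydef0.1}, expand, and collect coefficients. The $t^0$ coefficients reproduce the operad axioms for $\ip$ and so hold automatically. The $t^1$ coefficient of the two associativity equations in (OP2$'$) is precisely (E2.4.1) and (E2.4.2); the $t^1$ coefficient of the two equivariance equations in (OP3$'$) gives (E2.4.3) and (E2.4.4); and the $t^1$ coefficient of the associativity $\mu\ast^\ep(\sigma\tau)=(\mu\ast^\ep\sigma)\ast^\ep\tau$ of the $\S$-action gives (E2.4.5). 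Since our deformations preserve $\1$, (OP1$'$) forces $\wp_i(\theta,\1)=0=\wp_1(\1,\theta)$; by Lemma \ref{yylem2.6} any 2-cocycle can be normalized without changing its class, so the correspondence extends to all of $Z^2_{\ast}(\ip)$. The bijection is inverse to the tautological map ``deformation $\mapsto$ $\wv$-collection'' of Definition \ref{yydef2.9}(1).

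For part (2), suppose $(\wp_i,\vs)=\wv(\partial)$ is a 2-coboundary via a $\Bbbk$-linear $\partial\colon\ip\to\ip$. Define $\Phi\colon\ip[\ep]\to\ip[\ep]$ by $\Phi(\mu)=\mu+\partial(\mu)t$. Then $\Phi$ is a $\Bbbk[\ep]$-linear automorphism (with inverse $\mu\mapsto\mu-\partial(\mu)t$), and modulo $t^2$ one checks directly that (E2.5.1) is exactly the identity $\Phi(\mu\ucr{i}^\ep\nu)=\Phi(\mu)\ucr{i}\Phi(\nu)$ at order $t^1$, and (E2.5.2) is exactly $\Phi(\mu\ast^\ep\sigma)=\Phi(\mu)\ast\sigma$ at order $t^1$. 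Hence $\Phi$ is an equivalence from the deformation $(\ucr{i}^\ep,\ast^\ep)$ to the trivial one $(\ucr{i},\ast)$. Conversely, any equivalence as in Definition \ref{yydef2.9}(2) is of the form $\Phi(\mu)=\mu+\partial(\mu)t$ for some $\Bbbk$-linear $\partial$; unwinding the two operad-isomorphism conditions modulo $t^2$ forces $(\wp_i,\vs)=\wv(\partial)$, so the 2-cocycle is a 2-coboundary.

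Part (3) is then formal: parts (1) and (2) give a bijection $\idf(\ip)\to Z^2_{\ast}(\ip)/B^2_{\ast}(\ip)=H^2_{\ast}(\ip)$, and the $\Bbbk$-module structure matches because addition of $\wv$-collections corresponds to sum of the linear-in-$t$ corrections. The main obstacle is bookkeeping: the associativity and equivariance axioms each split into two index cases, and one must match each piece of the expansion of (OP2$'$) and (OP3$'$) against the right clause of (E2.4.1)--(E2.4.4). The computations are formally routine but notationally heavy, so the care lies in systematically tracking the indices $i,j,k$ and the permutations $\sigma',\phi''$ from \eqref{E0.1.1}.
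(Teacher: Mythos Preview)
Your proposal is correct and follows essentially the same approach as the paper: both expand the operad axioms (OP2$'$), (OP3$'$), and the $\S$-action associativity modulo $t^2$ and identify the $t^1$-coefficients with (E2.4.1)--(E2.4.5), then handle part (2) via $\Phi(\mu)=\mu+\partial(\mu)t$ exactly as you describe. The only minor difference is your treatment of (OP1$'$): you invoke Lemma~\ref{yylem2.6} to normalize, whereas the paper observes directly that the cocycle equations (E2.4.1)--(E2.4.2) with $\mu=\nu=\1$ force $\wp_i(\la,\1)=\la\ucr{i}\wp_1(\1,\1)$ and $\wp_1(\1,\nu)=\wp_1(\1,\1)\ucr{1}\nu$, so the deformed structure admits $\1-\wp_1(\1,\1)t$ as its identity---hence (OP1$'$) holds automatically and the bijection in (1) is genuinely with all of $Z^2_{\ast}(\ip)$, not just the normalized cocycles.
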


\begin{proof}
(1) For any 2-cocycle $\wv$ of $\ip$, one can define the
$\Bbbk[\epsilon]$-linear operadic structure on $\ip[\epsilon]$,
with the partial composition $-\ucr{i}^\ep -$
given by \eqref{E2.9.1} and $\S$-actions $\ast^\epsilon$ given
by \eqref{E2.9.2}. In fact, using \eqref{E2.4.1} and \eqref{E2.4.2},
we can check that the partial composition
$-\ucr{i}^\ep -$ satisfy the equations in (OP2$'$),
and using \eqref{E2.4.3}-\eqref{E2.4.5}, we can check that the
equations in (OP3$'$) holds. The equations in (OP1$'$)
follows from Definition \ref{yydef2.4}(2a). Therefore, each
2-cocycle $\wv$ determines an infinitesimal deformation of $\ip$.

Conversely, every infinitesimal deformation of $\ip$ produces
a natural $\wv$-collection by \eqref{E2.9.1} and \eqref{E2.9.2}.
Since $\ast^\epsilon$ is an $\S$-action, \eqref{E2.4.5}
holds. By (OP$2'$) and (OP$3'$) for the $\Bbbk[\epsilon]$-linear
operad $\ip[\epsilon]$, we immediately obtain the equations
\eqref{E2.4.1}, \eqref{E2.4.2}, \eqref{E2.4.3} and \eqref{E2.4.4},
which means that this $\wv$-collection is a 2-cocycle.

(2) Let $\wv$ be a 2-coboundary. Then there exists a $\Bbbk$-linear
map $\partial\colon \ip \to \ip$ such that
\begin{equation}
\label{E2.10.1}\tag{E2.10.1}
\wp_i(\mu, \nu) =\partial(\mu)\ucr{i} \nu
+\mu\ucr{i} \partial(\nu)-\partial(\mu\ucr{i} \nu)
\end{equation}
and
\[\varsigma(\mu, \sigma)=
\partial(\mu)\ast \sigma-\partial(\mu\ast \sigma),\]
for any $\mu\in \ip(m), \nu\in \ip(n), \sigma\in \S_m$, which
determines an infinitesimal deformation
$(\ip[\epsilon], -\ucr{i}^\epsilon-, \ast^\epsilon)$
of $\ip$ by the part (1).

Consider the $\Bbbk[\epsilon]$-linear map
$\Phi\colon \ip[\epsilon] \to \ip[\epsilon]$ given by
\[\Phi(\mu)=\mu+\partial(\mu) t\]
for any $\mu\in \ip(m)$. Clearly, $\Phi$ is a $\Bbbk[\e]$-automorphism
of $\ip[\epsilon]$. The zero $\wv$-collection yields the trivial
infinitesimal deformation, whose partial composition is still
denoted by $-\ucr{i}-$. By \eqref{E2.9.1}, we have
\begin{align*}
\Phi(\mu)\ucr{i} \Phi(\nu)&=(\mu+\partial(\mu)t)
\ucr{i} (\nu+\partial(\nu)t) \\
&=\mu\ucr{i} \nu+(\partial(\mu) \ucr{i}
\nu + \mu \ucr{i} \partial(\nu))t\\
&= \mu\ucr{i} \nu+(\partial(\mu \ucr{i}\nu)
+\wp_i(\mu, \nu))t\\
& =\Phi(\mu \ucr{i}^\epsilon \nu)
\end{align*}
and
\begin{align*}
\Phi(\mu\ast^\epsilon \sigma)&= \Phi(\mu\ast \sigma+\varsigma(\mu, \sigma)t)\\
&= \mu\ast \sigma+(\partial(\mu\ast\sigma)+\varsigma(\mu, \sigma))t\\
&= \mu\ast \sigma+(\partial(\mu)\ast \sigma)t\\
&= \Phi(\mu)\ast \sigma
\end{align*}
It follows that $\Phi$ is an isomorphism of operads, and
$(\ip[\epsilon], \ucr{i}^\epsilon, \ast^\epsilon)$
is a trivial infinitesimal deformation.

Conversely, if the infinitesimal deformation
$(\ip[\epsilon], \ucr{i}^\epsilon, \ast^\epsilon)$
defined by the 2-cocycle $\wv=(\wp_i, \varsigma)$ is trivial, 
then there exists an isomorphism $\Phi$ from
$(\ip[\epsilon], \ucr{i}^\epsilon, \ast^\epsilon)$
to the one deformed by zero $\wv$-collection, such that 
$\Phi(\mu)=\mu+\partial(\mu)t$ for all $\mu\in \ip(m)$,
where $\partial\colon \ip \to \ip$ is a $\Bbbk$-linear 
map. By easy calculation,
we know that the equations \eqref{E2.5.1}-\eqref{E2.5.2} hold, and
therefore $\wv=(\wp_i, \varsigma)$ is a 2-coboundary.

(3) It is obvious by the parts (1) and (2).
\end{proof}

\begin{remark}
\label{yyrem2.11}
From Lemma \ref{yylem2.6}, it is easily seen that each 
infinitesimal deformation that do not preserve the identity
$\1\in \ip(1)$ is equivalent to one preserving the 
identity. 
\end{remark}

\begin{theorem}
\label{yythm2.12}
Let $\ip$ be an operad over a field $\Bbbk$ and let
$\wv$ be a 2-cocycle of $\ip$.
\begin{enumerate}
\item[(1)]
If $\Ext^1_{\Bbbk \S_n}(\ip(n),\ip(n))=0$ for a fixed integer
$n$, then $\wv$ is equivalent to a 2-cocycle with $\varsigma_n=0$.
\item[(2)]
Suppose that $\Ext^1_{\Bbbk \S_m}(\ip(m),\ip(m))=0$ for all $m$.
Then $\wv$ is equivalent to an $\S$-2-cocycle.
\item[(3)]
Suppose that $\Ext^1_{\Bbbk \S_m}(\ip(m),\ip(m))=0$ for all $m$.
{\rm{(}}This is automatic if $\Bbbk$ is a field of
characteristic zero.{\rm{)}} Then
$H^2_{\ast}(\ip)\cong H^2(\ip)$.
\end{enumerate}
\end{theorem}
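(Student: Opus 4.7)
The overall strategy is to view the $\varsigma$-component of a 2-cocycle $\wv=(\wp_i,\varsigma)$ degree by degree as maps $\varsigma_n:\ip(n)\otimes\Bbbk\S_n\to\ip(n)$, and to observe that \eqref{E2.4.5} is precisely the 1-cocycle condition for a first-order deformation of $\ip(n)$ as a right $\Bbbk\S_n$-module; similarly, \eqref{E2.5.2} restricted to degree $n$ is the corresponding 1-coboundary condition. Hence the class of $\varsigma_n$ modulo coboundaries lies in $\Ext^1_{\Bbbk\S_n}(\ip(n),\ip(n))$, and the whole theorem is really a statement about killing these $\Ext^1$-classes one degree at a time.

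For part (1), the vanishing hypothesis supplies a $\Bbbk$-linear map $\partial_n:\ip(n)\to\ip(n)$ with $\varsigma_n(\mu,\sigma)=\partial_n(\mu\ast\sigma)-\partial_n(\mu)\ast\sigma$ for all $\mu\in\ip(n)$, $\sigma\in\S_n$. Extend $\partial_n$ to a degree-preserving $\Bbbk$-linear map $\partial:\ip\to\ip$ by setting $\partial|_{\ip(m)}=0$ for $m\ne n$, and form the 2-coboundary $\wv(\partial)=(\wp(\partial)_i,\varsigma(\partial))$ from Definition \ref{yydef2.5}(1). A degree-wise inspection shows $\varsigma(\partial)_n=\varsigma_n$ by construction, while $\varsigma(\partial)_m=0$ for $m\ne n$ because both terms of \eqref{E2.5.2} vanish when $\partial$ annihilates the input. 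Therefore $\widetilde{\wv}:=\wv-\wv(\partial)$ is a 2-cocycle equivalent to $\wv$ with $\widetilde{\varsigma}_n=0$ and $\widetilde{\varsigma}_m=\varsigma_m$ for $m\ne n$. Part (2) is then immediate: perform this construction in every degree, assemble the degree-wise $\partial_m$ into a single $\partial:\ip\to\ip$, and subtract $\wv(\partial)$ from $\wv$. Because the construction in part (1) leaves the other degrees untouched, a single subtraction kills $\varsigma$ globally, producing an $\S$-2-cocycle equivalent to $\wv$.

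For part (3), the inclusions $Z^2(\ip)\subseteq Z^2_{\ast}(\ip)$ and $B^2(\ip)\subseteq B^2_{\ast}(\ip)$ induce a natural $\Bbbk$-linear map $H^2(\ip)\to H^2_{\ast}(\ip)$. Surjectivity follows from part (2). For injectivity, suppose $\wv\in Z^2(\ip)$ (so $\varsigma=0$) also lies in $B^2_{\ast}(\ip)$, i.e.\ $\wv=\wv(\partial)$ for some $\Bbbk$-linear $\partial$. Then $0=\varsigma(\partial)(\mu,\sigma)=\partial(\mu\ast\sigma)-\partial(\mu)\ast\sigma$, so $\partial$ is $\S$-equivariant, and $\wv(\partial)$ qualifies as an $\S$-2-coboundary in the sense of Definition \ref{yydef2.5}(2). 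Hence $\wv\in B^2(\ip)$. Finally, the parenthetical remark follows from Maschke's theorem: when ${\rm{char}}\,\Bbbk=0$, every $\Bbbk\S_m$ is semisimple, and so $\Ext^1_{\Bbbk\S_m}(\ip(m),\ip(m))=0$ automatically.

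The main obstacle I anticipate is setting up the dictionary cleanly between \eqref{E2.4.5}/\eqref{E2.5.2} and the usual 1-cocycle/1-coboundary pair controlling module deformations, so that invoking the $\Ext^1$ hypothesis is legitimate. Once that dictionary is in place, the essential trick is the extension-by-zero of $\partial_n$: verifying that it does not introduce spurious $\varsigma_m$ for $m\ne n$ is the one technical checkpoint, and everything else is formal bookkeeping against Definitions \ref{yydef2.4} and \ref{yydef2.5}.
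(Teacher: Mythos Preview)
Your proposal is correct and follows essentially the same approach as the paper: interpret \eqref{E2.4.5} as the Hochschild 1-cocycle condition for $\varsigma_n$ in $\HH^1(\Bbbk\S_n,\End_\Bbbk(\ip(n)))\cong\Ext^1_{\Bbbk\S_n}(\ip(n),\ip(n))$, use the vanishing hypothesis to trivialize it via some $\partial_n$, extend $\partial_n$ by zero to the other degrees, and subtract the resulting 2-coboundary. Your injectivity argument in part (3) is slightly more explicit than the paper's, which simply observes that $B^2(\ip)=Z^2(\ip)\cap B^2_{\ast}(\ip)$ by definition (an $\S$-2-coboundary is just a 2-coboundary with $\varsigma=0$), but your version is correct and arguably more informative.
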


\begin{proof}
(1) For each fixed $n$, the equation \eqref{E2.4.5} implies that
$\varsigma_n$ is a 1-cocycle in the complex
\[X^\bullet :=\Hom_{\Bbbk \S_n}(\ip(n) \otimes_{\Bbbk \S_n} 
C^\bullet(\Bbbk \S_n), \ip(n)),\]
where $C^\bullet(\Bbbk \S_n)$ 
is the Hochschild cochain complex of the group algebra 
$\Bbbk \S_n$.

By \cite[Lemma 9.1.9]{We} and our assumption,
the first Hochschild cohomology group
\[\HH^1(\Bbbk\S_n, \End_\Bbbk(\ip(n))=\Ext_{\Bbbk\S_n}^1(\ip(n), \ip(n))=0.\]
Therefore, the $\Bbbk\S_n$-module $\ip(n)$ is rigid, and
$\varsigma_n$ being a 1-coboundary is equivalent to that
there is an $\partial_n:\ip(n)\to \ip(n)$ such that
$\varsigma_n(\mu, \sigma)=\partial_n(\mu\ast \sigma)
-\partial_n(\mu)\ast \sigma$ for all $\mu\in \ip(n)$ and
$\sigma\in \S_{n}$. 
Let $\partial$ be the collection $(\partial_m)_{m\geq 0}$ where
\[\partial_m=
\begin{cases}
0,          & m\neq n,\\
\partial_n, & m=n.
\end{cases}\]
Define $(\widetilde{\wp}, \widetilde{\varsigma})$ by
\begin{align*}
\widetilde{\wp}(\mu,\nu) &= \wp(\mu, \nu) -(\partial(\mu\ui \nu)
-\partial(\mu)\ui \nu-\mu \ui \partial(\nu))\\
\widetilde{\varsigma}(\mu, \sigma) &=\varsigma(\mu, \sigma)
-(\partial(\mu\ast \sigma)-\partial(\mu)\ast \sigma).
\end{align*}
Then $(\widetilde{\wp}, \widetilde{\varsigma})$ is equivalent
to $(\wp, \varsigma)$ by Definition \ref{yydef2.5}.
By the choice of $\partial$, $\widetilde{\varsigma}_n=0$ as
desired.

(2) By the proof of part (1), since
$\Ext^1_{\Bbbk \S_m}(\ip(m),\ip(m))=0$, there is an
$\partial_m$ such that there is an $\partial_m:\ip(m)\to \ip(m)$
such that $\varsigma_m(\mu, \sigma)=\partial_m(\mu\ast \sigma)
-\partial_m(\mu)\ast \sigma$ for all $\mu\in \ip(m)$ and
$\sigma\in \S_{m}$. Let $\partial$ be the collection
$(\partial_m)_{m\geq 0}$. Define
$(\widetilde{\wp}, \widetilde{\varsigma})$ by
$$\begin{aligned}
\widetilde{\wp}(\mu,\nu) &= \wp(\mu, \nu) -(\partial(\mu\ui \nu)
-\partial(\mu)\ui \nu-\mu \ui \partial(\nu))\\
\widetilde{\varsigma}(\mu, \sigma) &=\varsigma(\mu, \sigma)
-(\partial(\mu\ast \sigma)-\partial(\mu)\ast \sigma).
\end{aligned}
$$
Then $(\widetilde{\wp}, \widetilde{\varsigma})$ is equivalent
to $(\wp, \varsigma)$ by Definition \ref{yydef2.5}.
By the choice of $\partial$, $\widetilde{\varsigma}_m=0$ for
all $m$. So $(\widetilde{\wp}, \widetilde{\varsigma})$ is
an $\S$-2-cocycle.

(3) When $\Bbbk$ is a field of characteristic zero,
$\Bbbk \S_m$ is semisimple for each $m$. As a consequence, for every
$m$, $\Ext^1_{\Bbbk \S_m}(\ip(m),\ip(m))=0$.

By part (2), every 2-cocycle is equivalent to an $\S$-2-cocycle.
Consider the map $Z^2(\ip)\to Z^2_{\ast}(\ip)\to H^2_{\ast}(\ip)$.
Then this map is surjective. Therefore it induces an isomorphism
$$H^2(\ip)=Z^2(\ip)/B^2(\ip)=
Z^2(\ip)/(Z^2(\ip)\cap B^2_{\ast}(\ip))
\xrightarrow{\quad \cong \quad} Z^2_{\ast}(\ip)/B^2_{\ast}(\ip)=H^2_{\ast}(\ip).$$
\end{proof}

If $(\wp, \varsigma)$ is an $\S$-2-cocycle, then the deformation
equations \eqref{E2.4.1}-\eqref{E2.4.4} can be rewritten as
follows (and \eqref{E2.4.5} is automatic):
for $\lambda \in \ip(l)$, $\mu\in \ip(m)$ and $\nu\in \ip(n)$,
\begin{equation}
\label{E2.11.1}\tag{E2.11.1}
\wp_{i-1+j}(\la  \ucr{i} \mu, \nu)
+\wp_{i}(\la,\mu) \underset{i-1+j}{\circ} \nu
=\wp_i(\la, \mu \underset{j}{\circ} \nu)
+ \la \ucr{i} \wp_{j}(\mu, \nu),
\end{equation}
if $1\le i\le l, 1\le j\le m$, and
\begin{equation}
\label{E2.11.2}\tag{E2.11.2}
\wp_{k-1+m}(\la  \ucr{i} \mu,\nu)
+ \wp_{i}(\la, \mu) \underset{k-1+m}{\circ} \nu
=\wp_{i}(\la \underset{k}{\circ}\nu, \mu)
+\wp_{k}(\la,\nu) \ucr{i} \mu,
\end{equation}
if $1\le i<k\le l$, for $\mu\in \ip(m)$, $\phi\in \S_m$,
$\nu\in \ip(n)$ and $\sigma\in \S_{n}$,
\begin{equation}
\label{E2.11.3}\tag{E2.11.3}
\wp_i(\mu, \nu \ast \sigma)=\wp_i(\mu, \nu)\ast \sigma'
\end{equation}
\begin{equation}
\label{E2.11.4}\tag{E2.11.4}
\wp_i(\mu\ast \phi, \nu)=\wp_{\phi(i)}(\mu, \nu)\ast \phi'',
\end{equation}
where $\sigma'$ and $\phi''$ are given in \eqref{E0.1.1}.

In view of Theorem \ref{yythm2.10}(3), we make the following
definition.

\begin{definition}
\label{yydef2.13}
Let $\ip$ be an operad. We say $\ip$ is {\it $\idf$-rigid}
(resp., {\it $\idf$-semirigid}) if $H^2_{\ast}(\ip)=0$
(resp., $H^2_{\ast}(\ip)=\Bbbk$).
\end{definition}

\section{Calculation of derivations, automorphisms and $H^1$}
\label{yysec3}

First we recall the definition of ${^k \iu}$
for a unitary operad $\ip$ from \cite{BYZ}. Let $\n$
be the set $\{1, \cdots, n\}$ and $I$ be a subset of $\n$.
Let $\chi_{I}$ be the characteristic function of $I$, i.e.
$\chi_{I}(i)=1$ for $i\in I$ and $\chi_{I}(i)=0$ otherwise.
Then one defines the {\it restriction operator}
$\pi^I: \ip(n)\to \ip(s)$, where $s=|I|$, by
\[ \pi^I(\theta)=
\theta\circ(\1_{\chi_{I}(1)}, \cdots, \1_{\chi_{I}(n)})
\]
for all $\theta\in \ip(n)$, see \cite[Section 2.2.1]{Fr} or 
\cite[Section 2.2]{BYZ}. For $k\geq 1$, the
{\it $k$-th truncation ideal} of $\ip$, denoted by $^k \iu$, is
defined by
\begin{equation}
\label{E3.0.1}\tag{E3.0.1}
^k\iu(n) =\begin{cases}
\bigcap\limits_{I\subset \n,\, |I|= k-1} \ker\pi^I, & \text{if }n\geq k;\\
\quad\ \ 0, & \text{if } n<k.
\end{cases}
\end{equation}
By convention, ${^0 \iu}=\ip$. See \cite{BYZ} for several
applications of the truncation ideals ${^k \iu}$.

The aim of this section is to calculate invariants such as
derivations, automorphisms and $H^1$ for the following
operads
$$\Com, \quad \As, \quad \As/{^k \iu}, \quad \Lie,
\quad \Pois, \quad \Pois/{^k \iu}$$
for $k\geq 1$.

A derivation $\partial$ of $\ip$ is called
{\it locally nilpotent} if for every $a\in \ip$, there is
an $n$ such that $\partial^n(a)=0$. We start with two easy
lemmas.

\begin{lemma}
\label{yylem3.1}
Let $\ip$ be an operad and let $\partial$ be a derivation of
$\ip$.
\begin{enumerate}
\item[(1)]
Let $T$ be a generating set of $\ip$. If $\partial$ is zero
when restricted to $T$, then $\partial$ is zero.
\item[(2)]
Suppose $\Bbbk$ is a field. If $\ip$ is locally finite and
finitely generated, then $\der(\ip)$ is a finite dimensional
Lie algebra.
\item[(3)]
Suppose $\ip$ is $2$-unitary with a $2$-unit $\1_2$.
If $\partial(\1_0)=-c\1_0$ for some $c\in \Bbbk$, then
$\partial(\1_2)-c\1_2\in {^2\iu}(2)$.
\item[(4)]
Suppose ${\mathbb Q}\subseteq \Bbbk$.
If $\partial$ is a locally nilpotent derivation of $\ip$, then
$\Phi:=\exp(\partial)$ is an automorphism of $\ip$, 
where $\exp(\partial)(\theta)=
\sum\limits_{i=0}^\infty \dfrac{1}{i!}\partial^n(\theta)$ 
for all $\theta$ in $\ip$.
\end{enumerate}
\end{lemma}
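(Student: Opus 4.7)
\emph{Plan.} For part (1), the strategy is to propagate the vanishing of $\partial$ along the generation procedure. Every element of $\ip$ is a finite $\k$-linear combination of iterated expressions built from $T$ by partial compositions and right $\S$-actions. Since a derivation is required to preserve each $\S_n$-module structure, one has $\partial(\mu\ast\sigma)=\partial(\mu)\ast\sigma$ in addition to the Leibniz identity $\partial(\mu\ucr{i}\nu)=\partial(\mu)\ucr{i}\nu+\mu\ucr{i}\partial(\nu)$. A straightforward induction on the length of such an expression then shows that $\partial|_T=0$ forces $\partial=0$ on all of $\ip$.

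For part (2), choose a finite generating set $T=\{t_1,\dots,t_r\}$ with $t_j\in \ip(n_j)$. By part (1) the $\k$-linear map $\der(\ip)\to \bigoplus_{j=1}^{r}\ip(n_j)$, $\partial\mapsto(\partial(t_j))_j$, is injective. Each $\ip(n_j)$ is finite dimensional by local finiteness, so $\der(\ip)$ is finite dimensional, and it is a Lie algebra by Lemma \ref{yylem1.4}(1).

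For part (3), the key inputs are $\partial(\1)=0$ and the $2$-unit axioms $\1_2\ucr{1}\1_0=\1=\1_2\ucr{2}\1_0$. The first is obtained by applying $\partial$ to $\1\ucr{1}\1=\1$, which yields $2\partial(\1)=\partial(\1)$ (using $\theta\ucr{i}\1=\theta=\1\ucr{1}\theta$) and hence $\partial(\1)=0$. Applying $\partial$ to each $2$-unit axiom and using $\partial(\1_0)=-c\1_0$ gives
\[\partial(\1_2)\ucr{i}\1_0 = c\,(\1_2\ucr{i}\1_0)=c\1\qquad\text{for }i=1,2.\]
Unpacking the definition $\pi^I(\theta)=\theta\circ(\1_{\chi_I(1)},\1_{\chi_I(2)})$ for $\theta\in \ip(2)$ and $I\subset\{1,2\}$ with $|I|=1$, one identifies $\pi^{\{1\}}(\theta)=\theta\ucr{2}\1_0$ and $\pi^{\{2\}}(\theta)=\theta\ucr{1}\1_0$. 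Since $\pi^I(\1_2)=\1$ in both cases, the displayed equation reads $\pi^I(\partial(\1_2))=c\,\pi^I(\1_2)$, i.e., $\pi^I(\partial(\1_2)-c\1_2)=0$. By \eqref{E3.0.1} this is exactly the assertion $\partial(\1_2)-c\1_2\in{}^2\iu(2)$.

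For part (4), the plan is to verify the three endomorphism axioms of Definition \ref{yydef1.5}(1) for $\Phi=\exp(\partial)$; the local nilpotence hypothesis guarantees that the defining series is a finite sum when evaluated on any element of $\ip$, and that $\exp(-\partial)$ provides a two-sided inverse. Degree preservation and $\S$-equivariance of $\Phi$ follow at once from those of $\partial$. The composition axiom reduces, by iterating the Leibniz rule, to the binomial identity
\[\partial^{n}(\mu\ucr{i}\nu)=\sum_{k=0}^{n}\binom{n}{k}\partial^{k}(\mu)\ucr{i}\partial^{n-k}(\nu),\]
after which a standard rearrangement of the (finite) double sum yields $\Phi(\mu\ucr{i}\nu)=\Phi(\mu)\ucr{i}\Phi(\nu)$. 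Finally $\Phi(\1)=\1$ since $\partial(\1)=0$, as established in part (3). The main obstacle throughout is part (3), which requires carefully translating the restriction operators $\pi^I$ into partial compositions with $\1_0$ and then matching terms; the remaining parts are essentially formal consequences of the Leibniz rule and $\S$-equivariance.
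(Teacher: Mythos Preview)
Your proposal is correct and follows essentially the same approach as the paper. In part (2) you evaluate at a finite generating set $\{t_1,\dots,t_r\}$ to get an injection into $\bigoplus_j\ip(n_j)$, whereas the paper takes $T=\bigoplus_{s\le d}\ip(s)$ and embeds $\der(\ip)\hookrightarrow\Hom(T,T)$; in part (3) you explicitly justify $\partial(\1)=0$, which the paper simply uses, and your unpacking of $\pi^I$ in terms of $-\ucr{i}\1_0$ matches the paper's direct computation of $(\partial(\1_2)-c\1_2)\ucr{i}\1_0=0$.
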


\begin{proof}
(1) This is easy.

(2) Suppose $\ip$ is generated by $T:=\bigoplus_{s=0}^d \ip(s)$
which is finite dimensional over $\Bbbk$ as $\ip$ is locally finite.
Then the map $\partial\mapsto \partial\mid_{T}$ is injective
by part (1). Note that $\partial\mid_{T}$ is a linear map
of $T$, or $\partial\mid_{T}\in \Hom(T,T)$.
This implies that $\dim_{\Bbbk} (\der(\ip)) \leq (\dim_{\Bbbk} T)^2$.

(3) By definition, we have $\1_2\ucr{i} \1_0=\1_1$ for
$i=1, 2$. Applying the derivation $\partial$, we get
$$\partial(\1_2)\ucr{i}\1_0=
\partial(\1_1)-\1_2 \ucr{i}\partial(\1_0)
=0+c\1_1=c\1_1
$$
by assumption for $i=1, 2$. Set $\mu=\partial(\1_2)-c\1_2$, and we have
$$\mu\ucr{i}\1_0=(\partial(\1_2)-c\1_2)\ucr{i}\1_0
=c\1_1-c\1_1=0.
$$
This implies $\mu\in {^2\iu}(2)$ by definition.

(4) Clearly, $\Phi=\exp(\partial)$ is well-defined since
$\partial$ is locally nilpotent, and $\exp(\partial)$ is an
automorphism of $\Bbbk \S$-module by
$\exp(\partial)\exp(-\partial)=\id_\ip$.

Let $\mu\in \ip(m), \nu\in \ip(n)$. Since the derivation $\partial$
is locally nilpotent, we have
\[\partial^{k_\mu}(\mu)=0,\ {\rm and} \ \partial^{k_\nu}(\nu)=0\]
for some $k_\mu, k_\nu\in \mathbb{N}$. Then
\begin{align*}
\exp(\partial)(\mu\ucr{i}\nu)
&=\sum\limits_{k=0}^{k_\mu+k_\nu-1}\dfrac{1}{k!}
\left(\sum\limits_{r=0}^k {k\choose r} 
\partial^r(\mu)\ucr{i} \partial^{k-r}(\nu)\right)\\
&=\sum\limits_{k=0}^{k_\mu+k_\nu-1}
\left(\sum\limits_{r+s=k}(\dfrac{1}{r!}\partial^r(\mu))
\ucr{i} (\dfrac{1}{s!}\partial^{s}(\nu))\right)\\
&=\left(\sum\limits_{r=0}^{k_\mu-1}
\dfrac{1}{r!}\partial^r(\mu)\right)
\left(\sum\limits_{s=0}^{k_\nu-1}\dfrac{1}{s!}\partial^s(\nu)\right)\\
&=\exp(\partial)(\mu) \ucr{i}\exp(\partial)(\nu)
\end{align*}
It follows that $\exp(\partial)$ is an an automorphism of the
operad $\ip$.
\end{proof}

\begin{lemma}
\label{yylem3.2}
Let $\ip$ be an operad.
\begin{enumerate}
\item[(1)]
$\sf(\ip)\subseteq \ider(\ip)\subseteq \der(\ip)$.
\item[(2)]
If, for every $0\neq c\in \Bbbk$, there is an $n$ such that
$c(n-1) \ip(n)\neq 0$, then $\sf(\ip)=\Bbbk$.
\end{enumerate}
\end{lemma}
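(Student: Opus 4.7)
For part (1), the plan is first to identify each superfluous derivation $\sf_c$ as an explicit inner derivation, which immediately gives $\sf(\ip)\subseteq \ider(\ip)$; the inclusion $\ider(\ip)\subseteq \der(\ip)$ is then part of the definition (Lemma \ref{yylem1.2}(2)). Concretely, I would take $\la=c\1\in \ip(1)$ and, for $\theta\in \ip(n)$, compute
\[
\ad_{c\1}(\theta)=c\1\underset{1}{\circ}\theta-\sum_{i=1}^n \theta\ucr{i}(c\1)=c\,\theta-n\,c\,\theta=-(n-1)c\,\theta,
\]
using the identity axiom (OP1$'$). This exhibits $\sf_{-c}=\ad_{c\1}$, matching the identification already remarked in Remark \ref{yyrem1.3}(3). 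Since $c$ was arbitrary in $\Bbbk$, every $\sf_c$ lies in $\ider(\ip)$, proving (1).

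For part (2), the plan is to view the assignment $\Psi\colon \Bbbk\to \sf(\ip)$, $c\mapsto \sf_c$, as a morphism of $\Bbbk$-modules and to check it is an isomorphism. Surjectivity is by definition of $\sf(\ip)$: every element of $\sf(\ip)$ has the form $\sf_c$ for some $c\in \Bbbk$ (recall that the restriction, stated in Remark \ref{yyrem1.3}(2), forces a superfluous derivation on the operads we consider to be of this form, but here $\sf(\ip)$ is defined precisely as this family). $\Bbbk$-linearity is immediate since $\sf_{c+c'}(\theta)=(n-1)(c+c')\theta=\sf_c(\theta)+\sf_{c'}(\theta)$ and similarly for scalars. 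Injectivity is the content of the hypothesis: if $c\in \Bbbk$ satisfies $\sf_c=0$, then $(n-1)c\,\theta=0$ for every $n$ and every $\theta\in \ip(n)$, i.e.\ $c(n-1)\ip(n)=0$ for all $n$; by contrapositive of the assumption this forces $c=0$.

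There is no serious obstacle here. The only minor subtlety is making sure the two cancellations in the computation of $\ad_{c\1}$ really use the identity axiom (OP1$'$) of Definition \ref{yydef0.1} in both directions: $c\1\underset{1}{\circ}\theta=c\,\theta$ and $\theta\ucr{i}(c\1)=c\,\theta$ for each $1\le i\le n$. Once these are in hand, the count of the $n$ terms in the sum produces the factor $-(n-1)c$, and both statements follow.
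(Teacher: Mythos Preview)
Your proposal is correct and follows essentially the same approach as the paper: identify $\sf_c=\ad_{-c\1}$ (equivalently $\sf_{-c}=\ad_{c\1}$) for part (1), and use the hypothesis to show the $\Bbbk$-linear map $c\mapsto \sf_c$ is injective for part (2). One small correction: the identification $\sf_{-c}=\ad_{c\1}$ is stated just after Lemma~\ref{yylem1.2}, not in Remark~\ref{yyrem1.3}(3) (which concerns terminology for inner derivations).
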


\begin{proof}
(1) Every element in $\sf(\ip)$ is of the form
$\sf_{c}$, which is equal to $\ad_{-c\1}\in
\ider(\ip)$. Hence $\sf(\ip)\subseteq \ider(\ip)$.
It is clear that $\ider(\ip)\subseteq \der(\ip)$.

(2) Under the hypothesis, we see that $\sf_{c}\neq 0$
for every $0\neq c\in \Bbbk$. Thus the $\Bbbk$-linear
map $c\mapsto \sf_{c}$ induces an isomorphism from $\Bbbk$ to
$\sf(\ip)$.
\end{proof}

There is a version of Lemma \ref{yylem3.2} for automorphisms
of operads. Details are omitted.

Next we calculate derivations of commonly
used operads. Let $\ip=\As$ and let ${^k \iu}$ be
defined as in \eqref{E3.0.1}. By \cite[p.39]{BYZ},
${^3 \iu}(3)$ of the operad $\As$ is a free $\Bbbk$-module
of dimension two with basis elements
\begin{align}
\label{E3.2.1}\tag{E3.2.1}
\xi_1& =(1) - (12)+ (13) - (123) \in \Bbbk \S_3,\\
\label{E3.2.2}\tag{E3.2.2}
\xi_2& =(23) - (12) - (123) + (132) \in \Bbbk \S_3
\end{align}

\begin{proposition}
\label{yypro3.3} Let $\ip=\As$ or $\ip=\As/{^k \iu}$ for some
$k\geq 4$. In parts {\rm{(2, 3)}}, assume that $\Bbbk$ has no
nontrivial idempotent.
\begin{enumerate}
\item[(1)]
$\der(\ip)=\ider(\ip)\cong \Bbbk$. As a consequence, 
$H^1(\ip)=0$.
\item[(2)]
$\IAut(\ip)=\Bbbk^{\times}$
and $\Aut(\ip)=\Bbbk^{\times} \rtimes {\mathbb Z}_2$.
As a consequence, $\OAut(\ip)={\mathbb Z}_2$.
\item[(3)]
$\End(\ip)=\Aut(\ip)$.
\end{enumerate}
Consequently, $\ip$ is both $\der$-rigid and $\Aut$-rigid.
\end{proposition}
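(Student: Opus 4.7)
The plan is to exploit that $\As$ is generated as an operad by the $0$-unit $\1_0$ and the binary multiplication $\1_2$, subject to unitality $\1_2\ucr{i}\1_0=\1$ ($i=1,2$) and associativity $\1_2\ucr{1}\1_2=\1_2\ucr{2}\1_2$. Since ${}^k\iu(n)=0$ for $n<k$, passing to $\As/{}^k\iu$ with $k\geq 4$ does not disturb arities $\leq 3$, where all the key relations live; I will therefore treat both operads uniformly and check at the end that the extra ideal ${}^k\iu$ is preserved.

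For part (1): let $\partial\in\der(\ip)$. From $\1=\1\ucr{1}\1$ and $\ip(1)=\Bbbk\1$ one gets $\partial(\1)=0$. Writing $\partial(\1_0)=-c\1_0$, Lemma \ref{yylem3.1}(3) gives $\partial(\1_2)-c\1_2\in {}^2\iu(2)$, and a direct computation identifies ${}^2\iu(2)=\Bbbk\bigl(\1_2-\1_2\ast(12)\bigr)$. Hence $\partial(\1_2)=c\1_2+e\bigl(\1_2-\1_2\ast(12)\bigr)$ for some $e\in\Bbbk$. Applying $\partial$ to associativity and expanding the six resulting partial compositions in the basis $\S_3$ of $\ip(3)$ yields an equation of the form $e\cdot v=0$, where $v$ is a $\pm 1$ integer combination of four distinct elements of $\S_3$ (one recognizes $v=\pm\xi_2$ from \eqref{E3.2.2}). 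Since $\xi_2\ne 0$ in $\As/{}^k\iu$ for $k\geq 4$ (as ${}^k\iu(3)=0$), linear independence forces $e=0$, so $\partial=\sf_c=\ad_{-c\1}$. Combined with Lemmas \ref{yylem3.2}(1) and \ref{yylem3.2}(2), this yields $\der(\ip)=\ider(\ip)\cong\Bbbk$, hence $H^1(\ip)=0$.

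For parts (2) and (3): let $\Phi\in\End(\ip)$; then $\Phi(\1)=\1$, $\Phi(\1_0)=a\1_0$, and $\Phi(\1_2)=\alpha\1_2+\beta(\1_2\ast(12))$. Applying $\Phi$ to the unitality relations forces $a(\alpha+\beta)=1$, so $a\in\Bbbk^\times$. Applying $\Phi$ to associativity and expanding in $\Bbbk\S_3$ as before produces $\alpha\beta\cdot w=0$ for a nonzero $w$, hence $\alpha\beta=0$ in $\Bbbk$. Setting $u=a^{-1}=\alpha+\beta\in\Bbbk^\times$ and $e=\alpha u^{-1}$, the relations $\alpha\beta=0$ and $\alpha+\beta=u$ give $e^2=e$, so the no-nontrivial-idempotent hypothesis forces $(\alpha,\beta)\in\{(u,0),(0,u)\}$. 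The first case yields $\Phi=\ssf_{a^{-1}}$; the second yields $\Phi=\ssf_{a^{-1}}\circ\tau$, where $\tau$ is the involutive automorphism fixing $\1,\1_0$ and swapping $\1_2\leftrightarrow\1_2\ast(12)$ (the ``opposite algebra'' automorphism, $\S$-equivariant and arity-preserving, hence descending to $\As/{}^k\iu$). This simultaneously establishes $\End(\ip)=\Aut(\ip)$, that $\Aut(\ip)=\{\ssf_c\mid c\in\Bbbk^\times\}\sqcup\{\ssf_c\circ\tau\mid c\in\Bbbk^\times\}\cong\Bbbk^\times\rtimes\Z_2$, and, via $\aad_{c\1}=\ssf_c$, that $\IAut(\ip)=\Bbbk^\times$ and $\OAut(\ip)=\Z_2$.

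The main obstacle is the honest expansion of the associativity constraint: one must compute the six partial compositions built from $\1_2$ and $\1_2\ast(12)$ in the two slots, read off the six corresponding permutations in $\S_3$, and verify that the coefficient of $e$ (respectively $\alpha\beta$) is a sum of distinct basis elements with $\pm 1$ coefficients, so that it is nonzero in $\As/{}^k\iu$ whenever $k\geq 4$. A subsidiary but cleanly handled point is the step from $\alpha\beta=0$ with $\alpha+\beta$ invertible to $\alpha\in\{0,a^{-1}\}$, which is precisely the place where the no-nontrivial-idempotent hypothesis enters.
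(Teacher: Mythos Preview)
Your proposal is correct and follows essentially the same approach as the paper's proof: both arguments analyze how a derivation or endomorphism acts on the generators $\1_0,\1_2$, use unitality to constrain the form of $\partial(\1_2)$ or $\Phi(\1_2)$, and then apply the associativity relation to produce the key equation $e\,\xi_2=0$ (respectively $\alpha\beta\,\xi_2=0$), where $\xi_2\neq 0$ in $\As/{}^k\iu$ for $k\geq 4$. The only cosmetic differences are that the paper first normalizes by subtracting $\sf_c$ (respectively composing with $\ssf_c^{-1}$) before computing, whereas you carry the scalar along, and that you name the order-two automorphism $\tau$ explicitly while the paper leaves it implicit in the short exact sequence.
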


\begin{proof} We prove the assertions only for the operad
$\As$. The same proof works for operads $\As/{^k \iu}$
for all $k\geq 4$, where one of the key point is that
$\xi_2$ is a nonzero basis element in $\ip(3)$. However,
$H^1(\As/{^3 \iu})=\Bbbk$, see Theorem \ref{yythm3.8}(2).

(1) Recall that $\As(n)=\Bbbk \S_n$. The identity element
in $\Bbbk \S_n$ is denoted by $1_n$.

Let $\partial\in \der(\As)$ and let $\partial(1_0)
=-c 1_0$. Replacing $\partial$ by $\partial-\sf_{c}$,
we can assume that $c=0$ or $\partial(1_0)=0$. So
we have $\partial(1_n)=0$ for $n=0,1$.

Note that $\As$ is generated by $\{1_0,1_1,1_2\}$.
We claim that $\partial(1_2)=0$. If this holds,
by Lemma \ref{yylem3.1}(1), $\partial=0$.
Combining with the last paragraph, every
derivation of $\As$ is of the form $\sf_{c}$.
Thus the assertion follows from Lemma \ref{yylem3.2}.

Finally we prove the claim. Suppose $\partial(1_2)=
a 1_2+ b(12)$ for some $a, b\in \Bbbk$.
Since $1_2\underset{2}\circ 1_0=1_1$, after applying $\partial$,
we obtain that $\partial(1_2)\underset{2}\circ 1_0=0$.
This implies that $b=-a$. Applying $\partial$ to 
$1_2\underset{1}{\circ} 1_2=1_2\underset{2}{\circ} 1_2$,
we obtain
\[a[(1_2-(12))\underset{1}{\circ}1_2 +1_2\underset{1}{\circ}(1_2-(12))]
=a[(1_2-(12))\underset{2}{\circ}1_2+1_2\underset{2}{\circ}(1_2-(12))],\]
which is equivalent to
\[a \xi_2=0.\]
Since $\xi_2$ in \eqref{E3.2.2} is a basis element, $a=0$ as required.

(2, 3) Since $\As(1)=\Bbbk$, it is clear that $\IAut(\As)=\ssf(\As) =\Bbbk^{\times}$.
So we prove the second assertion of parts (2) and (3) together.

Let $\Phi$ be an endomorphism of $\As$.
Write $\Phi(1_0)=d 1_0$ and $\Phi(1_2)=a 1_2+b (12)$.
Applying $\Phi$ to the equation $1_2\underset{2}{\circ}1_0=1_1$,
we have $d(a+b)=1$. Hence $d$ is invertible.
Write $d=c^{-1}$ for some $c\in \Bbbk^{\times}$.
Then $\Phi(1_0)=c^{-1} 1_0$ for some $c\in \Bbbk^{\times}$.
Replacing $\Phi$ by $\Phi \circ (\ssf_{c})^{-1}$,
we can assume that $c=1$ or $\Phi(1_0)=1_0$.
So we have $\Phi(1_n)=1_n$ for $n=0,1$, and $a+b=1$.

We claim that $\Phi(1_2)=1_2$ or $(12)\in \S_2$. Applying 
$\Phi$ to $1_2\underset{1}{\circ}1_2=1_2\underset{2}{\circ}1_2$,
we obtain that
\[(a1_2+(1-a)(12))\underset{1}{\circ} (a1_2+(1-a)(12))
=(a1_2+(1-a)(12)) \underset{2}{\circ} (1_1, a1_2+(1-a)(12)\]
which is equivalent to
$$a(1-a) \xi_2=0.$$
Since $\xi_2$ is a basis element, we obtain that $a^2=a$. 
Since $\Bbbk$ does not have any nontrivial idempotent,
we obtain that either $(a, b)=(1,0)$ or $(a, b)=(0,1)$. In both cases,
we obtain an automorphism of $\As$. Therefore $\Phi$ is an
automorphism of $\As$ (part (4)) and there is a short exact
sequence
\[\{1\} \to \IAut(\As)\to \Aut(\As)\to {\mathbb Z}_2\to \{1\}.\]
Now the second assertion of part (3) follows.
\end{proof}

It is interesting to see that the fixed suboperad $\As^{{\mathbb Z}_2}$
[Lemma \ref{yylem1.6}(1)] is related (but not equal) to the operad
${\mathcal J}ord$ that
encodes the category of Jordan algebras. Note that special Jordan algebras
do not form a variety defined by polynomial identities, which means
that there is no operad that encodes just special Jordan algebras.
See discussions in \cite[Section 1.3]{BBM} and references in
\cite{BBM}.

The following Proposition is easy and its proof is similar to
the proof of Proposition \ref{yypro3.3}. So we skip its proof.

\begin{proposition}
\label{yypro3.4} Let $\ip=\Com$.
\begin{enumerate}
\item[(1)]
$\der(\ip)=\ider(\ip)\cong \Bbbk$.
As a consequence, $H^1(\ip)=0$.
\item[(2)]
$\IAut(\ip)=\Aut(\ip)=\End(\ip)=\Bbbk^{\times}$.
As a consequence, $\OAut(\ip)=\{1\}$.
\end{enumerate}
Consequently, $\Com$ is both $\der$-rigid and
$\Aut$-rigid.
\end{proposition}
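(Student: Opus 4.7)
The plan is to imitate the proof of Proposition~\ref{yypro3.3} while exploiting the fact that $\Com$ is much simpler than $\As$: each $\Com(n)$ is a one-dimensional $\Bbbk\S_n$-module with trivial $\S_n$-action, and $\Com$ is generated by $\{1_0, 1_1, 1_2\}$. A fortiori the $\S_n$-equivariance condition on derivations and endomorphisms is vacuous, which streamlines all the verifications.

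For part~(1), I would take an arbitrary $\partial \in \der(\Com)$ and write $\partial(1_0) = -c\cdot 1_0$ for some $c \in \Bbbk$. Replacing $\partial$ by $\partial - \sf_{c}$ reduces to the case $\partial(1_0) = 0$, and then $\partial(1_1) = 0$ automatically. Since $\Com(2) = \Bbbk 1_2$, write $\partial(1_2) = a\cdot 1_2$. Applying $\partial$ to the identity axiom $1_2 \ucr{2} 1_0 = 1_1$ yields $a \cdot (1_2 \ucr{2} 1_0) = 0$, hence $a = 0$. Thus $\partial$ vanishes on the generating set $\{1_0, 1_1, 1_2\}$, and Lemma~\ref{yylem3.1}(1) gives $\partial = 0$. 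So every derivation of $\Com$ is of the form $\sf_{c}$, and combining Lemma~\ref{yylem3.2}(1) with the observation that $(n-1)\Com(n) \ne 0$ for $n = 2$ yields $\der(\Com) = \ider(\Com) = \sf(\Com) \cong \Bbbk$, whence $H^1(\Com) = 0$.

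For part~(2), since $\Com(1) = \Bbbk 1_1$, a direct computation from Definition~\ref{yydef1.5}(5) gives $\aad_{c1_1} = \ssf_{c}$ for every $c \in \Bbbk^\times$, so $\IAut(\Com) = \ssf(\Com) \cong \Bbbk^\times$. To see every endomorphism lies in this set, take $\Phi \in \End(\Com)$ and write $\Phi(1_0) = d\cdot 1_0$ and $\Phi(1_2) = a \cdot 1_2$. Applying $\Phi$ to $1_2 \ucr{2} 1_0 = 1_1$ forces $ad = 1$, so $d \in \Bbbk^\times$ and $a = d^{-1}$. The superfluous automorphism $\ssf_{d^{-1}}$ then agrees with $\Phi$ on the generators $\{1_0, 1_1, 1_2\}$, so the two coincide. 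Consequently $\End(\Com) = \Aut(\Com) = \IAut(\Com) \cong \Bbbk^\times$ and $\OAut(\Com) = \{1\}$.

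There is essentially no obstacle in this argument. The place where the proof of Proposition~\ref{yypro3.3} had to invoke the nonzero basis element $\xi_2 \in {^3\iu}(3)$ of $\As$ to extract the quadratic constraint $a^2 = a$ (and in turn the assumption that $\Bbbk$ has no nontrivial idempotent) does not arise here, because the associativity relation $1_2 \ucr{1} 1_2 = 1_2 \ucr{2} 1_2$ lives in the one-dimensional space $\Com(3)$ and imposes no further condition on the scalar $a$. Hence the argument proceeds from the $0$-unit axiom alone, and the hypothesis on $\Bbbk$ can be dropped.
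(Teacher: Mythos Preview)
Your proposal is correct and follows precisely the approach the paper intends: the paper explicitly omits the proof, stating only that it ``is similar to the proof of Proposition~\ref{yypro3.3},'' and your adaptation of that argument to the one-dimensional pieces $\Com(n)$ is exactly the expected simplification. Your closing remark that the $\xi_2$-step and the no-idempotent hypothesis are unnecessary here is also on target.
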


\begin{proposition}
\label{yypro3.5}
Let $\ip=\Lie$.
\begin{enumerate}
\item[(1)]
$\der(\ip)=\ider(\ip)\cong \Bbbk$.
As a consequence, $H^1(\ip)=0$.
\item[(2)]
$\Aut(\ip)=\IAut(\ip)\cong \Bbbk^{\times}$.
As a consequence, $\OAut(\ip)=\{1\}$.
\item[(3)]
$\End(\Lie)=\Bbbk$.
\end{enumerate}
Consequently, $\Lie$ is both $\der$-rigid and
$\Aut$-rigid.
\end{proposition}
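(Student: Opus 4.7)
The plan is to exploit the fact that the operad $\Lie$ is generated, in the operadic sense, by the bracket $\beta\in\Lie(2)$ together with the identity $\1\in\Lie(1)$, subject only to antisymmetry and the Jacobi relation. As $\Bbbk$-modules, both $\Lie(1)=\Bbbk\1$ and $\Lie(2)=\Bbbk\beta$ are one-dimensional, and $\Lie(0)=0$, so there is no $0$-unit to keep track of. This makes the argument essentially parallel to, but simpler than, the one for $\As$ in Proposition \ref{yypro3.3}.

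For part (1), take $\partial\in\der(\Lie)$. Since $\Lie(1)=\Bbbk\1$, write $\partial(\1)=c\1$; applying $\partial$ to $\1\ucr{1}\1=\1$ forces $2c=c$, hence $c=0$. Next, write $\partial(\beta)=a\beta$ for some $a\in\Bbbk$. The superfluous derivation $\sf_a$ of Lemma \ref{yylem1.2}(1) satisfies $\sf_a(\beta)=(2-1)a\beta=a\beta$ and $\sf_a(\1)=0$, so $\partial-\sf_a$ vanishes on the generating set $\{\1,\beta\}$ and is therefore zero by Lemma \ref{yylem3.1}(1). Thus $\der(\Lie)=\sf(\Lie)$, and combined with Lemma \ref{yylem3.2}(1) (which gives $\sf(\Lie)\subseteq\ider(\Lie)\subseteq\der(\Lie)$) we conclude $\der(\Lie)=\ider(\Lie)=\sf(\Lie)\cong\Bbbk$; consequently $H^1(\Lie)=0$.

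For parts (2) and (3), I would treat endomorphisms and automorphisms in parallel. Any $\Phi\in\End(\Lie)$ satisfies $\Phi(\1)=\1$ by Definition \ref{yydef1.5}(1a) and $\Phi(\beta)=c\beta$ for some $c\in\Bbbk$, so $\Phi$ is determined by $c$ (two endomorphisms agreeing on generators agree everywhere). Conversely, each $c\in\Bbbk$ yields a well-defined endomorphism: antisymmetry $\beta\ast(12)=-\beta$ is obviously preserved under $\beta\mapsto c\beta$, and the Jacobi relation, being quadratic in $\beta$, is rescaled by $c^2$ and so remains zero. Since composition of such endomorphisms acts by multiplication on the scalars (rescaling $\beta$ by $c$ rescales any arity-$n$ element by $c^{n-1}$), we obtain a monoid isomorphism $\End(\Lie)\cong(\Bbbk,\cdot)$, proving part (3). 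An endomorphism is invertible precisely when $c\in\Bbbk^\times$, so $\Aut(\Lie)\cong\Bbbk^\times$.

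To identify inner automorphisms, a direct calculation with the formula in Definition \ref{yydef1.5}(5) shows $\aad_{c\1}=\ssf_c$ for $c\in\Bbbk^\times$: $(c\1)^{-1}=c^{-1}\1$, so $(c^{-1}\1)\ucr{1}\theta=c^{-1}\theta$, and total composition with $n$ copies of $c\1$ multiplies by $c^n$, giving the overall factor $c^{n-1}$. Hence $\IAut(\Lie)=\ssf(\Lie)$, and this coincides with $\Aut(\Lie)$ by the previous paragraph, yielding $\OAut(\Lie)=\{1\}$. The only non-routine ingredient is the standard fact that $\Lie$ is generated by $\beta$ as an operad (so that Lemma \ref{yylem3.1}(1) and its endomorphism analogue apply), which I would take as known; once this is granted, everything else is elementary linear algebra in the one-dimensional spaces $\Lie(1)$ and $\Lie(2)$.
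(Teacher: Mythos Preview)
Your proof is correct and follows essentially the same approach as the paper: both arguments exploit that $\Lie$ is generated by the single element $\fb\in\Lie(2)$ with $\Lie(2)$ one-dimensional, so any derivation or endomorphism is determined by a single scalar and must agree with the corresponding $\sf_c$ or $\ssf_c$. You supply a bit more detail (the check that $\partial(\1)=0$, the preservation of the Jacobi relation, and the explicit identification $\aad_{c\1}=\ssf_c$), but the structure of the argument is the same as the paper's, which simply invokes Lemma~\ref{yylem3.1}(1) once $\partial(\fb)=c\fb$ or $\Phi(\fb)=c\fb$ is known.
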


\begin{proof}
(1) Note that $\Lie(0)=0$,
$\Lie(1)=\Bbbk \1$ and $\Lie(2)=
\Bbbk \fb$. Also $\Lie$ is generated by $\fb$
subject to the relations
\[\fb\ast (12)_2=-\fb\]
and
\[\fb\underset{2}{\circ} \fb=\fb\underset{1}{\circ} \fb
+(\fb\underset{2}{\circ} \fb)\ast (12)_3.\]

Let $\partial\in \der(\Lie)$ and
$\partial(\fb)=c\fb$. Since $\Lie$ is
generated by $\fb$, $\partial=\sf_c$
by Lemma \ref{yylem3.1}(1).

(2) Let $\Phi\in \Aut(\Lie)$ and
$\Phi(\fb)=c\fb$ for some $c\in \Bbbk^{\times}$.
Since $\Lie$ is generated by $\fb$, $\Phi=\ssf_c$. 
The assertions follow.

(3) Let $\Phi\in \End(\Lie)$ and
$\Phi(\fb)=c\fb$ for some $c\in \Bbbk$.
Since $\Lie$ is generated by $m$ and subject to
the relations in the proof of part (2), there is a
one-to-one correspondence between $\Phi\in \End(\Lie)$ and
$c\in \Bbbk$. The assertion follows.
\end{proof}

Next we consider the Poisson operad. Recall that $\Pois$ denotes the
operad encoding unital commutative Poisson algebras. It is generated by
$\{\1_0, \1_1=\1, \1_2, \fb\}$ subject to the relations (see the proof of
\cite[Lemma 7.5]{BYZ})
\begin{align}
\label{E3.5.1}\tag{E3.5.1}
\1_2 \underset{1}{\circ} \1_0& =\1_2 \underset{2}{\circ} \1_0=\1_1,\\
\label{E3.5.2}\tag{E3.5.2}
\fb \underset{1}{\circ} \1_0& =\fb \underset{2}{\circ} \1_0=0,\\
\label{E3.5.3}\tag{E3.5.3}
\1_2\ast (12)& =\1_2,\\
\label{E3.5.4}\tag{E3.5.4}
\fb\ast (12) &=-\fb,\\
\label{E3.5.5}\tag{E3.5.5}
\1_2 \underset{1}{\circ} \1_2&= \1_2 \underset{2}{\circ} \1_2,\\
\label{E3.5.6}\tag{E3.5.6}
\fb\underset{1}{\circ} \1_2&=\1_2\underset{2}{\circ} \fb
+(\1_2\underset{2}{\circ} \fb)\ast (12)_3,\\
\label{E3.5.7}\tag{E3.5.7}
\fb\underset{2}{\circ} \fb&=\fb\underset{1}{\circ} \fb
+(\fb\underset{2}{\circ} \fb)\ast (12)_3.
\end{align}

\begin{proposition}
\label{yypro3.6}
Let $\ip=\Pois$ or $\Pois/{^k \iu}$ for some $k\geq 3$.
\begin{enumerate}
\item[(1)]
$\der(\ip)\cong \Bbbk^{\oplus 2}$ and
$\ider(\ip)=\Bbbk$. As a consequence,
$H^1(\ip)=\Bbbk$.
\item[(2)]
$\Aut(\ip)=(\Bbbk^{\times})^{\otimes 2}$.
and $\IAut(\ip)=\Bbbk^{\times}$. As a consequence,
$\OAut(\ip)\cong \Bbbk^{\times}$.
\item[(3)]
$\End(\ip)=\Bbbk \times \Bbbk^{\times}$.
\end{enumerate}
Consequently, $\Pois$ is both $\der$-semirigid and
$\Aut$-semirigid.
\end{proposition}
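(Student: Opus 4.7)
The plan is to use Lemma \ref{yylem3.1}(1) throughout: since $\ip$ is generated by $\{\1_0,\1,\1_2,\fb\}$ (or their images in $\Pois/{^k\iu}$), any derivation, endomorphism, or automorphism is determined by its values on these four elements. One has $\partial(\1)=0$ automatically (apply $\partial$ to $\1\ucr{1}\1=\1$) and $\Phi(\1)=\1$ by definition, so only the action on $\1_0,\1_2,\fb$ is at issue.

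For part (1), write $\partial(\1_0)=-c\,\1_0$. Since $\partial$ preserves the $\S_2$-action, (E3.5.3)--(E3.5.4) force $\partial(\1_2)$ to lie in the symmetric part $\Bbbk\1_2$ of $\ip(2)$ and $\partial(\fb)$ in the antisymmetric part $\Bbbk\fb$; the relation (E3.5.1) together with Lemma \ref{yylem3.1}(3) then pins down $\partial(\1_2)=c\,\1_2$, while (E3.5.2) puts no further constraint on the coefficient in $\partial(\fb)=e\,\fb$. The key check is that the higher relations (E3.5.5)--(E3.5.7), when differentiated by the Leibniz rule, reduce --- after reusing the original relation --- to tautologies, with both sides of each differentiated identity picking up the same scalar (namely $(c+e)$ for (E3.5.6) and $2e$ for (E3.5.7)), so no further constraint on $(c,e)$ arises. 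Hence $\der(\ip)\cong\Bbbk^{\oplus 2}$. Since $\ip(1)=\Bbbk\1$, the inner derivations are exactly $\ad_{a\1}=\sf_{-a}$, which in the $(c,e)$-parametrization trace out the diagonal $c=e$; thus $\ider(\ip)\cong\Bbbk$ and $H^1(\ip)\cong\Bbbk$.

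For part (2), the same generator analysis applied to an automorphism $\Phi$ forces $\Phi(\1_0)=c\,\1_0$ and $\Phi(\1_2)=c^{-1}\,\1_2$ (the latter from (E3.5.1), which also forces $c\in\Bbbk^\times$), and $\Phi(\fb)=d\,\fb$ with $d\in\Bbbk^\times$ for invertibility. The relations (E3.5.5)--(E3.5.7) are then automatic, with each side of each relation scaling by the same factor ($c^{-2}$, $c^{-1}d$, or $d^2$ respectively); thus $\Aut(\ip)\cong(\Bbbk^\times)^2$. A direct calculation identifies $\aad_{a\1}$ with $\ssf_a$, which acts as $(c,d)=(a^{-1},a)$, so $\IAut(\ip)$ is the antidiagonal subgroup $\{(a^{-1},a)\}\cong\Bbbk^\times$ and $\OAut(\ip)\cong\Bbbk^\times$. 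Part (3) repeats the endomorphism version of this argument: (E3.5.1) still forces $c\in\Bbbk^\times$, but $d$ no longer needs to be invertible, and the case $d=0$ (sending $\fb\mapsto 0$) is easily checked to respect (E3.5.2), (E3.5.6), (E3.5.7). Hence $\End(\ip)\cong\Bbbk^\times\times\Bbbk$.

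The main technical obstacle will be the bookkeeping for the higher relations (E3.5.5)--(E3.5.7): one needs to expand the Leibniz rule, collect the resulting terms, and verify that they rearrange back into a scalar multiple of the original relation using only (E3.5.5)--(E3.5.7) themselves. The calculation is parallel to the one for $\As$ in Proposition \ref{yypro3.3}. The passage to the quotient $\Pois/{^k\iu}$ for $k\ge 3$ is routine: the generators and defining relations live in degrees $\le 2<k$, so ${^k\iu}$ is trivial in those degrees and the generator-level analysis is unchanged, while any derivation or endomorphism automatically preserves ${^k\iu}$ because the restriction operators $\pi^I$ are built from partial compositions with $\1_0$.
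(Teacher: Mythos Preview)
Your proposal is correct and matches the paper's approach: pin down the action on the generators $\{\1_0,\1_2,\fb\}$ via $\S$-equivariance and (E3.5.1)--(E3.5.2), then verify that (E3.5.5)--(E3.5.7) impose no further constraints. One small slip: the defining relations (E3.5.5)--(E3.5.7) live in degree $3$, not $\le 2$; this does not affect your argument, since the generator-level constraints only use degrees $\le 2$, and your descent argument for the quotients (derivations and endomorphisms preserve ${^k\iu}$ because the $\pi^I$ are built from compositions with $\1_0$) is in fact more explicit than the paper's remark that ``a similar proof works.''
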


\begin{proof}
We prove the assertions only for the operad $\Pois$.
A similar proof works for the operads $\Pois/{^k \iu}$ for all $k\geq 3$.
The key point is that $\Pois/{^k \iu}$ is generated by $\{\1_0, \1_2, \fb\}$.
To save space, we omit the proof for $\Pois/{^k \iu}$. For $k=1, 2$,
see Theorem \ref{yythm3.8}(1).

(1) Let $\partial$ be a derivation of $\Pois$.
Since $\partial$ commutes with the $\S$-action,
we have $\partial(\1_2)=c \1_2$ and $\partial(\fb)=d \fb$ 
for some $c, d\in \Bbbk$.
By using \eqref{E3.5.1}, $\partial(\1_0)=-c \1_0$.
One can easily check that $\partial$ preserves all relations 
\eqref{E3.5.1}-\eqref{E3.5.7}
only using the axioms of a derivation.
Therefore there is a one-to-one correspondence 
between $\partial\in \der(\Pois)$ and
the pairs $(c, d)\in \Bbbk^{\oplus 2}$. The first assertion follows.
The second assertion can be proved similarly.

The proof of part (2) is similar to the proof of part (3),
and we only prove part (3).

(3) Let $\Phi$ be an endomorphism of $\Pois$.
Since $\Phi$ commutes with the $\S$-action,
we have $\Phi(\1_0)=d \1_0$, $\Phi(\1_2)=c \1_2$
and $\Phi(\fb)=b \fb$ for some $b, c, d\in \Bbbk$.
By using \eqref{E3.5.1}, we obtain that $dc=1$. So $c$ is invertible, and
$b$ can be any element in $\Bbbk$. One can easily check
that endomorphism $\Phi$ preserves all relations \eqref{E3.5.1}-\eqref{E3.5.7}.
Therefore there is a one-to-one correspondence between
$\Phi\in \End(\Pois)$ and the pairs $(b, c)\in \Bbbk \times \Bbbk^{\times}$.
The assertion follows.
\end{proof}

\begin{remark}
\label{yyrem3.7}
In this remark we assume that $\Bbbk$ is a field. Recall 
that for a 2-unitary operad, we define inductively
$\1_d=\1_{d-1} \underset{1}{\circ}  \1_2$
for all $d\geq 3$. We will consider suboperads of $\Pois$. Let $\partial$
be the derivation of $\Pois$ determined by $\partial(\1_2)=\1_2$
and $\partial(\fb)=0$ (see the proof of Proposition \ref{yypro3.6}).
\begin{enumerate}
\item[(1)]
If $\text{char}\ \Bbbk=0$, then the fixed suboperad 
$\Pois^{\Bbbk \partial}$ [Lemma \ref{yylem1.6}(2)]
under the $\partial$-action is isomorphic to $\Lie$.
\item[(2)]
If $\text{char}\ \Bbbk>0$, then we have
$$\Lie\subsetneq \Pois^{\Bbbk \partial}\subsetneq \Pois.$$
\item[(3)]
For every $d\geq 3$, we define $\Pois_{d}$ be the (non-unitary) suboperad
of $\Pois$ generated by  $\1_d$ and $\fb$. We call $\Pois_{d}$ the
{\it $d$-ary Poisson operad}. It is obvious that
$$\Lie\subsetneq \Pois_{d}\subsetneq \Pois_{2}=\Pois.$$
It would be interesting to understand more about the operads
$\Pois_{d}$ for all $d\geq 3$.
\item[(4)]
If ${\rm{char}}\; \Bbbk=p>0$, then one can check that
$\Pois^{\Bbbk \partial}=\Pois_{p+1}$.
\end{enumerate}
\end{remark}

\begin{proof}[Proof of Theorem \ref{yythm0.3}]
The assertions follow from Propositions \ref{yypro3.3}(1),
\ref{yypro3.4}(1), \ref{yypro3.5}(1), and \ref{yypro3.6}(1).
\end{proof}

\begin{theorem}
\label{yythm3.8} Let $\Bbbk$ be a field.
Let ${^k\iu}$ denote the $k$-th truncation ideal
of $\As$ and $\Pois$ respectively.
\begin{enumerate}
\item[(1)]
$\As/{^1 \iu}\cong \Pois/{^1 \iu}\cong \As/{^2 \iu}
\cong \Pois/{^2 \iu}\cong \Com$.
As a consequence, $H^1(\ip)=0$.
\item[(2)]
If ${\rm{char}}\; \Bbbk\neq 2$, then
$\As/{^3 \iu}\cong \Pois/{^3 \iu}$. As a consequence,
$H^1(\ip)=\Bbbk$.
\item[(3)]
If ${\rm{char}}\; \Bbbk= 2$, then
$\As/{^3 \iu}\not\cong \Pois/{^3 \iu}$.
\item[(4)]
For every $k\geq 4$,
$\As/{^k \iu}\not\cong \Pois/{^k \iu}$.
\end{enumerate}
\end{theorem}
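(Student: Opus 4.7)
\emph{Part (1).} The plan is to identify ${^1\iu}$ and ${^2\iu}$ explicitly in both operads and see that the quotients collapse to $\Com$. In $\As$, since $\sigma\circ(\1_0,\ldots,\1_0)=\1_0$ for every $\sigma\in\S_n$, the ideal ${^1\iu}(n)$ is precisely the kernel of the augmentation $\Bbbk\S_n\to\Bbbk$; the same computation with one $\1_1$ inserted instead of a $\1_0$ yields ${^2\iu}={^1\iu}$, and the induced operad structure on the arity-wise $1$-dimensional quotient must be $\Com$. On the $\Pois$ side, \eqref{E3.5.2} places $\fb$ in ${^1\iu}(2)$, and one checks that the operadic ideal generated by $\fb$ agrees with ${^1\iu}={^2\iu}$, so $\Pois/{^1\iu}\cong\Pois/{^2\iu}\cong\Com$. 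The consequence $H^1(\ip)=0$ is immediate from Proposition~\ref{yypro3.4}(1).

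\emph{Part (2).} Assume $\mathrm{char}\,\Bbbk\neq 2$. I will build an isomorphism $\phi\colon\Pois/{^3\iu}\to\As/{^3\iu}$ via the symmetric/antisymmetric idempotents $e^{\pm}:=\tfrac{1}{2}(1_2\pm(12))$, setting $\phi(\1_2)=[e^{+}]$, $\phi(\fb)=[e^{-}]$, and the identity on $\1_0,\1$. The unit and equivariance relations \eqref{E3.5.1}--\eqref{E3.5.4} are immediate; the Leibniz \eqref{E3.5.6} and Jacobi \eqref{E3.5.7} identities can be verified to hold \emph{exactly} in $\As$ by a direct expansion in $\Bbbk\S_3$; and the substantive step is the associativity \eqref{E3.5.5}, where the associator $e^{+}\ucr{1}e^{+}-e^{+}\ucr{2}e^{+}$ turns out to be a scalar multiple of $\xi_2\in{^3\iu}(3)$ and therefore vanishes in the quotient. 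This check crucially requires the factor $\tfrac{1}{2}$, i.e.\ $\mathrm{char}\,\Bbbk\neq 2$. For bijectivity, I will construct the explicit inverse $\psi\colon\As/{^3\iu}\to\Pois/{^3\iu}$ with $\psi([1_2])=\1_2+\fb$, $\psi([(12)])=\1_2-\fb$, verify analogously that $\psi$ is a well-defined operad morphism, and observe that $\psi\phi$ and $\phi\psi$ are the identity on generators. The consequence $H^1=\Bbbk$ then follows from Proposition~\ref{yypro3.6}(1) via this isomorphism.

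\emph{Part (3).} In $\mathrm{char}\,\Bbbk=2$, I will distinguish the two operads via the arity-$2$ piece. Since the only $I\subseteq\{1,2\}$ with $|I|=2$ is $\{1,2\}$ itself and $\pi^{\{1,2\}}$ is the identity, ${^3\iu}(2)=0$ on both sides; hence $\As(2)$ and $\Pois(2)$ would have to be isomorphic as $\Bbbk\S_2$-modules for any operad isomorphism to exist. In characteristic $2$ the sign character collapses to the trivial one and \eqref{E3.5.4} forces $\fb\ast(12)=\fb$, making $\Pois(2)=\Bbbk\1_2\oplus\Bbbk\fb$ a \emph{trivial} $\S_2$-module of rank $2$, while $\As(2)=\Bbbk\S_2\cong\Bbbk[t]/(t-1)^2$ is a cyclic \emph{indecomposable} $\Bbbk\S_2$-module. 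These are non-isomorphic, precluding an operad isomorphism.

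\emph{Part (4).} For $k\geq 4$, I will distinguish $\As/{^k\iu}$ from $\Pois/{^k\iu}$ via their derivation Lie algebras: Proposition~\ref{yypro3.3}(1) gives $\dim_{\Bbbk}\der(\As/{^k\iu})=1$, whereas Proposition~\ref{yypro3.6}(1) gives $\dim_{\Bbbk}\der(\Pois/{^k\iu})=2$. Since any operad isomorphism induces a Lie-algebra isomorphism on derivations via conjugation, no such isomorphism can exist. The main obstacle among the four parts is the associator computation in~(2), which underpins the isomorphism in characteristic $\neq 2$ and precisely fails in characteristic $2$.
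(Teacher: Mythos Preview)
Your proposal is correct. Parts (1) and (4) match the paper's arguments essentially verbatim (the paper cites $H^1$ rather than $\der$ in (4), but since $\ider\cong\Bbbk$ on both sides this is the same distinction). The genuine differences are in Parts (2) and (3).

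For Part (2), the paper establishes only the forward map $\phi\colon\Pois\to\As/{^3\iu}$ and then argues bijectivity indirectly: surjectivity because $\1_2=e^{+}+e^{-}$ generates, and $\ker\phi={^3\iu}_{\Pois}$ via a Gelfand--Kirillov dimension bound from \cite{BYZ} together with a Hilbert-series comparison. Your route---building the explicit inverse $\psi(1_2)=\1_2+\fb$ and checking $\psi\phi=\id$ on generators---avoids those external inputs and is more elementary. One point you leave implicit is why $\phi$ and $\psi$ descend to the quotients by ${^3\iu}$: since both send $\1_0\mapsto\1_0$, they commute with the restriction operators $\pi^I$, hence carry ${^3\iu}$ into ${^3\iu}$, and ${^3\iu}$ of each quotient operad is zero. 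With that remark the inverse construction goes through; the associator of $\1_2+\fb$ lands in ${^3\iu}(3)$ for the same reason (all its restrictions $\pi^I$ with $|I|=2$ vanish by a direct application of (OP2$'$)).

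For Part (3), the paper argues that in characteristic $2$ no element of $(\As/{^3\iu})(2)$ can simultaneously satisfy \eqref{E3.5.1} and \eqref{E3.5.3}, so $\As/{^3\iu}$ admits no symmetric $2$-unit. Your argument via the $\S_2$-module structure---trivial versus the indecomposable regular module---is a cleaner repackaging of the same obstruction and is arguably preferable, since it isolates the invariant that fails without reference to the $2$-unitary structure.
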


\begin{proof} (1) By \cite[Lemma 3.7]{BYZ},
${^1\iu}={^2\iu}$ for $\ip=\As$. By \cite[Lemma 3.5]{BYZ},
$\As/{^1 \iu}\cong \Com$. So the assertion holds for
$\ip=\As$. A similar argument shows that the assertion
holds for $\ip=\Pois$. The consequence follows from
Proposition \ref{yypro3.4}(1).

(2) Define two elements in $\As/{^3 \iu}$
$$\1'_2:=\frac{1}{2}(\1_2+\1_2\ast (12))
\quad {\text{and}} \quad
\tau' :=\frac{1}{2}(\1_2-\1_2\ast (12)).$$
Then it is easy to check that equations
\eqref{E3.5.1}-\eqref{E3.5.4} hold. For \eqref{E3.5.5},
by a computation, we have
\[\1'_2 \underset{1}{\circ} \1'_2- \1'_2 \underset{2}{\circ} \1'_2
=-\frac{1}{4} \xi_2\]
where $\xi_2$ is given in \eqref{E3.2.2}. Note that
$\xi_2=0$ in $\As/{^3 \iu}$. Therefore \eqref{E3.5.5}
holds for $(\1'_2, \tau')$. Note that it is well known that
\eqref{E3.5.6}-\eqref{E3.5.7} hold for $(\1'_2, \tau')$
even at the level of $\As$. Therefore
\eqref{E3.5.6}-\eqref{E3.5.7} hold for $(\1'_2, \tau')$
in $\As/{^3 \iu}$. Now we define a map
from $\phi: \Pois\to \As/{^3\iu}$ by sending
\[\1_0\mapsto \1_0, \quad \1\mapsto \1, \quad
\1_2\mapsto \1'_2, \quad \tau\mapsto \tau'.\]
Since $(\1_0, \1, \1'_2, \tau')$ in
$\As/{^3 \iu}$ satisfy \eqref{E3.5.1}-\eqref{E3.5.7},
$\phi$ uniquely determines an operadic morphism from
$\Pois$ to $\As/{^3\iu}$. Since $\As/{^3\iu}$ is
generated by $\1_0, \1_2=\1'_2+\tau'$, $\phi$ is a
surjective morphism. Let $K$ be the kernel of $\phi$.
Since $\GKdim \Pois/K= \GKdim \As/{^3\iu}=3$ where the
last equation is \cite[Theorem 0.2(2)]{BYZ}. By
\cite[Theorem 0.2(2)]{BYZ} again, we have that
$K\supseteq {^3 \iu}_{\Pois}$. Since $\As$ and $\Pois$
have the same Hilbert series, the proof of \cite[Theorem 0.1]{BYZ}
shows that $\As/{^k \iu}$ and $\Pois/{^k\iu}$ have the same
Hilbert series for each $k$, in particular, $\As/{^3\iu}$ and
$\Pois/{^3\iu}$ have the same Hilbert series. This forces that
$K={^3\iu}_{\Pois}$, or equivalently, $\phi$ induces an
operadic isomorphism from $\Pois/{^3 \iu}\to \As/{^3 \iu}$.

The consequence follows from
Proposition \ref{yypro3.6}(1).

(3) Let $\ip=\As/{^3\iu}$.
If ${\rm{char}}\; \Bbbk=2$, then there is no nonzero element
$\1'_2:=a \1_2+b \1_2\ast (12)\in \ip(2)$ such that both
\eqref{E3.5.1} and \eqref{E3.5.3} hold for for $\1'_2$.
Therefore $\ip$ can not be isomorphic to $\Pois/{^3 \iu}$.

(4) This follows from the fact that $H^1(\As/{^k \iu})=0$
for all $k\geq 4$ [Proposition \ref{yypro3.3}(1)] and
that $H^1(\Pois/{^k \iu})=\Bbbk$
for all $k\geq 3$ [Proposition \ref{yypro3.6}(1)].
\end{proof}

\section{Proof of Theorems \ref{yythm0.4} and \ref{yythm0.5}}
\label{yysec4}

The goal of this section is to prove Theorems \ref{yythm0.4} and
\ref{yythm0.5} which concern $H^1(\ip)$ of more complicated operads
$\ip$. For simplicity, we assume that $\Bbbk$ is a field. Then we
can use results in other papers (such as \cite[Theorem 0.6]{BYZ})
that assume that $\Bbbk$ is a field.
We need to recall the definition of the first Hochschild
cohomology of an associative algebra. Note that the notion of a
derivation (respectively, an inner derivation) of an associative
algebra can be found in \cite[Section 9.2]{We}. By
\cite[Lemma 9.2.1]{We}, the first Hochschild cohomology of an
associative algebra $A$ is equal to
\begin{equation}
\label{E4.0.1}\tag{E4.0.1}
\HH^1(A)=\der(A)/\ider(A)
\end{equation}
where $\der(A)$ is the set of derivations of $A$ and
$\ider(A)$ is the set of inner derivations of $A$. We will use
\eqref{E4.0.1} as a definition of the first Hochschild
cohomology for a non-unital associative algebra $A$ too.

We recall the construction in \cite[Example 2.3]{BYZ}, 
where the classification of 2-unitary operads of
GKdimension two is given.

\begin{example} \cite[Example 2.3]{BYZ}
\label{yyex4.1}
Let $A=\Bbbk \1_1\oplus \bar A$ be an augmented
algebra with augmentation ideal $\bar A$. Let
$\{\delta_i\mid i\in T\}$ be a $\Bbbk$-basis for $\bar A$ where
$T$ is an index set, and $\{\Omega_{kl}^v\mid k,l,v\in T\}$ the
corresponding structural constant, namely,
\begin{equation}
\notag
\delta_i\delta_j=\sum_{k\in T}\Omega_{ij}^k\delta_k
\end{equation}
for all $i,j\in T$.  We assume that $0$ is not in $T$.

We define a 2-unitary operad ${\mathcal D}_A$ as follows. Set
${\mathcal D}_A(0)=\Bbbk \1_0\cong \Bbbk$,
${\mathcal D}_A(1)=A=\Bbbk \1_1\oplus {\bar A}$,
and
\begin{equation}
\label{E4.1.1}\tag{E4.1.1}
{\mathcal D}_A(n)=\Bbbk \1_n \oplus \bigoplus
\limits_{i\in\n, j\in T}
\Bbbk \delta^n_{(i)j}
\end{equation}
for $n\ge 2$. For consistency of notations, we set $\delta^1_{(1)j}=\delta_j$
for each $j\in T$, and $\delta^n_{(i)0}=\1_n$ for all $i\in \n$.

The action of $\S_n$ on ${\mathcal D}_A(n)$ is given by
$\1_n\ast \sigma = \1_n$ and
$\delta^n_{(i)j}\ast \sigma =  \delta^n_{(\sigma^{-1}(i))j}$
for all $\sigma\in \S_n$ and all $n$.

The partial composition
\[-\ucr{i}-\colon {\mathcal D}_A(m) \otimes {\mathcal D}_A(n)
\to {\mathcal D}_A(m+n-1),\ \  1\leq i\leq m,\]
is defined by
\begin{align}
\label{E4.1.2}\tag{E4.1.2}
\begin{split}
\1_{m} \ucr{i} \1_{n}& = \1_{m+n-1},\\
\1_{m} \ucr{i} \delta_{(k)l}^n&= \delta_{(k+i-1)l}^{m+n-1},\\
\delta^m_{(s)t} \ucr{i} \1_n&=
\begin{cases} \delta^{m+n-1}_{(s)t}, & \quad 1\leq s \leq i-1,\\
              \sum\limits_{h=i}^{i+n-1} \delta_{(h)t}^{m+n-1}, & \quad s=i,\\
              \delta_{(s+n-1) t}^{m+n-1}, & \quad i<s\le m,
\end{cases}\\
\delta^m_{(s)t}\ucr{i} \delta_{(k)l}^n&=
\begin{cases}
              \sum\limits_{v\in T}\Omega^v_{tl}\delta^{m+n-1}_{(i+k-1)v}, & s=i,\\
              0, & s\neq i.
\end{cases}
\end{split}
\end{align}
Note that $-\underset{1}{\circ}-$ in $A$ is just the associative
multiplication of $A$. By the second relation on the above list,
we obtain
$$\delta_{(i)j}^n=\1_n\ucr{i} \delta_j$$
for all $i\in \n, j\in T$. By \cite[Example 2.3]{BYZ},
${\mathcal D}_A$ is a 2-unitary operad.

A $\Bbbk$-linear basis of ${\mathcal D}_A$ is explicitly given in \eqref{E4.1.1}.
When $T$ is a finite set with $d>0$ elements, the generating function of
${\mathcal D}_A$ is
$$G_{\mathcal{D}_A}(t)=\sum_{n=0}^{\infty} (1+dn)t^n=\frac{1}{1-t}+\frac{d t}{(1-t)^{2}}.$$
In this case, ${\mathcal D}_A$ has GKdimension two.
\end{example}

\begin{lemma}
\label{yylem4.2}
Retain the notation as in Example \ref{yyex4.1} and 
let $\ip={\mathcal D}_{A}$.
\begin{enumerate}
\item[(1)]
If $\theta\in \ip(2)$ satisfies
$\theta\ucr{i}\1_0=0$,
for $i=1, 2$, then $\theta=0$.
\item[(2)]
If $\theta\in \ip(2)$ satisfies
$\theta\ucr{i}\1_0=\1_1$,
for $i=1, 2$, then $\theta=\1_2$.
\item[(3)]
$H^0(\ip)=Z(A)\cap \bar{A}$.
\end{enumerate}
\end{lemma}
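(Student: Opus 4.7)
The plan is to work basis-element by basis-element, using the explicit formulas \eqref{E4.1.2}. Parts (1) and (2) are a direct linear-algebra calculation inside $\ip(2)$. By \eqref{E4.1.1}, every $\theta\in\ip(2)$ has a unique expansion
\[\theta = a\,\1_2 + \sum_{j\in T} b_j\,\delta^2_{(1)j} + \sum_{j\in T} c_j\,\delta^2_{(2)j}.\]
Setting $n=0$ in \eqref{E4.1.2} makes the $s=i$ case of the third row collapse to an empty sum, and I will compute
\[\theta\ucr{1}\1_0 = a\,\1_1 + \sum_{j\in T} c_j\,\delta_j,\qquad \theta\ucr{2}\1_0 = a\,\1_1 + \sum_{j\in T} b_j\,\delta_j,\]
expanded in the basis $\{\1_1\}\cup\{\delta_j : j\in T\}$ of $\ip(1)=A$. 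Part (1) then follows by equating both sides to $0$, and part (2) by equating both sides to $\1_1$.

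For part (3) my plan is to evaluate $\ad_\lambda$ explicitly on the basis of $\ip$ coming from \eqref{E4.1.1}. First write $\lambda = \alpha\,\1_1 + \bar\lambda$ with $\bar\lambda\in\bar A$. The identity axiom gives $\ad_{\1_1}(\theta)=(1-n)\theta$ for every $\theta\in\ip(n)$, and a short calculation using \eqref{E4.1.2} will show that $\ad_{\bar\lambda}(\1_n)=0$ for all $n\ge 0$, because the contributions of $\bar\lambda\ucr{1}\1_n$ and $\sum_i\1_n\ucr{i}\bar\lambda$ produce the same linear combination of the $\delta^n_{(h)k}$ and therefore cancel. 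On a generator $\delta^n_{(i)j}$, only the $r=i$ summand in $\sum_r\delta^n_{(i)j}\ucr{r}\bar\lambda$ is nonzero, and I will find
\[\ad_{\bar\lambda}\bigl(\delta^n_{(i)j}\bigr) = \sum_{v\in T} c^v\,\delta^n_{(i)v},\]
where $(c^v)_{v\in T}$ is precisely the coefficient vector of the commutator $[\bar\lambda,\delta_j]$ in the basis $\{\delta_v\}$ of $\bar A$. Evaluating $\ad_\lambda$ at $\1_0\in\ip(0)$ gives $\alpha\,\1_0$ (since $\delta_k\ucr{1}\1_0=0$ for every $k$), so $\ad_\lambda=0$ forces $\alpha=0$, i.e.\ $\lambda\in\bar A$; once $\alpha=0$ is in hand, the vanishing of $\ad_\lambda$ on each $\delta^1_{(1)j}=\delta_j$ translates into $[\bar\lambda,\delta_j]=0$ in $A$ for every $j$, i.e.\ $\bar\lambda\in Z(A)$. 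Running the same computations backwards delivers the reverse inclusion, yielding $H^0(\ip)=Z(A)\cap\bar A$.

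The main obstacle, in all three parts, is purely bookkeeping inside the piecewise formulas \eqref{E4.1.2}, particularly keeping the three cases $s<i$, $s=i$, $s>i$ straight and correctly tracking which terms drop out when $n=0$. The structural point that makes $Z(A)$ emerge in part (3) is the asymmetry between the composites $\bar\lambda\ucr{1}\theta$ and $\theta\ucr{r}\bar\lambda$, which mirrors the asymmetry between left and right multiplication in $A$ and so encodes the commutator bracket on $\bar A$.
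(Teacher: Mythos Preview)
Your proof is correct. Parts (1) and (2) match the paper's argument almost exactly; the only cosmetic difference is that the paper deduces (2) from (1) by applying (1) to $\1_2-\theta$, whereas you redo the coefficient comparison directly.

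For part (3) your route diverges slightly from the paper's. You compute $\ad_\lambda$ on the \emph{entire} basis $\{\1_n\}\cup\{\delta^n_{(i)j}\}$ of $\ip$, obtaining the closed formula $\ad_{\bar\lambda}(\delta^n_{(i)j})=\sum_v c^v\,\delta^n_{(i)v}$ with $(c^v)$ the coefficients of $[\bar\lambda,\delta_j]$, and read off both inclusions from that. The paper instead handles the inclusion $Z(A)\cap\bar A\subseteq H^0(\ip)$ by checking $\ad_\delta$ only on the \emph{generating set} $\{\1_0\}\cup\ip(1)\cup\{\1_2\}$ and then invoking Lemma~\ref{yylem3.1}(1) (a derivation vanishing on generators is zero); the other inclusion it simply declares ``easy to see''. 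Your approach is more self-contained and actually spells out the direction the paper leaves implicit, at the cost of a longer basis-wise computation; the paper's approach is shorter but leans on the generator lemma.
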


\begin{proof} (1)
By the construction in Example \ref{yyex4.1},
$\ip(2)$ has a $\Bbbk$-linear basis
$\{\1_2\}\cup \{\delta^2_{(1)i}\}_{i\in T} \cup \{\delta^2_{(2)i}\}_{i\in T}$.
Write $\theta=a \1_2 +\sum_{i\in T} b_i \delta^2_{(1)i}
+\sum_{i\in T} c_i \delta^2_{(2)i}$ for $a, b_i, c_i\in \Bbbk$. Then
\[0=\theta\underset{2}{\circ}\1_0=a \1_1+\sum_{i\in T} b_i \delta_{i},\]
which implies that $a=b_i=0$ for all $i$. By symmetry, $c_i=0$.
Therefore $\theta=0$.

(2) Let $\theta'=\1_2-\theta$. Then
$\theta'\ucr{i} \1_0=0$ for $i=1, 2$.
By part (1), $\theta'=0$. Hence $\theta=\1_2$.

(3) It is easy to see that $H^0(\ip)\subseteq Z(A)\cap \bar{A}$.
Conversely, let $\delta\in Z(A)\cap \bar{A}$. Consider the inner derivation
$\ad_{\delta}$, that is zero when restricted to $\ip(1)$ as $\delta\in Z(A)$.
Since $\delta\in \bar{A}$, $\ad_\delta(\1_0)=0$. By the definition of partial
composition \eqref{E4.1.2}, one sees that $\ad_\delta(\1_2)=0$. Since
$\ip$ is generated by $\{\1_0\}\cup \ip(1) \cup \{\1_2\}$, $\ad_\delta=0$
by Lemma \ref{yylem3.1}(1). Therefore $\delta\in H^0(\ip)$.
\end{proof}

\begin{theorem}
\label{yythm4.3}
Retain the above notation.
\begin{enumerate}
\item[(1)] 
The derivation space
$\der({\mathcal D}_A)$ fits into the following short
exact sequence of Lie algebras
\[0\to \Bbbk \to \der({\mathcal D}_A)\to \der(\bar{A})\to 0,\]
where $\der(\bar{A})$ is the $\Bbbk$-vector space of derivations
of nonunital algebra $\bar{A}$. As a
consequence, $\der({\mathcal D}_A)\cong \Bbbk\oplus \der(\bar{A})$.
\item[(2)] 
The first cohomology group
\[H^1(\ip)\cong \frac{\der(\bar{A})}{\ider(\bar{A})}=\HH^1(\bar{A}),\]
where $\ider(\bar{A})$ is the $\Bbbk$-vector space of inner
derivations of $\bar{A}$.
\item[(3)]
The automorphism group $\Aut(\ip)$ fits into the
following short exact sequence of groups
\[\{1\}\to \Bbbk^{\times} \to \Aut({\mathcal D}_A)\to
\Aut(\bar{A})\to \{1\},\]
where $\Aut(\bar{A})$ is the group of algebra automorphisms of $\bar{A}$.
As a consequence, $\Aut({\mathcal D}_A)\cong \Bbbk^{\times}\rtimes
\Aut(\bar{A})$.
\item[(4)]
The monoid $\End(\ip)$ fits into the
following short exact sequence of semigroups
\[\{1\}\to \Bbbk^{\times} \to \End({\mathcal D}_A)\to \End(\bar{A})\to \{1\},\]
where $\End(\bar{A})$ is the semigroup of algebra endomorphisms of $\bar{A}$.
\end{enumerate}
\end{theorem}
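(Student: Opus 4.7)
The plan is to prove all four parts by a single template: restrict the operadic object (derivation, automorphism, or endomorphism) to $\ip(1) = A$, show that the restriction preserves the augmentation ideal $\bar{A}$, identify the kernel of the restriction as scalars, and construct a splitting section using the explicit basis and composition formulas in Example \ref{yyex4.1}. The principal tool is the composition table \eqref{E4.1.2}, together with the auxiliary facts $\delta_j \ucr{1} \1_0 = 0$ and $\1_2 \ucr{i} \1_0 = \1_1$.

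For part (1), let $\partial \in \der(\mathcal{D}_A)$ and write $\partial(\1_0) = c_0 \1_0$. The partial composition $\ucr{1}\colon \ip(1) \otimes \ip(1) \to \ip(1)$ is the multiplication of $A$, so $\partial|_A$ is an algebra derivation and $\partial(\1_1) = 0$. The $\S_2$-equivariance (using $\delta^2_{(1)i} \ast (12) = \delta^2_{(2)i}$) restricts $\partial(\1_2)$ to the form $a\1_2 + \sum_i b_i(\delta^2_{(1)i} + \delta^2_{(2)i})$; applying $\partial$ to $\1_2 \ucr{1} \1_0 = \1_1$ then forces $a = -c_0$ and $b_i = 0$, so $\partial(\1_2) = -c_0 \1_2$. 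Applying $\partial$ to $\delta_j \ucr{1} \1_0 = 0$ and reading off the $\1_0$-component yields $\partial(\bar{A}) \subseteq \bar{A}$, so $\partial$ descends to $\delta := \partial|_{\bar{A}} \in \der(\bar{A})$. Since $\mathcal{D}_A$ is generated by $\1_0, \1_2, \bar{A}$, the data $(c_0, \delta)$ determines $\partial$, and the kernel of $\partial \mapsto \delta$ is precisely $\sf(\mathcal{D}_A) \cong \Bbbk$ by Lemma \ref{yylem3.2}. For surjectivity I would define, for each $\delta \in \der(\bar{A})$, the map $\partial_\delta$ on the basis by $\partial_\delta(\1_n) = 0$ and $\partial_\delta(\delta^n_{(i)j}) = \1_n \ucr{i} \delta(\delta_j)$, and check case-by-case from \eqref{E4.1.2} that it is an operad derivation. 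The centrality of superfluous derivations [Lemma \ref{yylem1.4}(4)] upgrades the split extension to a Lie algebra direct sum. Part (2) is then immediate: an inner derivation $\ad_\lambda$ with $\lambda = \alpha \1_1 + x \in A$ decomposes as $\sf_{-\alpha} + \ad_x$, and $\ad_x|_{\bar{A}}$ is precisely the algebra inner derivation of $\bar{A}$ at $x$; hence $\ider(\mathcal{D}_A) \cong \Bbbk \oplus \ider(\bar{A})$ and the quotient gives $H^1(\mathcal{D}_A) \cong \HH^1(\bar{A})$.

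Parts (3) and (4) follow the same pattern. For $\Phi \in \End(\mathcal{D}_A)$, writing $\Phi(\1_0) = d\1_0$, the equation $\Phi(\1_2) \ucr{1} \Phi(\1_0) = \Phi(\1_1) = \1_1$ forces $d \in \Bbbk^\times$ and (after the analogous $\S_2$ argument) $\Phi(\1_2) = d^{-1}\1_2$. The $\bar{A}$-stability proof carries over verbatim, so $\Phi|_{\bar{A}}$ is an algebra endomorphism, and the section $\Phi_\psi(\1_n) = \1_n$, $\Phi_\psi(\delta^n_{(i)j}) = \1_n \ucr{i} \psi(\delta_j)$ lifts any $\psi \in \End(\bar{A})$; the kernel of the restriction is $\ssf(\mathcal{D}_A) \cong \Bbbk^\times$. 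When $\psi$ is invertible the section lands in $\Aut$, and the semidirect product structure (actually a direct product, since $\ssf_c$ is central in $\Aut(\mathcal{D}_A)$) follows. The main obstacle throughout is verifying that the proposed section $\partial_\delta$ (or $\Phi_\psi$) truly respects partial composition: this requires working through the four cases of \eqref{E4.1.2}. The critical case is $\delta^m_{(s)t} \ucr{i} \delta^n_{(k)l}$ with $s = i$, whose value involves the structure constants $\Omega^v_{tl}$; compatibility there is exactly the Leibniz rule (respectively multiplicativity) of $\delta$ (respectively $\psi$) on $\bar{A}$, which is what interlocks the operadic and algebraic invariants in the statement.
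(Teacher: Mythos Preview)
Your proposal is correct and follows essentially the same architecture as the paper's proof: normalize away the scalar piece via $\sf_c$ (resp.\ $\ssf_c$), use $\delta_j\ucr{1}\1_0=0$ to get $\bar A$-stability, pin down the value on $\1_2$ via the $2$-unit equations, and lift $\der(\bar A)$ (resp.\ $\End(\bar A)$) back by the explicit section $\partial_\delta$ (resp.\ $\Phi_\psi$).

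The one substantive difference is in parts (3)--(4). The paper, after reducing to $g(\1_0)=\1_0$ and $g(\1_2)=\1_2$, invokes \cite[Theorem 0.6]{BYZ} to identify the stabilizer $\Aut_{2u}(\mathcal D_A)$ with $\Aut(\bar A)$ as a black box; you instead build the section $\Phi_\psi$ by hand on the basis $\{\1_n,\delta^n_{(i)j}\}$ and verify it against the table \eqref{E4.1.2}. Your route is more self-contained and makes transparent why the critical case $\delta^m_{(s)t}\ucr{i}\delta^n_{(k)l}$ with $s=i$ is exactly where the algebra multiplicativity/Leibniz condition on $\psi$/$\delta$ is consumed. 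The paper's route is shorter on the page but externalizes that verification. Your observation that $\ssf_c$ is actually central (so the semidirect product is in fact direct) is correct and slightly sharpens the paper's statement $\Bbbk^\times\rtimes\Aut(\bar A)$.
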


\begin{proof} (1) Let $\ip={\mathcal D}_A$.
For each fixed nonzero element $\delta\in \bar{A}$, let
$\delta^{n}_{(i)}$ denote the element $\1_n \ucr{i} \delta$
for all $n\geq 1$. Then partial composition of $\ip$
is determined by

\begin{align}
\label{E4.3.1}\tag{E4.3.1}
\1_{m} \ucr{i} \1_{n}& = \1_{m+n-1},\\
\label{E4.3.2}\tag{E4.3.2}
\1_{m} \ucr{i} \delta^n_{(k)}&= \delta^{m+n-1}_{(k+i-1)},\\
\label{E4.3.3}\tag{E4.3.3}
\delta^m_{(s)} \ucr{i} \1_n&=
\begin{cases} \delta^{m+n-1}_{(s)}, & \quad 1\leq s \leq i-1,\\
              \sum\limits_{h=i}^{i+n-1} \delta^{m+n-1}_{(h)}, & \quad s=i,\\
              \delta^{m+n-1}_{(s+n-1)}, & \quad i<s\le m,
\end{cases}\\
\label{E4.3.4}\tag{E4.3.4}
\delta^m_{(s)}\ucr{i} {\delta'}^n_{(k)}&=
\begin{cases}
              (\delta\delta')^{m+n-1}_{(i+k-1)}, & s=i,\\
              0 & s\neq i,
\end{cases}
\end{align}
for all $\delta, \delta'\in \bar{A}$ and all $n\geq 0, m\geq 1$.

Let $\phi$ be a derivation of the algebra $\bar{A}$. Define
$\partial_{\phi}\in \Hom(\ip,\ip)$ by
$$\partial_{\phi}(\1_m)=0,\;  \forall \; m\geq 0
\quad {\text{and}}\quad
\partial_{\phi}(\theta^n_{(i)})=(\phi(\theta))^n_{(i)}, \; \forall \;
\theta\in \bar{A}.$$
By using \eqref{E4.3.1}-\eqref{E4.3.4},
one can easily check that $\partial_{\phi}$
is a derivation of the operad $\ip$.

Let $\partial$ be a derivation of the operad $\ip$
and let $\partial(\1_0)=-c \1_0$. Replacing
$\partial$ by $\partial-\sf_{c}$, we may assume that
$\partial(\1_0)=0$. Since $\partial$ is a
derivation of $\ip$, the restriction
$\partial\mid_{\ip(1)}$ is a derivation of
$\ip(1)$ that preserves the augmentation
of $\ip(1)$. (This follows
from the equation that $\delta\circ \1_0=0$ for all
$\delta\in \bar{A}$.) Hence $\partial$ maps $\bar{A}$
to $\bar{A}$ (inside $\ip(1)$), or more precisely,
$\partial$ is a derivation of $\bar{A}$
when restricted to $\bar{A}$.
Let $\phi=\partial\mid_{\bar{A}}$.
Replacing $\partial$ by $\partial-\partial_{\phi}$,
we have that $\partial(\1_0)=0$ and
$\partial(\ip(1))=0$. Under this hypothesis, we claim
that $\partial=0$. Since $\ip$ is generated by
$\1_0$, $\ip(1)$ and $\1_2$, it suffices to
show that $\partial(\1_2)=0$ by Lemma \ref{yylem3.1}(1).
Starting with the equation,
\begin{equation}
\label{E4.3.5}\tag{E4.3.5}
\1_2\underset{1}{\circ}\1_0=\1_1=\1_2\underset{2}{\circ}\1_0,
\end{equation}
after applying $\partial$ to the above, we have
\[\partial(\1_2)\underset{1}{\circ}\1_0=0=\partial(\1_2)\underset{2}{\circ}\1_0.\]
By Lemma \ref{yylem4.2}(1), $\partial(\1_2)=0$. Thus we have proved
the claim and therefore $\partial=0$.

The above paragraph shows that every derivation $\partial$ can
uniquely be written as $\sf_{c}+\partial_{\phi}$ for some
$\phi\in \der(\bar{A})$. Hence the exact sequence follows.

It is clear that $\sf_{c}$ commutes with $\partial_{\phi}$.
Then the exact sequence splits and we have
$\der({\mathcal D}_A)\cong \Bbbk\oplus \der(\bar{A})$.

(2) The assertion follows from the fact that
$\ider(\ip)=\Bbbk\oplus \ider(\bar{A})$ which
can be proved in a way similar to the proof of
part (1).

(3) Let $g\in \Aut(\ip)$ and let $g(\1_0)=c^{-1} \1_0$ for
some $c\in \Bbbk^{\times}$. By replacing $g$ by $g (\aad_{c\1_1})^{-1}$,
we may assume that $g(\1_0)=\1_0$.
We claim that $g(\1_2)=\1_2$. Applying $g$ to Equation \eqref{E4.3.5} and
using the fact $g(\1_n)=\1_n$ for $n=0,1$, we obtain that
\[g(\1_2)\underset{1}{\circ}\1_0=\1_1=g(\1_2)\underset{2}{\circ}\1_0.\]
By Lemma \ref{yylem4.2}(2), $g(\1_2)=\1_2$, so we proved the claim.
In this case, $g$ is an automorphism of the 2-unitary operad $\ip$.
By \cite[Theorem 0.6]{BYZ}, the automorphism group of the 2-unitary
operad $\ip$, denoted by $\Aut_{2u}(\ip)$, is naturally isomorphic
to the automorphism group of the augmented algebra $A$, which is
isomorphic to $\Aut(\bar{A})$. Thus we have a short exact sequence
\[\{1\}\to \Bbbk^{\times} \to \Aut(\ip)\to \Aut_{2u}(\ip)\to \{1\}\]
and an isomorphism
\[\Aut_{2u}(\ip)\cong \Aut(\bar{A}).\]
The assertion follows.

(4) Let $g\in \End(\ip)$ and let $g(\1_0)=d\1_0$ for
some $d\in \Bbbk$. Applying $g$ to Equation \eqref{E4.3.5} and
using the fact $g(\1_1)=\1_1$, we obtain that
$$dg(\1_2)\circ (\1_1,\1_0)=\1_1=dg(\1_2)\circ (\1_0,\1_1).$$
By Lemma \ref{yylem4.2}(2), $d g(\1_2)=\1_2$. Hence $d$ is invertible.
Write $d=c^{-1}$, we have
$g(\1_0)=c^{-1} \1_0$. The rest of the proof is very similar
to the proof of part (2), so is omitted.
\end{proof}

\begin{proof}[Proof of Theorem \ref{yythm0.4}]
By \cite[Theorem 0.6]{BYZ} and its proof, every 2-unitary operad of
GKdimension two is of the form ${\mathcal D}_A$ given in Example
\ref{yyex4.1}. The assertion follows from Theorem \ref{yythm4.3}(1).
\end{proof}

\begin{proof}[Proof of Theorem \ref{yythm0.5}]
By \cite[Theorem 0.4(3)]{BYZ}, if $\ip$ is left and right
artinian and semiprime, then $\ip$ is isomorphic to ${\mathcal D}_A$
for an augmented semisimple artinian algebra $A$ as given in Example
\ref{yyex4.1}. The assertion follows from Theorem \ref{yythm4.3}(1).
\end{proof}

A version of Theorem \ref{yythm4.3}(1) fails when $\GKdim \ip\geq 3$.
We give an example when $\GKdim \ip=3$.

\begin{example}
\label{yyex4.4}
Suppose ${\rm{char}}\; \Bbbk\neq 2$.
Let $A$ be an augmented associative algebra $\Bbbk 1_A\oplus \Bbbk \delta$ with
$\delta^2=2\delta$ and $\bar{A}$ be the 1-dimensional 
nonunital subalgebra $\Bbbk \delta$.
It is easy to verify that $\der(\bar{A})=0$.

To construct an operad of GKdimension 3, we first define the sequence of
vector spaces
$$\ip(n)=\begin{cases} \Bbbk \1_0 & n=0,\\
A \cong \Bbbk \1_1 \oplus \Bbbk \delta^{1}_{(1)} & n=1,\\
\Bbbk \1_n \oplus \bigoplus_{1\leq i\leq n}\Bbbk\delta^{n}_{(i)} \oplus
\bigoplus_{1\leq k<l\leq n} \Bbbk \delta^{n}_{(kl)} & n\geq 2.
\end{cases}
$$
Then the generating series of $\ip$ is
$$G_{\ip}(t)=\frac{1}{1-t}+\frac{t}{(1-t)^2}+\frac{t^2}{(1-t)^3}$$
which implies that $\GKdim \ip=3$.

The action of $\S_n$ on $\ip(n)$ is given by $\1_n\ast \sigma=\1_n$,
$\delta_{(i)}^n\ast \sigma=\delta_{(\sigma^{-1}(i))}^n$
and
$\delta_{(ij)}^n\ast \sigma=\delta_{(\sigma^{-1}(i), \sigma^{-1}(j))}^n$
for all $1\le i<j\le n$. Here we use the convention that
$\delta_{(kl)}^n=\delta_{(lk)}^n$ for all $n$ and $1\le l<k\le n$. 

Define the partial composition of $\ip$ by the following rules,
for all $m\geq 1$, $n\geq 0$ (and $n\geq 1$ when $\delta^n_{(\cdot)}$ appears),
$1\leq i\leq m$, $1\leq s< t\leq m$, $1\leq k<l\leq n$,
\begin{align}
\label{E4.4.1}\tag{E4.4.1}
\1_{m} \ucr{i} \1_{n}& = \1_{m+n-1}, \\
\label{E4.4.2}\tag{E4.4.2}
\1_{m} \ucr{i} \delta^n_{(k)}&= \delta^{m+n-1}_{(k+i-1)}, \; \forall \; 1\leq k\leq n, \\
\label{E4.4.3}\tag{E4.4.3}
\1_{m} \ucr{i} \delta^n_{(kl)}&= \delta^{m+n-1}_{(k+i-1, l+i-1)}, \; \forall \; 1\leq k<l\leq n, \\
\label{E4.4.4}\tag{E4.4.4}
\delta^m_{(s)} \ucr{i} \1_n&=
\begin{cases} \delta^{m+n-1}_{(s)}, & \quad 1\leq s \leq i-1,\\
              \sum\limits_{i\leq h\leq i+n-1} \delta^{m+n-1}_{(h)}
							-\sum\limits_{i\leq k<l\leq i+n-1} \delta^{m+n-1}_{(kl)}, & \quad s=i,\\
              \delta^{m+n-1}_{(s+n-1)}, & \quad i<s\le m,
\end{cases}\\
\label{E4.4.5}\tag{E4.4.5}
\delta^m_{(st)} \ucr{i} \1_n&=
\begin{cases}
\delta^{m+n-1}_{(st)},                                 & \quad 1\leq s <t\leq i-1,\\
\sum\limits_{t\leq h\leq t+n-1} \delta^{m+n-1}_{(sh)}, & \quad t=i,\\
\delta^{m+n-1}_{(s,t+n-1)},                            & \quad s <i <t,\\
\sum\limits_{s\leq h\leq s+n-1} \delta^{m+n-1}_{(h,t+n-1)},
							 & \quad s=i,\\
\delta^{m+n-1}_{(s+n-1, t+n-1)}, & \quad i<s\le m,
\end{cases}\\
\label{E4.4.6}\tag{E4.4.6}
\delta^m_{(s)}\ucr{i} \delta^n_{(k)}&=
\begin{cases}
\delta^{m+n-1}_{(s,k+i-1)}, & s<i,\\
             2\delta^{m+n-1}_{(i+k-1)}
						-\sum\limits_{{i\le h\le i+n-1\atop h\neq i+k-1}}
						\delta^{m+n-1}_{(h, i+k-1)}, & s=i,
						\\
              \delta^{m+n-1}_{(k+i-1,s+n-1)}, & s> i,
\end{cases}\\
\label{E4.4.7}\tag{E4.4.7}
\delta^m_{(st)}\ucr{i} \delta^n_{(k)}&=
\begin{cases}
              2 \delta^{m+n-1}_{(s,t+k-1)}, & i=t,\\
							2 \delta^{m+n-1}_{(s+k-1,t+n-1)}, & i=s,\\
              0, & i\neq s,t,
\end{cases}\\
\label{E4.4.8}\tag{E4.4.8}
\delta^m_{(s)}\ucr{i} \delta^n_{(kl)}&=0,\\
\label{E4.4.9}\tag{E4.4.9}
\delta^m_{(st)}\ucr{i} \delta^n_{(kl)}&=0.
\end{align}

Now one can check that $\ip$ is a 2-unitary operad.
(Note that all checking are straightforward, though tedious.
We apologize for leaving out details).  Also one can
check that $H^0(\ip)=0$.

Note that $\ad_{\delta^1_{(1)}}$ is a
nonzero derivation as
$$\begin{aligned}
\ad_{\delta^1_{(1)}}(\1_2)&= \delta^1_{(1)}\circ \1_2-\1_2\underset{1}{\circ} \delta^1_{(1)}
-\1_2\underset{2}{\circ} \delta^1_{(1)}\\
&=\delta^1_{(1)}\circ \1_2-\delta^2_{(1)}-\delta^2_{(2)}\\
&=-\delta^2_{(12)}
\end{aligned}
$$
where the last equality is \eqref{E4.4.4}. Since $\ad_{\delta^1_{(1)}}$
is not of the form $\sf_{c}$, by Lemma \ref{yylem3.2}(2),
$\ider(\ip)$ has $\Bbbk$-dimension at least two. Since $\der(\overline{\ip(1)})=0$,
$\der(\ip)$ does not fit into an exact sequence
$$0\to \Bbbk\to \der(\ip)\to \der(\overline{\ip(1)})\to 0.$$

In fact, we can calculate $\der(\ip)$ as follows.
Let $\partial$ be an arbitrary derivation of $\ip$ and let
$\partial(\1_0)=-c\1_0$. Replacing $\partial$ by
$\partial-\sf_{c}$, we may assume that $\partial(\1_0)=0$.
Since $\der(\overline{\ip(1)})=0$, we obtain that
$\partial=0$ when restricted to $\ip(1)$.
Suppose $\partial(\1_2)=a \1_2+ b \delta^2_{(1)}+c \delta^2_{(2)}
+d \delta^2_{(12)}$. Applying $\partial$ to the identity
$$\1_2\underset{1}{\circ}\1_0=\1_1=\1_2\underset{2}{\circ}\1_0,$$
we have
$$\partial(\1_2)\underset{1}{\circ}\1_0=0=\partial(\1_2)\underset{2}{\circ}\1_0.$$
The above equation is equivalent to
$$a \1_1+b \delta^1_{(1)}=0=a \1_1+ c\delta^1_{(1)}$$
which implies that $a=b=c=0$. Thus $\partial=(-d) \ad_{\delta^1_{(1)}}$.
Therefore
\begin{equation}
\notag
\der(\ip)=\sf(\ip)\oplus \Bbbk \ad_{\delta^1_{(1)}}\cong \Bbbk^{\oplus 2}.
\end{equation}
Similarly,
$$\ider(\ip)=\Bbbk^{\oplus 2}$$
and
$$H^1(\ip)=0=\HH^1(\overline{\ip(1)}).$$

We can also calculate the automorphism group of $\ip$. If $\Bbbk$ is
${\mathbb R}$ or ${\mathbb C}$, we can use the derivation $\ad_{\delta^1_{(1)}}$
to define an automorphism
\begin{equation}
\notag
\exp(d \ad_{\delta^1_{(1)}}):=\sum_{s=0}^{\infty} \frac{d^n}{n!} \;
(\ad_{\delta^1_{(1)}})^n: \ip\to \ip.
\end{equation}
(Although $\ad_{\delta^1_{(1)}}$ is not nilpotent,
$\exp(d \ad_{\delta^1_{(1)}})$ is still well-defined.) For general
$\Bbbk$, we can use the inner automorphism $\aad_{\la}$ where $\la=\1 +x \delta^1_{(1)}$
for some $x\in \Bbbk$ such that $(1+2x)$ is invertible
(and $\la^{-1}=\1+y \delta^1_{(1)}$ where $y=\frac{-x}{1+2x}$). Now
\[\begin{aligned}
\aad_{\la}(\1_2)&= (\la^{-1} \circ \1_2) \circ (\la, \la)\\
&=\1_2\circ (\1+x \delta^1_{(1)}, \1+x \delta^1_{(1)})\\
&\qquad +y[\1_2\circ (\delta^1_{(1)}, \1)+\1_2\circ (\1,\delta^1_{(1)})
-\1_2\circ (\delta^1_{(1)}, \delta^1_{(1)})]\circ (\1+x \delta^1_{(1)}, \1+x \delta^1_{(1)})\\
&=\1_2+x \delta^2_{(1)}+x \delta^2_{(2)}+x^2 \delta^2_{(12)}\\
&\qquad +y[(1+2x)(\delta^2_{(1)}+\delta^2_{(2)})-(1+2x) \delta^2_{(12)}]\\
&=\1_2+x \delta^2_{(1)}+x \delta^2_{(2)}+x^2 \delta^2_{(12)}\\
&\qquad +(-x)(\delta^2_{(1)}+\delta^2_{(2)})+x \delta^2_{(12)}\\
&=\1_2+(x+x^2) \delta^2_{(12)}.
\end{aligned}
\]

In general, one can check directly that $g_d: \1_0\mapsto \1_0, \1\mapsto \1, \delta^1_{(1)}\mapsto
\delta^1_{(1)}, \1_2\mapsto \1_2+d \delta^2_{(12)}$ extends an automorphism of the operad $\ip$
for every $d\in \Bbbk$. Using this fact, one can show that
\begin{equation}
\notag
\Aut(\ip)=\ssf(\ip) \times \{g_d\mid d\in \Bbbk\}\cong \Bbbk^{\times} \times \Bbbk.
\end{equation}
\end{example}



\section{Preliminaries for $H^2_{\ast}$ and $H^2$ computation}
\label{yysec5}

This and the next two sections are devoted to the computation of $H^2_{\ast}(\ip)$
and $H^2(\ip)$ for various operads $\ip$ (note that $H^2_{\ast}(\ip)$
is a lot more difficult to understand than $H^1(\ip)$). The  main result in
this section is Theorem \ref{yythm5.3}.

The following lemma will be used several times in computation of
$H^2_{\ast}(\ip)$.

\begin{lemma}
\label{yylem5.1}
Let $\Bbbk$ be a field and $\ip$ be an operad over $\Bbbk$
generated by a set of elements, say $X$, and subject to a
set of relations, say $R$. Let $(\ip[\epsilon],
\ucr{i}^{\epsilon}, \ast^{\epsilon})$ be an
infinitesimal deformation of $\ip$. Suppose that the set
$X$ in $\ip\subseteq \ip[\epsilon]$ satisfies all relations
in $R$ with respect to operations
$\ucr{i}^{\epsilon}$ and $\ast^{\epsilon}$. Then
$(\ip[\epsilon], \ucr{i}^{\epsilon},
\ast^{\epsilon})$ is equivalent to the trivial infinitesimal
deformation of $\ip$.
\end{lemma}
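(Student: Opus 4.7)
The plan is to exploit the universal property of the presentation $(X,R)$ of $\ip$: since, by hypothesis, the generators $X$ satisfy the relations $R$ inside $\bigl(\ip[\epsilon], \ucr{i}^{\epsilon}, \ast^{\epsilon}\bigr)$, they determine a canonical operad map from $\ip$ into the deformed operad, which I will then $\Bbbk[\epsilon]$-linearly extend to an equivalence with the trivial deformation.

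Concretely, I would first embed $\ip\hookrightarrow \ip[\epsilon]$ via $\mu\mapsto \mu\otimes 1$ and invoke the universal property to obtain a unique operad morphism
\[
\phi\colon \ip \longrightarrow \bigl(\ip[\epsilon], \ucr{i}^{\epsilon}, \ast^{\epsilon}\bigr),
\qquad x\longmapsto x\otimes 1 \text{ for } x\in X.
\]
Reducing $\phi$ modulo $t$, and using that (by \eqref{E2.9.1}--\eqref{E2.9.2}) the deformed structures reduce to the original ones so that $\ip[\epsilon]/(t)\cong \ip$ as operads, I obtain an operad endomorphism $\bar\phi\colon \ip\to\ip$ fixing $X$ pointwise; uniqueness forces $\bar\phi=\id_{\ip}$, and hence $\phi$ has the form $\phi(\mu)=\mu+\partial(\mu)\,t$ for some $\Bbbk$-linear map $\partial\colon\ip\to\ip$. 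I would then $\Bbbk[\epsilon]$-linearly extend $\phi$ to
\[
\Phi\colon \ip[\epsilon]\longrightarrow \ip[\epsilon],
\qquad \Phi(\mu+\nu t)=\mu+(\nu+\partial(\mu))\,t,
\]
which is a $\Bbbk[\epsilon]$-module automorphism (its inverse subtracts $\partial(\mu)t$). That $\phi$ is an operad morphism, combined with the $\Bbbk[\epsilon]$-linearity of both structures, forces $\Phi$ to be an operad isomorphism from the trivial deformation $(\ip[\epsilon],\ucr{i},\ast)$ onto $(\ip[\epsilon], \ucr{i}^{\epsilon}, \ast^{\epsilon})$, which is precisely the equivalence demanded by Definition \ref{yydef2.9}(2). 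Equivalently, expanding $\phi(\mu\ucr{i}\nu) = \phi(\mu)\ucr{i}^{\epsilon}\phi(\nu)$ modulo $t^{2}$ reads off the 2-coboundary identities \eqref{E2.5.1}--\eqref{E2.5.2} for $\partial$ on the nose, so the associated 2-cocycle $\wv$ is a 2-coboundary and Theorem \ref{yythm2.10}(2) finishes the argument.

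The main obstacle is the opening invocation of the universal property, which must be set up carefully: one has to regard $\ip$ as the quotient of the free operad on $X$ by the operadic ideal generated by $R$, and verify that $\bigl(\ip[\epsilon], \ucr{i}^{\epsilon}, \ast^{\epsilon}\bigr)$, as an operad (over $\Bbbk$, or equivalently over $\Bbbk[\epsilon]$), is a legitimate target for the assignment $x\mapsto x\otimes 1$. Both points are built into Definition \ref{yydef2.9}(1), so the difficulty is one of bookkeeping rather than mathematics; once this is in place, the reduction mod $t$ and the construction of $\Phi^{-1}$ are elementary, and the conclusion follows.
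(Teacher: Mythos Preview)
Your proposal is correct and follows essentially the same route as the paper: construct the operad map $\phi\colon \ip\to \ip[\epsilon]$ via the universal property of the presentation, reduce mod $t$ to see $\phi(\mu)=\mu+\partial(\mu)t$, and then $\Bbbk[\epsilon]$-linearly extend to the desired equivalence $\Phi$. The paper carries out the last step by fixing a $\Bbbk$-basis $\{\mu_i\}$ and verifying the operadic identities on the four types of basis pairs, whereas you invoke $\Bbbk[\epsilon]$-linearity (and, alternatively, read off the coboundary equations \eqref{E2.5.1}--\eqref{E2.5.2} and appeal to Theorem~\ref{yythm2.10}(2)); both arguments are equivalent.
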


\begin{proof} We may consider $\ip[\epsilon]$ as an operad over $\Bbbk$
as well as over $\Bbbk[\epsilon]$.
There is always an operadic morphism from the free operad $\mathcal{F}(X)$ over $\Bbbk$
generated by $X$ to $\ip[\epsilon]$ sending $x\mapsto x$ for all $x\in X$.
By definition, $\ip=\mathcal{F}(X)/\langle R\rangle$
where $\langle R\rangle$ is the ideal of $\mathcal{F}(X)$ generated by $R$.
Since $X$ in $\ip[\epsilon]$ satisfies all relations in $R$,
it induces an operadic morphism from $f: \ip\to \ip[\epsilon]$.
Let $\pi$ be the canonical operadic morphism from $\ip[\epsilon]
\to \ip$ provided by the definition of an infinitesimal deformation.
Then $F:=\pi\circ f:\ip\to \ip$ is an operadic morphism sending
$x\mapsto x$ for all $x\in X$. Since $\ip$ is generated by $X$, we have
that $F$ is the identity of $\ip$. This implies that
for every $\mu\in \ip$, $f(\mu)=\mu+\epsilon \partial(\mu)$
for some $\partial(\mu)\in \ip$.
Let $\{\mu_i\}_{i\in I}$ be a $\Bbbk$-linear basis of $\ip$.
Then $\{\epsilon f(\mu_i)\}_{i\in I}$ is a basis of $\epsilon \ip$
and hence $\{f(\mu_i)\}_{i\in I}\bigcup \{\epsilon f(\mu_i)\}_{i\in I}$
is a $\Bbbk$-linear basis of $\ip[\epsilon]$. Thus we can define
a $\Bbbk$-linear isomorphism $G: \ip \otimes \Bbbk[\epsilon]\to
\ip[\epsilon]$ by
$$G(\mu_i)=f(\mu_i), \qquad {\text{and}} \qquad
G(\epsilon \mu_i)=\epsilon f(\mu_i)=\epsilon \mu_i$$
for all $i\in I$. (Here $\ip \otimes \Bbbk[\epsilon]$ is the
trivial infinitesimal deformation of $\ip$.)
It is clear that $G$ is $\Bbbk[\epsilon]$-linear.
It remains to show that $G$ is an operadic morphism over $\Bbbk$.
We use the $\Bbbk$-linear basis $\{\mu_i\}_{i\in I}\bigcup
\{\epsilon \mu_i\}_{i\in I}$ of the trivial deformation
$\ip \otimes \Bbbk[\epsilon]$. For all $\mu_m,\mu_n$ where $m,n\in I$
and all $i$, using the fact that $f$ is an operadic morphism and
the definition of $G$, we have
\[G(\mu_m \ucr{i} \mu_n)
=f(\mu_m \ucr{i} \mu_n)
=f(\mu_m) \ucr{i}^\ep f(\mu_n)
=G(\mu_m) \ucr{i}^\ep G(\mu_n).
\]
Similarly,
\[G(\epsilon \mu_m \ucr{i} \mu_n)
=\epsilon (\mu_m \ucr{i} \mu_n)
=(\epsilon \mu_m) \ucr{i}^\ep \mu_n
=G(\epsilon\mu_m) \ucr{i}^\ep G(\mu_n).
\]
By symmetry, we have
\[G(\mu_m \ucr{i} \epsilon \mu_n)
=G(\mu_m) \ucr{i}^\ep G(\epsilon \mu_n).
\]
Finally,
\[G(\epsilon\mu_m \ucr{i} \epsilon \mu_n)=0
=G(\epsilon \mu_m) \ucr{i}^\ep G(\epsilon \mu_n).
\]
Therefore $G$ is an operadic morphism as desired.
\end{proof}

We have an immediate consequence.

\begin{proposition}
\label{yypro5.2}
Let $\mathcal{F}(X)$ be a free operad over a field $\Bbbk$ generated
by a set $X$. Then every infinitesimal deformation of $\ip$
is equivalent to the trivial one. As a consequence,
$H^2_{\ast}(\ip)=0$.
\end{proposition}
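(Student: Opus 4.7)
The plan is to invoke Lemma \ref{yylem5.1} directly, taking advantage of the defining feature of a free operad: it is presented by generators with no relations whatsoever. More precisely, $\mathcal{F}(X)$ can be written as $\mathcal{F}(X)/\langle R \rangle$ with $R = \emptyset$, so the hypothesis of Lemma \ref{yylem5.1} is vacuously satisfied for any infinitesimal deformation.

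Concretely, I would proceed as follows. Let $(\mathcal{F}(X)[\epsilon], \ucr{i}^{\epsilon}, \ast^{\epsilon})$ be an arbitrary infinitesimal deformation of $\mathcal{F}(X)$. Since the generating set $X$ is subject to no relations in $R$, the statement ``$X$ satisfies all relations in $R$ with respect to $\ucr{i}^{\epsilon}$ and $\ast^{\epsilon}$'' is trivially true. Therefore Lemma \ref{yylem5.1} applies and yields that this infinitesimal deformation is equivalent to the trivial one. Since the deformation was arbitrary, $\idf(\mathcal{F}(X))$ contains only the class of the trivial deformation.

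For the cohomological consequence, I would then appeal to Theorem \ref{yythm2.10}(3), which provides a natural isomorphism $\idf(\ip) \cong H^2_{\ast}(\ip)$ for any operad $\ip$. Applying this with $\ip = \mathcal{F}(X)$ and using the previous paragraph immediately gives $H^2_{\ast}(\mathcal{F}(X)) = 0$.

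There is no real obstacle here, since the difficult work has already been done in Lemma \ref{yylem5.1} and Theorem \ref{yythm2.10}. The only mild point of care is the observation that having no relations is the correct specialization of Lemma \ref{yylem5.1} to the free case; one should note explicitly that the statement of the lemma is trivially satisfied when $R=\emptyset$, so no further verification in $\mathcal{F}(X)[\epsilon]$ is necessary.
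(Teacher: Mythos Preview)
Your proposal is correct and follows essentially the same approach as the paper: apply Lemma \ref{yylem5.1} with $R=\emptyset$ so that its hypothesis is vacuous, and then read off $H^2_{\ast}(\mathcal{F}(X))=0$ from Theorem \ref{yythm2.10}(3).
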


\begin{proof} Let $(\mathcal{F}(X)[\epsilon],
\ucr{i}^\ep, \ast^{\epsilon})$ be an
infinitesimal deformation of $\mathcal{F}(X)$. Since $\mathcal{F}(X)$ is free
generated by $X$, there is an operadic morphism
from $\mathcal{F}(X)\to \mathcal{F}(X)[\epsilon]$ sending $x\mapsto x$ for all
$x\in X$. The hypotheses in Lemma \ref{yylem5.1} hold
since $R$ is the empty set. By Lemma \ref{yylem5.1},
$\mathcal{F}(X)[\epsilon]$ is equivalent to the trivial one. The
consequence is clear by Theorem \ref{yythm2.10}(3).
\end{proof}

\begin{theorem}
\label{yythm5.3}
Let $\ip$ be a locally finite operad over a field $\Bbbk$.
Suppose that $\ip$ is generated by a finite set and subject
to a finite set of relations. Then $H^2_{\ast}(\ip)$ {\rm{(}}and
hence $H^2(\ip)${\rm{)}} is finite dimensional over $\Bbbk$.
\end{theorem}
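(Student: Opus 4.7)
The plan is to combine Theorem \ref{yythm2.10}(3), which identifies $H^2_\ast(\ip)$ with infinitesimal deformations modulo equivalence, with Lemma \ref{yylem5.1}, in order to inject $H^2_\ast(\ip)$ into a finite-dimensional vector space built from the given presentation. Fix a finite set of generators $X = \{x_1,\ldots,x_k\}$ and a finite set of defining relations $R = \{r_1,\ldots,r_\ell\}$, where each $r_j$ lies in the free operad $\mathcal{F}(X)$ in some arity $n_j$ and maps to zero in $\ip$. Because $\ip$ is locally finite and $R$ is finite, the space $V := \bigoplus_{j=1}^{\ell} \ip(n_j)$ is finite-dimensional over $\Bbbk$; this will be the target of the desired injection.

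Given $\wv = (\wp_i,\varsigma) \in Z^2_\ast(\ip)$, Theorem \ref{yythm2.10}(1) yields the infinitesimal deformation $(\ip[\epsilon], \ucr{i}^\epsilon, \ast^\epsilon)$ defined by \eqref{E2.9.1}--\eqref{E2.9.2}. For each $r_j$, I evaluate the formal expression inside $\ip[\epsilon]$ by replacing every partial composition and $\S$-action appearing in $r_j$ with its deformed version and plugging in the generators $X \subseteq \ip \subseteq \ip[\epsilon]$ at the leaves. Since $r_j$ already vanishes in $\ip$, the result has no constant term in $t$ and equals $t\,\delta_j(\wv)$ for a unique $\delta_j(\wv) \in \ip(n_j)$. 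Expanding each $\ucr{i}^\epsilon$ and $\ast^\epsilon$ modulo $t^2$ and collecting the coefficient of $t$, one sees that $\delta_j(\wv)$ is a fixed $\Bbbk$-linear combination of values of $\wp_i$ and $\varsigma$, so the assignment
\[
\Psi \colon Z^2_\ast(\ip) \longrightarrow V,
\qquad \wv \longmapsto \bigl(\delta_j(\wv)\bigr)_{j=1}^{\ell}
\]
is $\Bbbk$-linear.

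The key step is to show $\ker \Psi = B^2_\ast(\ip)$. The inclusion $B^2_\ast(\ip) \subseteq \ker \Psi$ is immediate from Theorem \ref{yythm2.10}(2), since a coboundary produces the trivial deformation and hence the relations $R$ continue to hold exactly on $X$. Conversely, if $\Psi(\wv) = 0$, then every $r_j$ evaluates to zero in $\ip[\epsilon]$ under the deformed operations with inputs in $X$, which is precisely the hypothesis of Lemma \ref{yylem5.1}; the lemma forces the deformation to be equivalent to the trivial one, and Theorem \ref{yythm2.10}(2) then returns $\wv \in B^2_\ast(\ip)$. Consequently $\Psi$ descends to an injection $H^2_\ast(\ip) \hookrightarrow V$, giving the finite-dimensionality of $H^2_\ast(\ip)$, and the finite-dimensionality of $H^2(\ip)$ follows at once from the natural injection $H^2(\ip) \hookrightarrow H^2_\ast(\ip)$ noted after Definition \ref{yydef2.7}. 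The only genuine obstacle I anticipate is the careful bookkeeping needed to verify that $\Psi$ is well-defined and $\Bbbk$-linear; once $\Psi$ is in hand, the rest is a clean application of the deformation-theoretic machinery already established.
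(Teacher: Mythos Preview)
Your overall strategy matches the paper's: use Lemma~\ref{yylem5.1} to show that any $2$-cocycle lying in the kernel of a suitable linear map into a finite-dimensional space is a coboundary, and conclude that $H^2_\ast(\ip)$ is bounded by the dimension of that space. Your choice of target $V=\bigoplus_j \ip(n_j)$ is in fact more economical than the paper's (which restricts the entire low-degree part of the cocycle to a Hom-space), but the mechanism is the same.

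There is, however, a genuine gap in your argument: the inclusion $B^2_\ast(\ip)\subseteq\ker\Psi$ is false in general, and your justification for it is incorrect. Theorem~\ref{yythm2.10}(2) says that a coboundary $\wv(\partial)$ corresponds to a deformation \emph{equivalent to} the trivial one, not to the trivial deformation itself. Equivalence is realised by $\Phi(\mu)=\mu+\partial(\mu)t$, and the relations $R$ hold exactly on $\Phi(X)$, not on $X$. Concretely, take $\ip=\Com$, the relation $\1_2\ucr{1}\1_0=\1$, and the superfluous map $\partial$ with $\partial(\1_2)=\1_2$ and $\partial=0$ elsewhere: then $\1_2\ucr{1}^{\epsilon}\1_0-\1=\wp_1(\1_2,\1_0)t=(c_1-c_2-c_0)\1\,t=-\1\,t\neq 0$, so $\Psi(\wv(\partial))\neq 0$ even though $\wv(\partial)\in B^2_\ast(\ip)$. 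Thus $\Psi$ does \emph{not} descend to $H^2_\ast(\ip)$, and you do not get an injection $H^2_\ast(\ip)\hookrightarrow V$.

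The fix is immediate and brings you exactly in line with the paper: you only need the inclusion $\ker\Psi\subseteq B^2_\ast(\ip)$, which you have correctly deduced from Lemma~\ref{yylem5.1}. From this alone,
\[
\dim H^2_\ast(\ip)=\dim Z^2_\ast(\ip)/B^2_\ast(\ip)\le \dim Z^2_\ast(\ip)/\ker\Psi=\dim\Im\Psi\le\dim V<\infty,
\]
which is precisely the chain of inequalities the paper uses (with its own kernel $K$ in place of your $\ker\Psi$). Drop the claim of equality and the word ``injection'', and your proof goes through.
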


\begin{proof} Suppose $\ip$ is generated by a finite set $X$ and
subject to a finite set $R$ of relations. For a large $N$ (that
is bigger than the degrees of elements in $X$), all operadic
operations (either a partial composition or an $\S$-action)
involved in any relations in $R$ are one of the following:
\[-\ucr{i} -\colon \ip(m)\otimes \ip(n)\to \ip(m+n-1)\]
for some $m,n\leq N$, or
\[-\ast -\colon \ip(m)\otimes \Bbbk \S_m\to \ip(m)\]
for some $m\leq N$.

Let $\wv$ be a 2-cocycle of $\ip$. Let $Res(\wv)$ be
a collection of morphisms
\[\wp_i\colon \ip(m)\otimes \ip(n)\to \ip(m+n-1)\]
for all $m,n\leq N$, and
\[\varsigma\colon \ip(m)\otimes \Bbbk \S_m\to \ip(m)\]
for all $m\leq N$. Let
\[V=\bigoplus_{m,n\leq N}\Hom_{\Bbbk}
(\ip(m)\otimes \ip(n), \ip(m+n-1))\oplus
\bigoplus_{m\leq N} \Hom_{\Bbbk}(\ip(m)\otimes \Bbbk\S_m,\ip(m)).\]
Since $\ip$ is locally finite, $V$ is finite dimensional.
We define a $\Bbbk$-linear map
$$Res\colon Z^2_{\ast}(\ip)\to V$$
sending $\wv$ to $Res(\wv)$. Let $K$ be the
kernel of $Res$.
If $\wv$ is in $K$, then
all maps
\[\wp_i\colon \ip(m)\otimes \ip(n)\to \ip(m+n-1)\]
for all $m,n\leq N$ and
\[\varsigma\colon \ip(m)\otimes \Bbbk \S_m\to \ip(m)\]
for all $m\leq N$ are zero.
Let $\ip_{[\wv]}=(\ip[\ep], \ucr{i}^\epsilon, \ast^\epsilon)$
be the infinitesimal deformation
of $\ip$ associated to $\wv=(\wp_i, \varsigma)\in K$.
By \eqref{E2.9.1} and \eqref{E2.9.2},
$\ucr{i}^{\epsilon}$ and $\ast^{\epsilon}$
equal to $\ucr{i}$ and $\ast$ when restricted
to $\ip(m)\otimes \ip(n)$ (for all $m, n\leq n$)
and to $\ip(m)\otimes \Bbbk \S_m$ (for all
$m\leq N$) respectively. This shows that the relations
in $R$ still hold for elements in $X$ considered
in $\ip_{[\wv]}$.
 By Lemma \ref{yylem5.1}, $\wv$ is a 2-coboundary.
 Therefore $K\subseteq B^2_{\ast}(\ip)$. So
\[\dim H^2_{\ast}(\ip)=\dim Z^2_{\ast}(\ip)/B^2_{\ast}(\ip)
\leq \dim Z^2_{\ast}(\ip)/K\leq \dim V<\infty.\]
\end{proof}

\begin{proof}[Proof of Theorem \ref{yythm0.8}]
(1) This is Theorem \ref{yythm1.7}(1).

(2). If $i=0$, $\dim H^0(\ip)\leq \dim \ip(1)<\infty$.
If $i=1$, it follows from Lemma \ref{yylem3.1}(2).
If $i=2$, this is Theorem \ref{yythm5.3}.
\end{proof}

We prove another lemma that is needed in the next section.

\begin{lemma}
\label{yylem5.4}
\begin{enumerate}
\item[(1)]
Suppose $\{a_{m, n}\}_{m\geq 1, n\geq 0}$ is a family of scalars in $\Bbbk$.
Then it satisfies
\begin{equation}
\label{E5.4.1}\tag{E5.4.1}
a_{l+m-1, n}+a_{l,m}=a_{l, m+n-1}+a_{m, n}
\end{equation}
for all $l\geq 1, m\geq 1, n\geq 0$ if and only if there is a sequence of
scalars $\{c_m\}_{m\geq 0}$ such that
\begin{equation}
\label{E5.4.2}\tag{E5.4.2}
a_{m,n}=(c_{m+n-1}-c_{m}-c_n)
\end{equation}
for all $m\geq 1$ and $n\geq 0$.
\item[(2)]
Suppose $\{a_{m, n}\}_{m, n\geq 1}$ is a family of scalars in $\Bbbk$.
Then it satisfies \eqref{E5.4.1}
for all $l, m, n\geq 1$ if and only if there is a sequence of
scalars $\{c_m\}_{m\geq 1}$ such that
\eqref{E5.4.2} holds for all $m, n\geq 1$.
\end{enumerate}
\end{lemma}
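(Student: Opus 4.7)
The ``if'' direction in both parts is a direct substitution: after plugging $a_{m,n}=c_{m+n-1}-c_m-c_n$ into the two sides of \eqref{E5.4.1}, both reduce to $c_{l+m+n-2}-c_l-c_m-c_n$. All of the work lies in the converse, which is essentially the standard vanishing of the bar $H^2$ of the monoid $(\N_{\geq 1},*)$ with $m*n:=m+n-1$ and trivial coefficients.

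For part {\rm{(1)}}, first specialize \eqref{E5.4.1} at $m=1$; this gives $a_{l,1}=a_{1,n}$ for all $l\geq 1$ and $n\geq 0$, a common constant which I name $-c_1$. Set $c_0=0$, and for $m\geq 2$ recursively define $c_m$ by forcing $a_{m,0}=c_{m-1}-c_m-c_0$, i.e.\ $c_m:=c_{m-1}-a_{m,0}$. Introduce $d_{m,n}:=a_{m,n}-(c_{m+n-1}-c_m-c_n)$; by the ``if'' direction $d$ satisfies the same cocycle identity, and by construction $d_{1,n}=0$ for all $n\geq 0$ and $d_{m,0}=0$ for all $m\geq 1$. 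The cocycle identity for $d$ at $n=0$,
\[d_{l+m-1,0}+d_{l,m}=d_{l,m-1}+d_{m,0},\]
collapses to $d_{l,m}=d_{l,m-1}$; induction on $m$ starting from $d_{l,0}=0$ kills $d$.

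For part {\rm{(2)}} the $n=0$ boundary is unavailable, so I adjust the normalization. The $m=1$ specialization still produces the constant $-c_1=a_{1,n}=a_{l,1}$; fix $c_2$ freely (say $c_2=0$), and for $m\geq 2$ define $c_{m+1}$ by $c_{m+1}:=a_{m,2}+c_m+c_2$, so that $a_{m,2}=c_{m+1}-c_m-c_2$. With the same $d_{m,n}$, the boundary values now read $d_{1,n}=d_{l,1}=d_{m,2}=0$, and the cocycle identity for $d$ at $n=2$ collapses to $d_{l,m}=d_{l,m+1}$; induction on $m$ from $d_{l,1}=0$ finishes the argument. I do not expect a serious obstacle — the only point requiring care is choosing the right boundary slice ($n=0$ vs.\ $n=2$) on which the normalization forces $d$ to vanish, so that one further specialization of the cocycle telescopes all remaining values to zero.
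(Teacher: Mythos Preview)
Your proof is correct and follows essentially the same strategy as the paper's: substitute for the ``if'' direction, specialize \eqref{E5.4.1} at $m=1$ to extract the common constant $a_{l,1}=a_{1,n}$, define the $c_n$ along one boundary slice, and then induct using the cocycle relation. The only cosmetic difference is the choice of slice: the paper normalizes along $a_{2,\cdot}$ (explicitly listing $c_n$ as partial sums of $a_{2,i}$) and inducts on the first argument via $l=2$, whereas you normalize along $a_{\cdot,0}$ (respectively $a_{\cdot,2}$ in part~(2)) and induct on the second argument via $n=0$ (respectively $n=2$); your introduction of $d_{m,n}$ and the one-line telescoping $d_{l,m}=d_{l,m-1}$ makes the induction a bit cleaner than the paper's direct verification.
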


\begin{proof} (1) First of all, it is straightforward to check that
\eqref{E5.4.2}
implies that \eqref{E5.4.1}. It remains to show the other implication, namely,
prove \eqref{E5.4.2} by assuming \eqref{E5.4.1}.

Taking $m=1$ in \eqref{E5.4.1}, we get $a_{l, 1}=a_{1, n}$
for all $l\geq 1$ and $n\geq 0$. Set
\begin{equation}
\label{E5.4.3}\tag{E5.4.3}
c_n=\begin{cases}
 -a_{1,0}-a_{2,0}, &n=0,\\
-a_{1,1}=-a_{1,0}, & n=1,\\
0, & n=2,\\
\sum\limits_{i=2}^{n-1} a_{2, i}, &n\ge 3.
\end{cases}
\end{equation}
Clearly, $a_{1, n}=a_{l, 1}=-c_1$ for all $l\geq 1$ and
$n\geq 0$ which implies that
\[a_{1,n}=-c_1=c_{1+n-1}-c_n-c_1\]
and
\[a_{l,1}=-c_1=c_{l+1-1}-c_{l}-c_1.\]
So \eqref{E5.4.2} holds for $(m, n)=(1, n)$ and $(l, 1)$
for all $n\geq 0$ and $l\geq 1$. Again by the definition
of $c_n$ in \eqref{E5.4.3}, one sees that
\[a_{2, n}=-c_n+c_{n+1}=c_{2+n-1}-c_{n}-c_2\]
for all $n \ge 0$. So \eqref{E5.4.2} holds for
$(m, n)=(2, n)$ for all $n\geq 0$. Now we show \eqref{E5.4.2}
by induction on $m$ for $m\geq 3$. By induction hypothesis,
we assume that \eqref{E5.4.2} holds for all smaller $m$
(including for $m=2$). Taking $l=2$ in \eqref{E5.4.1}, for all
$n\geq 0$, we have
\begin{align*}
a_{m+1, n}=& a_{m, n}+a_{2, m+n-1}-a_{2, m}\\
=& (-c_m-c_n+c_{m+n-1})+(-c_{m+n-1}+c_{m+n})-(-c_m+c_{m+1})\\
=& -c_{m+1}-c_n+c_{m+n}.
\end{align*}
Therefore \eqref{E5.4.2} holds for $(m+1,n)$. We finish the
proof by induction.

(2) The proof is similar after we replace \eqref{E5.4.3}
by
\begin{equation}
\label{E5.4.4}\tag{E5.4.4}
c_n=\begin{cases}
-a_{1,1}, & n=1,\\
0, & n=2,\\
\sum\limits_{i=2}^{n-1} a_{2, i}, & n\ge 3.
\end{cases}
\end{equation}
Details are omitted.
\end{proof}

Finally we define a special kind of 2-coboundaries, namely,
superfluous 2-coboundary in a slightly different way
from that of Remark \ref{yyrem1.3}(1).

\begin{definition}
\label{yydef5.5}
Let $\wv$ be a 2-coboundary of $\ip$. We call $\wv$ a
{\it superfluous 2-coboundary} if there is a sequence of
scalars $\{c_n\}_{n\ge 0}$ in $\Bbbk$ such that, for all
$1\leq i\leq m$, $\mu\in \ip(m)$, $\nu\in \ip(n)$
and $\sigma\in \S_m$,
$$\begin{aligned}
\wp_i(\mu,\nu)&=(c_{m+n-1}-c_{n}-c_m) \mu\ui \nu,\\
\varsigma(\mu, \sigma)&=0,
\end{aligned}
$$
or equivalently, there is a superfluous map $\partial:\ip\to\ip$
with $\partial(\mu)=c_{m} \mu$ for all $\mu\in \ip(m)$
such that
$$\begin{aligned}
\wp_i(\mu,\nu)&=\partial(\mu\ui \nu)
-[\partial(\mu)\ui \nu+\mu \ui \partial(\nu)],\\
\varsigma(\mu, \sigma)&=\partial(\mu\ast \sigma)-\partial(\mu)\ast \sigma.
\end{aligned}
$$
\end{definition}

Lemma \ref{yylem5.4} is related to superfluous 2-coboundaries.

\section{Calculation of $H^2_{\ast}$, part 1}
\label{yysec6}
Throughout this section we assume that $\Bbbk$ is a field.
The goal of this section is to work out $H^2_{\ast}$ and
$H^2$ for operads $\As$, $\Com$ and $\Lie$. We divide this section into
several subsections. Recall that $\Bbbk[\epsilon]=\Bbbk[t]/(t^2)$
and that we will use both $t$ and $\epsilon$ for the
variable $\epsilon$ in $\Bbbk[\epsilon]$.

\subsection{$H^2_{\ast}(\Com)$}
\label{yysec6.1}

The main result of this subsection is the following.

\begin{theorem}
\label{yythm6.1}
Suppose that ${\rm{char}}\; \Bbbk\neq 2$. Let $\ip=\Com$.
Then the following hold.
\begin{enumerate}
\item[(1)]
Every infinitesimal deformation of $\ip$ is trivial.
\item[(2)]
$\idf(\ip)=\{0\}$ and $H^2(\ip)=H^2_{\ast}(\ip)=0$.
\item[(3)]
Every 2-cocycle of $\ip$ is a 2-coboundary.
\end{enumerate}
\end{theorem}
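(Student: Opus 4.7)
The plan is to reduce everything to part (3) via Theorem \ref{yythm2.10}(3) and Definition \ref{yydef2.7}, since that result gives $\idf(\ip)\cong H^2_{\ast}(\ip)$ and a 2-cocycle vanishes in $H^2_\ast$ iff the corresponding infinitesimal deformation is trivial. Moreover, $H^2(\ip)\hookrightarrow H^2_{\ast}(\ip)$, so $H^2=0$ will follow from $H^2_{\ast}=0$. Thus I only need to show every 2-cocycle of $\Com$ is a 2-coboundary, and by Lemma \ref{yylem2.6} I may assume the 2-cocycle $\wv=(\wp_i,\varsigma)$ is normalized. Throughout, write $\Com(n)=\Bbbk\, m_n$, recalling that the $\S_n$-action on $\Com(n)$ is trivial.

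First I would treat the $\S$-part. Since the $\S$-action is trivial, $\varsigma$ is determined by scalars: $\varsigma(m_m,\sigma)=b_m(\sigma)\,m_m$ for some function $b_m\colon\S_m\to\Bbbk$. Then \eqref{E2.4.5} reduces to $b_m(\sigma\tau)=b_m(\sigma)+b_m(\tau)$, so $b_m$ is a group homomorphism $\S_m\to(\Bbbk,+)$. For $m\leq 1$ this is automatically $0$; for $m\geq 2$ it factors through $\S_m^{\mathrm{ab}}=\Z/2\Z$, and the hypothesis $\mathrm{char}\,\Bbbk\neq 2$ forces $b_m=0$. Therefore $\varsigma=0$.

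Next I would analyze $\wp$. Write $\wp_i(m_m,m_n)=a_{m,n,i}\,m_{m+n-1}$. Substituting $\vs=0$ and triviality of $\S$-action, equation \eqref{E2.4.4} becomes $a_{m,n,i}=a_{m,n,\phi(i)}$ for every $\phi\in\S_m$, so $a_{m,n,i}$ is independent of $i$; call it $a_{m,n}$. (Equation \eqref{E2.4.3} then becomes tautological.) Equation \eqref{E2.4.1} yields
\[
a_{l+m-1,n}+a_{l,m}=a_{l,m+n-1}+a_{m,n}\qquad(l\geq 1,\ m\geq 1,\ n\geq 0),
\]
which is precisely the hypothesis \eqref{E5.4.1} of Lemma \ref{yylem5.4}(1). (Equation \eqref{E2.4.2} turns out to be a consequence, since the solution from Lemma \ref{yylem5.4} will be automatically symmetric in $m,n$.) Lemma \ref{yylem5.4}(1) then produces a sequence $\{c_n\}_{n\geq 0}\subseteq\Bbbk$ with $a_{m,n}=c_{m+n-1}-c_m-c_n$.

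Finally I would exhibit the coboundary. Define the superfluous map $\partial\colon\Com\to\Com$ by $\partial(m_n)=c_n\,m_n$ for all $n\geq 0$. Then $\partial(\mu)\ast\sigma=\partial(\mu\ast\sigma)$ (so the $\varsigma$-part of the coboundary is zero, matching $\varsigma=0$), and the direct computation
\[
\partial(m_m\ucr{i}m_n)-\partial(m_m)\ucr{i}m_n-m_m\ucr{i}\partial(m_n)=(c_{m+n-1}-c_m-c_n)\,m_{m+n-1}=\wp_i(m_m,m_n)
\]
shows that $\wv=\wv(\partial)$ in the sense of Definition \ref{yydef2.5}(1) (in fact a superfluous 2-coboundary, Definition \ref{yydef5.5}). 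Hence $\wv\in B^2_\ast(\Com)$, proving (3); then (1) and (2) follow from Theorem \ref{yythm2.10}(3) together with the injection $H^2(\Com)\hookrightarrow H^2_\ast(\Com)$. The only delicate point is Step 1, where the hypothesis $\mathrm{char}\,\Bbbk\neq 2$ is essential to kill the sign character of $\S_m$; everything else is a mechanical unpacking of the cocycle axioms combined with Lemma \ref{yylem5.4}.
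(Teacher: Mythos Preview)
Your proof is correct and, for the $\wp$-part, essentially identical to the paper's: both use \eqref{E2.4.4} to show $a_{m,n,i}$ is independent of $i$, then feed \eqref{E2.4.1} into Lemma~\ref{yylem5.4}(1) to obtain $a_{m,n}=c_{m+n-1}-c_m-c_n$, and finish with the superfluous coboundary of Definition~\ref{yydef5.5}. (Your appeal to Lemma~\ref{yylem2.6} is harmless but unused.)

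The genuine difference is in how you kill $\varsigma$. The paper takes an indirect route: it proves $\Ext^1_{\Bbbk\S_n}(\Bbbk,\Bbbk)=0$ for $\mathrm{char}\,\Bbbk\neq 2$ (Lemma~\ref{yylem6.2}(1)) and then invokes Theorem~\ref{yythm2.12} to replace the given cocycle by an equivalent $\S$-2-cocycle, after which the computation above yields $H^2(\Com)=0$ (Lemma~\ref{yylem6.3}(3)) and hence $H^2_\ast(\Com)=0$. You instead read \eqref{E2.4.5} directly: triviality of the $\S$-action makes $\sigma\mapsto b_m(\sigma)$ a group homomorphism $\S_m\to(\Bbbk,+)$, which factors through $\S_m^{\mathrm{ab}}\cong\Z/2\Z$ and therefore vanishes when $2$ is invertible. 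This is more elementary---it bypasses the Hochschild/$\Ext$ machinery entirely---and in fact proves the slightly sharper statement that $\varsigma=0$ on the nose rather than only up to equivalence. The paper's route, on the other hand, fits into the general framework of Theorem~\ref{yythm2.12}, which is reused for $\As$ and $\Pois$ later.
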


The assertion also holds when ${\rm{char}}\; \Bbbk=2$,
see Theorem \ref{yythm6.8}. We first prove two lemmas.

\begin{lemma}
\label{yylem6.2}
Let $\Bbbk$ also denote the trivial $\Bbbk \S_n$-module.
\begin{enumerate}
\item[(1)]
Suppose that ${\rm{char}}\; \Bbbk \neq 2$.
Then $\Ext^1_{\Bbbk \S_n}(\Bbbk,\Bbbk)=0$ for all $n$.
\item[(2)]
Suppose that ${\rm{char}}\; \Bbbk=2$.
Then $\Ext^1_{\Bbbk \S_n}(\Bbbk,\Bbbk)=
\begin{cases}
0, & n\leq 1,\\
\Bbbk, & n\geq 2.
\end{cases}$
\end{enumerate}
\end{lemma}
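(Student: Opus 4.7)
The plan is to translate this to a standard group-cohomology computation. First I would use the identification
\[
\Ext^1_{\Bbbk \S_n}(\Bbbk,\Bbbk) \;\cong\; H^1(\S_n,\Bbbk),
\]
where $\Bbbk$ on the right is the trivial $\S_n$-module. This is the first sheet of a standard bar/resolution computation and also follows from \cite[Lemma 9.1.9]{We}, which the authors already invoke nearby. Because the action on $\Bbbk$ is trivial, every crossed homomorphism $\S_n \to \Bbbk$ is just an ordinary group homomorphism into $(\Bbbk,+)$, and every principal crossed homomorphism vanishes. Therefore
\[
H^1(\S_n,\Bbbk) \;=\; \Hom_{\mathrm{Grp}}\!\bigl(\S_n,(\Bbbk,+)\bigr).
\]

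Any such homomorphism factors through the abelianization $\S_n^{\mathrm{ab}}$. It is classical that $[\S_n,\S_n]=A_n$, so $\S_n^{\mathrm{ab}}$ is trivial for $n\leq 1$ and $\S_n^{\mathrm{ab}}\cong \Z/2\Z$ for $n\geq 2$, the isomorphism being given by the sign character. Hence
\[
\Hom_{\mathrm{Grp}}\!\bigl(\S_n,(\Bbbk,+)\bigr)
\;=\;
\begin{cases}
0, & n\leq 1,\\
\Hom_{\mathrm{Grp}}(\Z/2\Z,(\Bbbk,+)), & n\geq 2.
\end{cases}
\]

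Finally, a homomorphism $\Z/2\Z \to (\Bbbk,+)$ is determined by the image $a$ of the generator, subject to $2a=0$. If $\mathrm{char}\,\Bbbk \neq 2$ this forces $a=0$, proving part (1); if $\mathrm{char}\,\Bbbk=2$ every $a\in\Bbbk$ works and the evaluation map gives an isomorphism with $\Bbbk$, proving part (2). The computation is entirely routine; the only point that deserves care is the passage from $\Ext^1_{\Bbbk\S_n}$ to group cohomology, which one can either cite or verify directly using the bar resolution of the trivial module.
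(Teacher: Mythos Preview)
Your argument is correct. The identification $\Ext^1_{\Bbbk\S_n}(\Bbbk,\Bbbk)\cong H^1(\S_n,\Bbbk)$ with trivial coefficients, the reduction of $H^1$ to $\Hom_{\mathrm{Grp}}(\S_n,(\Bbbk,+))$, the factorization through $\S_n^{\mathrm{ab}}\cong\Z/2\Z$ (for $n\geq 2$), and the final count of $2$-torsion in $(\Bbbk,+)$ are all standard and correctly applied.

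Your route is genuinely different from the paper's. The paper argues directly with extensions $0\to\Bbbk\to E\to\Bbbk\to 0$: in part (1) it observes that each transposition $(ij)_n$ generates a copy of $\S_2$, over which the extension splits (since $\Bbbk\S_2$ is semisimple when ${\rm char}\,\Bbbk\neq 2$), forcing $(ij)_n-1_n$ to act by zero on $E$; as transpositions generate $\S_n$, the extension is trivial. In part (2) with ${\rm char}\,\Bbbk=2$, the paper shows by a short calculation that every $3$-cycle acts trivially on $E$, hence the alternating group ${\mathbb A}_n$ acts trivially and the problem reduces to $\Bbbk\S_2$. Your approach packages all of this into the single fact $\S_n^{\mathrm{ab}}\cong\Z/2\Z$ and a one-line torsion computation; it is shorter and more conceptual. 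The paper's argument is more hands-on and avoids invoking the $\Ext$--group-cohomology dictionary, at the cost of separate ad hoc arguments for transpositions and $3$-cycles.
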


\begin{proof}
(1) If ${\rm{char}}\; \Bbbk=0$, then $\Bbbk \S_n$ is semisimple.
The assertion follows. Now assume ${\rm{char}}\; \Bbbk\geq 3$.
Then $\Bbbk \S_n$ is semisimple for $n\leq 2$. So the assertion
holds for $n\leq 2$. Assume that we have a short exact sequence
$$0\to \Bbbk\to E\to \Bbbk\to 0$$
as modules over $\Bbbk \S_n$ for $n\geq 3$. Let $(ij)_n$ be the
permutation in $\S_n$ that switches $i$ and $j$, for $1\leq i
<j \leq n$. Since
$$\Ext^1_{\Bbbk \langle (ij)_n \rangle}(\Bbbk, \Bbbk)
=\Ext^1_{\Bbbk \S_2}(\Bbbk,\Bbbk)=0,$$
$((ij)_n-1_n)$ becomes zero when it acts on $E$.
Note that $\S_n$ is generated by $(ij)_n$ for different $i,j$.
Then $((ij)_n-1_n)$ is zero when acting on $E$ for all $(ij)_n\in \S_n$.
This means that $E$ is a direct sum of two trivial module.
Therefore the assertion holds.

(2) The assertion is clear for $n< 2$. It is also
easy to show that $\Ext^1_{\Bbbk \S_2}(\Bbbk,\Bbbk)=\Bbbk$.
When $n\geq 3$, consider a short exact sequence
\begin{equation}
\label{E6.2.1}\tag{E6.2.1}
0\to \Bbbk\to E\to \Bbbk\to 0
\end{equation}
as modules over $\Bbbk \S_n$.
Observe that $\sigma=(123)_n$ is an even permutation.
Then the action of $(1-\sigma)$ on $\Bbbk$ is zero.
This implies that the action of $(1-\sigma)(\sigma-\sigma^2)$ on $E$ is zero.
Since ${\rm{char}}\; \Bbbk=2$,
\[(1-\sigma)(\sigma-\sigma^2)=\sigma^2+\sigma+\sigma^2+1=1-\sigma.\]
Then the action of $(1-\sigma)$ on $E$ is zero. Since the alternating
group ${\mathbb A}_n(\subset \S_n)$ is generated by elements of the form similar
to $\sigma$, we have that the action of ${\mathbb A}_n$ on $E$ is
trivial. So we can consider the short exact sequence \eqref{E6.2.1}
as that of $\Bbbk \S_2$-module (viewing $\S_2$ as $\S_n/{\mathbb A}_n$).
Then the assertion follows from the fact
$\Ext^1_{\Bbbk \S_2}(\Bbbk,\Bbbk)=\Bbbk$.
\end{proof}

\begin{lemma}
\label{yylem6.3}
Let $\ip=\Com$.
\begin{enumerate}
\item[(1)]
Every 2-coboundary is of the form $(\wp_i, 0)$.
\item[(2)]
Suppose $(\widetilde{\wp}_i, \widetilde{\varsigma})$ is
equivalent to $(\wp_i,\varsigma)$. Then
$\widetilde{\varsigma}=\varsigma$.
\item[(3)]
$H^2(\ip)=0$.
\item[(4)]
If ${\rm{char}}\; \Bbbk\neq 2$, then $H^2_{\ast}(\ip)=0$.
\end{enumerate}
\end{lemma}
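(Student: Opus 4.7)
The plan is to exploit that $\Com(n)=\Bbbk$ with trivial $\S_n$-action for every $n$, so all the structure is carried by one scalar per arity.

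For (1), let $\wv=(\wp_i,\vs)$ be a 2-coboundary coming from some $\Bbbk$-linear map $\partial\colon \ip\to \ip$. Since $\partial$ preserves degree, $\partial(\mu)\in \Com(m)=\Bbbk$ whenever $\mu\in \Com(m)$, and the $\S_m$-action on $\Com(m)$ is trivial; hence $\partial(\mu\ast\sigma)=\partial(\mu)=\partial(\mu)\ast\sigma$, so \eqref{E2.5.2} forces $\vs=0$. Part (2) is then immediate: the difference $\widetilde{\wv}-\wv$ is a 2-coboundary, so by (1) its $\vs$-component vanishes, i.e.\ $\widetilde{\vs}=\vs$.

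For (3), I analyze an $\S$-2-cocycle $(\wp_i,0)$. Because $\Com(m+n-1)=\Bbbk$, there is a scalar $a_{m,n,i}$ with $\wp_i(1_m,1_n)=a_{m,n,i}\,1_{m+n-1}$. The equivariance relation \eqref{E2.11.4} with trivial $\S$-action reduces to $a_{m,n,i}=a_{m,n,\phi(i)}$ for every $\phi\in\S_m$, so $a_{m,n,i}$ is independent of $i$; write it as $a_{m,n}$. The associativity relation \eqref{E2.11.1}, evaluated on $(1_l,1_m,1_n)$, becomes exactly
\[a_{l+m-1,n}+a_{l,m}=a_{l,m+n-1}+a_{m,n},\]
which is equation \eqref{E5.4.1}. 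By Lemma \ref{yylem5.4}(1), there exist scalars $\{c_n\}_{n\ge 0}$ such that $a_{m,n}=c_{m+n-1}-c_m-c_n$. I then define the superfluous map $\partial\colon\Com\to\Com$ by $\partial(1_m)=c_m\,1_m$; by construction \eqref{E2.5.1} gives $\partial(\mu\ucr{i}\nu)-\partial(\mu)\ucr{i}\nu-\mu\ucr{i}\partial(\nu)=(c_{m+n-1}-c_m-c_n)\,\mu\ucr{i}\nu=\wp_i(\mu,\nu)$, while \eqref{E2.5.2} is automatic. Thus $(\wp_i,0)$ is an $\S$-2-coboundary, and I should double-check that relation \eqref{E2.11.2} is automatically satisfied by this $a_{m,n}$ (a short symmetric substitution), which it is. Hence $H^2(\Com)=0$.

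For (4), under $\operatorname{char}\Bbbk\ne 2$ Lemma \ref{yylem6.2}(1) gives $\Ext^1_{\Bbbk\S_m}(\Bbbk,\Bbbk)=0$ for every $m$, i.e.\ $\Ext^1_{\Bbbk\S_m}(\Com(m),\Com(m))=0$. Theorem \ref{yythm2.12}(3) then yields $H^2_{\ast}(\Com)\cong H^2(\Com)$, which is $0$ by (3). The only delicate step in the whole plan is the combinatorial identification of the associativity relation with \eqref{E5.4.1}; everything else is bookkeeping once the triviality of the $\S$-action is exploited.
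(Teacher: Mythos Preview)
Your proposal is correct and follows essentially the same approach as the paper's proof: triviality of the $\S$-action forces $\vs=0$ for coboundaries, the scalars $a_{m,n,i}$ are reduced via equivariance and associativity to \eqref{E5.4.1}, and Lemma~\ref{yylem5.4} provides the superfluous $\partial$. Your citation of Theorem~\ref{yythm2.12}(3) in part~(4) is in fact the correct reference (the paper's text cites Theorem~\ref{yythm2.10}(3), which appears to be a typo).
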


\begin{proof} (1) Since the $\S_m$-action on $\ip(m)$ is trivial,
the assertion follows from Definition \ref{yydef2.5}(1b).

(2) The assertion follows from part (1).

(3) Let $(\wp_i,\varsigma)$ be an $\S$-2-cocycle. By
definition, we have $\varsigma=0$. Since
$\ip(n)=\Bbbk\cdot \1_n$ for all $n\geq 0$, we obtain that, for all
$\sigma\in \S_m$,
\begin{equation}
\label{E6.3.1}\tag{E6.3.1}
\wp_i(\1_m, \1_n)= a_{m, n, i}\1_{m+n-1} \ {\rm and}\ \vs(\1_m, \sigma)=0
\end{equation}
for some $a_{m, n, i}\in \Bbbk$. By \eqref{E2.4.4} (with $\varsigma=0$),
we have
\[
a_{m, n, i}= a_{m, n, \phi(i)}
\]
for all $m\geq 1, n\ge 0$ and $\phi\in \S_m$. So we can suppose
$a_{m, n, i}=a_{m, n}$ for all $m, n, i$.
By \eqref{E2.4.1}, we have
\[
a_{l+m-1, n}+a_{l,m}= a_{l, m+n-1}+a_{m, n},
\]
which agrees with \eqref{E5.4.1}.
By Lemma \ref{yylem5.4}, there is a sequence of scalars
$\{c_m\}_{m\geq 0}$ such that
\begin{equation}
\label{E6.3.2}\tag{E6.3.2}
a_{m,n}=(c_{m+n-1}-c_{n}-c_{m})
\end{equation}
for all $m\geq 1, n\geq 0$. Combining \eqref{E6.3.1}
with \eqref{E6.3.2}, it follows from Definition \ref{yydef5.5}
that the 2-cocycle $(\wp_i, \varsigma)$ is a 2-coboundary.

(4) By Lemma \ref{yylem6.2}(1),
$\Ext^1_{\Bbbk \S_n}(\Bbbk,\Bbbk)=0$. By Theorem \ref{yythm2.10}(3)
and part (3), we have
$H^2_{\ast}(\ip)=H^2(\ip)=0$.
\end{proof}

\begin{proof}[Proof of Theorem \ref{yythm6.1}]
Note that (1), (2) and (3) are equivalent. So we only need to show
(3). Note that part (3) is equivalent to Lemma \ref{yylem6.3}(4).
We are done.
\end{proof}

\subsection{$H^2_{\ast}(\As)$}
\label{yysec6.2}
The main result of this subsection is the following.

\begin{theorem}
\label{yythm6.4}
Let $\ip=\As$. Then $H^2(\ip)=H^2_{\ast}(\ip)=0$.
\end{theorem}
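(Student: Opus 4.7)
The plan follows the strategy of Theorem~\ref{yythm6.1}, relying on Lemma~\ref{yylem5.1}. First, since $\As(m) = \Bbbk\S_m$ is a free (hence projective) right $\Bbbk\S_m$-module, we have $\Ext^1_{\Bbbk\S_m}(\As(m), \As(m)) = 0$ for every $m \geq 0$, so by Theorem~\ref{yythm2.12}(3) we obtain $H^2_\ast(\As) \cong H^2(\As)$. It therefore suffices to show $H^2(\As) = 0$. By Theorem~\ref{yythm2.10}, this is equivalent to showing every $\S$-equivariant infinitesimal deformation of $\As$ is trivial.

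Given such a deformation with associated $\S$-2-cocycle $\wp = (\wp_i)$, normalize via Lemma~\ref{yylem2.6} so that $\wp_i(\mu, \1_1) = 0 = \wp_1(\1_1, \mu)$; then $\1_1$ remains the identity in the deformed operad. Recall that $\As$ is generated by $\1_0$ and $\1_2$, subject to the unitality relations $\1_2 \ucr{i} \1_0 = \1_1$ for $i = 1, 2$ and the associativity relation $\1_2 \ucr{1} \1_2 = \1_2 \ucr{2} \1_2$ (the $\S_2$-action on $\1_2$ is automatically preserved). By Lemma~\ref{yylem5.1}, it suffices to produce lifts $\widetilde{\1}_0 = \1_0 + \alpha \1_0 \epsilon \in \As[\epsilon](0)$ and $\widetilde{\1}_2 = \1_2 + b \epsilon \in \As[\epsilon](2)$, with $\alpha \in \Bbbk$ and $b \in \As(2) = \Bbbk\S_2$, satisfying these relations with respect to $\ucr{i}^\epsilon$.

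Writing $b = x\1_2 + y(\1_2 \ast (12))$ and $\wp_i(\1_2, \1_0) = \beta_i \1_1$, the unitality conditions reduce to the linear equations $\alpha + x + y + \beta_i = 0$ for $i = 1, 2$, which are simultaneously solvable iff $\beta_1 = \beta_2$. This last equality follows from the cocycle condition \eqref{E2.4.2} applied with $\lambda = \1_2$, $\mu = \nu = \1_0$, $i = 1$, $k = 2$: using the normalization $\wp_1(\1_1, \1_0) = 0$ together with $\1_1 \ucr{1} \1_0 = \1_0$, it collapses to $\beta_1 \1_0 = \beta_2 \1_0$. The associativity condition, after cancelling the undeformed part, becomes an equation of the form $\alpha_1 - \alpha_2 = \partial(b)$ in $\Bbbk\S_3 = \As(3)$, where $\alpha_i := \wp_i(\1_2, \1_2)$ and $\partial : \As(2) \to \As(3)$ is the $\Bbbk$-linear map sending $b$ to $(b \ucr{2} \1_2 + \1_2 \ucr{2} b) - (b \ucr{1} \1_2 + \1_2 \ucr{1} b)$.

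The main obstacle is verifying that $\alpha_1 - \alpha_2$ lies in the image of $\partial$. This should follow from the cocycle conditions \eqref{E2.4.1}--\eqref{E2.4.2} systematically applied with $\lambda = \mu = \nu = \1_2$, combined with the equivariance \eqref{E2.4.4} to relate $\wp_i$ at $\S_3$-conjugate arguments. The desired solvability is essentially the classical rigidity statement for the associative operad, and admits a direct (though tedious) verification in $\Bbbk\S_3$. Once suitable $\alpha$ and $b$ are produced, Lemma~\ref{yylem5.1} gives triviality of the deformation, yielding $H^2(\As) = 0 = H^2_\ast(\As)$.
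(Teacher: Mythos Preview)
Your outline is sound up to identifying the main obstacle, but there is a genuine gap precisely there: you never verify that $\alpha_1 - \alpha_2 \in \operatorname{im}(\partial) = \Bbbk\xi_2$. Asserting that this ``should follow'' or that it is ``essentially the classical rigidity statement for the associative operad'' is circular---that rigidity statement \emph{is} $H^2(\As)=0$.

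Concretely, the equivariance conditions \eqref{E2.11.3}--\eqref{E2.11.4} at arity $2$ do not by themselves constrain $\alpha_1-\alpha_2$; they merely express the remaining values $\wp_i(\sigma,\tau)$ for $\sigma,\tau\in\S_2$ in terms of $\alpha_1,\alpha_2$. After fixing $\widetilde{\1}_0$ so that $\1_2 \ucr{i}^\epsilon \widetilde{\1}_0 = \1_1$, one can compute $(\1_2 \ucr{j}^\epsilon \1_2)\ucr{i}^\epsilon \widetilde{\1}_0$ via the operad axioms and deduce $\alpha_1-\alpha_2 \in {}^3\iu(3) = \Bbbk\xi_1 \oplus \Bbbk\xi_2$, a $2$-dimensional space. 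But $\operatorname{im}(\partial)=\Bbbk\xi_2$ is only $1$-dimensional, so you still need to kill the $\xi_1$-component. That step genuinely requires a computation in $\Bbbk\S_4$: one applies \eqref{E2.4.1}--\eqref{E2.4.2} with $\lambda=\mu=\nu=\1_2$, obtaining equations that involve new unknowns $\wp_j(1_3,1_2)$ and $\wp_j(1_2,1_3)$, and then eliminates these to extract the constraint (the paper does this in \eqref{E6.7.1}--\eqref{E6.7.5}, comparing coefficients of $1_4$ to get $2b_1=3b_1$). Your remark that the verification lives ``in $\Bbbk\S_3$'' understates what is needed.

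The paper avoids some of this bookkeeping by a preliminary stronger normalization (Lemma~\ref{yylem6.6}): one first subtracts an $\S$-equivariant coboundary to arrange $\wp_1(1_m,1_n)=0$ for \emph{all} $m,n$, not just at $\1_1$. This forces $\alpha_1=0$, makes unitality automatic, and reduces the degree-$4$ step to a clean form (Lemma~\ref{yylem6.7}). Your more direct route could in principle be completed, but as written it omits the essential computation.
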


We need the following lemmas.

\begin{lemma}
\label{yylem6.5}
Let $\Bbbk$ be a field of any characteristic.
\begin{enumerate}
\item[(1)]
For every $n\geq 0$,
$\Ext^1_{\Bbbk \S_n}(\Bbbk \S_n, \Bbbk \S_n)=0$.
\item[(2)]
If $\ip$ is $\As$ or $\Pois$, then every 2-cocycle
of $\ip$ is equivalent to an $\S$-2-cocycle.
\end{enumerate}
\end{lemma}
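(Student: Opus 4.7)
The plan is to prove (1) by a direct projectivity argument and then reduce (2) to (1) via Theorem \ref{yythm2.12}(2). For (1), the key observation is that $\Bbbk\S_n$ is free of rank one over itself as a right module, hence projective, so $\Ext^1_{\Bbbk\S_n}(\Bbbk\S_n,-)$ vanishes identically on the module category; specializing the second argument to $\Bbbk\S_n$ gives the claim.

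For (2), Theorem \ref{yythm2.12}(2) reduces the statement to the vanishing $\Ext^1_{\Bbbk\S_m}(\ip(m),\ip(m)) = 0$ for every $m \geq 0$. When $\ip = \As$ this is immediate: $\As(0) = \Bbbk$ with $\S_0$ trivial, and $\As(m) = \Bbbk\S_m$ for $m \geq 1$, so part (1) applies term by term and Theorem \ref{yythm2.12}(2) yields the desired equivalence.

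When $\ip = \Pois$ the same reduction works once one knows that $\Pois(m) \cong \Bbbk\S_m$ as a right $\Bbbk\S_m$-module for every $m \geq 1$ (with $\Pois(0) = \Bbbk$ and $\S_0$ trivial). The cleanest way to obtain this $\S_m$-isomorphism is to invoke the Livernet-Loday \cite{LL} and Markl-Remm \cite{MaR} theorem that $\As$ is a flat deformation of $\Pois$ over $\Bbbk[t]$: flatness of the deformation forces the underlying sequence of $\Bbbk\S_m$-modules to be preserved, so $\Pois(m) \cong \As(m) = \Bbbk\S_m$ in each arity. Part (1) then delivers the Ext vanishing, and Theorem \ref{yythm2.12}(2) finishes the proof.

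The main obstacle is justifying the $\S_m$-module identification $\Pois(m) \cong \Bbbk\S_m$. If one wishes to avoid citing the deformation theorem, an alternative route is to use the operadic decomposition $\Pois = \Com \circ \Lie$ together with a character computation (on the level of $\S$-modules, or via generating series of Frobenius characteristics) showing that $(\Com \circ \Lie)(m)$ is the regular $\S_m$-representation; this is self-contained but more computational than the one-line deformation-theoretic argument.
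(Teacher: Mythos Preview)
Your proof is correct and follows the same route as the paper: part~(1) via projectivity of the regular module, and part~(2) by invoking Theorem~\ref{yythm2.12}(2) together with the identification $\ip(n)\cong\Bbbk\S_n$ for $\ip=\As$ or $\Pois$. The paper simply asserts this last isomorphism as known and moves on, whereas you supply an argument.

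One caution on your justification for $\Pois(m)\cong\Bbbk\S_m$: the sentence ``flatness of the deformation forces the underlying sequence of $\Bbbk\S_m$-modules to be preserved'' is not valid as a general principle in positive characteristic---a flat family of $\S_m$-modules over $\Bbbk[t]$ can have non-isomorphic fibers when $\Bbbk\S_m$ is not semisimple. What actually makes the Livernet--Loday deformation work here is that in that construction the $\S$-action is \emph{not} deformed at all (only the composition relations acquire a $t$), so the $\S_m$-module structure is constant along the family by construction, not by flatness. Your alternative route via $\Pois\cong\Com\circ\Lie$ (equivalently, the PBW identification of the free Poisson algebra with the associated graded of the free associative algebra) is characteristic-free and is the safer self-contained argument.
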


\begin{proof}
(1) It follows from the fact that $\Bbbk \S_n$ is
a free module over itself.

(2) Since $\ip(n)=\Bbbk \S_n$ for all $n\geq 0$
when $\ip=\As$ or $\Pois$. The assertion follows
from Theorem \ref{yythm2.10}(2).
\end{proof}

The following lemma is a generalized version of
Lemma \ref{yylem5.4}.

\begin{lemma}
\label{yylem6.6}
Let $\ip$ be an operad and
let $\{\theta_n\in \ip(n)\mid n\geq 0\}$ be a sequence elements
in $\ip$ with $\theta_1=\1$ such that $\theta_{m}\underset{1}{\circ} \theta_{n}=\theta_{m+n-1}$
for all $m\geq 1$ and $n\geq 0$. Let $\{K(\theta_{m}, \theta_n)\in
\ip(m+n-1)\mid m\geq 1, n\geq 0\}$ be a set of elements.
\begin{enumerate}
\item[(1)]
Suppose that $\ip(1)=\Bbbk \1$. Then, for all $l,m\geq 1$ and
$n\geq 0$,
\begin{equation}
\label{E6.6.1}\tag{E6.6.1}
K(\theta_{l+m-1},\theta_{n})+K(\theta_{l},\theta_{m})
\ucr{1} \theta_{n}=K(\theta_{l},\theta_{m+n-1})+
\theta_{l}\ucr{1} K(\theta_{m},\theta_{n})
\end{equation}
holds if and only if there is a sequence of elements
$\{\bar\theta_m\in \ip(m)\}_{m\geq 0}$  such that
\begin{equation}
\label{E6.6.2}\tag{E6.6.2}
K(\theta_m, \theta_n)=\bar\theta_{m+n-1}-\bar\theta_{m}\ucr{1} \theta_{n}
-\theta_{m}\underset{1}{\circ} \bar\theta_{n}.
\end{equation}
\item[(2)]
Suppose that $\ip(0)=0$. Then \eqref{E6.6.1} holds
for all $l, m, n\geq 1$ if and only if \eqref{E6.6.2}
holds for all $m, n\geq 1$.
\end{enumerate}
\end{lemma}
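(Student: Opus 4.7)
The plan is to mirror, in the operadic setting, the scalar construction used in the proof of Lemma \ref{yylem5.4}. The guiding intuition is deformation-theoretic: if we view $\mu \ucr{1} \nu \mapsto \mu \ucr{1} \nu + K(\mu, \nu)t$ as an infinitesimal deformation of $\ucr{1}$ and put $\theta_m^\epsilon := \theta_m + \bar\theta_m t$, then \eqref{E6.6.2} is exactly the first-order compatibility $\theta_m^\epsilon \ucr{1}^\epsilon \theta_n^\epsilon = \theta_{m+n-1}^\epsilon$, while \eqref{E6.6.1} is associativity of the deformed $\ucr{1}$ restricted to $\{\theta_n\}$; so $\bar\theta$ plays the role of a change of coordinates trivializing the deformation. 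The \emph{if} direction in both parts then reduces to substituting \eqref{E6.6.2} into both sides of \eqref{E6.6.1} and expanding using only $(\lambda \ucr{1} \mu) \ucr{1} \nu = \lambda \ucr{1} (\mu \ucr{1} \nu)$ from (OP2$'$) together with $\theta_l \ucr{1} \theta_m = \theta_{l+m-1}$; both sides collapse to the symmetric expression
\[
\bar\theta_{l+m+n-2} - \bar\theta_l \ucr{1} \theta_{m+n-1} - \theta_l \ucr{1} (\bar\theta_m \ucr{1} \theta_n) - \theta_l \ucr{1} (\theta_m \ucr{1} \bar\theta_n).
\]

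For the \emph{only if} direction of part (2), I telescope out $\{\bar\theta_n\}_{n \geq 1}$ in analogy with \eqref{E5.4.4}. Specializing \eqref{E6.6.1} at $(l,m,n) = (1,1,n)$ gives $K(\theta_1,\theta_n) = K(\theta_1,\theta_1) \ucr{1} \theta_n$, and at $(l,m,n) = (2,1,1)$ gives $K(\theta_2,\theta_1) = \theta_2 \ucr{1} K(\theta_1,\theta_1)$; these motivate
\[
\bar\theta_1 := -K(\theta_1,\theta_1), \qquad \bar\theta_2 := 0, \qquad \bar\theta_{n+1} := K(\theta_2,\theta_n) + \theta_2 \ucr{1} \bar\theta_n \quad (n \geq 2).
\]
By construction \eqref{E6.6.2} holds for the base cases $m = 1, 2$ (all $n \geq 1$). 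For $m \geq 3$ I induct on $m$: applying \eqref{E6.6.1} at $(l, m', n') = (2, m, n)$ expresses $K(\theta_{m+1},\theta_n)$ in terms of $K(\theta_2,\theta_{m+n-1})$, $K(\theta_2,\theta_m)$, and $K(\theta_m,\theta_n)$; substituting the inductive hypothesis into these three terms and collapsing via the associativity identities $(\theta_2 \ucr{1} \theta_m) \ucr{1} X = \theta_{m+1} \ucr{1} X$ and $\theta_2 \ucr{1} (\bar\theta_m \ucr{1} \theta_n) = (\theta_2 \ucr{1} \bar\theta_m) \ucr{1} \theta_n$ yields \eqref{E6.6.2} at $(m+1, n)$.

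For part (1) the same construction defines $\bar\theta_n$ for $n \geq 1$; the remaining task is to produce $\bar\theta_0 \in \ip(0)$. The relation \eqref{E6.6.2} at $(m,n) = (1,0)$ is vacuous in $\bar\theta_0$---it reduces, via $\bar\theta_1 = -K(\theta_1,\theta_1)$, to the cocycle-derived identity $K(\theta_1,\theta_0) = K(\theta_1,\theta_1) \ucr{1} \theta_0$---while at $(2,0)$ it demands
\[
\theta_2 \ucr{1} \bar\theta_0 = -K(\theta_1,\theta_1) - K(\theta_2,\theta_0).
\]
This is the crux where the hypothesis $\ip(1) = \Bbbk\1$ enters decisively: the right-hand side lies in $\ip(1) = \Bbbk\1$, hence equals $\alpha \1$ for some $\alpha \in \Bbbk$, and putting $\bar\theta_0 := \alpha \theta_0$ gives $\theta_2 \ucr{1} \bar\theta_0 = \alpha (\theta_2 \ucr{1} \theta_0) = \alpha \1$ as required. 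With $\bar\theta_0$ so chosen, \eqref{E6.6.2} at $(m,0)$ holds for $m = 1, 2$ by direct check, and the inductive step of part (2) extends verbatim (now allowing $n = 0$) to cover $m \geq 3$. I expect the $n = 0$ case of part (1) to be the principal obstacle: without $\ip(1) = \Bbbk\1$, the forced value of $\theta_2 \ucr{1} \bar\theta_0$ need not lie in the image of $\theta_2 \ucr{1} -\colon \ip(0) \to \ip(1)$, and the trivialization would break down.
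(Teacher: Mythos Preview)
Your proposal is correct and follows essentially the same approach as the paper's proof. The paper defines the $\bar\theta_n$ via the closed-form sum $\bar\theta_n = \sum_{i=1}^{n-2} K(\theta_{n-i},\theta_2)\ucr{1}\theta_i$ (for $n\geq 3$) and then runs the same induction on $m$ with pivot $l=2$, whereas you package the same data via the equivalent recursion $\bar\theta_{n+1} = K(\theta_2,\theta_n) + \theta_2\ucr{1}\bar\theta_n$; the two formulas agree under the cocycle identity \eqref{E6.6.1}, and your treatment of $\bar\theta_0$ via $\ip(1)=\Bbbk\1$ matches the paper's choice $\bar\theta_0 = -(a_{11}+a_{20})\theta_0$ exactly.
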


\begin{proof}
(1) Using the fact that $\theta_{m}\ucr{1} \theta_{n}=\theta_{m+n-1}$,
one can easily check that \eqref{E6.6.2} implies
\eqref{E6.6.1}. It remains to show the other implication.

Now we assume \eqref{E6.6.1} holds. Since $\ip(1)=\Bbbk \1$, we can write
$K(\theta_1,\theta_1)=a_{11} \1$ and $K(\theta_2,\theta_0)=a_{20} \1$ for some
scalars $a_{11}$ and $a_{20}$ in $\Bbbk$.

Taking $m=1$ in \eqref{E6.6.1}, we get
$K(\theta_l, \theta_1)\underset{1}{\circ} \theta_n=\theta_{l}
\underset{1}{\circ} K(\theta_1,\theta_n)$ for all $l\geq 1,n\geq 0$.
By taking $l=1$ and $n=1$ respectively, we have
\begin{equation}
\label{E6.6.3}\tag{E6.6.3}
K(\theta_1,\theta_n) = \theta_1\underset{1}{\circ}K(\theta_1,\theta_n)
=K(\theta_1, \theta_1)\underset{1}{\circ} \theta_n=a_{11} \theta_n
\end{equation}
for all $n\geq 0$, and
\begin{equation}
\label{E6.6.4}\tag{E6.6.4}
K(\theta_l, \theta_1)=\theta_{l}
\underset{1}{\circ} K(\theta_1,\theta_1)
=a_{11} \theta_l
\end{equation}
for all $l\geq 1$. Set
\begin{equation}
\label{E6.6.5}\tag{E6.6.5}
\bar\theta_n=
\begin{cases}
-K(\theta_1,\theta_0)-K(\theta_2, \theta_0)\underset{1}{\circ}
        \theta_0 =-(a_{11}+a_{20})\theta_0, &n=0,\\
-K(\theta_1,\theta_1) =-a_{11} \theta_1=-a_{11} \1, & n=1,\\
0, & n=2,\\
\sum\limits_{i=1}^{n-2} K(\theta_{n-i},\theta_2) \underset{1}{\circ} \theta_{i}, &n\ge 3.
\end{cases}
\end{equation}
We now prove \eqref{E6.6.2} by induction on $m$. Let $RHS(m)$ and
$LHS(m)$ be the right-hand side and the left-hand side of
\eqref{E6.6.2} for $m$. For each fixed $m$,  we will show that
$RHS(m)=LHS(m)$ for all $n\geq 0$. The initial step is when $m=1$:
$$\begin{aligned}
RHS(1)&=\bar\theta_{1+n-1}-\bar\theta_1\underset{1}{\circ} \theta_n-\theta_1
       \underset{1}{\circ} \bar\theta_n=\bar\theta_{n}-\bar\theta_1\underset{1}{\circ} \theta_n-\bar\theta_n\\
&=-\bar\theta_1\underset{1}{\circ} \theta_n=K(\theta_1,\theta_1)\underset{1}{\circ} \theta_n\\
&=K(\theta_1,\theta_n) \qquad\qquad\qquad \qquad {\text{by}}\;\; \eqref{E6.6.3}\\
&=LHS(1).
\end{aligned}
$$
So \eqref{E6.6.2} holds for $m=1$. When $m=2$, we have
$$\begin{aligned}
RHS(2)&=\bar\theta_{2+n-1}-\bar\theta_2\underset{1}{\circ} \theta_n-\theta_2 \underset{1}{\circ} \bar\theta_n
          =\bar\theta_{n+1}-\theta_2 \underset{1}{\circ} \bar\theta_n\\
&=\begin{cases}
\bar\theta_1 -\theta_2 \underset{1}{\circ} \bar\theta_0, & n=0\\
\bar\theta_2-\theta_2\underset{1}{\circ} \bar\theta_1, & n=1\\
\sum\limits_{i=1}^{n-1} K(\theta_{n+1-i},\theta_2) \underset{1}{\circ} \theta_{i}
-\theta_2 \underset{1}{\circ} [\sum\limits_{i=1}^{n-2} K(\theta_{n-i},\theta_2)
\underset{1}{\circ} \theta_{i}], &n\geq 2.
\end{cases}
\end{aligned}
$$
When $n=0$,
$$RHS(2)=\bar\theta_1 -\theta_2 \underset{1}{\circ} \bar\theta_0=
-a_{11}\1+\theta_2 \underset{1}{\circ} (a_{11}+a_{20})\theta_0=
a_{20} \theta_1=K(\theta_2, \theta_0)=LHS(2).$$
When $n=1$,
$$RHS(2)=\bar\theta_2-\theta_2\underset{1}{\circ} \bar\theta_1 =\theta_2\underset{1}{\circ}K(\theta_1,\theta_1)
=a_{11} \theta_2=K(\theta_2, \theta_1)=LHS(2)$$
by \eqref{E6.6.4}. If $n\geq 2$, using \eqref{E6.6.1} and \eqref{E6.6.5},
we have
$$\begin{aligned}
RHS(2)&=\sum\limits_{i=1}^{n-1} K(\theta_{n+1-i},\theta_2) \underset{1}{\circ} \theta_{i}
-\theta_2 \underset{1}{\circ} [\sum\limits_{i=1}^{n-2} K(\theta_{n-i},\theta_2)
\underset{1}{\circ} \theta_{i}]\\
&=\sum\limits_{i=1}^{n-1} K(\theta_{n+1-i},\theta_2) \underset{1}{\circ} \theta_{i}
-[\sum\limits_{i=1}^{n-2} \theta_2 \underset{1}{\circ} K(\theta_{n-i},\theta_2)]
\underset{1}{\circ} \theta_{i}\\
&=\sum\limits_{i=1}^{n-1} K(\theta_{n+1-i},\theta_2) \underset{1}{\circ} \theta_{i}
-\sum\limits_{i=1}^{n-2} [K(\theta_{n-i+1},\theta_2)+K(\theta_2,\theta_{n-i})
\underset{1}{\circ} \theta_2-K(\theta_2,\theta_{n-i+1})]\underset{1}{\circ} \theta_i\\
&=K(\theta_2,\theta_2)\underset{1}{\circ} \theta_{n-1}-K(\theta_2,\theta_2)\underset{1}{\circ} \theta_{n-1}
+K(\theta_2,\theta_n)\underset{1}{\circ} \theta_{1}\\
&=K(\theta_2,\theta_n)=LHS(2).
\end{aligned}
$$
Up to this point, we have proved \eqref{E6.6.2} for $m=1,2$. Next
we use induction on $m$. Let $l=2$ in \eqref{E6.6.1} and continue
with induction hypothesis, we have, for all $n\geq 0$,
$$\begin{aligned}
LHS(m+1)&=K(\theta_{m+1},\theta_n)=-K(\theta_2,\theta_m)\underset{1}{\circ} \theta_n+K(\theta_2, \theta_{m+n-1})
+\theta_2\underset{1}{\circ} K(\theta_m,\theta_n)\\
&=-[\bar\theta_{2+m-1}-\bar\theta_2\underset{1}{\circ} \theta_m -\theta_2 \underset{1}{\circ} \bar\theta_m]
\underset{1}{\circ} \theta_n
+[\bar\theta_{m+n}-\bar\theta_2 \underset{1}{\circ} \theta_{m+n-1}-\theta_2 \underset{1}{\circ}  \bar\theta_{m+n-1}]\\
&\qquad \qquad\quad +\theta_2 \underset{1}{\circ}
[\bar\theta_{m+n-1}-\theta_{m}\underset{1}{\circ} \bar\theta_{n}-\bar\theta_{m}\underset{1}{\circ} \theta_n]\\
&=\bar\theta_{m+n}-\bar\theta_{2+m-1}\underset{1}{\circ} \theta_n
-\theta_{m+1} \underset{1}{\circ}  \bar\theta_{n}\\
&=RHS(m+1).
\end{aligned}
$$
The assertion follows by induction.

(2) We only sketch the proof since it is similar to the proof of part (1).
Using the fact that $\theta_{m}\underset{1}{\circ} \theta_{n}=\theta_{m+n-1}$,
one can easily check that \eqref{E6.6.2} implies that
\eqref{E6.6.1}. It remains to show the other implication.

In part (2) we have $\ip(0)=0$ (but we do not assume
$\ip(1)=\Bbbk \1$). Note that \eqref{E6.6.3}-\eqref{E6.6.4}
hold for $n,l\geq 1$ without the last equation. Set
\begin{equation}
\notag
\bar\theta_n=
\begin{cases}
-K(\theta_1,\theta_1), & n=1,\\
0, & n=2,\\
\sum\limits_{i=1}^{n-2} K(\theta_{n-i},\theta_2) \underset{1}{\circ} \theta_{i}, &n\ge 3.
\end{cases}
\end{equation}
The rest of the proof is similar without worrying
the case of $n=0$ for proving $RHS(2)=LHS(2)$.
\end{proof}

In $\As$ we identify $\1_n$ with $1_n\in\S_n$ for all $n$.

\begin{lemma}
\label{yylem6.7}
Let $\ip=\As$ and $\wv$ be an
$\S$-2-cocycle of $\ip$ such that $\wp_1(1_m,1_n)=0$ for all
$m\geq 1, n\geq 0$. Then $\wv$ is a 2-coboundary.
\end{lemma}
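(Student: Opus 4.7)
My plan is to exhibit $\wv$ as a 2-coboundary via Lemma \ref{yylem5.1} applied to the standard presentation of $\As$. Recall that $\As$ is generated by $1_0\in\As(0)$ and $1_2\in\As(2)$ subject to the unit relations $1_2\ucr{i}1_0=\1$ for $i=1,2$ and the associativity relation $1_2\ucr{1}1_2=1_2\ucr{2}1_2$. The goal is to exhibit elements $y_0=1_0+\alpha\ep$ and $y_2=1_2+\beta\ep$ in $\As[\ep]$ (with $\alpha\in\As(0),\beta\in\As(2)$) that continue to satisfy these defining relations in the deformed structure $(\ucr{i}^\ep,\ast^\ep)$; by Lemma \ref{yylem5.1} this gives triviality of the infinitesimal deformation corresponding to $\wv$, and hence by Theorem \ref{yythm2.10}(2) that $\wv$ is a 2-coboundary.

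First I would verify that $\wv$ is normalized. The hypothesis directly yields $\wp_1(1_m,\1)=0=\wp_1(\1,1_n)$. Applying \eqref{E2.11.1} with $\la=1_l$, $\mu=1_m$, $\nu=\1$, $i=k$, $j=1$ produces $\wp_k(1_{l+m-1},\1)=1_l\ucr{k}\wp_1(1_m,\1)=0$, so $\wp_i(1_p,\1)=0$ whenever $p\geq i$; the equivariance \eqref{E2.11.4} then propagates this to $\wp_i(\mu,\1)=0$ for all $\mu$, and similarly \eqref{E2.11.3} yields $\wp_1(\1,\mu)=0$.

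Next I would extract the key identity $\wp_2(1_2,1_0)=0$. Running \eqref{E2.11.2} with $\la=1_2$, $\mu=1_0$, $\nu=1_n$, $i=1$, $k=2$, all three ``$\wp_1$'' terms vanish by hypothesis, leaving $\wp_2(1_2,1_n)\ucr{1}1_0=0$ for every $n\geq 0$. Specializing to $n=0$ and noting that $\wp_2(1_2,1_0)\in\As(1)=\Bbbk\1$ is a scalar multiple of $\1$, this forces $\wp_2(1_2,1_0)=0$.

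Combining these, the order-$\ep$ unit conditions $\wp_i(1_2,1_0)+1_2\ucr{i}\alpha+\beta\ucr{i}1_0=0$ become simultaneously satisfiable in $\As$ (for instance with $\alpha=0$ and $\beta=b(1_2-(12))$ for $b\in\Bbbk$). The order-$\ep$ associativity condition then reads $\wp_2(1_2,1_2)=1_2\ucr{1}\beta-1_2\ucr{2}\beta+\beta\ucr{1}1_2-\beta\ucr{2}1_2$, an equation in $\Bbbk\S_3$. The hard part will be to verify that the full set of $\S$-2-cocycle identities \eqref{E2.11.1}-\eqref{E2.11.2}, evaluated on the generators $\{1_0,1_2\}$, cuts the possible values of $\wp_2(1_2,1_2)$ down to precisely the image of the right-hand side as $b$ varies (possibly after further adjusting $\alpha$). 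This is a finite-dimensional linear-algebra verification in $\Bbbk\S_3$; once established, Lemma \ref{yylem5.1} gives triviality of the deformation and Theorem \ref{yythm2.10}(2) concludes that $\wv$ is a 2-coboundary.
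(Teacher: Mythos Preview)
Your strategy is the same as the paper's: adjust the generators $1_0,1_2$ so that the defining relations of $\As$ hold in the deformed structure, then invoke Lemma~\ref{yylem5.1}. The preliminary reductions (normalization, $\wp_2(1_2,1_0)=0$) are fine. However, you have not actually proved the lemma, because the step you label ``the hard part'' is the entire content of the argument and you have left it undone.

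Two specific points. First, the constraints you have written down so far do not suffice to cut $\wp_2(1_2,1_2)$ down to a one-dimensional space. You have only shown $\wp_2(1_2,1_2)\ucr{1}1_0=0$; you still need $\wp_2(1_2,1_2)\ucr{i}1_0=0$ for $i=2,3$ as well (these follow from further instances of \eqref{E2.11.1}--\eqref{E2.11.2} with $\nu=1_0$), which places $\wp_2(1_2,1_2)$ in the two-dimensional space ${}^3\iu(3)=\Bbbk\xi_1\oplus\Bbbk\xi_2$ of \eqref{E3.2.1}--\eqref{E3.2.2}. Second, and more seriously, the image of your coboundary map $\beta\mapsto \beta\ucr{1}1_2+1_2\ucr{1}\beta-\beta\ucr{2}1_2-1_2\ucr{2}\beta$ on $\As(2)$ is only the line $\Bbbk\xi_2$, so you must still kill the $\xi_1$-coefficient. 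This is \emph{not} a computation in $\Bbbk\S_3$: the relation $b_1=0$ only emerges from cocycle identities in arity $4$. The paper obtains it by combining four instances of \eqref{E2.11.1}--\eqref{E2.11.2} with $\la=\mu=\nu=1_2$ (yielding equations in $\Bbbk\S_4$) and comparing the coefficient of $1_4$, which gives $2b_1=3b_1$. Until you carry out that arity-$4$ computation, the argument is incomplete.
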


\begin{proof}
Let $(\ip_{[\wv]}, \ucr{i}^{\epsilon})$
denote the infinitesimal deformation of $\ip$ associated to $\wv$, namely,
$$\mu \ucr{i}^{\epsilon} \nu=\mu\ucr{i} \nu +\wp_i(\mu,\nu) \epsilon$$
for all $\mu,\nu\in \ip$.

By hypothesis, $\wp_1(1_2,1_2)=0$ and $\wp_1(1_2,1_0)=0$, or equivalently,
$1_2\ucr{1}^{\epsilon}1_2=1_3$ and $1_2 \ucr{1}^{\epsilon} 1_0=1_1$.
Let $1_2 \ucr{2}^{\epsilon} 1_0=a1_1$ where $a\in \Bbbk[\epsilon]$.
Then, by operadic axioms,
\[a 1_0= a1_1 \ucr{1}^{\epsilon} 1_0=(1_2 \ucr{2}^{\epsilon} 1_0)
\ucr{1}^{\epsilon} 1_0=(1_2 \ucr{1}^{\epsilon} 1_0)
\ucr{1}^{\epsilon} 1_0=1_1\ucr{1}^{\epsilon} 1_0=1_0,\]
which implies that $a=1$ and $1_2 \ucr{2}^{\epsilon} 1_0=1_1$.
Using the fact that $1_1$ is the identity
$\ip_{[\wv]}$,  one can easily show that
\[(1_2\ucr{2}^{\epsilon} 1_2)\ucr{i}^{\epsilon} 1_0
=1_2\]
for $i=1, 2, 3$.
Write $1_2 \ucr{2}^{\epsilon} 1_2=1_3+ \alpha \epsilon$.
Then the above equations implies that
\[\alpha \epsilon \ucr{i}^{\epsilon} 1_0=(\alpha \ucr{i} 1_0)\epsilon=0\]
for $i=1,2,3$.
This means that $\alpha\in {^3 \iu}(3)$, see \eqref{E3.0.1}.
Recall from \eqref{E3.2.1} and \eqref{E3.2.2} that ${^3 \iu}(3)$
is 2-dimensional with a $\Bbbk$-linear basis $\{\xi_1, \xi_2\}$,
where
\begin{align*}
\xi_1& =(1) - (12)+ (13) - (123),\\
\xi_2& =(23) - (12) - (123) + (132).
\end{align*}
By definition, this implies that $\wp_2(1_2, 1_2)=\alpha=b_1\xi_1+b_2\xi_2$
for some $b_1, b_2\in \Bbbk$. We claim that $b_1=0$.
In fact, taking $\la=\mu=\nu=1_2$ and $i=1, k=2$ in \eqref{E2.4.2}, we get
\begin{equation}\label{E6.7.1}
\tag{E6.7.1}
\wp_{3}(1_3, 1_2)=\wp_2(1_2, 1_2)\ucr{1} 1_2.
\end{equation}
Taking $\la=\mu=\nu=1_2$ and $i=2, j=1$ in \eqref{E2.4.1}, we get
\begin{equation}\label{E6.7.2}
\tag{E6.7.2}
\wp_2(1_3, 1_2)+\wp_2(1_2, 1_2)\ucr{2} 1_2=\wp_2(1_2, 1_3).
\end{equation}
Taking $\la=\mu=\nu=1_2$ and $i=1, j=2$ in \eqref{E2.4.1}, we get
\begin{equation}
\label{E6.7.3} \tag{E6.7.3}
\wp_2(1_3, 1_2)=1_2 \ucr{1} \wp_2(1_2, 1_2).
\end{equation}
For $\la=\mu=\nu=1_2$ and $i=j=2$ in \eqref{E2.4.1}, we have
\begin{equation}
\label{E6.7.4}\tag{E6.7.4}
\wp_3(1_3, 1_2)+\wp_2(1_2, 1_2)\ucr{3} 1_2
=\wp_2(1_2, 1_3)+1_2\ucr{2} \wp_2(1_2, 1_2).
\end{equation}
By \eqref{E6.7.1}-\eqref{E6.7.4}, we have
\[\wp_2(1_2, 1_2)\ucr{1} 1_2+\wp_2(1_2, 1_2)
\ucr{3} 1_2=1_2\ucr{1}\wp_2(1_2, 1_2)+
\wp_2(1_2, 1_2)\ucr{2} 1_2+1_2\ucr{2} \wp_2(1_2, 1_2).\]
Comparing the coefficient of $1_4$ in the above equation, we
obtain
\begin{equation}
\label{E6.7.5}\tag{E6.7.5}
2 b_1=3 b_1,
\end{equation}
which implies $b_1=0$ (and that this is the only equation).
From this point, we have
\[\wp_2(1_2, 1_2)=b_2\xi_2=b_2((12)\ucr{2} 1_2 +1_2\ucr{2}(12)-(12) \ucr{1} 1_2-1_2\ucr{1} (12)).\]
Choosing $\bar{1}_2=1_2+\epsilon b(1_2-(12))$, we have
\begin{align*}
&\bar{1}_2\ucr{2}^{\epsilon} \bar{1}_2-\bar{1}_2\ucr{1}^{\epsilon} \bar{1}_2\\
=& [1_2+\epsilon b_2(1_2-(12))]\ucr{2}^{\epsilon}[1_2+\epsilon b_2(1_2-(12))]
   -[1_2+\epsilon b_2(1_2-(12))]\ucr{1}^{\epsilon}[1_2+\epsilon b_2(1_2-(12))]\\
=& \epsilon b_2(\xi_2-((12)\ucr{2}1_2+1_2\ucr{2}(12)-(12)\ucr{1} 1_2-1_2\ucr{1}(12)))\\
=& 0,
\end{align*}
and therefore $\bar{1}_2\ucr{1}^{\epsilon} 1_0=\bar{1}_2\ucr{2}^{\epsilon} 1_0=1_1$.
It follows that $\bar{1}_2$ is an associative and unital binary operation.
Define $\bar{1}_n=\bar{1}_2\ucr{1} \bar{1}_{n-1}$ for $n\ge 3$, inductively.

Now we can define a $\Bbbk[\epsilon]$-linear operadic morphism $f$ from
$\ip_{[0]}\to \ip_{[\wv]}$,
where $\ip_{[0]}$ is the trivial infinitesimal deformation of $\ip$, by sending
$f: 1_n\in \ip_{[0]}(n)\mapsto \bar{1}_n\in \ip_{[\wv]}(n)$, or equivalently,
\[f: \sigma_0+\epsilon \sigma_1\mapsto \bar{1}_n \ast (\sigma_0+\epsilon \sigma_1)\]
for all $\sigma_0, \sigma_1\in \S_n$. Note that $f$ being an operadic morphism follows
from the fact that $f$ preserves the following only relations in $\ip_{[0]}$:
\[1_2 \ucr{1} 1_2=1_2\ucr{2} 1_2, \quad {\text{and}}\quad
1_2 \ucr{1} 1_0=1_1=1_2 \ucr{2} 1_0.\]
By definition, $\bar{1}_n=1_n+ \epsilon \nu_n$
for some $\nu_n\in \Bbbk\S_n$. Since $\wv$ is an $\S$-2-cocycle, $1_n$ generates
$\ip[\epsilon](n)$ as an $\S_n[\epsilon]$-module. Then $\bar{1}_n$ generates
$\ip[\epsilon](n)$ as an $\S_n[\epsilon]$-module as well. Therefore, $f$ is
surjective, whence injective. The assertion follows.
\end{proof}

\begin{proof}[Proof of Theorem \ref{yythm6.4}]
Since $H^2(\ip)$ is a subspace of $H^2_{\ast}(\ip)$, we only need
to show that $H^2_{\ast}(\ip)=0$. Let $\wv$ be a 2-cocycle of $\ip$
and it remains to show that it is trivial.
By Lemma \ref{yylem6.5}(2), we can assume that
$\wv=(\wp_i,\varsigma)=(\wp_i,0)$, namely, $\wp$ is an
$\S$-2-cocycle.

Consider $\theta_n=1_n$ for all $n\ge 0$ in Lemma \ref{yylem6.6},
and $K(1_m, 1_n)=\wp_1(1_m, 1_n)$.
Then $1_{m} \underset{1}{\circ} 1_{n}=1_{m+n-1}$ for all $m\geq 1, n\geq 0$
and $\{K(1_m, 1_n)\mid m\geq 1, n\geq 0\}$ satisfies \eqref{E6.6.1}
(that follows from \eqref{E2.4.1} by setting $i=j=1$).
By Lemma \ref{yylem6.6},
there is a sequence of elements $\{\bar{1}_n\in \ip(n)\mid n\geq 0\}$
such that \eqref{E6.6.2} holds:
\begin{equation}
\label{E6.7.6}\tag{E6.7.6}
\wp_1(1_m, 1_n)=K(1_m, 1_n)=\bar{1}_{m+n-1}-\bar{1}_{m}\ucr{1} 1_{n}
-1_{m} \ucr{1} \bar{1}_{n}
\end{equation}
for all $m\geq 1$ and $n\geq 0$.

For each $n$, define a map $\partial_n: \ip(n)\to \ip(n)$ by
\[\partial_n (\sigma)=\bar{1}_n\ast \sigma \qquad {\text{for all }}\; \sigma\in \ip(n).\]
Since $\ip(n)$ is a free right $\S_n$-module generated by $1_n$,
the map $\partial_n\colon \ip(n) \to \ip(n)$ is an endomorphism of the right $\Bbbk\S_n$-module for all $n$.
Note that $\partial_n(1_n)=\bar{1}_n$ by definition.
Let $\partial=\{\partial_n\}_{n\geq 0}$ and
let $\wv(\partial)=(\wp(\partial)_i, \vs(\partial))$ be the 2-cocycle associated to $\partial$,
namely,
\begin{align*}
\wp(\partial)_i(\mu, \nu)=& \partial(\mu\ucr{i} \nu)-[\partial(\mu)\ucr{i}\nu-\mu\ucr{i}\partial(\nu)],\\
\vs(\partial)(\mu, \sigma)=& \partial(\mu\ast \sigma)-\partial(\mu)\ast \sigma
\end{align*}
for all $\mu\in \ip(m), \nu\in \ip(n), \sigma\in \S_n$.
We now consider the 2-cocycle $\widetilde{\wv}:=\wv-\wv(\partial)$
which is equivalent to $\wv$.

First of all, since $\partial$ is an  $\S$-module map,
by Definition \ref{yydef2.5}(1), $\widetilde{\wv}$ is still an $\S$-2-cocycle,
namely, $\widetilde{\varsigma}=0$.
By definitions of $\partial$ and $K(1_m, 1_n)$, and \eqref{E6.7.6} and
Definition \ref{yydef2.5}(1), we have
\begin{align*}
\widetilde{\wp}_1(1_n, 1_m)
&=\wp_1(1_n, 1_m)-\wp(\partial)_1(1_n, 1_m)\\
&=K(1_m, 1_n)-[\bar{1}_{m+n-1}-\bar{1}_{m}\ucr{1} 1_{n}
-1_{m}\ucr{1} \bar{1}_{n}]\\
&=0
\end{align*}
for all $m\geq 1, n\geq 0$. By Lemma \ref{yylem6.7},
$\widetilde{\wv}$ is trivial as desired.
\end{proof}

Recall that the definition of ${^k \iu}$ is given in \eqref{E3.0.1}.
The next result is similar to Theorem \ref{yythm6.4}.

\begin{theorem}
\label{yythm6.8}
Let $\ip=\As/{^k \iu}$ for $k\geq 1$. Suppose that $k\neq 4$.
Then $H^2(\ip)=H^2_{\ast}(\ip)=0$.
\end{theorem}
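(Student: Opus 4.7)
The plan is to adapt the three-step template from the proof of Theorem \ref{yythm6.4} for $\ip=\As/{^k\iu}$: first reduce to an $\S$-2-cocycle via Theorem \ref{yythm2.12}(2), then modify by a 2-coboundary (using Lemma \ref{yylem6.6} with $\theta_n=1_n$) to achieve $\wp_1(1_m,1_n)=0$, and finally produce an equivalence with the trivial deformation by appropriately choosing $\bar 1_2$ and setting $\bar 1_n=\bar 1_2\ucr{1}^{\epsilon}\bar 1_{n-1}$. For the degenerate range $k=1,2$ we have $\ip\cong\Com$ by Theorem \ref{yythm3.8}(1); when $\operatorname{char}\Bbbk\neq 2$ this is Theorem \ref{yythm6.1}, and when $\operatorname{char}\Bbbk=2$ a direct argument is needed: writing $\varsigma(1_m,\sigma)=s_m(\sigma)1_m$ for a group homomorphism $s_m\colon\S_m\to(\Bbbk,+)$ (forced by \eqref{E2.4.5} and the trivial $\S$-action on $\Com$), the cocycle conditions \eqref{E2.4.3}--\eqref{E2.4.4} demand $s_{m+n-1}(\phi\ucr{i}1_n)=s_m(\phi)$, and since $\phi\ucr{i}1_n$ need not preserve sign-parity (a transposition can embed as a $3$-cycle), this mixing forces every $s_m$ to vanish, so $\varsigma=0$ and Lemma \ref{yylem6.3}(3) completes the argument.

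For $k\geq 3$ with $k\neq 4$ the analysis follows Lemma \ref{yylem6.7}. After the preparatory reductions, one computes in $\ip(3)$ that $\alpha:=\wp_2(1_2,1_2)$ lies in the image of ${^3\iu}(3)$ of $\As$ inside $\ip(3)$. When $k=3$ this image is zero by construction of the quotient, so $\alpha=0$ and we may take $\bar 1_2=1_2$. When $k\geq 5$ we have $\ip(3)=\As(3)$ and $\ip(4)=\As(4)$ (since $4<k$), so the full Lemma \ref{yylem6.7} computation runs verbatim: expanding $\alpha=b_1\xi_1+b_2\xi_2$, the scalar identity $2b_1=3b_1$ extracted from the coefficient of $1_4$ in the cocycle relation in $\ip(4)$ forces $b_1=0$, so $\alpha=b_2\xi_2$ and we set $\bar 1_2=1_2+\epsilon b_2(1_2-(12))$. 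In either case the resulting $\bar 1_n$ and the $\Bbbk[\epsilon]$-linear map $f(1_n\ast\sigma)=\bar 1_n\ast^{\epsilon}\sigma$ respect associativity and the unit relation, exactly as in Lemma \ref{yylem6.7}.

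The main obstacle is to show that $f$ descends from a morphism on $\As$ to a morphism on the quotient $\ip=\As/{^k\iu}$: for each $\eta=\sum c_\sigma\sigma\in{^k\iu}(n)$, we must verify $\sum c_\sigma\bar 1_n\ast^{\epsilon}\sigma=0$ in $\ip_{[\wv]}(n)$. Writing $\bar 1_n=1_n+\epsilon\beta_n$, this reduces to the a priori nontrivial identity $\beta_n\ast\eta=0$ in $\ip(n)$ for every $\eta\in{^k\iu}(n)$, which we expect to check inductively using the explicit recursive form of $\beta_n$ and the closure properties of ${^k\iu}(n)$ under the right $\S$-action and partial compositions. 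The exclusion of $k=4$ pinpoints where this scheme breaks down: in $\ip(4)=\As(4)/{^4\iu}(4)$ the scalar equation $2b_1=3b_1$ only holds modulo ${^4\iu}(4)$, so $b_1$ need not vanish and no choice of $\bar 1_2$ of the form $1_2+\epsilon b_2(1_2-(12))$ cancels the obstruction, leaving room for a genuine class in $H^2_{\ast}(\As/{^4\iu})$.
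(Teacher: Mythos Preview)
Your proposal identifies the right overall architecture but has a genuine gap at the crucial descent step, and the paper closes this gap with an idea you are missing entirely. You acknowledge that ``the main obstacle is to show that $f$ descends from a morphism on $\As$ to a morphism on the quotient $\ip=\As/{^k\iu}$'' and then say only that you ``expect to check inductively'' that $\beta_n\ast\eta=0$ for $\eta\in{^k\iu}(n)$. This is not carried out, and a direct inductive verification along these lines is not at all straightforward. The paper bypasses it completely: viewing $\ip_{[\wv]}$ as an operad over $\Bbbk$, one has $\GKdim\ip_{[\wv]}=k$, hence $\GKdim(\As/\ker f)\leq k$; then \cite[Theorem 0.1(2)]{BYZ} forces $\ker f\supseteq{^k\iu}$, so $f$ automatically descends. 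This Gelfand--Kirillov dimension argument is the key insight and is what makes the proof uniform in $k$ and characteristic-free.

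Two further issues. First, your opening reduction to an $\S$-2-cocycle via Theorem~\ref{yythm2.12}(2) requires $\Ext^1_{\Bbbk\S_m}(\ip(m),\ip(m))=0$ for \emph{all} $m$, which is not known for $\ip=\As/{^k\iu}$ in positive characteristic once $m\geq k$ (then $\ip(m)$ is a proper quotient of $\Bbbk\S_m$, not free). The paper's proof never performs a global $\S$-reduction; it works directly with $\ucr{i}^{\epsilon}$ and only needs the three $\As$-relations, so Lemma~\ref{yylem5.1} applies regardless of $\varsigma$. Second, your char $2$ argument for $\Com$ is flawed: equation \eqref{E2.4.4} gives $\wp_i(1_m,1_n)-\wp_{\phi(i)}(1_m,1_n)=(s_{m+n-1}(\phi'')-s_m(\phi))\,1_{m+n-1}$, so the identity $s_{m+n-1}(\phi'')=s_m(\phi)$ only follows when $\phi(i)=i$, and in that case $\phi''=\phi\ucr{i}1_n$ has the same parity as $\phi$, so no mixing occurs. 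The paper's GKdim argument handles $k=1,2$ (hence $\Com$) in any characteristic without ever touching $\varsigma$; see Remark~\ref{yyrem6.9}.
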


\begin{proof}[Sketch of Proof]
We will repeat some ideas in the proof of Theorem \ref{yythm6.4} and skip many details.

Let both $\ip_{[\wv]}$ and $(\ip[\epsilon], \ucr{i}^\epsilon, \ast^\epsilon)$
denote the infinitesimal deformation of $\ip$ corresponding to a 2-cocycle
$\wv$. It remains to show that $\wv$ is trivial. We can assume that
$\1_2 \ucr{1}^\epsilon \1_0=\1$ after replacing
$\1_0$ by $(1+a\epsilon)\1_0$ if necessary. Write
$$\1_2 \ucr{2}^\epsilon \1_0=(1+c\epsilon) \1.$$
Then
$$\begin{aligned}
(1+c\epsilon) \1_0&= (1+c\epsilon) \1\ucr{1}^\epsilon \1_0\\
&=(\1_2 \ucr{2}^\epsilon \1_0)\ucr{1}^\epsilon \1_0\\
&=(\1_2 \ucr{1}^\epsilon \1_0)\ucr{1}^\epsilon \1_0\\
&=\1 \ucr{1}^\epsilon \1_0=\1_0.
\end{aligned}
$$
Hence $c=0$ and $\1_2 \ucr{2}^\epsilon \1_0=\1$. Using this,
one can show that
$$(\1_2 \ucr{1}^\epsilon \1_2-\1_2 \ucr{2}^\epsilon \1_2)
\ucr{i}^\epsilon \1_0=0$$
for $i=1,2,3$ (some computations are omitted). Therefore
$\1_2 \ucr{1}^\epsilon \1_2-\1_2 \ucr{2}^\epsilon \1_2
\in \epsilon \; {^3 \iu}(3)$. If $k\leq 3$, then
${^3 \iu}(3)=0$ and consequently,
$\1_2 \ucr{1}^\epsilon \1_2=\1_2 \ucr{2}^\epsilon \1_2$.
If $k\geq 5$, similar to the proof of Lemma
\ref{yylem6.7}, one can show that
$\1_2 \ucr{1}^\epsilon \1_2=\1_2 \ucr{2}^\epsilon \1_2$
after replacing $1_2$ by $\bar{1}_2:=1_2-\epsilon b(\tau-1_2)$
for some $b\in \Bbbk$,
or equivalently, replacing $\wv$ by an equivalent 2-cocycle.
(A lot of computations are omitted, see the proof of
Lemma \ref{yylem6.7} and Theorem \ref{yythm6.4}).

Up to this point, we have proved that
$$\1_2 \ucr{1}^\epsilon \1_2=\1_2 \ucr{2}^\epsilon \1_2.$$
Recall from \cite[Figure 2, p.1707]{Sa} that
$\As$ is generated by $\{\1_0,\1_2\}$ subject to the relations
$$\begin{aligned}
\1_2 \underset{1}{\circ} \1_0&=\1,\\
\1_2 \underset{2}{\circ} \1_0&=\1,\\
\1_2 \underset{1}{\circ} \1_2&=\1_2 \underset{2}{\circ} \1_2.
\end{aligned}
$$
Therefore there is an operadic morphism from $f: \As\to \ip_{[\wv]}$
sending $\1_0\mapsto \1_0$ and $\1_2\mapsto \1_2$. By \cite[Theorem 0.1(2)]{BYZ},
$\GKdim \ip=k$. Considering $\ip_{[\wv]}$ as an operad over $\Bbbk$, we
have have $\GKdim \ip_{[\wv]}=k$. This implies that $\GKdim \As/\ker f
\leq k$ where $\ker f$ is the kernel of $f$. By \cite[Theorem 0.1(2)]{BYZ},
$\ker f$ contains ${^k\iu}$ of $\As$. Therefore $f$ induces an operadic
morphism from $\ip$ to $\ip_{[\wv]}$ sending $\1_0\mapsto \1_0$ and $\1_2\mapsto \1_2$,
still denoted by $f$. By Lemma \ref{yylem5.1}, $\ip_{[\wv]}$
is trivial as desired.
\end{proof}

\begin{remark}
\label{yyrem6.9}
When $k=4$, then we can not show that $\1_2 \ucr{1}^\epsilon \1_2
=\1_2 \ucr{2}^\epsilon \1_2$ even after replacing $1_2$ by
$\bar{1}_2:=1_2+\epsilon b(1_2-(12))$, this is because we can
not obtain \eqref{E6.7.5} in this case. It would be interesting to
work out $H^2_{\ast}(\As/{^4 \iu})$ and $H^2(\As/{^4 \iu})$.

When $k=1$, Theorem \ref{yythm6.8} recovers Theorem
\ref{yythm6.1} in any characteristic. These two proof are slightly
different.
\end{remark}

\subsection{$H^2_{\ast}(\Lie)$}
\label{yysec6.3}
In this subsection we deal with the Lie algebra operad $\Lie$.
Throughout this subsection let $\ip$ denote $\Lie$. Write
$\ip(2)=\Bbbk \fb$ where
\begin{equation}
\label{E6.9.1}\tag{E6.9.1}
\fb \ast (12)_2=-\fb .
\end{equation}
Then $\ip(3)=\Bbbk \fb\ucr{1}\fb +\Bbbk \fb\ucr{2}\fb$ and the
Jacobi identity of a Lie algebra is equivalent to the relation
\begin{equation}
\label{E6.9.2}\tag{E6.9.2}
\fb\ucr{2}\fb=\fb\ucr{1} \fb +(\fb\ucr{2} \fb)\ast (12)_3.
\end{equation}
As a consequence, $\ip(3)$ is generated by $\fb\ucr{1}\fb$
as an $\S_3$-module. It is well-known that $\ip$ is generated
by a single element $\{\fb\}$ subject to relations
\eqref{E6.9.1} and \eqref{E6.9.2}. The main result of this
subsection is the following.

\begin{theorem}
\label{yythm6.10}
Let $\Bbbk$ be a field as a general setup in this subsection.
\begin{enumerate}
\item[(1)]
$H^2(\ip)=0$.
\item[(2)]
Suppose ${\rm{char}}\; \Bbbk\neq 2,3$. Then $H^2(\ip)=H^2_{\ast}(\ip)=0$.
\end{enumerate}
\end{theorem}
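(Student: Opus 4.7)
In both parts the strategy is to invoke Lemma~\ref{yylem5.1}: since $\Lie$ is generated by $\fb$ subject only to the antisymmetry relation \eqref{E6.9.1} and the Jacobi relation \eqref{E6.9.2}, it suffices to exhibit, for a given 2-cocycle $\wv$, an element $\bar{\fb}\in\ip_{[\wv]}(2)$ of the associated infinitesimal deformation that satisfies both relations with respect to the deformed operations. The deformation will then be equivalent to the trivial one and $\wv$ will be a 2-coboundary.

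For part (1), let $\wv=(\wp_i,0)$ be an $\S$-2-cocycle. Since $\varsigma=0$, the $\S_2$-action on $\ip_{[\wv]}(2)=\Bbbk[\epsilon]\fb$ is undeformed, so every antisymmetric element has the form $\bar{\fb}=(1+b\epsilon)\fb$, and \eqref{E6.9.1} holds automatically. Expanding the deformed Jacobi identity \eqref{E6.9.2} for $\bar{\fb}$ and reading off the coefficient of $\epsilon$, the $b$-dependent terms cancel by the undeformed Jacobi identity for $\fb$, leaving the single obstruction
\[
J(\wv):=\wp_2(\fb,\fb)-\wp_1(\fb,\fb)-\wp_2(\fb,\fb)\ast(12)_3\in\Lie(3).
\]
The core step is to show $J(\wv)=0$. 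I would substitute $\la=\mu=\nu=\fb$ into the associativity cocycle identities \eqref{E2.11.1} and \eqref{E2.11.2} for all admissible $(i,j)$ and $(i,k)$, obtaining five relations in $\Lie(4)$ among $\wp_p(\fb\ucr{q}\fb,\fb)$, $\wp_p(\fb,\fb\ucr{q}\fb)$, $\wp_p(\fb,\fb)\ucr{q}\fb$ and $\fb\ucr{q}\wp_p(\fb,\fb)$; combining these with the equivariance constraints \eqref{E2.11.3}, \eqref{E2.11.4} applied to $\fb\ast(12)_2=-\fb$ (which pin $\wp_1(\fb,\fb)$ and $\wp_2(\fb,\fb)$ down to specific antisymmetric subspaces of $\Lie(3)$), an appropriate alternating sum of these identities should reduce to $J(\wv)\ucr{1}\fb=0$ in $\Lie(4)$, forcing $J(\wv)=0$ in $\Lie(3)$.

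For part (2), the hypothesis $\operatorname{char}\Bbbk\neq 2,3$ makes $\Bbbk\S_2$ and $\Bbbk\S_3$ semisimple, so $\Ext^1_{\Bbbk\S_m}(\Lie(m),\Lie(m))=0$ for $m\le 3$. Since $\Lie$ is generated in degree $2$ with defining relations sitting in degrees $2$ and $3$, one can modify a general 2-cocycle $(\wp_i,\varsigma)$ by coboundaries, using Theorem~\ref{yythm2.12}(1) in degrees $2$ and $3$ separately, to arrange $\varsigma_2=0$ and $\varsigma_3=0$. After this normalization the relation $\fb\ast^\epsilon(12)_2=-\fb$ holds at $\bar{\fb}=\fb$, and the $\epsilon$-coefficient of the deformed Jacobi identity collapses to the same $J(\wv)=0$ handled in part (1). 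Lemma~\ref{yylem5.1} with $\bar{\fb}=\fb$ then finishes the argument.

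The main obstacle is the combinatorial verification that $J(\wv)=0$ is forced by the associativity cocycle identities. The cancellation is not transparent, and the bookkeeping inside $\Lie(3)$ and $\Lie(4)$ must be careful: the correct linear combination of the five $\Lie(4)$-relations, together with the antisymmetry of $\wp_1(\fb,\fb)$ in positions $(1,2)$ and of $\wp_2(\fb,\fb)$ in positions $(2,3)$, is needed to make the mixed $\wp_p(\fb\ucr{q}\fb,\fb)$- and $\wp_p(\fb,\fb\ucr{q}\fb)$-terms cancel.
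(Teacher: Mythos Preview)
Your overall framework via Lemma~\ref{yylem5.1} is correct, and your treatment of part~(2) matches the paper's. But in part~(1) you are making the core step far harder than necessary, and as written the argument has a genuine gap.

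You correctly extract from \eqref{E2.11.3} that $\wp_1(\fb,\fb)\ast(12)_3=-\wp_1(\fb,\fb)$ and $\wp_2(\fb,\fb)\ast(23)_3=-\wp_2(\fb,\fb)$; in the basis $\{\fb\ucr{1}\fb,\fb\ucr{2}\fb\}$ of $\Lie(3)$ this forces $\wp_1(\fb,\fb)=a_1\,\fb\ucr{1}\fb$ and $\wp_2(\fb,\fb)=b_2\,\fb\ucr{2}\fb$. A direct computation then gives $J(\wv)=(b_2-a_1)\,\fb\ucr{1}\fb$, so everything reduces to showing $a_1=b_2$. You propose to get this from the associativity identities \eqref{E2.11.1}--\eqref{E2.11.2} in $\Lie(4)$, but you have not carried out the cancellation; the mixed terms $\wp_p(\fb\ucr{q}\fb,\fb)$ and $\wp_p(\fb,\fb\ucr{q}\fb)$ are new unknowns in $\Lie(4)$ that must themselves be controlled, and ``an appropriate alternating sum \dots\ should reduce to $J(\wv)\ucr{1}\fb=0$'' is a hope, not an argument.

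The paper avoids this entire excursion into $\Lie(4)$ by using the one equivariance condition you did not exploit. Apply \eqref{E2.11.4} with $\mu=\nu=\fb$, $\phi=(12)_2$ and $i=1$: since $\fb\ast(12)_2=-\fb$ and $\phi(1)=2$, this reads
\[
-\wp_1(\fb,\fb)=\wp_2(\fb,\fb)\ast\phi'',\qquad \phi''=(12)_2\ucr{1}1_2=(123)_3.
\]
Evaluating the right-hand side on $b_2\,\fb\ucr{2}\fb$ and comparing with $-a_1\,\fb\ucr{1}\fb$ yields $a_1=b_2=:a$ immediately. Hence $\fb\ucr{i}^{\epsilon}\fb=(1+a\epsilon)\,\fb\ucr{i}\fb$ for $i=1,2$, the deformed Jacobi identity \eqref{E6.9.2} holds on the nose for $\bar{\fb}=\fb$, and Lemma~\ref{yylem5.1} finishes. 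In short: equivariance alone, applied in $\Lie(3)$, already forces $J(\wv)=0$; the associativity cocycle identities are never needed.
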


\begin{proof} If ${\rm{char}}\; \Bbbk=0$, then part (2) follows
from part (1) and Theorem \ref{yythm2.10}(3). For simplicity, we
only prove part (1) and make some comments when ${\rm{char}}\; \Bbbk\geq 5$.

Let $\ip_{[\wv]}=(\ip[\epsilon], \ucr{i}^\epsilon, \ast^\epsilon)$
denote the infinitesimal deformation of $\ip$ associated to
an $\S$-2-cocycle $\wv$.
Since $\wv$ is an $\S$-2-cocycle, we have $\varsigma=0$ and
that $\ast^{\epsilon}=\ast$. (When ${\rm{char}}\; \Bbbk\neq 2,3$,
we can assume that $\ast^{\epsilon}=\ast$ for the right
$\S_2$-action and the right $\S_3$-action on $\ip(2)$ and $\ip(3)$
respectively. This is enough to continue the proof.)

Write
$$\begin{aligned}
\fb\ucr{1}^{\epsilon} \fb &= \fb\ucr{1} \fb +\epsilon \wp_1(\fb, \fb),\\
\fb\ucr{2}^{\epsilon} \fb &= \fb\ucr{2} \fb+\epsilon \wp_2(\fb, \fb),
\end{aligned}
$$
where
\begin{align*}
\wp_1(\fb, \fb)= a_1\fb\ucr{1} \fb+a_2 \fb\ucr{2}\fb, \ \ \
\wp_2(\fb, \fb)= b_1\fb\ucr{1} \fb+b_2 \fb\ucr{2}\fb,
\end{align*}
for some $a_1, a_2, b_1, b_2\in \Bbbk$.

Taking $\mu=\nu=\fb$, $\sigma=(2,1)$ and $i=1$ in \eqref{E2.4.3}, we get
\begin{equation}\label{E6.10.1}\tag{E6.10.1}
-\wp_1(\fb, \fb)=\wp_1(\fb, \fb)\ast (12)_3,
\end{equation}
which is equivalent to
$$-(a_1\fb\ucr{1} \fb+a_2 \fb\ucr{2}\fb)
=-a_1 \fb\ucr{1} \fb+a_2 (\fb\ucr{2}\fb-\fb\ucr{1} \fb).$$
Hence $a_2=0$. Similarly, we have
$-\wp_2(\fb, \fb)=\wp_2(\fb, \fb)\ast (23)_3$ which implies
that $b_1=0$. Taking $\mu=\nu=\fb$, $\phi=(12)_2$ and $i=1$ in \eqref{E2.4.4}, we get
\[
-\wp_1(\fb, \fb)=\wp_2(\fb, \fb)\ast (123)_3
\]
which implies that $a_2=b_1=: a$. In this case,
$$\fb \ucr{i}^\epsilon \fb =(1+a\epsilon)
\fb \ucr{i} \fb$$
for $i=1, 2$. This implies that
\[\fb\ucr{2}^{\epsilon}\fb=\fb\ucr{1}^{\epsilon} \fb +(\fb\ucr{2}^{\epsilon} \fb)\ast (12)_3\]
or \eqref{E6.9.2} holds for $\fb$ in $\ip_{[\wv]}$.
Since \eqref{E6.9.1} trivially holds for any $\S$-2-cocycles,
by Lemma \ref{yylem5.1}, $\ip_{[\wv]}$ is trivial as required.
\end{proof}

We conclude this section with two remarks.

\begin{remark}
\label{yyrem6.11}
By Theorems \ref{yythm6.1}, \ref{yythm6.4} and
\ref{yythm6.10}, $\As, \Com$ and $\Lie$ (when
${\rm{char}}\; \Bbbk\neq 2,3$) are $\idf$-rigid.
So, together with Theorem \ref{yythm8.2},
we give an explicit proof of the claim made by 
Kontsevich-Soibelman in \cite[Section 5.7.7.]{KS}.
We don't know whether or not $\Lie$ is $\idf$-rigid when
${\rm{char}}\; \Bbbk$ is 2 or 3. In view of Example
\ref{yyex2.8}, it will not be surprising if $H^2_{\ast}(\Lie)=\Bbbk$
when ${\rm{char}}\; \Bbbk=2$. But we don't have a strong evidence.
\end{remark}

\begin{remark}
\label{yyrem6.12}
Let $\ip$ be the operad ${\mathcal D}(A)$ given in Example
\ref{yyex4.1}. Some ideas in the proof of Theorem \ref{yythm6.8}
indicates that $H^2_{\ast}(\ip)$ is isomorphic to the second
Hochschild cohomology $\HH^2(\bar{A})$ of the algebra $\bar{A}$.
It is interesting to work out both $H^2_{\ast}(\ip)$ and
$H^2(\ip)$ in this case.
\end{remark}

\section{Calculation of $H^2_{\ast}$, part 2}
\label{yysec7}

In this section we will calculate $H^2_{\ast}(\Pois)$ and
essentially prove Theorem \ref{yythm0.6}(4) . The
proof is very complicated and we need to break it
into several steps. The relations in $\Pois$ are given in
\eqref{E3.5.1}-\eqref{E3.5.7}. For the rest of this
section, let $\ip$ denote either $\Pois$ or $\Pois/{^k \iu}$
for some $k\geq 5$.

\begin{lemma}
\label{yylem7.1}
The following hold.
\begin{enumerate}
\item[(1)]
$\ip(3)$ has a $\Bbbk$-linear basis

\begin{center}
\begin{tabular}{lll}
$\1_3=\1_2 \ucr{1} \1_2$, & $\theta_1=\1_2\ucr{1}\fb$, & $\theta_2=(\1_2\ucr{1}\fb)\ast (23)_3$, \\
$\theta_3=\1_2 \ucr{2} \fb$, &
$\theta_4=\fb\ucr{1} \fb$, &
$\theta_5=\fb\ucr{2}\fb$.
\end{tabular}
\end{center}

\item[(2)]
%
%
Every 2-cocycle of $\ip$ is equivalent to a 2-cocycle with $\varsigma_m=0$ for
$m=0,1,2,3,4$.
\end{enumerate}
\end{lemma}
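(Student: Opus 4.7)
The plan for (1) is to produce the six listed elements as a spanning set via the generating relations and then finish by a dimension count. Since $\Pois$ is generated (as an operad) by $\{\1_0, \1, \1_2, \fb\}$, every element of $\ip(3)$ is a $\Bbbk$-linear combination of $\S_3$-translates of the partial compositions $\1_2 \ucr{i} \1_2$, $\1_2 \ucr{i} \fb$, $\fb \ucr{i} \1_2$, and $\fb \ucr{i} \fb$ for $i \in \{1,2\}$. Relation \eqref{E3.5.5} collapses the first family to $\1_3$; relation \eqref{E3.5.6} (the Leibniz rule) rewrites $\fb \ucr{1} \1_2$ and $\fb \ucr{2} \1_2$ as $\Bbbk$-linear combinations of $\{\theta_1, \theta_2, \theta_3\}$; and \eqref{E3.5.7} (Jacobi) shows that the $\S_3$-span of $\{\fb \ucr{1} \fb, \fb \ucr{2} \fb\}$ is two-dimensional with basis $\{\theta_4, \theta_5\}$. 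A direct check (using the $\S_2$-invariance of $\1_2$) verifies that the $\S_3$-orbits of $\theta_1$ and $\theta_3$ already lie in the span of $\{\theta_1, \theta_2, \theta_3\}$, so the six listed elements span $\ip(3)$. For linear independence, I would invoke that $\Pois$ and $\As$ have the same Hilbert series (as used in the proof of Theorem \ref{yythm3.8}(2)); since ${^k\iu}(3)=0$ when $k\geq 5$, this gives $\dim \ip(3) = \dim \As(3) = 6$, and six spanning vectors in a six-dimensional space form a basis.

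For (2), the plan is to apply Theorem \ref{yythm2.12}(1) successively to $m = 0, 1, 2, 3, 4$. In arities $0$ and $1$, the group $\S_m$ is trivial, so axiom \eqref{E2.4.5} forces $\varsigma_m = 0$ automatically without any adjustment. For $m = 2, 3, 4$, one has $\ip(m) = \Bbbk\S_m$ as a right $\Bbbk\S_m$-module: this holds directly for $\ip = \Pois$, and for $\ip = \Pois/{^k\iu}$ because ${^k\iu}(m) = 0$ whenever $m < k$ and $k\geq 5$. Since $\Bbbk\S_m$ is free over itself, Lemma \ref{yylem6.5}(1) gives $\Ext^1_{\Bbbk\S_m}(\ip(m), \ip(m)) = 0$, and Theorem \ref{yythm2.12}(1) produces a 2-cocycle equivalent to $\wv$ with $\varsigma_m = 0$.

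The main subtlety is ensuring that successive modifications do not revive previously-killed components of $\varsigma$. This follows by inspecting the proof of Theorem \ref{yythm2.12}(1): the correcting map $\partial$ is taken to be supported in the single arity $m$ (i.e., $\partial_{m'} = 0$ for $m' \neq m$), and a direct calculation from Definition \ref{yydef2.5} shows that the associated coboundary $\wv(\partial)$ satisfies $\varsigma(\partial)_{m'} = \partial(\mu\ast\sigma) - \partial(\mu)\ast\sigma = 0$ for $m' \neq m$. Thus the corrections for $m = 2, 3, 4$ may be performed one after another without interference, yielding a 2-cocycle equivalent to $\wv$ with $\varsigma_m = 0$ for all $m \in \{0,1,2,3,4\}$, as desired.
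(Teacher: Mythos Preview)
Your proposal is correct and follows essentially the same approach as the paper, which omits the computations for (1) and for (2) simply observes that $\ip(m)=\Bbbk\S_m$ for $m\le 4$ and invokes the argument of Theorem~\ref{yythm2.12}. Your version is considerably more detailed—in particular, you make explicit the non-interference of the successive corrections in (2), which the paper leaves implicit—but the underlying ideas are the same.
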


\begin{proof} (1) Easy computations are omitted here.

(2) Since $\ip(m)=\Bbbk\S_m$ for $m=0,1,2,3,4$, the assertion follows
from the proof of Theorem \ref{yythm2.10}(2).
\end{proof}

When $\ip=\Pois/{^k \iu}$, we need $k\geq 5$ in the proof of
Lemma \ref{yylem7.1}(3). Note that, when $\varsigma_m=0$,
$\ast^{\epsilon}=\ast$ when applied to $\ip_{[\wv]}(m)$.

\begin{lemma}
\label{yylem7.2}
Let $\wv$ be a 2-cocycle of $\ip$ such that $\varsigma_m=0$
for all $m\leq 3$ {\rm{(}}for example, $\wv$ is an $\S$-2-cocycle{\rm{)}}.
Let $\ip_{[\wv]}=(\ip[\epsilon], \ucr{i}^\epsilon, \ast^\epsilon)$
denote the infinitesimal deformation of $\ip$ associated to $\wv$.
\begin{enumerate}
\item[(1)]
Up to a change of basis element $\1_0$ we have
\begin{equation}
\label{E7.2.1}\tag{E7.2.1}
\1_2 \ucr{1}^{\epsilon} \1_0
= \1=\1_2 \ucr{2}^{\epsilon} \1_0, \quad and \quad \wp_1(\1_2, \1_0)=0=\wp_2(\1_2, \1_0)
\end{equation}
which is \eqref{E3.5.1} for $\ucr{2}^{\epsilon}$.
\item[(2)]
We have
\begin{equation}
\label{E7.2.2}\tag{E7.2.2}
\1_2\ast^{\epsilon} (12)=\1_2\ast (12)=\1_2,
\end{equation}
and
\begin{equation}
\label{E7.2.3}\tag{E7.2.3}
\fb\ast^{\epsilon} (12)=\fb\ast (12)=-\fb,
\end{equation}
which are \eqref{E3.5.3} and \eqref{E3.5.4} respectively.
\item[(3)]
\begin{align}
\label{E7.2.4}\tag{E7.2.4}
\1_2 \ucr{1}^{\epsilon} \1_2
&= \1_3 +\ep a_4(\theta_4-2\theta_5),\\
\label{E7.2.5}\tag{E7.2.5}
\1_2 \ucr{2}^{\epsilon} \1_2
&=\1_3 +\ep a_4(\theta_5-2\theta_4)
\end{align}
for some $a_0, a_4\in \Bbbk$.
\item[(4)]
\begin{equation}
\label{E7.2.6}\tag{E7.2.6}
\1_2 \ucr{1}^{\epsilon} \1_2-\1_2 \ucr{2}^{\epsilon} \1_2
=(3a_4\epsilon) (\theta_4-\theta_5),
\end{equation}
or equivalently,
\[\wp_1(\1_2,\1_2)-\wp_2(\1_2,\1_2)=3a_4(\theta_4-\theta_5),\]
where $a_4$ is given by part (3).
\end{enumerate}
\end{lemma}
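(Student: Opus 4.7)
For part (1), since $\ip(1) = \Bbbk \1$, write $\wp_i(\1_2, \1_0) = \alpha_i \1$ with $\alpha_i \in \Bbbk$, so that $\1_2 \ucr{i}^\epsilon \1_0 = (1+\alpha_i \epsilon)\1$. The plan is to apply the associativity axiom (OP2$'$) of the deformed operad $\ip_{[\wv]}$ to $(\1_2 \ucr{1}^\epsilon \1_0) \ucr{1}^\epsilon \1_0 = (\1_2 \ucr{2}^\epsilon \1_0) \ucr{1}^\epsilon \1_0$; since $\1 \ucr{1}^\epsilon \1_0 = \1_0$ (the identity $\1$ is preserved by Remark \ref{yyrem2.11}), both sides expand to $(1+\alpha_i\epsilon)\1_0$, forcing $\alpha_1 = \alpha_2 =: \alpha$. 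A change of basis $\1_0 \mapsto (1-\alpha\epsilon)\1_0$, legitimate because $\ip[\epsilon](0)$ is a free $\Bbbk[\epsilon]$-module of rank one, then yields \eqref{E7.2.1}. Part (2) is immediate: the hypothesis $\varsigma_2 = 0$ forces $\ast^\epsilon = \ast$ on $\ip(2)$, so the undeformed identities \eqref{E3.5.3} and \eqref{E3.5.4} persist as \eqref{E7.2.2} and \eqref{E7.2.3}.

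For part (3), normalize $\wv$ via Lemma \ref{yylem2.6} so that $\wp_i(\mu, \1) = \wp_1(\1, \mu) = 0$, and expand $\wp_1(\1_2, \1_2) = a_0 \1_3 + a_1\theta_1 + a_2\theta_2 + a_3\theta_3 + a_4\theta_4 + a_5\theta_5$ in the basis of Lemma \ref{yylem7.1}(1). I would impose three batches of constraints. First, the equivariance \eqref{E2.4.3} with $\mu = \nu = \1_2$, $\sigma = (12)$ (and $\varsigma_2 = \varsigma_3 = 0$) gives $\wp_1(\1_2, \1_2) \ast (12)_3 = \wp_1(\1_2, \1_2)$; computing the $(12)_3$-action on each basis vector using commutativity of $\1_2$, antisymmetry of $\fb$, and the Jacobi identity \eqref{E3.5.7} (which produces $\theta_5 \ast (12)_3 = \theta_5 - \theta_4$) forces $a_1 = 0$, $a_2 = a_3$, and $a_5 = -2a_4$. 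Second, applying the cocycle identities \eqref{E2.4.1} and \eqref{E2.4.2} with $\lambda = \mu = \1_2$, $\nu = \1_0$ together with part (1) yields $\wp_1(\1_2, \1_2) \ucr{k} \1_0 = -\wp_k(\1_3, \1_0)$ for $k = 1, 2, 3$; coupling this with the $\S_3$-invariance of $\1_3$ applied through \eqref{E2.4.4} (which constrains the $k$-dependence of $\wp_k(\1_3, \1_0)$) and the two-dimensionality of $\ip(2) = \Bbbk\1_2 \oplus \Bbbk\fb$ pins $a_0 = a_2 = a_3 = 0$, establishing \eqref{E7.2.4}. Third, \eqref{E2.4.4} with $\phi = (12)_2$, $\mu = \nu = \1_2$, $i = 1$ (and $\varsigma_2 = 0$) yields $\wp_1(\1_2, \1_2) = \wp_2(\1_2, \1_2) \ast \phi''$ for the specific permutation $\phi'' = (12)_2 \ucr{1} 1_2 \in \S_3$; tracing $\phi''$ through the $\S_3$-action tables used in Batch~I produces \eqref{E7.2.5} with the same coefficient $a_4$.

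Part (4) is an immediate subtraction: $\epsilon a_4 [(\theta_4 - 2\theta_5) - (\theta_5 - 2\theta_4)] = 3a_4 \epsilon (\theta_4 - \theta_5)$. The main obstacle lies in the second batch of part (3): the cocycle equations only couple $\wp_1(\1_2, \1_2)$ to the unknowns $\wp_k(\1_3, \1_0)$ without directly constraining either, so the loop must be closed by leveraging the full $\S_3$-symmetry of $\1_3$ via further equivariance relations (applying \eqref{E2.4.4} to several permutations in $\S_3$) and exploiting the low dimensionality of $\ip(2)$ to separate the scalar coefficients of $\1_2$ and $\fb$. The remaining ingredients, namely the explicit $(12)_3$-action on $\theta_1, \ldots, \theta_5$ and the computation of $\phi''$, are routine symbolic calculations that must nonetheless be tracked with care.
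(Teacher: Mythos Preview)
Your argument for parts (1), (2), and (4) matches the paper's, and Batches~1 and~3 of part (3) --- the $(12)_3$-symmetry constraint on $\wp_1(\1_2,\1_2)$ and the derivation of $\wp_2(\1_2,\1_2)$ from $\wp_1(\1_2,\1_2)$ via equivariance --- are also the same in substance.

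The difference lies in Batch~2. The paper does not introduce the auxiliary unknowns $\wp_k(\1_3,\1_0)$ at all. After Batches~1 and~3 it already has both $\1_2\ucr{1}^{\epsilon}\1_2$ and $\1_2\ucr{2}^{\epsilon}\1_2$ written out, and it simply observes that associativity in the deformed operad together with part~(1) gives
\[
(\1_2\ucr{1}^{\epsilon}\1_2-\1_2\ucr{2}^{\epsilon}\1_2)\ucr{i}^{\epsilon}\1_0=0
\qquad(i=1,2,3).
\]
Since the difference already carries an $\epsilon$, the deformed composition reduces to the undeformed one; the $a_0\1_3$ contributions cancel in the difference, the $\theta_4,\theta_5$ terms die under $\ucr{i}\1_0$, and what survives is a nonzero multiple of $\epsilon a_2\fb$, forcing $a_2=0$. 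Your route via $\wp_1(\1_2,\1_2)\ucr{k}\1_0=-\wp_k(\1_3,\1_0)$ followed by the $\S_3$-equivariance of $\wp_k(\1_3,\1_0)$ is a legitimate alternative and also kills $a_2$, though it is a bit more circuitous and requires the extra normalization from Lemma~\ref{yylem2.6}.

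However, your claim that Batch~2 also pins $a_0=0$ is not correct. After Batch~1 one computes $\wp_1(\1_3,\1_0)=\wp_2(\1_3,\1_0)=-a_0\1_2-a_2\fb$ and $\wp_3(\1_3,\1_0)=-a_0\1_2$. The equivariance relations from \eqref{E2.4.4} relate these via the $\S_2$-action on $\ip(2)$, but the $\1_2$-component is $(12)$-invariant and identical in all three, so only the $\fb$-component (hence $a_2$) is constrained. In fact $a_0$ \emph{cannot} be forced to vanish by cocycle identities alone: it changes under equivalence via $\partial(\1_2)=c\1_2$. The paper does not pretend otherwise; it disposes of $a_0$ by the remark ``replacing $(1+a_0\epsilon)\1_3$ by $\1_3$ \ldots\ does not effect the proof,'' which is justified because $a_0$ cancels in the difference \eqref{E7.2.6} and that difference is all that is used downstream. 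So your gap is real but harmless for the application; you should either drop the claim about $a_0$ and carry it along (noting it disappears in part~(4)), or absorb it as the paper does.
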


\begin{proof}
(1) First we work with $\1_0$ and $\1_2$.
Let $\1_2 \ucr{1}^{\epsilon} \1_0=(1+a\epsilon) \1$ for some $a\in \Bbbk$.
After replacing $\1_0$ by $(1-a\epsilon) \1_0$, we can assume that $a=0$.
Let $\1_2 \ucr{2}^{\epsilon} \1_0 =(1+b\epsilon) \1$ for some $b\in \Bbbk$.
Using the associativity,
\[(1+b\epsilon) \1_0=
(\1_2 \ucr{2}^{\epsilon} \1_0)\ucr{1}^{\epsilon} \1_0
=(\1_2 \ucr{1}^{\epsilon} \1_0)\ucr{1}^{\epsilon} \1_0
=\1\ucr{1}^{\epsilon} \1_0=\1_0,\]
which implies that $b=0$. Therefore the assertions hold.

(2) Since $\varsigma_{2}=0$, the assertion follows.

(3) Using the basis given
in Lemma \ref{yylem7.1}(1), write
\begin{align*}
\1_2 \ucr{1}^{\epsilon} \1_2
&=\1_3+\ep(a_0 \1_3 +a_1\theta_1+a_2\theta_2+a_3\theta_3+a_4\theta_4+a_5\theta_5)
\end{align*}
where $a_i\in \Bbbk, i=0, 1, \cdots, 5$.
By $\1_2\ucr{1}^\ep \1_2=\1_2\ucr{1}^\ep(\1_2\ast (12)_2)=(\1_2\ucr{1}^\ep \1_2)\ast (12)_3$, we get
\begin{align*}
a_0\1_3+a_1\theta_1+a_2\theta_2+ &a_3\theta_3+a_4\theta_4+a_5\theta_5\\
=& a_0\1_3-a_1\theta_1+a_2\theta_3+a_3\theta_2-a_4\theta_4+a_5(\theta_5-\theta_4)
\end{align*}
and $a_1=0, a_2=a_3, a_5=-2a_4$.
Replacing $(1+ a_0\ep)\1_3$ by $\1_3$,  we can assume that $a_0=0$
(this does not effect the proof).
Therefore,
\[\1_2\ucr{1}^\ep \1_2=\1_3+\ep[a_2(\theta_2+\theta_3)+a_4(\theta_4-2\theta_5)],\]
and
\begin{align*}
\1_2\ucr{2}^\ep \1_2=&(\1_2\ast (12))\ucr{1}^\ep \1_2=(\1_2\ucr{1}^\ep \1_2)\ast (132)\\
=&\1_3+\ep[-a_2(\theta_1+\theta_2)+a_4(\theta_5-2\theta_4)].
\end{align*}
Using \eqref{E7.2.1}, we have
\[(\1_2\ucr{1}^{\epsilon}\1_2-\1_2\ucr{2}^{\epsilon}\1_2)
\ucr{i}^{\epsilon} \1_0=0\]
for all $i=1, 2, 3$, which implies that $a_2=0$.
 The assertion follows.

(4) This follows from part (3).
\end{proof}

\begin{lemma}
\label{yylem7.3}
Assume the hypothesis of Lemma \ref{yylem7.2}.
\begin{enumerate}
\item[(1)]
\begin{equation}
\label{E7.3.1}\tag{E7.3.1}
\fb\ucr{1}^{\epsilon}\1_0=\fb\ucr{2}^{\epsilon}\1_0=0
\qquad {\text{and}}\qquad
\wp_1(\fb,\1_0)=\wp_2(\fb,\1_0)=0
\end{equation}
which is \eqref{E3.5.2} for $\ucr{i}^{\epsilon}$.
\item[(2)]
\begin{align}
\label{E7.3.2}\tag{E7.3.2}
\fb \ucr{1}^{\epsilon} \fb
&=\fb\ucr{1} \fb+\ep b_2(2\theta_1+\theta_2-\theta_3),\\
\label{E7.3.3}\tag{E7.3.3}
\fb \ucr{2}^{\epsilon} \fb
&=\fb\ucr{2} \fb+\ep b_2(\theta_1-\theta_2-2\theta_3)
\end{align}
for some $b_2\in \Bbbk$.
\item[(3)]
\begin{equation}
\label{E7.3.4}\tag{E7.3.4}
\fb \ucr{2}^{\epsilon} \fb-\fb \ucr{1}^{\epsilon} \fb
-(\fb \ucr{2}^{\epsilon} \fb)\ast (12)_3=0
\end{equation}
which is \eqref{E3.5.7} for $\ucr{i}^{\epsilon}$.
\item[(4)]
\begin{equation}
\label{E7.3.5}\tag{E7.3.5}
(\fb\ucr{1} \fb)\ucr{j}^{\epsilon} \1_0=
\begin{cases}
-b_2\epsilon \fb, & j=1,\\
b_2\epsilon \fb,  & j=2,\\
2b_2\epsilon \fb, & j=3,
\end{cases}
\quad\qquad
(\fb\ucr{2} \fb)\ucr{j}^{\epsilon} \1_0=
\begin{cases}
-2b_2\epsilon \fb, & j=1,\\
-b_2\epsilon \fb,  & j=2,\\
b_2\epsilon \fb,   & j=3,
\end{cases}
\end{equation}
where $b_2$ is given in part (2).
\end{enumerate}
\end{lemma}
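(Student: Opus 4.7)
The plan is to work through the four parts in order, mimicking the pattern already set in Lemma~\ref{yylem7.2}: pin down the remaining degrees of freedom of $\wv$ on the generators $\fb,\1_2,\1_0$ by combining the equivariance axiom (OP3$'$) (which is transparent because $\varsigma_m=0$ for $m\le 3$) with the associativity axiom (OP2$'$) applied to carefully chosen triples built from the generators. Throughout, I will freely use Lemma~\ref{yylem7.2} so that \eqref{E3.5.1}, \eqref{E3.5.3}, \eqref{E3.5.4}, \eqref{E3.5.5} already hold in $\ip_{[\wv]}$.

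For part~(1), since $\fb\ucr{i}\1_0=0$ in $\ip$ and $\ip(1)=\Bbbk\1$, write $\wp_i(\fb,\1_0)=x_i\1$ with $x_i\in\Bbbk$. The antisymmetry $\fb=-\fb\ast^\epsilon(12)$ (valid since $\varsigma_2=0$) together with the equivariance rule $(\mu\ast\phi)\ucr{i}^\epsilon\nu=(\mu\ucr{\phi(i)}^\epsilon\nu)\ast\phi''$, applied with $\mu=\fb$, $i=1$, $\nu=\1_0$, $\phi=(12)_2$ (noting $\phi''=(12)\ucr{1}1_0=1_1\in\S_1$), yields $x_1=-x_2$. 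On the other hand, substituting $(\la,\mu,\nu)=(\fb,\1_0,\1_0)$ with $i=1,k=2$ into the second case of (OP2$'$) for $\ucr{i}^\epsilon$ gives $(\fb\ucr{1}^\epsilon\1_0)\ucr{1}^\epsilon\1_0=(\fb\ucr{2}^\epsilon\1_0)\ucr{1}^\epsilon\1_0$, i.e.\ $x_1=x_2$. Hence both vanish.

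For part~(2), expand $\wp_i(\fb,\fb)$ in the basis of Lemma~\ref{yylem7.1}(1). The equivariance identity $\wp_1(\fb,\fb\ast(12))=\wp_1(\fb,\fb)\ast(1_2\ucr{1}(12))$ combined with $\fb\ast(12)=-\fb$ gives an antisymmetry under the induced $\S_3$-action, while applying $(12)_2$ to the outer slot links $\wp_1(\fb,\fb)$ to $\wp_2(\fb,\fb)$. These two constraints collapse the six-parameter ansatz to a small number of parameters. To finish, I would apply the 2-cocycle relations \eqref{E2.4.1}--\eqref{E2.4.2} to triples $(\la,\mu,\nu)=(\fb,\fb,\1_0)$ for the various $(i,j)$: part~(1) kills the $\wp(\fb,\1_0)$ contributions, and what remains forces $\wp_1(\fb,\fb),\wp_2(\fb,\fb)$ into the claimed form with a single free scalar $b_2$, as in \eqref{E7.3.2}--\eqref{E7.3.3}.

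Parts~(3) and~(4) are then largely verification. For (3), substitute the explicit formulas from~(2) into $\fb\ucr{2}^\epsilon\fb-\fb\ucr{1}^\epsilon\fb-(\fb\ucr{2}^\epsilon\fb)\ast(12)_3$; the order-zero piece vanishes by Jacobi \eqref{E6.9.2}, and the order-$\epsilon$ piece becomes $b_2[(\theta_1-\theta_2-2\theta_3)-(2\theta_1+\theta_2-\theta_3)-(\theta_1-\theta_2-2\theta_3)\ast(12)_3]$, which reduces to zero after using the known $(12)_3$-action on $\theta_1,\theta_2,\theta_3$ together with $\theta_5\ast(12)=\theta_5-\theta_4$ coming from Jacobi. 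For (4), expand $(\fb\ucr{i}\fb)\ucr{j}^\epsilon\1_0$ via the 2-cocycle identity \eqref{E2.4.1} or \eqref{E2.4.2}, apply part~(1) to drop the $\wp(\fb,\1_0)$ terms, and substitute the explicit form of $\wp_i(\fb,\fb)$ from~(2); each of the six cases collapses to $\pm b_2\epsilon\,\fb$ or $\pm 2b_2\epsilon\,\fb$. The main obstacle is part~(2): carefully controlling the $\S_3$-action on the six basis elements of $\ip(3)$ — in particular, $\theta_4,\theta_5$ are linked by Jacobi and therefore do not transform freely — while keeping the sign bookkeeping from the antisymmetry of $\fb$ consistent throughout the cocycle identities.
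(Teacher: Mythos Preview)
Your plan matches the paper's argument closely for parts~(1), (3) and~(4). The point that needs sharpening is part~(2). After the two equivariance constraints you describe, the expansion of $\wp_1(\fb,\fb)$ still carries three free parameters, say $b_1,b_2,b_4$ (coefficients of $\theta_1,\theta_2,\theta_4$, with $b_0=b_5=0$ and $b_3=-b_2$). Applying the cocycle identities \eqref{E2.4.1}--\eqref{E2.4.2} to the triple $(\fb,\fb,\1_0)$ does \emph{not} by itself constrain these parameters: the right-hand sides vanish by part~(1), and what remains merely \emph{defines} $\wp_j(\theta_4,\1_0)$ and $\wp_j(\theta_5,\1_0)$ in terms of $\wp_s(\fb,\fb)\ucr{j}\1_0$. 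The relation $b_1=2b_2$ only emerges once you feed in the Jacobi identity $\theta_5=\theta_4+\theta_5\ast(12)_3$ of $\ip$, which (via \eqref{E2.11.4}) gives a second, independent expression for the same $\wp_j(\theta_{4,5},\1_0)$. The paper packages this step more directly: it shows that every term of the deformed Jacobi expression $J^\epsilon:=\fb\ucr{2}^\epsilon\fb-\fb\ucr{1}^\epsilon\fb-(\fb\ucr{2}^\epsilon\fb)\ast(12)_3$ satisfies $(-)\ucr{i}^\epsilon\1_0=0$ (by part~(1) and associativity in $\ip_{[\wv]}$), hence the $\epsilon$-coefficient $J'$ of $J^\epsilon$ obeys $J'\ucr{i}\1_0=0$ for $i=1,2,3$, and this is exactly $b_1=2b_2$.

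A second point you do not address: neither equivariance nor the Jacobi constraint touches $b_4$ --- the $\theta_4,\theta_5$ contributions cancel in $J'$. The paper handles this by the remark ``replacing $(1+b_4\epsilon)\theta_4$ by $\theta_4$\dots this does not affect the proof'': the $b_4$-term is harmless for parts~(3) and~(4) and for the downstream Lemmas~\ref{yylem7.4}--\ref{yylem7.5}. Your outline should either absorb $b_4$ the same way or verify explicitly that it plays no role.
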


\begin{proof} Since $\varsigma_m=0$ for all $m\leq 3$,
we have $\ast^{\epsilon}=\ast$ when applied to $\ip_{[\wv]}(m)$
for $m\leq 3$.

(1) Suppose $\fb \ucr{1}^{\epsilon} \1_0=\fb\ucr{1}\1_0+\ep x \1=\ep x\1$ for some $x\in \Bbbk$.
Then
\begin{align*}
\ep x \1_0&=(\ep x\1)\ucr{1}^{\epsilon}\1_0=(\fb\ucr{1}^{\epsilon} \1_0)\ucr{1}^{\epsilon} \1_0\\
&=-((\fb\ast (12))\ucr{1}^{\epsilon} \1_0)\ucr{1}^{\epsilon} \1_0
 =-(\fb\ucr{2}^{\epsilon} \1_0)\ucr{1}^{\epsilon} \1_0\\
&=-(\fb\ucr{1}^{\epsilon} \1_0)\ucr{1}^{\epsilon} \1_0=-\ep x\1_0,
\end{align*}

which implies that $x=0$ and $\fb\ucr{1}^{\epsilon}\1_0=0$. Similarly,
$\fb\ucr{2}^{\epsilon}\1_0=0$. The assertion follows.

(2, 3) Using the basis given in Lemma \ref{yylem7.1}(1),
write
\begin{align*}
\fb \ucr{1}^{\epsilon} \fb
&= \fb \ucr{1} \fb+\ep(b_0\1_3+b_1\theta_1+b_2\theta_2+b_3\theta_3+b_4\theta_4+b_5\theta_5),
\end{align*}
where $b_i\in \Bbbk, i=0, 1, \cdots, 5$.
By $-\fb\ucr{1}^\ep \fb=\fb\ucr{1}^\ep (\fb\ast (12)_2)=(\fb\ucr{1}^\ep\fb)\ast (12)_3$, we get
\begin{align*}
& -(b_0\1_3+b_1\theta_1+b_2\theta_2+b_3\theta_3+b_4\theta_4+b_5\theta_5)\\
=& b_0\1_3-b_1\theta_1+b_2\theta_3+b_3\theta_2-b_4\theta_4+b_5(\theta_5-\theta_4),
\end{align*}
and $b_0=0, b_2=-b_3, b_5=0$.
Replacing $(1+ b_4\ep)\theta_4$ by $\theta_4$,  we can assume that $b_4=0$
(this does not effect the proof). Therefore,
\begin{align}\label{E7.3.6}
\tag{E7.3.6}
\fb \ucr{1}^\ep \fb= \fb\ucr{1}\fb+\ep[b_1\theta_1+b_2(\theta_2-\theta_3)],
\end{align}
and
\begin{align}\label{E7.3.7}
\fb\ucr{2}^\ep \fb=&-(\fb\ast (12)_2)\ucr{2}^\ep \fb=-(\fb\ucr{1}^\ep \fb)\ast (132)_3 \notag\\
=& \fb\ucr{2}\fb+\ep[-b_1\theta_3+b_2(\theta_1-\theta_2)]. \tag{E7.3.7}
\end{align}
Using \eqref{E7.3.1}, one sees that
\[(\fb \ucr{2}^{\epsilon} \fb-\fb \ucr{1}^{\epsilon} \fb
-(\fb \ucr{2}^{\epsilon} \fb)\ast(12)_3) \ucr{i}^{\epsilon}
\1_0=0\]
for $i=1,2,3$.  An easy computation following by
\eqref{E7.3.6}-\eqref{E7.3.7} shows that
$b_1=2b_2$. Then we have
\begin{align*}
\fb \ucr{1}^\ep \fb=& \fb\ucr{1}\fb+\ep b_2(2\theta_1+\theta_2-\theta_3),\\
\fb\ucr{2}^\ep \fb=& \fb\ucr{2}\fb+\ep b_2(\theta_1-\theta_2-2\theta_3),
\end{align*}
and
\begin{align*}
\fb \ucr{2}^{\epsilon} \fb-\fb \ucr{1}^{\epsilon} \fb
-(\fb \ucr{2}^{\epsilon} \fb)\ast(12)_3=\fb \ucr{2} \fb-\fb \ucr{1} \fb
-(\fb \ucr{2} \fb)\ast (12)_3=0.
\end{align*}

(4)
It follows from part (1) that
$(\fb\ucr{i}^{\epsilon} \fb)\ucr{j}^{\epsilon} \1_0=0$
for $i=1, 2$ and $j=1, 2, 3$. Now the assertions follow from
\eqref{E7.3.2}-\eqref{E7.3.3}.
\end{proof}

\begin{lemma}
\label{yylem7.4}
Assume the hypothesis of Lemma \ref{yylem7.2}.
\begin{enumerate}
\item[(1)]
\begin{align}
\label{E7.4.1}\tag{E7.4.1}
1_2\ucr{1}^{\epsilon} \fb
&= \1_2\ucr{1}\fb+\ep[c_2(\theta_2-\theta_3)+c_4\theta_4],\\
\label{E7.4.2}\tag{E7.4.2}
1_2 \ucr{2}^{\epsilon} \fb
&= \1_2\ucr{2}\fb+\ep[c_2(\theta_2-\theta_1)-c_4\theta_5]
\end{align}
for some $c_2, c_4\in \Bbbk$.
\item[(2)]
\begin{align}
\label{E7.4.3}\tag{E7.4.3}
\fb\ucr{1}^{\epsilon} \1_2
=& \fb\ucr{1} \1_2+\ep[c_2(\theta_2+\theta_3)+d_4(\theta_4-2\theta_5)]\\
\label{E7.4.4}\tag{E7.4.4}
\fb\ucr{2}^{\epsilon} \1_2
=& \fb\ucr{2} \1_2+\ep[c_2(\theta_2+\theta_1)-d_4(\theta_5-2\theta_4)].
\end{align}
for some $d_4\in \Bbbk$, where $c_2$ is given in part $(1)$.
\item[(3)]
\begin{align}
\fb\ucr{1}^{\epsilon} \1_2-\1_2\ucr{2}^{\epsilon}\fb -(\1_2\ucr{2}^{\epsilon}\fb)\ast (12)_3
=&(d_4-c_4)\epsilon (\theta_4 -2 \theta_5), \label{E7.4.5}\tag{E7.4.5}\\
\fb\ucr{2}^{\epsilon} \1_2-\1_2\ucr{1}^{\epsilon}\fb
-(\1_2\ucr{1}^{\epsilon}\fb)\ast (23)_3=&(d_4-c_4)\epsilon (2\theta_4
-\theta_5), \label{E7.4.6}\tag{E7.4.6}
\end{align}
where $c_4, d_4$ are given in part (1) and (2), respectively.
\end{enumerate}
\end{lemma}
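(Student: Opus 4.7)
\textbf{Proof Proposal for Lemma \ref{yylem7.4}.} The plan is to mimic the bookkeeping strategy of Lemmas \ref{yylem7.2} and \ref{yylem7.3}: expand each deformed partial composition in the basis of $\ip(3)$ given by Lemma \ref{yylem7.1}(1), and then shrink the number of free parameters by (a) imposing $\S_3$-equivariance through the relations $\1_2\ast(12)=\1_2$ and $\fb\ast(12)=-\fb$, and (b) evaluating $(\,\cdot\,)\ucr{j}^{\epsilon}\1_0$ against the known values from \eqref{E7.2.1} and \eqref{E7.3.1}.

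For part (1), I would write
\[
\1_2\ucr{1}^{\epsilon}\fb=\theta_1+\epsilon\bigl(c_0\1_3+c_1\theta_1+c_2\theta_2+c_3\theta_3+c_4\theta_4+c_5\theta_5\bigr)
\]
and then use the identity $\1_2\ucr{1}^{\epsilon}\fb=\1_2\ucr{1}^{\epsilon}(-\fb\ast(12))=-(\1_2\ucr{1}^{\epsilon}\fb)\ast(23)_3$ (legitimate because $\varsigma_2=\varsigma_3=0$) to collapse coefficients, obtaining immediately $c_0=c_1=0$ and $c_3=-c_2$ together with a relation between $c_4$ and $c_5$. Applying $(-)\ucr{j}^{\epsilon}\1_0$ for $j=1,2,3$ and comparing with $\1_2\ucr{1}^{\epsilon}(\fb\ucr{j'}^{\epsilon}\1_0)=0$ (obtained via the associativity axiom (OP2$'$) for $\ucr{i}^{\epsilon}$) pins down $c_5=0$, leaving exactly the two free parameters $c_2,c_4$ in \eqref{E7.4.1}. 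Then \eqref{E7.4.2} follows from \eqref{E7.4.1} by computing $\1_2\ucr{2}^{\epsilon}\fb=(\1_2\ast(12))\ucr{1}^{\epsilon}\fb$ using (OP3$'$) and the already-known equivariance.

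Part (2) is entirely analogous: expand $\fb\ucr{1}^{\epsilon}\1_2$ in the basis, use $\fb\ucr{1}^{\epsilon}\1_2=-(\fb\ast(12))\ucr{2}^{\epsilon}\1_2$ together with the $\1_0$-composition tests to eliminate coefficients. The critical point to verify is that the coefficient of $\theta_2$ appearing in \eqref{E7.4.3} is forced to equal the parameter $c_2$ from part (1); this identification comes from applying $(-)\ucr{j}^{\epsilon}\1_0$ and using \eqref{E7.3.5} and \eqref{E7.3.1}, which couples $\fb\ucr{1}^{\epsilon}\1_2$ to $\1_2\ucr{2}^{\epsilon}\fb$ via an associativity square. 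After that, \eqref{E7.4.4} follows from $\fb\ucr{2}^{\epsilon}\1_2=-(\fb\ast(12))\ucr{1}^{\epsilon}\1_2$, and the single remaining parameter is named $d_4$.

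For part (3), I would substitute \eqref{E7.4.1}--\eqref{E7.4.4} directly into the Poisson-compatibility expressions $\fb\ucr{1}^{\epsilon}\1_2-\1_2\ucr{2}^{\epsilon}\fb-(\1_2\ucr{2}^{\epsilon}\fb)\ast(12)_3$ and its mirror, using the classical identity \eqref{E3.5.6} in $\ip$ to cancel all the degree-zero ($\epsilon^0$) contributions, and then collect the surviving $\epsilon$-coefficients. The $c_2$-terms should cancel by construction, and what remains is the asymmetry between $c_4$ and $d_4$, giving exactly \eqref{E7.4.5} and \eqref{E7.4.6}. The main obstacle I anticipate is bookkeeping: the six-term basis, combined with three positions $j=1,2,3$ for the $\1_0$-tests and multiple $\S_3$-equivariance checks, produces a sizable linear system, and one has to be careful that the basis element $\theta_2=(\1_2\ucr{1}\fb)\ast(23)_3$ transforms correctly under each cyclic permutation so that the coefficient identifications across parts (1) and (2) are made consistently.
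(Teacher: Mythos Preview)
Your overall strategy is the paper's strategy: expand in the basis of Lemma~\ref{yylem7.1}(1), reduce parameters by $\S_3$-equivariance (via $\1_2\ast(12)=\1_2$ and $\fb\ast(12)=-\fb$, using $\varsigma_2=\varsigma_3=0$), and then couple the two sides through $\1_0$-compositions. Part~(3) is indeed just substitution, and the key step in part~(2) --- tying the $\theta_2$-coefficient of $\fb\ucr{1}^{\epsilon}\1_2$ to the $c_2$ of part~(1) --- is exactly what the paper does by setting $\mu=\fb\ucr{1}^{\epsilon}\1_2-\1_2\ucr{2}^{\epsilon}\fb-(\1_2\ucr{2}^{\epsilon}\fb)\ast(12)_3$ and checking $\mu\ucr{i}^{\epsilon}\1_0=0$.

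There are, however, two concrete bookkeeping slips in your part~(1). First, the permutation is wrong: with $\mu=\1_2$, $i=1$, $\sigma=(12)_2$, the induced permutation is $\sigma'=1_2\ucr{1}(12)_2=(12)_3$, not $(23)_3$; so the correct self-constraint is $-\1_2\ucr{1}^{\epsilon}\fb=(\1_2\ucr{1}^{\epsilon}\fb)\ast(12)_3$. Second, this equivariance does \emph{not} give $c_1=0$: since $\theta_1\ast(12)_3=-\theta_1$, the $c_1$-coefficient is unconstrained (the relation yields $c_0=0$, $c_3=-c_2$, $c_5=0$, and nothing on $c_1,c_4$). The paper handles $c_1$ exactly as it handled $a_0$ in Lemma~\ref{yylem7.2} and $b_4$ in Lemma~\ref{yylem7.3}: it normalizes it away by replacing $(1+c_1\epsilon)\theta_1$ with $\theta_1$, noting this does not affect the proof. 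Consequently no $\1_0$-tests are needed in part~(1) at all; those enter only in part~(2). Once you make these two corrections, your argument lines up with the paper's essentially verbatim.
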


\begin{proof}
(1) Using the basis given in Lemma \ref{yylem7.1}(1),
write
\begin{align*}
1_2\ucr{1}^{\epsilon} \fb
&=1_2\ucr{1}\fb+\ep(c_0\1_3+c_1\theta_1+c_2\theta_2+c_3\theta_3+c_4\theta_4+c_5\theta_5),
\end{align*}
where $c_i\in \Bbbk, i=0, 1, \cdots, 5$.
By $-\1_2\ucr{1}^{\epsilon} \fb=\1_2\ucr{1}^{\epsilon} 
(\fb\ast (12)_2)=(\1_2\ucr{1}^{\epsilon} \fb)\ast (12)_3$, we get
\begin{align*}
-(c_0\1_3+c_1\theta_1+c_2\theta_2+ & c_3\theta_3+c_4\theta_4+c_5\theta_5)\\
=& c_0\1_3-c_1\theta_1+c_2\theta_3+c_3\theta_2-c_4\theta_4+c_5(\theta_5-\theta_4),
\end{align*}
and $c_0=0, c_3=-c_2, c_5=0$. Replacing $(1+ c_1\ep)\theta_1$ by $\theta_1$,  we can assume that $c_1=0$
(this does not effect the proof). Therefore,
\begin{align*}
\1_2\ucr{1}^\ep \fb=\1_2\ucr{1}\fb+\ep[c_2(\theta_2-\theta_3)+c_4\theta_4],
\end{align*}
and
\begin{align*}
\1_2\ucr{2}^\ep \fb=& (\1_2\ast (12)_2)\ucr{2}^\ep \fb=(\1_2\ucr{1}^\ep \fb)\ast (132)\\
=& \1_2\ucr{2}\fb+\ep(c_2(\theta_2-\theta_1)-c_4\theta_5).
\end{align*}

(2, 3) Write
\begin{align*}
\fb\ucr{1}^{\epsilon} \1_2
&= \fb\ucr{1} \1_2+\ep(d_0\1_3+d_1\theta_1+d_2\theta_2+d_3\theta_3+d_4\theta_4+d_5\theta_5),
\end{align*}
where $d_i\in \Bbbk$, $i=0, 1, \cdots, 5$.
By  $\fb\ucr{1}^{\epsilon} \1_2=\fb\ucr{1}^{\epsilon} 
(\1_2\ast (12)_2)=(\fb\ucr{1}^{\epsilon} \1_2)\ast (12)_3$, we get
\begin{align*}
d_0\1_3+d_1\theta_1+d_2\theta_2+ & d_3\theta_3+d_4\theta_4+d_5\theta_5\\
=& d_0\1_3-d_1\theta_1+d_2\theta_3+d_3\theta_2-d_4\theta_4+d_5(\theta_5-\theta_4),
\end{align*}
and $d_1=0, d_2=d_3, d_5=-2d_4$. Therefore, we have
\begin{align*}
\fb \ucr{1}^\ep \1_2=& \fb\ucr{1}\1_2+
 \ep[d_0\1_3+d_2(\theta_2+\theta_3)+d_4(\theta_4-2\theta_5)], \\
\fb \ucr{2}^\ep \1_2=& -(\fb\ast(12)_2) \ucr{2}^\ep \1_2=-(\fb\ucr{1}^\ep\1_2)\ast (132)\\
=& \fb \ucr{2}\1_2+\ep[-d_0\1_3+d_2(\theta_1+\theta_2)-d_4(\theta_5-2\theta_4)].
\end{align*}
Set $\mu=\fb\ucr{1}^\ep \1_2-\1_2\ucr{2}^\ep \fb-(\1_2\ucr{2}^\ep \fb)\ast (12)_3$. 
Then by \eqref{E7.2.1} and \eqref{E7.3.1}, we have $\mu\ucr{i}^\e \1_0=0$ 
for $i=1, 2, 3$, which implies that
$d_0=0$ and $d_2=c_2$. Therefore, we get
\begin{align*}
\fb\ucr{1}^{\epsilon} \1_2
=& \fb\ucr{1} \1_2+\ep[c_2(\theta_2+\theta_3)+d_4(\theta_4-2\theta_5)],\\
\fb\ucr{2}^{\epsilon} \1_2=&-(\fb\ast (12)_2)\ucr{2}^\ep \1_2=-(\fb \ucr{1}^\ep \1_2)\ast (132)_3\\
=& \fb\ucr{2} \1_2+\ep[c_2(\theta_2+\theta_1)-d_4(\theta_5-2\theta_4)].
\end{align*}
It follows that
\begin{align*}
\fb\ucr{1}^{\epsilon} \1_2&=\1_2\ucr{2}^{\epsilon}\fb +(\1_2\ucr{2}^{\epsilon}\fb)\ast (12)_3+(d_4-c_4)\ep(\theta_4-2\theta_5),\\
\fb\ucr{2}^{\epsilon} \1_2&=-(\fb\ast (12)_2)\ucr{2}^\ep \1_2=-(\fb\ucr{1}^{\epsilon} \1_2) \ast (132)_3\\
&=\1_2 \ucr{1}^{\epsilon}\fb+ (\1_2\ucr{1}\fb)\ast(23)_3+
(d_4-c_4)\ep(2\theta_4-\theta_5).
\end{align*}
The assertions follow.
\end{proof}

\begin{lemma}
\label{yylem7.5}
Assume the hypothesis of Lemma \ref{yylem7.2}.
In addition we suppose that $\varsigma_4=0$
and $\1_2\ucr{1}^{\epsilon} \1_2=\1_2\ucr{2}^{\epsilon}\1_2$. Then
\begin{align*}
\fb\ucr{1}^{\epsilon} \1_2=\1_2\ucr{2}^{\epsilon}\fb +(\1_2\ucr{2}^{\epsilon}\fb)\ast (12)_3.
\end{align*}
which is \eqref{E3.5.6} for $\ucr{i}^\ep$.
\end{lemma}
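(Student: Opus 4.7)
By Lemma \ref{yylem7.4}(3), the Leibniz discrepancy
\[E := \fb\ucr{1}^{\epsilon}\1_2 - \1_2\ucr{2}^{\epsilon}\fb - (\1_2\ucr{2}^{\epsilon}\fb)\ast(12)_3\]
equals $(d_4-c_4)\,\epsilon\,(\theta_4-2\theta_5)$, so the claim is equivalent to $d_4=c_4$. The hypothesis $\1_2\ucr{1}^{\epsilon}\1_2 = \1_2\ucr{2}^{\epsilon}\1_2$ forces $a_4 = 0$ by Lemma \ref{yylem7.2}(3), whence $\wp_i(\1_2,\1_2) = 0$. Composing $E$ with $\1_0$ at any position gives $0$ identically, because $(\theta_4-2\theta_5)\ucr{j}\1_0 = 0$ for $j = 1,2,3$ by the Lie unit relation \eqref{E3.5.2}; hence, unlike the Jacobi case of Lemma \ref{yylem7.3}(3), the ``$\ucr{j}^{\epsilon}\1_0$'' trick provides no constraint here.

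My plan is to compose $E$ with $\fb$ at position $1$ and evaluate in two ways. Directly, $E\ucr{1}^{\epsilon}\fb = \epsilon(d_4-c_4)(\theta_4-2\theta_5)\ucr{1}\fb$, a nonzero element of the Lie part of $\Pois(4)$. On the other hand, applying (OP2$'$) and (OP3$'$) in $\ip_{[\wv]}$ term-by-term to the three summands of $E$ yields
\begin{align*}
(\fb\ucr{1}^{\epsilon}\1_2)\ucr{1}^{\epsilon}\fb &= \fb\ucr{1}^{\epsilon}(\1_2\ucr{1}^{\epsilon}\fb),\\
(\1_2\ucr{2}^{\epsilon}\fb)\ucr{1}^{\epsilon}\fb &= (\1_2\ucr{1}^{\epsilon}\fb)\ucr{3}^{\epsilon}\fb,\\
((\1_2\ucr{2}^{\epsilon}\fb)\ast(12)_3)\ucr{1}^{\epsilon}\fb &= (\1_2\ucr{2}^{\epsilon}(\fb\ucr{1}^{\epsilon}\fb))\ast\phi'',
\end{align*}
where $\phi'' = (12)_3\ucr{1}1_2 \in \S_4$. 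The assumption $\varsigma_4 = 0$ ensures $\ast^{\epsilon} = \ast$ on $\ip(4)$, so the last twist behaves classically modulo $\epsilon$.

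Expanding each of the three right-hand sides to order $\epsilon$ via \eqref{E7.2.4}--\eqref{E7.4.4}, the classical parts cancel by the Leibniz identity in $\Pois$, and the order-$\epsilon$ residuals combine known binary $\wp$-values with the unknowns $\wp_*(\theta_i,\fb)$, $\wp_*(\fb,\theta_i)$, and $\wp_*(\1_2,\theta_4)$. These unknowns are then eliminated by the 2-cocycle identities \eqref{E2.4.1}--\eqref{E2.4.2} applied to binary triples in $\{\1_2,\fb\}^3$---notably $(\1_2,\fb,\fb)$ for the $\wp_*(\theta_i,\fb)$ terms and $(\fb,\fb,\1_2)$ for the $\wp_*(\theta_4,\1_2)$ term---using $\wp_*(\1_2,\1_2) = 0$ to suppress crossterms. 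After the substitutions the equation collapses to $(d_4-c_4)\cdot[(\theta_4-2\theta_5)\ucr{1}\fb] = 0$ in $\Pois(4)$, and the non-vanishing of this element in the Lie subspace forces $d_4 = c_4$. The main obstacle is the bookkeeping in the $24$-dimensional space $\Pois(4)$; projecting onto the purely-nested-Lie summand (where $\theta_4\ucr{1}\fb$ and $\theta_5\ucr{1}\fb$ have clean expansions via the Jacobi identity) greatly reduces the number of terms to track.
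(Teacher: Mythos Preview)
Your approach is circular and does not produce a constraint. The direct computation gives $E\ucr{1}^{\epsilon}\fb = \epsilon(d_4-c_4)(\theta_4-2\theta_5)\ucr{1}\fb$, and rewriting the three summands of $E\ucr{1}^{\epsilon}\fb$ via (OP2$'$) and (OP3$'$) must return \emph{the same element}, since those axioms hold exactly in $\ip_{[\wv]}$. The 2-cocycle identities \eqref{E2.4.1}--\eqref{E2.4.2} are precisely the order-$\epsilon$ shadow of those axioms, so invoking them to ``eliminate'' $\wp_1(\fb,\theta_1)$, $\wp_3(\theta_1,\fb)$, $\wp_2(\1_2,\theta_4)$ just undoes the associativity moves you applied; for instance, the identity for $(\1_2,\fb,\fb)$ with $i=2,\ j=1$ is exactly the order-$\epsilon$ part of $(\1_2\ucr{2}^{\epsilon}\fb)\ucr{2}^{\epsilon}\fb=\1_2\ucr{2}^{\epsilon}(\fb\ucr{1}^{\epsilon}\fb)$. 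After full substitution the equation reads $(d_4-c_4)X=(d_4-c_4)X$, not $(d_4-c_4)X=0$. Crucially, the hypothesis $\wp_i(\1_2,\1_2)=0$ never enters: neither of the triples you cite, $(\1_2,\fb,\fb)$ or $(\fb,\fb,\1_2)$, contains two copies of $\1_2$, so no $\wp_i(\1_2,\1_2)$ term ever appears in those identities to be ``suppressed''.

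The paper composes with $\1_2$ rather than $\fb$, so that the hypothesis acts as a genuine relation between two distinct degree-$4$ expressions. Writing $a=d_4-c_4$, it substitutes $\fb\ucr{1}^{\epsilon}\1_2 = \1_2\ucr{2}^{\epsilon}\fb + (\1_2\ucr{2}^{\epsilon}\fb)\ast(12)_3 + a\epsilon(\theta_4-2\theta_5)$ into both $(\fb\ucr{1}^{\epsilon}\1_2)\ucr{1}^{\epsilon}\1_2$ and $(\fb\ucr{1}^{\epsilon}\1_2)\ucr{2}^{\epsilon}\1_2$, then uses sequential associativity and the hypothesis $\1_2\ucr{1}^{\epsilon}\1_2=\1_2\ucr{2}^{\epsilon}\1_2$ to equate these. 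After iterating the Leibniz substitution and cancelling common $((\1_2\ucr{1}^{\epsilon}\1_2)\ucr{3}^{\epsilon}\fb)\ast\sigma$ terms, one obtains $a\cdot Y_1 = a\cdot Y_2$ with $Y_1,Y_2\in\Pois(4)$ built from $\1_2\ucr{2}(\theta_4-2\theta_5)$ and $(\theta_4-2\theta_5)\ucr{j}\1_2$; an explicit evaluation in the free Poisson algebra on four generators shows $Y_1\neq Y_2$, forcing $a=0$.
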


\begin{proof} By \eqref{E7.4.5}, we can assume that
\[\fb\ucr{1}^{\epsilon} \1_2=\1_2\ucr{2}^{\epsilon}\fb +(\1_2\ucr{2}^{\epsilon}\fb)\ast (12)_3
+a\epsilon (\fb\ucr{1}\fb -2 \fb\ucr{2}\fb)\]
for some $a\in \Bbbk$.
By our assumption, we have
\begin{align*}
\fb\ucr{1}^\e (\1_2 \ucr{2}^\e \1_2)=&(\fb\ucr{1}^\e \1_2) \ucr{2}^\e \1_2\\
= & [\1_2\ucr{2}^\e \fb+(\1_2\ucr{2}^\e \fb)\ast (12)_3+a\e ( \fb \ucr{1} \fb -2 \fb \ucr{2} \fb )]\ucr{2}^\e \1_2\\
= & (\1_2\ucr{2}^\e \fb)\ucr{2}^\e \1_2 +[(\1_2\ucr{2}^\e \fb)\ast (12)_3]\ucr{2}^\e \1_2 +a\e ( \fb \ucr{1} \fb -2 \fb \ucr{2} \fb )\ucr{2}\1_2\\
= & \1_2 \ucr{2}^\e (\fb \ucr{1}^\e \1_2)+[(\1_2\ucr{2}^\e \fb)\ucr{1}^\e \1_2]\ast (132)_4+a\e ( \fb \ucr{1} \fb -2 \fb \ucr{2} \fb )\ucr{2}\1_2\\
= & \1_2\ucr{2}^\e [\1_2\ucr{2}^\e \fb+(\1_2\ucr{2}^\e \fb)\ast (12)_3+a\e ( \fb \ucr{1} \fb -2 \fb \ucr{2} \fb )]\\
& +((\1_2\ucr{1}^\e \1_2)\ucr{3}^\e \fb)\ast (132)_4+a\e ( \fb \ucr{1} \fb -2 \fb \ucr{2} \fb )\ucr{2} \1_2\\
= & (\1_2\ucr{2}^\e \1_2)\ucr{3}^\e \fb+((\1_2\ucr{2}^\e \1_2)\ucr{3}^\e \fb)\ast (23)_4+((\1_2\ucr{1}^\e \1_2)\ucr{3}^\e \fb)\ast (132)_4\\
& +a\e [\1_2\ucr{2} (\fb \ucr{1} \fb-2\fb\ucr{2} \fb)+(\fb \ucr{1} \fb-2\fb\ucr{2} \fb)\ucr{2} \1_2].
\end{align*}
Similarly,
\begin{align*}
\fb\ucr{1}^\e (\1_2 \ucr{1}^\e \1_2)=&(\fb\ucr{1}^\e \1_2) \ucr{1}^\e \1_2\\
= &  (\1_2\ucr{1}^\e \1_2)\ucr{3}^\e \fb +((\1_2\ucr{2}^\e \1_2)\ucr{3}^\e \fb)\ast (123)_4+((\1_2\ucr{2}^\e \1_2)\ucr{3}^\e \fb)\ast (13)_4\\
&+a\e [(\1_2\ucr{2} (\fb \ucr{1} \fb-2\fb\ucr{2} \fb))\ast (123)_4 + (\fb \ucr{1} \fb-2\fb\ucr{2} \fb)\ucr{1}\1_2]\\
\end{align*}
By $\1_2\ucr{1}^\e \1_2=\1_2\ucr{2}^\e \1_2$ and $\1_2\ast (12)_2=\1_2$, we have
\begin{align*}
(\1_2\ucr{2}^\e \1_2)\ucr{3}^\e \fb
=& (\1_2\ucr{1}^\e (\1_2\ast (12)_2))\ucr{3}^\e \fb\\
=& ((\1_2 \ucr{1}^\e \1_2)\ast (12)_3)\ucr{3}^\e \fb\\
=& ((\1_2 \ucr{1}^\e \1_2)\ucr{3}^\e \fb)\ast (12)_4,
\end{align*}
and
\begin{align*}
((\1_2\ucr{2}^\e \1_2)\ucr{3}^\e \fb)\ast (23)_4=
  & ((\1_2 \ucr{1}^\e \1_2)\ucr{3}^\e \fb)\ast ((12)_4(23)_4)\\
=& ((\1_2 \ucr{1}^\e \1_2)\ucr{3}^\e \fb)\ast (123)_4,\\
((\1_2\ucr{2}^\e \1_2)\ucr{3}^\e \fb)\ast (132)_4=
  & ((\1_2 \ucr{1}^\e \1_2)\ucr{3}^\e \fb)\ast ((12)_4(132)_4)\\
=& ((\1_2 \ucr{1}^\e \1_2)\ucr{3}^\e \fb)\ast (13)_4.
\end{align*}
From 
$\fb\ucr{1}^\e (\1_2\ucr{2}^\e \1_2)=\fb\ucr{1}^\e (\1_2\ucr{1}^\e \1_2)$, 
it follows that
\[a[\1_2\ucr{2} (\fb \ucr{1} \fb-2\fb\ucr{2} \fb)
  +(\fb \ucr{1} \fb-2\fb\ucr{2} \fb)\ucr{2} \1_2]
=a[(\1_2\ucr{2} (\fb \ucr{1} \fb-2\fb\ucr{2} \fb))\ast (3,1,2,4) 
+ (\fb \ucr{1} \fb-2\fb\ucr{2} \fb)\ucr{1}\1_2].\]
On the other hand, we consider the free Poisson algebra $\Pois(V)$
associated to the $\Bbbk$-vector space $V=\oplus_{i=1}^4 \Bbbk x_i$. Then
\begin{align*}
\; [\1_2\ucr{2} (\fb \ucr{1} \fb&-2\fb\ucr{2} \fb)
+(\fb \ucr{1} \fb-2\fb\ucr{2} \fb)\ucr{2} \1_2](x_1, x_2, x_3, x_4)\\
=& x_1\{\{x_2, x_3\}, x_4\}-2x_1\{x_2, \{x_3, x_4\}\}+\{\{x_1, x_2x_3\}, x_4\}-2\{x_1, \{x_2x_3, x_4\}\}\\
=& -\{x_1, x_2\}\{x_3, x_4\}-\{x_1, x_3\}\{x_2, x_4\} -x_1\{x_2, \{x_3, x_4\}\}-x_1\{x_3, \{x_2, x_4\}\}\\
& -x_2\{x_1, \{x_3, x_4\}\}-x_2\{x_3, \{x_1, x_4\}\} -x_3\{x_1, \{x_2, x_4\}\}-x_3\{x_2, \{x_1, x_4\}\},
\end{align*}
\begin{align*}
\; [(\1_2\ucr{2} (\fb \ucr{1} \fb&-2\fb\ucr{2} \fb))\ast (123)_4 + (\fb \ucr{1} \fb-2\fb\ucr{2} \fb)\ucr{1}\1_2](x_1, x_2, x_3, x_4)\\
=& x_3\{\{x_1, x_2\}, x_4\}-2x_3\{x_1, \{x_2, x_4\}\}
 +\{\{x_1x_2, x_3\}, x_4\}-2\{x_1x_2, \{x_3, x_4\}\}\\
=& \{x_1, x_4\}\{x_2, x_3\}+\{x_1, x_3\}\{x_2, x_4\} 
 -x_1\{x_2, \{x_3, x_4\}\}-x_1\{x_3, \{x_2, x_4\}\}\\
& -x_2\{x_1, \{x_3, x_4\}\}-x_2\{x_3, \{x_1, x_4\}\}
 -x_3\{x_1, \{x_2, x_4\}\}-x_3\{x_2, \{x_1, x_4\}\}.
\end{align*}
It follows that $a=0$.

\end{proof}

\begin{theorem}
\label{yythm7.6}
Let $\ip$ denote either $\Pois$ or $\Pois/{^k \iu}$ for some $k\geq 5$.
Then $\dim H^2_{\ast}(\ip)\leq 1$. As a consequence,
$\ip$ is $\idf$-$($semi$)$rigid.
\end{theorem}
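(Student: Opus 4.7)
The plan is to show that the scalar $a_4 \in \Bbbk$ extracted in Lemma \ref{yylem7.2}(3) induces an injection $H^2_{\ast}(\ip) \hookrightarrow \Bbbk$, which forces $\dim H^2_{\ast}(\ip) \leq 1$. Starting from an arbitrary 2-cocycle $\wv$, by Lemma \ref{yylem7.1}(2) I may assume $\varsigma_m = 0$ for all $m \leq 4$. Lemmas \ref{yylem7.2}, \ref{yylem7.3}, \ref{yylem7.4} then normalize $\wv$ on the generators $\{\1_0, \1_2, \fb\}$ so that its formulas are expressed through five scalars $a_4, b_2, c_2, c_4, d_4$, and verify that the Poisson relations \eqref{E3.5.1}, \eqref{E3.5.2}, \eqref{E3.5.3}, \eqref{E3.5.4}, and \eqref{E3.5.7} hold for the deformed operations $\ucr{i}^\epsilon$ and $\ast^\epsilon$ on these generators.

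If further $a_4 = 0$, then \eqref{E3.5.5} holds by Lemma \ref{yylem7.2}(4), and Lemma \ref{yylem7.5} produces the Leibniz relation \eqref{E3.5.6}. Hence \emph{all} defining relations of $\Pois$ hold in $\ip_{[\wv]}$ when restricted to the three generators. When $\ip = \Pois$, Lemma \ref{yylem5.1} directly yields that $\ip_{[\wv]}$ is equivalent to the trivial deformation. When $\ip = \Pois/{^k\iu}$ with $k \geq 5$, one first obtains an operadic morphism $f: \Pois \to \ip_{[\wv]}$ as above, and then a Gelfand--Kirillov dimension argument parallel to that used in the proof of Theorem \ref{yythm6.8} shows $\ker f \supseteq {^k\iu}$ since $\GKdim \ip_{[\wv]} = \GKdim \ip = k$; the induced map $\ip \to \ip_{[\wv]}$ then allows Lemma \ref{yylem5.1} to conclude triviality. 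By Theorem \ref{yythm2.10}(2), $\wv$ is a 2-coboundary whenever $a_4 = 0$.

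It remains to verify that the assignment $\wv \mapsto a_4$ descends to a well-defined linear map $H^2_{\ast}(\ip) \to \Bbbk$. For any 2-coboundary map $\partial: \ip \to \ip$, the condition $\varsigma_n = 0$ for $n \leq 3$ forces $\partial$ to be $\S_n$-equivariant on $\ip(n)$ for $n \leq 3$. Since the $\S_3$-invariants of $\ip(3) \cong \Bbbk \S_3$ form the one-dimensional subspace $\Bbbk \1_3$, we have $\partial(\1_3) \in \Bbbk \1_3$; combined with $\partial(\1_2) = \alpha \1_2$, the coboundary correction
\[\wp(\partial)_1(\1_2, \1_2) = \partial(\1_3) - \partial(\1_2) \ucr{1} \1_2 - \1_2 \ucr{1} \partial(\1_2) = \partial(\1_3) - 2\alpha \1_3\]
is a multiple of $\1_3$ and contributes nothing to the $(\theta_4 - 2\theta_5)$-coefficient. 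Thus $a_4$ is a genuine cohomology invariant, and combined with the previous paragraph this yields the desired injection $H^2_{\ast}(\ip) \hookrightarrow \Bbbk$.

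The main technical obstacle I expect is the GK-dimension reduction for the truncated case $\Pois/{^k\iu}$, which requires invoking an operadic GK-dimension characterization of the truncation ideal analogous to the argument used in the proof of Theorem \ref{yythm6.8}; a secondary subtlety is tracking carefully that the basis changes $\1_0 \mapsto (1-a\epsilon)\1_0$ and $\1_3 \mapsto (1+a_0\epsilon)\1_3$ invoked inside Lemmas \ref{yylem7.2}--\ref{yylem7.4} genuinely arise from coboundary equivalences, so that the five-parameter reduction on generators is compatible with the cohomology-class interpretation.
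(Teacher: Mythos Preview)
Your proposal is correct and follows essentially the same route as the paper. The paper also reduces to $Z^2_4(\ip)$ (cocycles with $\varsigma_m=0$ for $m\le 4$), defines the linear map $f(\wv)=\wp_1(\1_2,\1_2)-\wp_2(\1_2,\1_2)$, observes via Lemma~\ref{yylem7.2}(4) that its image lies in $\Bbbk(\theta_4-\theta_5)$, and then shows $\ker f\subseteq B^2_4(\ip)$ by exactly the chain of lemmas you invoke (Lemmas~\ref{yylem7.2}--\ref{yylem7.5} followed by Lemma~\ref{yylem5.1}, with the same GK-dimension descent through ${^k\iu}$ in the truncated case).

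The one minor difference is packaging: the paper never checks that $a_4$ descends to $H^2_{\ast}$. Instead it uses the pure linear-algebra inequality
\[
\dim Z^2_4/B^2_4 \;\le\; \dim Z^2_4/\ker f \;=\; \dim\operatorname{im} f \;\le\; 1,
\]
which requires only $\ker f\subseteq B^2_4$. Your descent argument (that $\partial$ must be $\S_n$-equivariant for $n\le 3$, hence $\wp(\partial)_1(\1_2,\1_2)\in\Bbbk\1_3$) is correct but superfluous once you phrase things this way. Regarding your flagged concern about the normalizations inside Lemmas~\ref{yylem7.2}--\ref{yylem7.4}: the $\1_0$-rescaling is a genuine $B^2_4$-coboundary (the associated $\partial$ is supported only on $\ip(0)$, so $\varsigma(\partial)=0$), while the ``replace $(1+a_0\epsilon)\1_3$ by $\1_3$''-type remarks are not coboundary equivalences at all---they are merely suppressions of coefficients that play no role in verifying \eqref{E3.5.1}--\eqref{E3.5.7} and do not affect $a_4$.
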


By a result in the next section (Corollary \ref{yycor8.5}), if
${\rm{char}}\;\ \Bbbk=0$, the $H^2_{\ast}(\ip)\cong \Bbbk$.

\begin{proof}[Proof of Theorem \ref{yythm7.6}]
First we introduce some temporary notation.
Let $Z^2_4(\ip)$ (respectively, $B^2_4(\ip)$) denote the $\Bbbk$-vector space
of all 2-cocycles (respectively, 2-coboundary) with $\varsigma_{m=0}$ for all $m\leq 4$.
By Lemma \ref{yylem7.1}(3), every 2-cocycle of $\ip$
is equivalent to a 2-cocycle with $\varsigma_{m=0}$ for all $m\leq 4$.
This implies that $H^2_{\ast}(\ip)\cong Z^2_{4}(\ip)/B^2_{4}(\ip)$.

Now we can start with a 2-cocycle $\wv$ in $Z^2_4(\ip)$.
Let $\ip_{[\wv]}=(\ip[\epsilon], \ucr{i}^\epsilon, \ast^\epsilon)$
denote the infinitesimal deformation of $\ip$ corresponding to $\wv$.
Since $\varsigma_m=0$ for $m\leq 4$,
we have $\ast^{\epsilon}=\ast$
when applied on elements in $\ip_{[\wv]}(m)$ for $m\leq 4$.

Recall that $\Pois$ is generated by $\{\1_0, \1_2, \fb\}$ subject
to the relations given in \eqref{E3.5.1}-\eqref{E3.5.7}. We need to
analyze the compositions of these generators in $\ip=\Pois$ or $\Pois/{^k \iu}$.

Define a $\Bbbk$-linear map from $f: Z^2_4(\ip)\to \ip(3)$
by
$$f(\wv):=\wp_1(\1_2, \1_2)-\wp_2(\1_2, \1_2).$$
By Lemma \ref{yylem7.2}(4), $f(\wv)$ is the 1-dimensional
space $\Bbbk (\fb\ucr{1}\fb -\fb\ucr{2}\fb)$. Let $K$ be the kernel
of $f$. We claim that $K$ is a subspace of $B^2_4(\ip)$.
If the claim holds, then
$$\dim H^2_{\ast}(\ip)=\dim Z^2_4(\ip)/B^2_4(\ip)
\leq \dim Z^2_4(\ip)/K\leq \dim \Bbbk (\fb\ucr{1}\fb -\fb\ucr{2}\fb)=1.$$
Therefore it is enough to prove the claim.

Suppose $\wv$ is in $K$. Then  $\wp_1(\1_2,\1_2)-\wp_2(\1_2,\1_2)=0$
or $\1_2 \ucr{1}^{\epsilon} \1_2=\1_2 \ucr{2}^{\epsilon} \1_2$ by
Lemma \ref{yylem7.2}(4). This yields equation \eqref{E3.5.5} for
$\ucr{i}^{\epsilon}$. By Lemma \ref{yylem7.5},
we have
\[\fb \ucr{1}^{\epsilon} \1_2=\1_2\ucr{2}^{\epsilon}\fb
+(\1_2\ucr{2}^{\epsilon}\fb)\ast (12)_3\]
which is equation \eqref{E3.5.6} for $\ucr{i}^{\epsilon}$.
By Lemma \ref{yylem7.3}(3), Equation \eqref{E3.5.7} holds for $\ucr{i}^{\epsilon}$.
Now other equations \eqref{E3.5.1}-\eqref{E3.5.4} hold by Lemmas
\ref{yylem7.2}(1, 2) and \ref{yylem7.3}(1). Up to this point, we have
shown that \eqref{E3.5.1}-\eqref{E3.5.7} hold for the elements
$\{\1_0, \1_1, \1_2, \fb\}$ in $\ip_{[\wv]}$. Therefore there is an operadic
morphism $\Phi$ from $\Pois$ to $\ip_{[\wv]}$ sending $\1_0\mapsto \1_0,
\1_2\mapsto \1_2, \fb\mapsto \fb$. If $\ip=\Pois$, by Lemma \ref{yylem5.1},
$\wv$ is a 2-coboundary as desired. Now suppose that $\ip=\Pois/{^k \iu}$
for some $k\geq 5$. Then $\GKdim \ip=k$ by \cite[Theorem 0.1(2)]{BYZ}.
This implies that $\GKdim \Pois/\ker \Phi\leq k$ where $\ker\Phi$ is the
kernel of the operadic morphism $\Phi$. By \cite[Theorem 0.1(2)]{BYZ} again,
$\ker \Phi \supseteq {^k \iu}$ of $\Pois$. As a consequence,
$\Phi$ induces an operadic morphism $\Phi': \ip:=\Pois/{^k \iu}\to
\ip_{[\wv]}$. Since $\Phi'$ sends $\1_0\mapsto \1_0, \1_2\mapsto \1_2, \fb\mapsto \fb$,
it follows from Lemma \ref{yylem5.1} that $\wv$ is a 2-coboundary as
desired.
\end{proof}

\begin{remark}
\label{yyrem7.7}
Let $\ip=\Pois/{^k \iu}$ for all $\k\geq 1$.
\begin{enumerate}
\item[(1)]
If $k=1$ or $2$, $\ip\cong \Com$ and by Theorem
\ref{yythm6.8}, $H^2_{\ast}(\ip)=0$.
\item[(2)]
If $k=3$ and ${\rm{char}}\; \Bbbk\neq 2$, then
$\ip\cong \As/{^3 \iu}$ [Theorem \ref{yythm3.8}(2)].
By Theorem \ref{yythm6.8}, $H^2_{\ast}(\ip)=0$.
\item[(3)]
By Theorem \ref{yythm7.6} and Corollary \ref{yycor8.5},
if ${\rm{char}}\;\Bbbk=0$, then $H^2_{\ast}(\Pois)
=H^2(\Pois)=\Bbbk$. We expect the statement holds
in positive characteristic. Is there a direct
way of showing
$H^2(\ip)\neq 0$ for $k\geq 5$ and any fields $\Bbbk$?
\item[(4)]
We don't know the exact statement for
$H^2_{\ast}(\Pois/{^4 \iu})$ (and
$H^2_{\ast}(\Pois/{^3 \iu})$ when ${\rm{char}}\; \Bbbk=2$).
\item[(5)]
The computation of $H^2_{\ast}$ for a general $\ip$ could be very
complicated. It would be useful to develop effective methods.
\end{enumerate}
\end{remark}

\section{Formal deformations of operads}
\label{yysec8}

In this section we recall the definition of a formal deformation of an operad.
Most of this section is either well known or a folklore, see \cite{NN, Re}.
But a formal deformation in this paper does not necessarily satisfy the equivariance property.
This freedom is convenient when we are working with some examples.

Here we assume that $\Bbbk$ is a base commutative ring.
When we are working with examples such as $\As$, $\Com$, $\Lie$, $\Pois$,
we usually assume that $\Bbbk$ is a field.
Even when $\Bbbk$ is a field, the formal power series ring $\Bbbk[[t]]$ is not a field.
(But the Laurent power series ring $\Bbbk[[t^{\pm 1}]]$ is again a field.)
Let $\ip$ be an operad over $\Bbbk$.
Then we use $\ip[[t]]$ (respectively, $\ip[[t^{\pm 1}]]$)
to denote the operad $\ip\otimes_{\Bbbk} \Bbbk[[t]]$ over $\Bbbk[[t]]$
(respectively, $\ip\otimes_{\Bbbk} \Bbbk[[t^{\pm 1}]]$ over $\Bbbk[[t^{\pm 1}]]$).

Recall that, for every $n\geq 0$,
\[\ip[[t]](n):=(\ip(n))[[t]]=\ip(n)\otimes_{\Bbbk} \Bbbk[[t]].\]

\begin{definition}
\label{yydef8.1}
Let $\ip$ be an operad over $\Bbbk$.
\begin{enumerate}
\item[(1)]
A {\it formal deformation} of an operad $\ip$ is a
$\Bbbk[[t]]$-linear operadic structure on the $\Bbbk[[t]]$-module
$\ip[[t]]$, with partial composition $-\ucr{i}^t -$
and right $\S$-action $\ast^t$ that satisfy the following conditions:
\begin{enumerate}
\item[(1a)]
for all $m, n, 1\leq i\leq m$,
\begin{align*}
-\underset{i}{\circ^t} -\colon \ip[[t]](m) &\otimes_{\Bbbk[[t]]}\ip[[t]](n) \to \ip[[t]](m+n-1)\\
&\mu \underset{i}{\circ^t} \nu=\mu \ucr{i} \nu +\sum_{j=1}^{\infty} \wp_i^{j}(\mu,\nu) t^j
\end{align*}
is $\Bbbk[[t]]$-linear, where
\[\wp_i^{j}(-,-)\colon \ip(m) \otimes \ip(n) \to \ip(m+n-1)\]
is a $\Bbbk$-linear map for all $j\ge 1$.
\item[(1b)]
for each $m\geq 0$,
\begin{align*}
\ast^t\colon & \ip[[t]](m) \otimes_{\Bbbk[[t]]} \Bbbk[[t]] \S_m\to \ip[[t]](m)\\
&\mu \ast^t \sigma =\mu \ast \sigma +\sum_{j=1}^{\infty} \varsigma^j(\mu, \sigma) t^j
\end{align*}
is $\Bbbk[[t]]$-linear, where
\begin{align*}
\varsigma^j: \ip(m)\otimes \Bbbk \S_m \to \ip(m), \; \forall \; m\geq 0
\end{align*}
is $\Bbbk$-linear for all $j\ge 1$.

A formal deformation of $\ip$ is denoted by any of the following
notation
$$\wv^{\bullet} \quad {\text{or}}\quad
(\wp_i^{\bullet},\varsigma^{\bullet}) \quad {\text{or}}\quad
\ip_{[\wv^{\bullet}]} \quad {\text{or}}\quad
(\ip[[t]], \ucr{i}^{t},\ast^t).$$
\end{enumerate}
\item[(2)]
A formal deformation of $\ip$ is called an {\it $\S$-formal deformation} if
$\varsigma^j=0$ for all $j\geq 1$.
\item[(3)]
Two infinitesimal deformations
$(\ip[[t]], \ucr{i}^{t}, \ast^{t})$
and $(\ip[[t]], \underset{i}{\widetilde{\circ}}^{t}, \widetilde{\ast}^{t})$ are called {\it equivalent} if there is an
automorphism $\Phi$ of the $\Bbbk[[t]]$-module $\ip[[t]]$ such that
\begin{enumerate}
\item[(3a)]
$\Phi$ is an isomorphism of operads from
$(\ip[[t]], \underset{i}{\circ^{t}},
\ast^{t})$ to $(\ip[[t]], \underset{i}{\widetilde{\circ}}^{t}, 
\widetilde{\ast}^{t})$, and
\item[(3b)]
there is a sequence of $\Bbbk$-linear maps $\partial^j:\ip\to\ip$ such that
$\Phi(\mu)=\mu+\sum_{j=1}^{\infty}\partial^j (\mu) t^j$ for all $\mu\in \ip(m)$.
\end{enumerate}
\item[(4)]
A formal deformation is called {\it trivial} if it is equivalent to
the trivial formal deformation defined by zero $\wv^{\bullet}$-collection.
\item[(5)]
The set of formal deformations of $\ip$ modulo the equivalent relation
defined in part (3) is denoted by $\fdf (\ip)$.
\item[(6)]
We say $\ip$ is {\it $\fdf$-rigid} if every formal deformation of
$\ip$ is trivial.
\end{enumerate}
\end{definition}

One of the main results in this section is to show that
$\As,\Com, \Lie$ are $\fdf$-rigid, which is a consequence
of the following theorem together with Theorems \ref{yythm6.1},
\ref{yythm6.4} and \ref{yythm6.10}.

\begin{theorem}
\label{yythm8.2}
Suppose that $H^2_{\ast}(\ip)=0$, or equivalently, $\ip$ is
$\idf$-rigid. Then $\ip$ is $\fdf$-rigid.
\end{theorem}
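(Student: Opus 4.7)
The plan is to trivialize a given formal deformation $\ip_{[\wv^{\bullet}]}=(\ip[[t]], \ucr{i}^{t}, \ast^t)$ order-by-order in $t$, using the hypothesis $H^2_{\ast}(\ip)=0$ to kill the leading obstruction at each stage, and then taking a $t$-adic limit of the resulting sequence of automorphisms.

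First I would show the base case. Reducing the formal deformation modulo $t^2$ yields an infinitesimal deformation of $\ip$ with $\wv$-collection $(\wp_i^1, \varsigma^1)$. By Theorem \ref{yythm2.10}(1) this is a 2-cocycle; since $H^2_{\ast}(\ip)=0$, Theorem \ref{yythm2.10}(2) provides a $\Bbbk$-linear map $\partial^1\colon \ip\to \ip$ such that $\wp_i^1(\mu,\nu)=\partial^1(\mu)\ucr{i}\nu+\mu\ucr{i}\partial^1(\nu)-\partial^1(\mu\ucr{i}\nu)$ and $\varsigma^1(\mu,\sigma)=\partial^1(\mu)\ast\sigma-\partial^1(\mu\ast\sigma)$. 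Define the $\Bbbk[[t]]$-linear automorphism $\Phi_1(\mu)=\mu-\partial^1(\mu)t$ of $\ip[[t]]$; transporting $(\ucr{i}^{t},\ast^t)$ through $\Phi_1$ gives an equivalent formal deformation in which the first-order term of the composition and the first-order term of the $\S$-action both vanish.

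Next I would carry out the induction. Suppose after a finite composition of such automorphisms the formal deformation $(\ucr{i}^{t},\ast^t)$ satisfies $\wp_i^j=0$ and $\varsigma^j=0$ for all $1\le j\le n$. Expanding the operadic axioms (OP1$'$)--(OP3$'$) for $(\ucr{i}^{t},\ast^t)$ and collecting the coefficient of $t^{n+1}$, all cross terms involving $\wp_i^j\circ\wp_{j'}^{k}$ or $\varsigma^j\ast\varsigma^k$ with $j+k=n+1$ and $j,k\ge 1$ drop out by the inductive hypothesis; what remains is exactly the system of equations \eqref{E2.4.1}--\eqref{E2.4.5} for $(\wp_i^{n+1},\varsigma^{n+1})$. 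Hence $(\wp_i^{n+1},\varsigma^{n+1})\in Z^2_{\ast}(\ip)$, and by $H^2_{\ast}(\ip)=0$ there is a $\Bbbk$-linear map $\partial^{n+1}\colon\ip\to\ip$ trivializing it. The automorphism $\Phi_{n+1}(\mu)=\mu-\partial^{n+1}(\mu)t^{n+1}$ is the identity modulo $t^{n+1}$, so it leaves $\wp_i^j$ and $\varsigma^j$ unchanged for $j\le n$, while the transported deformation now has $\wp_i^{n+1}=0$ and $\varsigma^{n+1}=0$.

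Finally I would take the limit. Since $\Phi_{n+1}\equiv \id\pmod{t^{n+1}}$, the infinite composition $\Phi:=\cdots\circ\Phi_2\circ\Phi_1$ converges in the $t$-adic topology on $\ip[[t]]$ to a $\Bbbk[[t]]$-linear automorphism of the form $\Phi(\mu)=\mu+\sum_{j\ge 1}\partial^j(\mu)t^j$ in the sense of Definition \ref{yydef8.1}(3b), and the transported deformation has $\wp_i^j=0$ and $\varsigma^j=0$ for all $j\ge 1$. Thus $\wv^{\bullet}$ is equivalent to the trivial formal deformation, proving $\fdf$-rigidity. The main technical obstacle is the inductive step: one must verify that, once the lowest $n$ orders have been killed, the formal-deformation axioms genuinely force the pair $(\wp_i^{n+1},\varsigma^{n+1})$ to satisfy the full 2-cocycle system \eqref{E2.4.1}--\eqref{E2.4.5}, which requires a careful bookkeeping of the coefficient of $t^{n+1}$ in each axiom and the observation that the inductive vanishing eliminates every mixed quadratic term.
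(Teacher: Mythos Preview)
Your proposal is correct and follows essentially the same approach as the paper. The paper packages the two key facts you use---that the leading term of an $n$-away formal deformation is a 2-cocycle, and that trivializing it by an automorphism of the form $\mu\mapsto\mu+\partial^n(\mu)t^n$ yields an $(n+1)$-away deformation---into a separate Lemma~\ref{yylem8.3} (whose proof is omitted as a routine adaptation of Theorem~\ref{yythm2.10}), and then takes the same $t$-adic limit of the infinite composition $\cdots\Phi^n\cdots\Phi^1$ that you describe.
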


The proof of Theorem \ref{yythm8.2} follows from
the standard argument in deformation theory. We say that
a formal deformation $\wv^{\bullet}$ is {\it $n$-away}
if $\wp^j=0$ and $\varsigma^j=0$ for all $j<n$.
We start with some lemmas.

\begin{lemma}
\label{yylem8.3}
Let $\ip$ be an operad over $\Bbbk$.
\begin{enumerate}
\item[(1)]
If $\wv^{\bullet}$ is $n$-away for some positive
integer $n$, then $(\wv^n,\varsigma^n)$ is a 2-cocycle.
\item[(2)]
Suppose that $\wv^{\bullet}$ is $n$-away with
corresponding
formal deformation $\ip^n_{[\wv^{\bullet}]}$
and that $(\wv^n,\varsigma^n)$ is a 2-coboundary.
Then $\ip^n_{[\wv^{\bullet}]}$ is equivalent to
an $(n+1)$-away formal deformation
$\ip^{n+1}_{[\wv^{\bullet}]}$ with
an isomorphism $\Phi^n: \ip^n_{[\wv^{\bullet}]}\to
\ip^{n+1}_{[\wv^{\bullet}]}$ such that
$$\Phi^n(\mu)=\mu+ \partial^n(\mu) t^n$$
where $\partial^n$ provides the
2-coboundary $(\wv^n,\varsigma^n)$, namely,
$(\wv^n,\varsigma^n)=\wv(\partial^n)$.
\end{enumerate}
\end{lemma}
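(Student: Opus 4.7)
The plan is to prove both parts by expanding the $\Bbbk[[t]]$-linear operadic axioms for $\ip[[t]]$ and reading off coefficients of $t^n$. For part (1), I would write, for an $n$-away formal deformation,
\[\mu\ucr{i}^{t}\nu=\mu\ucr{i}\nu+\wp^n_i(\mu,\nu)t^n+O(t^{n+1}), \qquad \mu\ast^{t}\sigma=\mu\ast\sigma+\varsigma^n(\mu,\sigma)t^n+O(t^{n+1}),\]
and plug these into the operadic axioms (OP1$'$), (OP2$'$), (OP3$'$) together with the $\S$-action axiom. Since the deformation is $n$-away, all the cross-terms of the form $\wp^j t^j \ucr{i}^t \wp^{j'} t^{j'}$ or $\varsigma^j t^j \ast^t \varsigma^{j'} t^{j'}$ that could contribute at order $t^n$ must have $j=n$ or $j'=n$, and by $\Bbbk[[t]]$-linearity this forces the other factor to reduce to its order-zero part. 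Matching the $t^n$ coefficients in the two sides of each axiom yields exactly the equations \eqref{E2.4.1}--\eqref{E2.4.5} for $(\wp^n_i,\varsigma^n)$; for instance the associativity $(\lambda \ucr{i}^t\mu)\underset{i-1+j}{\circ^t}\nu=\lambda\ucr{i}^t(\mu\underset{j}{\circ^t}\nu)$ produces \eqref{E2.4.1}, the other associativity case gives \eqref{E2.4.2}, the equivariance axiom produces \eqref{E2.4.3}--\eqref{E2.4.4}, and the $\S_m$-action axiom gives \eqref{E2.4.5}. So $(\wp^n,\varsigma^n)\in Z^2_\ast(\ip)$.

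For part (2), I would set $\Phi^n\colon \ip[[t]]\to\ip[[t]]$ by the formula $\Phi^n(\mu)=\mu+\partial^n(\mu)t^n$, extended $\Bbbk[[t]]$-linearly. It is invertible (with formal inverse $\sum_{k\ge 0}(-\partial^n)^{k}t^{nk}$), and I would use it to transport the operadic structure: declare $\underset{i}{\widetilde{\circ}^t}$ and $\widetilde{\ast}^t$ by the rules
\[\Phi^n(\mu\underset{i}{\widetilde{\circ}^t}\nu)=\Phi^n(\mu)\ucr{i}^{t}\Phi^n(\nu), \qquad \Phi^n(\mu\,\widetilde{\ast}^t\,\sigma)=\Phi^n(\mu)\ast^{t}\sigma.\]
By construction this yields a formal deformation equivalent to $\ip^n_{[\wv^{\bullet}]}$. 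It only remains to verify that the new data is $(n+1)$-away.

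To check this, I would expand the right-hand side of the first rule modulo $t^{n+1}$: using that the old deformation is $n$-away,
\[\Phi^n(\mu)\ucr{i}^{t}\Phi^n(\nu)=\mu\ucr{i}\nu+\bigl[\wp^n_i(\mu,\nu)+\partial^n(\mu)\ucr{i}\nu+\mu\ucr{i}\partial^n(\nu)\bigr]t^n+O(t^{n+1}).\]
Applying $\Phi^{n,-1}(x)=x-\partial^n(x)t^n+O(t^{2n})$ gives
\[\mu\underset{i}{\widetilde{\circ}^t}\nu=\mu\ucr{i}\nu+\bigl[\wp^n_i(\mu,\nu)+\partial^n(\mu)\ucr{i}\nu+\mu\ucr{i}\partial^n(\nu)-\partial^n(\mu\ucr{i}\nu)\bigr]t^n+O(t^{n+1}),\]
and the bracket vanishes precisely because $\wp^n_i=\wp(\partial^n)_i$ via \eqref{E2.5.1}. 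The same calculation for $\widetilde{\ast}^t$ uses \eqref{E2.5.2} to kill the $t^n$-coefficient. Hence the new deformation is $(n+1)$-away, as claimed.

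The main obstacle is not conceptual but bookkeeping in part (1): there are five cocycle relations to recover, and the order of composition together with the identifications $\sigma'=1_m\ucr{i}\sigma$ and $\phi''=\phi\ucr{i}1_n$ must be tracked correctly so that the $t^n$-coefficients match \eqref{E2.4.1}--\eqref{E2.4.5} verbatim; once the right-hand sides are expanded carefully, the cross-terms arising from $\wp^j$ or $\varsigma^j$ with $0<j<n$ vanish by the $n$-away hypothesis, and the remaining computation is routine.
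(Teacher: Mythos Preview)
Your proposal is correct and is precisely the adaptation of the proof of Theorem~\ref{yythm2.10} that the paper alludes to (the paper itself omits the details). The only minor wrinkle is directional: with your convention $\Phi^n(\mu\,\widetilde{\circ}_i^{\,t}\,\nu)=\Phi^n(\mu)\circ_i^{t}\Phi^n(\nu)$, the map $\Phi^n$ is an isomorphism from the new $(n{+}1)$-away structure to the old $n$-away one, i.e., the inverse of the map in the lemma's statement; this is harmless for establishing equivalence, and the sign discrepancy mirrors the one already present between \eqref{E2.5.1} and \eqref{E2.10.1} in the paper.
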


\begin{proof} Basically the proof of Theorem
\ref{yythm2.10} can be adapted here. Details
are omitted.
\end{proof}

\begin{proof}[Proof of Theorem \ref{yythm8.2}]
Every formal deformation is obviously $1$-away,
which is denoted by $\ip^1_{[\wv^{\bullet}]}$.
By Lemma \ref{yylem8.3}(1), $(\wp^1, \varsigma^1)$
is a 2-cocycle. Since $H^2_{\ast}(\ip)=0$,
it is also a 2-coboundary. By Lemma \ref{yylem8.3}(2),
$\ip^1_{[\wv^{\bullet}]}$ is isomorphic to
$\ip^2_{[\wv^{\bullet}]}$ via isomorphism
$\Phi^1$. By induction and Lemma \ref{yylem8.3},
there are a sequence of isomorphisms
$\Phi^n$ from $\ip^n_{[\wv^{\bullet}]}$ to
Let $\Bbbk$
$\ip^{n+1}_{[\wv^{\bullet}]}$. It follows
from the form of $\Phi^n$, the infinite
product
$$\cdots\cdots \Phi^{n}\Phi^{n-1}\cdots \Phi^1$$
is an isomorphism from $\ip^1_{[\wv^{\bullet}]}$
to $\ip^{\infty}_{[\wv^{\bullet}]}$ where
$\ip^{\infty}_{[\wv^{\bullet}]}$ is the trivial
deformation of $\ip$. Therefore $\ip^1_{[\wv^{\bullet}]}$
is equivalent to the trivial one. Thus the
assertion follows.
\end{proof}

The following operad is also well known, first due to
Livernet-Loday \cite{LL}, is now called the Livernet-Loday
operad. See \cite[Example 3]{MaR} and \cite[Definition 4]{Do}
for some discussions. Below we are using the presentation
given in \cite[Example 3]{MaR}.

\begin{example}
\label{yyex8.4}
In \cite[Example 3]{MaR}, $\Bbbk$ is the complex field ${\mathbb C}$,
but it is clear that the construction works for any field $\Bbbk$ of characteristic zero.
It is less clear if this works for a field of
positive characteristic. The Livernet-Loday operad,
denoted by $\mathcal{LL}_{t}$, is a quadratic
operad over $\Bbbk[[t]]$ generated by a commutative operation
$(a, b) \mapsto a \cdot b$ and a skew-commutative
operation $(a, b) \mapsto [a, b]$ satisfying the identities
\begin{enumerate}
\item[(i)]
$[a \cdot b, c] = a \cdot [b, c] + [a, c] \cdot b,$
\item[(ii)]
$[a, [b, c]] + [b, [c, a]] + [c, [a, b]] = 0,$
\item[(iii)]
$(a \cdot b) \cdot c - a \cdot (b \cdot c) = t
[b, [a, c]].$
\end{enumerate}
By \cite[Example 3]{MaR}, when $t=0$, $\mathcal{LL}_t$ becomes
$\Pois$. Or equivalently, $\mathcal{LL}_t/t\mathcal{LL}_t \cong
\Pois$. By \cite[Example 3]{MaR}, when $t\neq 0$, $\mathcal{LL}_t$ is
isomorphic to $\As$. This means that there is a $\Bbbk[[t^{\pm 1}]]$-linear
operadic isomorphism from $\mathcal{LL}_t\otimes_{\Bbbk[[t]]}\Bbbk[[t^{\pm 1}]]$
to $\As\otimes_{\Bbbk} \Bbbk[[t^{-1}]]$. Using the relations given in (i)-(iii),
one can check that $\mathcal{LL}_t(n)$, for each $n$, is $t$-torsionfree
and of finite rank over $\Bbbk[[t]]$. Since $\Bbbk[[t]]$ is a DVR,
we obtain that $\mathcal{LL}_t$ is isomorphic to $\As\otimes_{\Bbbk} \Bbbk[[t]]$ (and then
to $\Pois\otimes_{\Bbbk} \Bbbk[[t]]$) as $\Bbbk[[t]]$-modules. Then $\mathcal{LL}_t$
is a formal deformation of $\Pois$.

We claim that $\mathcal{LL}_t$ is not a trivial formal deformation.
By \cite[Example 3]{MaR}, $\mathcal{LL}_t\otimes_{\Bbbk[[t]]}\Bbbk[[t^{\pm 1}]]$
is isomorphic to $\As\otimes_{\Bbbk} \Bbbk[[t^{-1}]]$ as an operad
over $\Bbbk[[t^{-1}]]$. This implies that
$\mathcal{LL}_t\otimes_{\Bbbk[[t]]}\Bbbk[[t^{\pm 1}]]$
is not isomorphic to $\Pois\otimes_{\Bbbk}\Bbbk[[t^{\pm 1}]]$ as an
operad over $\Bbbk[[t^{\pm 1}]]$.
As a consequence, $\mathcal{LL}_t$
is not isomorphic to $\Pois\otimes_{\Bbbk}\Bbbk[[t]]$ as an
operad over $\Bbbk[[t]]$.
This says that $\mathcal{LL}_t$ is not isomorphic to a
trivial formal deformation, or equivalently,
$\mathcal{LL}_t$ is not a trivial formal deformation.
\end{example}

\begin{corollary}
\label{yycor8.5}
Suppose that ${\rm{char}}\; \Bbbk=0$.
\begin{enumerate}
\item[(1)]
$\Pois$ has a non-trivial formal deformation.
\item[(2)]
$H^2_{\ast}(\Pois)=H^2(\Pois)=\Bbbk$.
\end{enumerate}
\end{corollary}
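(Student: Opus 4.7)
The plan is to assemble this corollary from three results already established in the paper, with essentially no new computation required.

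For part (1), I would simply invoke Example \ref{yyex8.4}: the Livernet–Loday operad $\mathcal{LL}_t$ is constructed there as an operad over $\Bbbk[[t]]$ whose reduction mod $t$ is $\Pois$ and which is free of the expected rank as a $\Bbbk[[t]]$-module (so it genuinely is a formal deformation in the sense of Definition \ref{yydef8.1}). The non-triviality argument is also given in Example \ref{yyex8.4}: after inverting $t$, $\mathcal{LL}_t \otimes_{\Bbbk[[t]]} \Bbbk[[t^{\pm 1}]]$ becomes isomorphic to $\As \otimes_{\Bbbk} \Bbbk[[t^{\pm 1}]]$, while $\Pois \otimes_{\Bbbk} \Bbbk[[t^{\pm 1}]]$ is not isomorphic to $\As \otimes_{\Bbbk} \Bbbk[[t^{\pm 1}]]$ as operads (for instance, their arity-3 components have different $\S_3$-module structures, or one can appeal to the non-isomorphism $\As \not\cong \Pois$ directly). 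Hence $\mathcal{LL}_t$ is a non-trivial formal deformation of $\Pois$.

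For part (2), I would first observe the upper bound $\dim_{\Bbbk} H^2_{\ast}(\Pois) \le 1$, which is Theorem \ref{yythm7.6}. For the lower bound, I take the contrapositive of Theorem \ref{yythm8.2}: if $H^2_{\ast}(\Pois) = 0$, then $\Pois$ would be $\fdf$-rigid, contradicting part (1). Therefore $H^2_{\ast}(\Pois) \ne 0$, and combined with the upper bound we conclude $H^2_{\ast}(\Pois) \cong \Bbbk$.

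Finally, the equality $H^2(\Pois) = H^2_{\ast}(\Pois)$ follows from Theorem \ref{yythm2.10}'s companion result, Theorem \ref{yythm2.12}(3): since $\operatorname{char}\Bbbk = 0$, every $\Bbbk\S_m$ is semisimple, so $\Ext^1_{\Bbbk\S_m}(\ip(m),\ip(m)) = 0$ for all $m$, and hence $H^2(\Pois) \cong H^2_{\ast}(\Pois) \cong \Bbbk$.

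There is no real obstacle here — the substance lies entirely in the previously established results, namely the careful cocycle-analysis of Sections \ref{yysec5}--\ref{yysec7} giving the upper bound, the Livernet–Loday construction furnishing the non-trivial deformation, and the rigidity principle of Theorem \ref{yythm8.2}. The only point worth stating explicitly in the write-up is that Theorem \ref{yythm8.2}, although phrased as an implication (vanishing cohomology implies rigidity), is used via its contrapositive, so one should remark that the existence of \emph{any} non-trivial formal deformation forces $H^2_{\ast} \ne 0$.
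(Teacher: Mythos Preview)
Your proposal is correct and follows essentially the same approach as the paper: part (1) is exactly Example \ref{yyex8.4}, and part (2) combines the upper bound from Theorem \ref{yythm7.6} with the contrapositive of Theorem \ref{yythm8.2} applied to part (1), together with the characteristic-zero identification $H^2 \cong H^2_{\ast}$ (Theorem \ref{yythm2.12}(3)). Your explicit mention of the contrapositive of Theorem \ref{yythm8.2} and the precise citation of Theorem \ref{yythm2.12}(3) for the $H^2 = H^2_{\ast}$ step are, if anything, slightly clearer than the paper's terse reference list.
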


\begin{proof}
(1) This is Example \ref{yyex8.4}.

(2) This follows from part (1), Theorems \ref{yythm2.10},
\ref{yythm7.6} and \ref{yythm8.2}.
\end{proof}



\subsection*{Acknowledgments}

Y.-H. Bao and Y. Ye were partially supported by the Natural 
Science Foundation of China (Grant No. 11871071,
11971449). Y.-H. Wang was partially supported by
the Natural Science Foundation of China (Grant No. 11971289) and
Foundation of China Scholarship Council (Grant No. [2016]3009).
J.J. Zhang was partially supported by the US National Science
Foundation (Grant No. DMS-1700825).

\end{document}